\newcommand{\C}{{\mathbb{C}}}
\newcommand{\F}{{\mathcal{F}}}
\newcommand{\R}{{\mathbb{R}}}%
\newcommand{\Z}{{\mathbb{Z}}}
\let\CAL=\mathcal%
\def\mathcal#1{{\CAL#1}}%
\newcommand{\N}{\mathbb{N}}
\newcommand{\Ve}{{\mathsf{Ve}}}
\newcommand{\Ed}{{\mathsf{Ed}}}
\newcommand{\AG}{{\mathsf{A}}}
\newcommand{\MG}{{\mathsf{M}}}
\let\GA\AG
\let\GV\VG
\let\GE\EG
\let\GM\MG
\let\EF=\FE
\newcommand{\A}{{\mathsf{A}}}
\newcommand{\E}{{\mathsf{Ed}_{\A}}}
\newcommand{\AR}{{\mathsf{R}}}
\newcommand{\ER}{{\mathsf{Ed}_{\mathsf{R}}}}
\newcommand{\ARZ}{{\mathsf{R}_0}}
\newcommand{\ARU}{{\mathsf{R}_1}}
\newcommand{\AZ}{{\mathsf{Z}}}
\newcommand{\AZa}{\mathsf{Z}^{\alpha}}
\newcommand{\VZa}{\mathsf{Ve}_{\AZa}}
\newcommand{\wAZa}{\wt \AZ{}^{\alpha}}
\newcommand{\Dis}{\mathrm{Dis}^{\F^\diamond}}
\newcommand{\Fe}{\msl{Fol}}
\newcommand{\MFe}{\msl{MFol}}
\newcommand{\TFe}{\mr{Mod}_{\msl{tr}}}
\let\TFol\TFe
\def\ftrait{\;{{}^{\underline{\phantom{\longleftrightarrow}}}}\;}%
\let\sing\Sing
\newcommand{\G}{\mathcal{G}}
\renewcommand{\emph}[1]{{\textsl{#1}}}
\newcommand{\comp}{{\mr {Comp}}}
\newcommand{\msl}[1]{\mathsf{#1}}
\newcommand{\mf}[1]{{\mathfrak{#1}}}
\newcommand{\mr}[1]{{{\mathrm{#1}}}}
\newcommand{\mc}[1]{{\mathcal{#1}}}
\newcommand{\mb}[1]{{\mathbb{#1}}}
\newcommand{\wt}[1]{{\widetilde{#1}}}
\newcommand{\mbf}[1]{\mathbf{#1}}
\newcommand{\br}[1]{{\breve{#1}}}
\newcommand{\wh}[1]{{\widehat{#1}}}
\newcommand{\iso}{{\overset{\sim}{\longrightarrow}}}
\newcommand{\inte}[1]{\overset{\circ}{{#1}}}
\newcommand{\DD}[1]{\frac{\partial\phantom{ #1}}{\partial #1}}
\newtheorem{teo}{Theorem}[section]
\newtheorem{thmm}{Theorem}
\newtheorem{lema}[teo]{Lemma}
\newtheorem{prop}[teo]{Proposition}
\newtheorem{defin}[teo]{Definition}
\newtheorem{cor}[teo]{Corollary}
\newtheorem{obs2}[teo]{Remark}
\newtheorem{recap2}[teo]{Recapitulation}
\newtheorem{ex2}[teo]{Exemple}
\newenvironment{obs}{\begin{obs2}\rm}{\hfill\qed\end{obs2}}
\newenvironment{dem}{\begin{proof}[Proof]}{\end{proof}}
\newenvironment{dem2}[1]{\begin{proof}[Proof #1]}{\end{proof}}
\def\bibartp#1#2#3#4#5#6#7#8
\def\bibart#1#2#3#4#5#6
\def\bibliv#1#2#3#4#5
\def\bibaart#1#2#3#4
\newcommand{\ED}{\mc E^{\diamond}}
\newcommand{\ME}{\MFe_{\msl{tr}}(\mc E^{\diamond})}
\newcommand{\CE}{C_{\ED}}
\title[Topological moduli]{Topological moduli space\\
for  germs of holomorphic foliations}
\date{\today}
\author{David Mar\'{\i}n, Jean-Fran\c{c}ois Mattei and \'{E}liane Salem}
\thanks{This work was partially supported by grants 
MTM2011-26674-C02-01 and MTM2015-66165-P of  Ministerio de  Econom\'{\i}a y Competitividad of Spain/FEDER}
\address{Departament de Matem\`{a}tiques \\ Universitat Aut\`{o}noma de Barcelona \\ E-08193 Bellaterra (Barcelona)\\ Spain} \email{davidmp@mat.uab.es}
\address{Institut de Math\'{e}matiques de Toulouse\\ Universit\'{e} Paul Sabatier\\ 118, Route de Narbonne\\ F-31062 Toulouse Cedex 9, France} \email{jean-francois.mattei@math.univ-toulouse.fr}
\address{Institut de Math\'{e}matiques de Jussieu-Paris Rive Gauche\\ Universit\'e Pierre et Marie Curie\\ 4 Place Jussieu, F-75252 Paris  cedex 05, France} \email{eliane.salem@imj-prg.fr}
\subjclass[2010]{Primary 37F75; Secondary 32M25, 32S50, 32S65, 34M} 
\keywords{Complex dynamical systems, singular foliations, holonomy, topological invariants, Teichm\"uller space, moduli space, non abelian cohomology}
\begin{document}
\begin{abstract}
This work deals with the topological classification of germs of singular foliations on $(\C^{2},0)$. Working in a  suitable class of foliations we fix the topological invariants given by the separatrix set, the Camacho-Sad indices and the projective holonomy representations and we compute the moduli space of topological classes in terms of the cohomology of a new algebraic object that we call group-graph. 
This moduli space may be an infinite dimensional functional space but under  generic conditions we prove 
that it has finite dimension and we describe its algebraic and topological structures.
\end{abstract}
\maketitle
\tableofcontents

\section{Introduction}
This work deals with the \emph{topological classification} of germs of  \emph{singular foliations} on $(\C^{2},0)$.
To every (possibly dicritical) foliation $\F$ we can associate 
the separatrix set ${Sep}_{\F}$, that is the collection of all germs at $0\in \C^2$  of invariant irreducible analytic curves, called \emph{separatrices}, its \emph{canonical reduction map} $E_{\F}:(M_{\F},\mc E_{\F})\to(\C^{2},0)$, cf.~\cite{CCD}, and the \emph{marked exceptional divisor} 
\[\ED_{\F}=(\mc E_{\F},\Sigma_{\F}, \imath_\F),\] 
where   $\Sigma_{\F}:=\mr{Sing}(\F^\sharp)$ is the finite set consisting of the singular points of $\F^\sharp$:=  $E_{\F}^{*}\F$ 
and $\imath_\F$ is the intersection pairing of $\mc E_{\F}$  in $M_\F$. 
The topological class of ${Sep}_{\F}$ is clearly a topological invariant of $\F$. In this paper we will assume that $\F$ is a \emph{generalized curve}, i.e. $\F^\sharp$ has no saddle-node singularities.
The topological class   $[\mc E_{\F}^{\diamond}]$ of $\mc E_{\F}^{\diamond}$ (as a marked  intrinsic curve) is then a topological invariant of $\F$ because in this situation $E_{\F}$  is  also the minimal desingularization map of ${Sep}_{\F}$, cf. \cite{CLNS}. \\

We know \cite{MM3} that under some assumptions the Camacho-Sad indices of $\F^\sharp$ at the points of $\Sigma_\F$ and the holonomy representations  (up to inner automorphisms) of every component of $\mc E_\F$  are also topological invariants of the germ $\F$ at $0\in \C^2$. 
Our purpose in this work is to describe the set of all other topological invariants and highlight its geometric and algebraic structure.\\ 

\noindent\textbf{MAIN RESULT.}
\large\it
Under generic conditions, 
\begin{enumerate}[(a)]
\item
\textit{there exists an analytic family of foliations parametrized by a finite dimensional space  which gives all the topological types once we fix the topological class of the marked exceptional divisor, the Camacho-Sad indices  and the holonomy representations;}\vspace{1mm}
\item\textit{\large the quotient of this complete family by the topological equivalence relation is naturally isomorphic to the abelian group
\[
\left.\left(F\oplus B \oplus_{j=1}^{\lambda}(\C^{*}/\alpha_{j}^{\Z})\oplus(\C^{*})^{\nu}
\right)\right/ Z\,,
\]
where $\alpha_{j}\in\C^{*}$,  $F$ is a finite abelian group, $Z$ is a finite subgroup, $B$ is a direct sum of $\beta$ 
 totally disconnected subgroups of $\mb U(1)$ and 
$\lambda$, $\nu$ and $\beta$  only
depend on the  (combinatorics of the)
local types of the singularities inside the exceptional divisor.}
\end{enumerate}
\normalsize\rm

\vspace{3mm}

We will also give an explicit characterization of those foliations
satisfying Assertion~(a) in the main result above, that we will call finite type foliations.

\section{Statement of results}

\subsection{Marking of a foliation}
To give a precise sense to our problem let us call \emph{marked divisor} any collection  $\mc E^{\diamond}=(\mc E,\Sigma,\imath)$ consisting of  a compact curve with normal crossings $\mc E$ whose irreducible components  are biholomorphic to $\mb P^1$, a finite subset $\Sigma$ of $\mc E$ and a symmetric map $\imath: \mr{Comp}(\mc E)^2\to \mb Z$, $\mr{Comp}(\mc E)$ denoting the set of irreducible components of~$\mc E$. We will denote by $\mc E^d\subset \mc E$ the union of  the irreducible components of $\mc E$ that do not contain any point of $\Sigma$; we call  them \emph{dicritical components}.\\

A \emph{marking} of a foliation $\F$ by $\mc E^\diamond$ will be  a homeomorphism $f:\mc E\to\mc E_{\F}$ sending $\Sigma$ onto $\Sigma_{\F}$ compatible with the intersection pairing: 
$$\imath_\F(f(D),f(D'))=\imath(D,D').$$ 
In this way, the holonomy representations and the  Camacho-Sad indices of all  pairs $\F^\diamond:=(\F,f)$
can be now associated to  two common sets of indices: the set 
$\CE:=\mr{Comp}(\overline{\mc E\setminus\mc E^d})$ of irreducible components of $\overline{\mc E\setminus\mc E^d}$ and
 \begin{equation*}\label{ChechIndices}
I_{\mc E^\diamond}:= \{  (s,D)\in \Sigma\times\CE 
\;|\; s\in D\}\,,
\end{equation*}
by defining
\[
 \mr{CS}^{\F^\diamond}:=(\mr{CS}(\F^\diamond, D, s))_{(s,D)\in I_{\ED}},\quad
 \mr{CS}(\F^\diamond, D, s) := \mr{CS}(\F^\sharp, f(D), f(s))\,,
\]
\[
\dot{\mc H}^{\F^\diamond}:=(\dot{\mc H}^{\F^\diamond}_D)_{D\in\CE}
,\quad
\mc H_D^{\F^\diamond} := \mc H_{f(D)}^{\F^\sharp}\circ f_\ast \;:\; \pi_1(D\setminus \Sigma, \cdot) \longrightarrow\mr{Diff}(\C,0)\,,
\]
where 
$ \mc H_{f(D)}^{\F^\sharp}$  is the $\F^\sharp$-holonomy representation of $ \pi_1(f(D)\setminus \Sigma_\F, \cdot)$ in the group $\mr{Diff}(\C,0) $ of germs of holomorphic automorphisms of $(\C,0)$,   $\dot{\mc H}^{\F^\diamond}_D$ is its class  up to inner automorphisms,
$f_\ast$ is the isomorphism induced by $f$ at the fundamental groups level and $\mr{CS}(\F^\sharp, f(D), f(s))$  is the  Camacho-Sad index of $\F^\sharp$ along $f(D)$ at $f(s)$.
\\

Let us denote by $\Fe(\mc E^{\diamond})$  the set  of germs of generalized curves $\F$ at $0\in\C^2$   for which there exists a marking $f:\mc E\to\mc E_{\F}$ of $\F$ by $\mc E^{\diamond}$.
Our general goal is to describe  a generic subset of the quotient set 
\[
[\Fe(\mc E^{\diamond})]_{\mc C^0}
\]
of the set $\Fe(\mc E^{\diamond})$   by  the equivalence relation: 
\begin{itemize}
\item
\it $\F\sim_{\mc C^0}\G$  if $\F$ and $\G$  are topologically equivalent as germs at $0\in \C$.
\end{itemize}
\subsection{Globalization of topological equivalences} 
Consider now the equivalence relation:
\begin{itemize}
\item
$\F\sim_{\mc E}\G$ if  $\F^\sharp$ and $\mc G^\sharp$ are topologically conjugated, as germs along the exceptional divisors,  by a germ of a homeomorphism  $(M_\F,\mc E_\F)\to (M_{\mc G}, \mc E_{\mc G})$ which is  holomorphic 
at each point of $\Sigma_\F\setminus \mc N_\F$, 
\end{itemize}
$\mc N_\F$ denoting the subset of the singular points  of $\mc E_\F$, called \emph{nodal corners}, where the Camacho-Sad index of $\F^\sharp$ is a strictly positive real number.
Clearly relation $\sim_{\mc E}$ is stronger than $\sim_{\mc C^0}$, but they will coincide on a generic class of foliations when $\mc E^\diamond$ fulfills the following condition
\begin{enumerate}
\item[(TC)] \it The closure of each connected component of $\mc E\setminus\mc E^{d}$ contains an irreducible component $D$ with $\mr{card}(D\cap \Sigma)\neq 2$.\\
\end{enumerate}
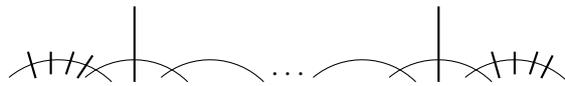
\begin{figure}[h]
\begin{tikzpicture}
\draw [thick] (5,0.05) to (4.9,.4); 
\draw [thick] (5.2,0.1) to (5.2,.4);
\draw [thick] (5.4,0.1) to (5.5,.4);
\draw [thick] (5.55,0.05) to (5.75,.35);
\draw [thick] (6.3,0) to (6.3,1);
\node at (8.35,0.1) {$\cdots$};
\draw [thick] (10.3,0) to (10.3,1);
\draw [thick] (11.1,0.05) to (11,.4); 
\draw [thick] (11.3,0.1) to (11.3,.4);
\draw [thick] (11.5,0.1) to (11.6,.4);
\draw [thick] (11.65,0.05) to (11.8,.35);
\draw  (6,0) arc [radius=1, start angle=45, end angle= 130];
\draw  (7,0) arc [radius=1, start angle=45, end angle= 130];
\draw  (8,0) arc [radius=1, start angle=45, end angle= 130];
\draw  (10,0) arc [radius=1, start angle=45, end angle= 130];
\draw  (11,0) arc [radius=1, start angle=45, end angle= 130];
\draw  (12,0) arc [radius=1, start angle=45, end angle= 130];
\end{tikzpicture}
\caption{The only situation excluded by Condition (TC), the extremity divisors being dicritical components.}
\end{figure}
To specify the notion of genericity
let us call \emph{cut-component of $\mc E_\F$} any closure $\mc C$  of  a connected component of  $\mc E_\F\setminus (\mc E_\F^d\cup\mc N_\F)$; if $\mr{card}(D\cap\Sigma)\leq 2$ for each  
 $D\in \mr{Comp}(\mc C)$ we will say that $\mc C$ is \emph{exceptional}.
Now consider the following transverse rigidity condition:
\begin{itemize}
\item[(TR)] \it Any non exceptional  cut-component of $\mc E_\F$ contains an irreducible component with topologically rigid\footnote{We recall that a subgroup $G$ of the group $\mr{Diff}(\C,0)$ of germs of biholomorphisms of $\C$ at $0$ is called \emph{topologically rigid} if every topological conjugation  between $G$ and another subgroup $G'\subset\mr{Diff}(\C,0)$ is necessarily conformal. This class contains the non-solvable groups \cite{nakai} and the non-abelian groups with dense linear part \cite[Th\'eor\`eme 2]{CerveauSad}.} holonomy group.
\end{itemize}
The Krull-open density  in $\Fe(\mc E^\diamond)$  of the  subset $\Fe_\msl{tr}(\mc E^\diamond)$  consisting of   the foliations $\F$ fulfilling  Condition (TR)  is proven in \cite{LeFloch}.
An extended version of  Main Theorem of \cite{MM3} is given in Appendix (Theorem \ref{newliftingconjugacies}). It asserts that 
\begin{thmm}\label{star}
If $\mc E^\diamond$ satisfies condition (TC) then the relations  $\sim_{\mc E}$ and $\sim_{\mc C^0}$ are equal on  $\Fe_\msl{tr}(\mc E^\diamond)$.
\end{thmm}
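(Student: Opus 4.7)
The easy implication $\sim_{\mc E}\!\Rightarrow\sim_{\mc C^0}$ is immediate: a germ of homeomorphism $\Phi:(M_\F,\mc E_\F)\to(M_\G,\mc E_\G)$ conjugating $\F^\sharp$ to $\G^\sharp$ descends through the proper reduction maps $E_\F$ and $E_\G$ to a germ at $0\in\C^2$ conjugating $\F$ to $\G$. The whole content lies in the converse. The plan is to start from a topological conjugation $\phi:(\C^2,0)\to(\C^2,0)$ between $\F$ and $\G$, first lift it to a homeomorphism between the reductions, and then modify this lift cut-component by cut-component, using condition (TR) to promote topological conjugations of holonomies to conformal ones and using condition (TC) to drive the propagation through the chains of components meeting only two singularities.

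For the lifting step: since $\F,\G\in\Fe(\mc E^\diamond)$ are generalized curves, the reduction maps $E_\F$ and $E_\G$ coincide with the minimal desingularizations of the separatrix sets $\mr{Sep}_\F$ and $\mr{Sep}_\G$ by \cite{CLNS}. The germ $\phi$ sends $\mr{Sep}_\F$ onto $\mr{Sep}_\G$, since it sends germs of invariant analytic curves onto germs of invariant analytic curves; by topological functoriality of the minimal desingularization of a germ of curve with normal crossings, $\phi$ then lifts to a germ of homeomorphism $\widetilde\phi:(M_\F,\mc E_\F)\to(M_\G,\mc E_\G)$ conjugating $\F^\sharp$ to $\G^\sharp$, sending $\Sigma_\F$ to $\Sigma_\G$, and respecting the intersection pairing. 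Away from $\mc E_\F$ the lift is already holomorphic, so the only remaining task is to modify $\widetilde\phi$ near each point of $\Sigma_\F\setminus\mc N_\F$ until it is holomorphic there.

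The upgrading near non-nodal singular points is carried out separately on each cut-component $\mc C$ of $\mc E_\F$. Assume first that $\mc C$ is non-exceptional: by (TR) it contains an irreducible component $D$ whose holonomy group $\mc H_D^\F$ is topologically rigid, and $\widetilde\phi$ induces, via a transversal at a base point of $D\setminus\Sigma$, a topological conjugation between $\mc H_D^\F$ and the holonomy of $\widetilde\phi(D)$; by rigidity this conjugation is automatically conformal. One then modifies $\widetilde\phi$ along transversals, progressing from $D$ through the successive corners of $\mc C$: at each reduced (non saddle-node) corner $s$, a Mattei--Moussu type holomorphic extension argument turns the now conformal transversal conjugation into a local holomorphic equivalence of the foliations near $s$, to be glued to the modification already built on the previously treated components. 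Iterating along the tree of components of $\mc C$ exhausts $\mc C$ without monodromy obstruction since $\mc C$ is simply connected as a tree-like union.

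When $\mc C$ is exceptional---i.e.\ every component meets $\Sigma$ in at most two points---condition (TC) forces $\mc C$ to be adjacent, at one of its boundary singularities, to a component $D'$ sitting either in a dicritical part of $\mc E$ or in a non-exceptional cut-component already treated in the previous step; in either case the conformality at that shared singular point can be pushed along $\mc C$ through its chain of bi-valent components by the same Mattei--Moussu argument. Assembling all these local modifications and leaving $\widetilde\phi$ unchanged at the nodal corners (where $\sim_{\mc E}$ by definition permits non-holomorphic behavior) yields the desired homeomorphism witnessing $\F\sim_{\mc E}\G$. The hard part is the corner-by-corner propagation: one must verify both that conformality of the holonomy conjugation transfers to conformality of the transversal conjugation at each corner, and that the successive local holomorphic replacements glue continuously with the already modified $\widetilde\phi$ on overlaps, without introducing any monodromy obstruction along the components of $\mc C$.
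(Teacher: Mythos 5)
Your outline of the non-exceptional case follows the paper's Step 2 in spirit (transverse rigidity via (TR) plus propagation through corners), but the treatment of exceptional cut-components contains a genuine gap, and it is precisely there that the real content of condition (TC) lives. You propose to ``push conformality'' into an exceptional chain $\mc C$ from an adjacent dicritical component or from an already-treated neighbour. This cannot work: the boundary points of an exceptional cut-component are separatrix attaching points, nodal corners, or attaching points of dicritical components, and at none of these is any transverse conformality available to push. A nodal corner decouples the transverse structures on its two sides (this is exactly why cut-components are cut there, and why $\sim_{\mc E}$ does not demand holomorphy at nodal corners), and along a dicritical component the foliation is a fibration with trivial holonomy, so the conjugation has no reason to be transversely conformal at the attaching point. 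The paper's actual strategy on exceptional chains is entirely different: it first proves the equality of the Camacho--Sad indices by a combinatorial argument (Lemma~\ref{equalitiesCS}, case c)), using the index formula and the topological invariance of the self-intersection numbers to express all indices as continued fractions in the $(D_i,D_i)$; condition (TC) then forces the chain to be either a configuration admitting a holomorphic first integral (all singularities linearizable with matching rational indices) or one ending at a nodal point with irrational index (handled by Rosas' result). One then builds a \emph{new} local holomorphic conjugation from these index equalities and glues it in, discarding the original homeomorphism on $\mc C$ altogether. Your proposal never establishes the Camacho--Sad equalities on exceptional components, and without them there is no way to produce the required holomorphic local models.

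A secondary but real gap is your lifting step. A germ of homeomorphism of $(\C^{2},0)$ preserving the separatrix sets does \emph{not} automatically lift to a homeomorphism of the minimal resolutions extending continuously over the exceptional divisors: $E_{\G}^{-1}\circ\phi\circ E_{\F}$ need not extend to $\mc E_{\F}$ for an arbitrary topological $\phi$. What is true, and what the paper uses, is that $\phi$ induces the combinatorial correspondences between components and singular points and preserves intersection numbers. The existence of a (generally different) homeomorphism whose lift does extend is part of the \emph{conclusion} of Theorem~\ref{newliftingconjugacies}, obtained there by reconstructing the conjugation from the monodromy/holonomy data via the realization machinery of \cite{MM3}, not by correcting a pre-existing lift.
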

\noindent In other words 
\[
\left[ \Fe_{\msl{tr}}(\mc E^\diamond)\right]_{\sim_{\mc E}}=
\left[ \Fe_{\msl{tr}}(\mc E^\diamond)\right]_{\mc C^0} \;\subset\;\left[ \Fe(\mc E^\diamond)\right]_{\mc C^0}\,,
\]
 $\left[ X \right]_{\sim_{\mc E}}$ and   $\left[ X\right]_{\sim_{\mc C^0}}$ denoting the quotient of a subset   $ X\subset  \Fe(\mc E^\diamond) $ by the relations  ${\sim_{\mc E}}$ and ${\sim_{\mc C^0}}$ respectively.

\begin{obs}
This result implies that under the hypothesis (TR) and (TC) the collection of Camacho-Sad indices at the singular points of $\F^\sharp$ is a topological invariant of the germ of $\F$ at $0$. The topological classification of logarithmic foliations obtained by E. Paul shows  \cite[Th\'eor\`eme~3.5]{Paul} that Condition (TR) is necessary for this. On the other hand when Condition~(TC) is not satisfied,  it is easy to construct topologically conjugated foliations with same separatrices but different Camacho-Sad indices.
\end{obs}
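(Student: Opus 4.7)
The remark makes three independent assertions, which I would handle separately.

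For the first assertion (topological invariance of the Camacho-Sad indices under (TR) and (TC)), the starting point is Theorem~A. Given $\F,\G\in\Fe_{\msl{tr}}(\mc E^\diamond)$ with $\F\sim_{\mc C^0}\G$, it provides a germ of homeomorphism $\Phi:(M_\F,\mc E_\F)\to(M_\G,\mc E_\G)$ conjugating $\F^\sharp$ and $\G^\sharp$, holomorphic at every point of $\Sigma_\F\setminus\mc N_\F$. At a non-nodal corner $s$ on a component $f(D)$, the germ of $\Phi$ at $s$ is thus a biholomorphism sending the pair $(\F^\sharp,f(D))$ to $(\G^\sharp,\Phi(f(D)))$; since the Camacho-Sad index is an analytic invariant of such a pair (foliation, smooth invariant branch), it is preserved. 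For the remaining, nodal, corners $s\in\mc N_\F$, the two CS indices along the two intersecting components are mutually reciprocal, so they are tied together by the Camacho-Sad self-intersection identity
\[
\sum_{s\in D\cap\Sigma_\F}\mr{CS}(\F^\sharp,D,s)=\imath_\F(D,D)
\]
applied to each $D\in\mr{Comp}(\mc E)$. This yields a linear system on the nodal CS indices whose right-hand side depends only on the marking and on the already-known non-nodal CS indices; one checks that under (TC) this system uniquely determines the nodal CS indices, completing the proof that all CS indices coincide.

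For the necessity of (TR), it suffices to exhibit a pair of topologically conjugate foliations in $\Fe(\mc E^\diamond)\setminus\Fe_{\msl{tr}}(\mc E^\diamond)$ whose CS indices disagree. Logarithmic foliations provide such examples: their projective holonomy is abelian, so (TR) fails, and Paul's classification \cite[Th\'eor\`eme~3.5]{Paul} produces continuous deformations inside a single topological class along which the CS indices vary.

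For the failure of (TC), the excluded configuration is that of Figure~1: a chain $D_1-\cdots-D_n$ of components joining two dicritical extremities $D_1,D_n$, every interior component carrying exactly two singular corners. I would build counterexamples by fixing the separatrix set and modifying the CS index at an interior corner $s$. Replace the local model at $s$ by a nearby linear one with a slightly different eigenvalue ratio, and use the transverse freedom available along the two dicritical extremities (whose regular leaves cross the chain transversally and fill a full neighborhood of each extremity) to absorb the modification into a fiber-preserving homeomorphism of a tubular neighborhood of the chain. Projecting by $E_\F$ yields a germ of homeomorphism at $0\in\C^2$ conjugating the two foliations, with identical separatrix sets but distinct CS indices at $s$.

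The main obstacle is in the first part and concerns the nodal corners: since Theorem~A does not provide holomorphy at points of $\mc N_\F$, analytic invariance is unavailable there and one must recover the CS indices indirectly through the Camacho-Sad self-intersection relation. Making this recovery work requires that the non-nodal corners supply enough data to solve the resulting linear system uniquely, which is precisely where (TC) enters in a decisive way: without it a cut-component may contain only nodal corners, and the linear system becomes underdetermined. In part~(iii) the secondary difficulty is the global coherence of the constructed homeomorphism across the two dicritical ends while keeping the marking unchanged outside the chain.
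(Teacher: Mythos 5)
Your derivation follows the remark's own stated logic (deduce everything from Theorem~\ref{star}) and is essentially sound, but at the nodal corners it takes a genuinely different route from the paper. The paper does not obtain the Camacho--Sad equalities as a corollary of Theorem~\ref{star}: they are proved \emph{first}, as Lemma~\ref{equalitiesCS} of the Appendix, precisely because they are an ingredient in the proof of Theorem~\ref{newliftingconjugacies} (the extended form of Theorem~\ref{star}). There the nodal corners are handled by citing Rosas's topological invariance of the index at nodal singularities, the non-exceptional cut-components by transverse rigidity (TR), and the exceptional chains by writing the index as a continued fraction in the self-intersection numbers $\imath_\F(D,D)$, which are topological invariants. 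Your alternative for the nodal corners --- holomorphy of $\Phi$ on $\Sigma_\F\setminus\mc N_\F$, the index theorem $\sum_{s\in D\cap\Sigma_\F}\mr{CS}(\F^\sharp,D,s)=\imath_\F(D,D)$ on each invariant component, and reciprocity of the two indices at a reduced corner --- does close up: the edges of the dual tree carrying nodal corners form a forest, every finite forest with an edge has a leaf, a leaf component carries exactly one undetermined index, and one removes that edge and inducts. This is more elementary than citing Rosas, but it is only available \emph{a posteriori}, once Theorem~\ref{star} is in hand; it could not replace the paper's Lemma, which must be established without it. Your treatment of the second and third assertions is at the same (purely indicative) level of detail as the paper.

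The one point to correct is your account of where (TC) enters the nodal argument. The solvability of your system has nothing to do with (TC): it follows from the tree structure of the dual graph alone, and the scenario you invoke (``a cut-component containing only nodal corners'') does not obstruct the induction, since a component all of whose corners are nodal is still eventually reached once the indices on all but one of its nodal edges have been propagated from the leaves of the forest. Condition (TC) is needed only to invoke Theorem~\ref{star} in the first place (and, in the paper's independent proof of Lemma~\ref{equalitiesCS}, to constrain the possible shapes of exceptional chains); attributing to it the determinacy of the linear system misstates its role.
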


\subsection{Topological moduli space of a marked foliation}\label{SubsectTopModSpace}
In order to describe $\left[ \Fe_{\msl{tr}}(\mc E^\diamond)\right]_{\sim{\mc E}}$ let us consider the set 
$\MFe_{\msl{tr}}(\mc E^{\diamond})$ 
of \emph{marked by $\mc E^\diamond$ foliations} $\F^\diamond=(\F,f)$  with $\F\in \Fe_{\msl{tr}}(\mc E^\diamond)$. We proceed in the following way:

\begin{enumerate}[A)]
\item We adapt the equivalence relation $\sim_{\mc E}$ in $\Fe_{\msl{tr}}(\mc E^\diamond)$ to $\MFe_{\msl{tr}}(\mc E^\diamond)$ by means of
\begin{itemize}
\item
\it $(\F,f)\sim_{\diamond}(\mc G,g)$ if there is a germ of homeomorphism 
$\Phi:(M_{\F},\mc E_{\F})\to( M_{\mc G},\mc E_{\mc G})$ that conjugates  $\F^\sharp$ and $\mc G^\sharp$, is holomorphic at each point of $\Sigma_\F\setminus \mc N_\F$ and its restriction to $\mc E_{\F}$ is isotopic to $g\circ f^{-1}$ by an isotopy fixing $\Sigma_\F$.\\
\end{itemize}
\item We define the \emph{Topological Teichmuller Space} as the quotient set 
 \[  \TFol (\mc E^{\diamond}) := \MFe_{\msl{tr}}(\mc E^{\diamond})/\!\! \sim_\diamond\] 
 so that the  \emph{Forgetful map} 
\[\TFol(\mc E^\diamond)
\longrightarrow 
\left[ \Fe_{\msl{tr}}(\mc E^\diamond)\right]_{\mc C^0}\,,
\quad 
[\F,f]_{\sim_\diamond}\mapsto [\F]_{\mc C^0}
\]
is well defined.
\item\label{cc} We note that the fibres of this map are exactly  the orbits of the action 
\[\dot\varphi\star\left[\F,f\right]:= \left[\F,f\circ\varphi^{-1}\right]\,,
\quad
\dot\varphi\in \mr{Mcg}(\mc E^\diamond)\,,
\quad
(\F,f)\in \MFe_{\msl{tr}}(\mc E^{\diamond})\,
,\]
on $\TFol (\mc E^{\diamond}) $ of the 
\emph{Mapping Class Group} $\mr{Mcg}(\mc E^\diamond)$ of $\mc E^\diamond$, that is the product of braid groups defined as the group of isotopy classes of $\mc C^0$-auto\-mor\-phisms of $\mc E$ fixing 
$\Sigma$ and leaving the intersection form $\imath$ invariant. Thus 
\[ \left[ \Fe_{\msl{tr}}(\mc E^\diamond)\right]_{\mc C^0}\simeq 
 \TFe(\mc E^{\diamond})
  /\mr{Mcg}(\mc E^\diamond)\,.\]
\item\label{dd} The description of  $\TFol(\mc E^\diamond)$ is obtained by fixing the Camacho-Sad indices and the holonomy representations (that are topological invariants). In other words we give a description of each nonempty fiber of the map 
\[\wt{\mc H} :
 \TFol (\mc E^{\diamond}) 
\longrightarrow
\C^{I_{\mc E^\diamond}}\times \dot{\mc R}
\,,\quad
\F^\diamond\mapsto (\mr{CS}^{\F^\diamond}, \dot{\mc H}^{\F^\diamond})\,,
\]
$\dot{\mc R}$ being  the set of  conjugacy classes of group morphisms
from the free product of the groups $\pi_1(D\setminus\Sigma,\cdot)$ for all $D\in\CE$ with values in $\mr{Diff}(\C,0)$. 
\end{enumerate}

\begin{defin}
We call \emph{topological moduli space} of $[\F^\diamond]\in \TFol(\mc E^{\diamond})$ the  fiber 
of $\wt{\mc H}$ above $\wt{\mc H}([\F^\diamond])$ that is the set
\[
\mr{Mod}([\F^\diamond]) := \left\{\left. [{\mc G}^\diamond]\in \TFol(\mc E^{\diamond})
\; \right|\;   
\mr{CS}^{\F^\diamond}=\mr{CS}^{{\mc G}^\diamond}, 
\dot{\mc H}^{\F^\diamond}=\dot{\mc H}^{{\mc G}^\diamond}
\right\}\,.
\] 
\end{defin}
We consider in Section \ref{secGroupGraph} a new algebraic notion, which we call  \emph{group-graph}, that will be the key tool in the whole paper.  It allows us,  by combining Theorems  \ref{p1} and \ref{isoH1symH1aut}, to obtain a bijection between this moduli space $\mr{Mod}([\F^\diamond])$ and the cohomology of a suitable group-graph defined in Section~\ref{Symmetry-tree-group}, namely the \emph{symmetry group-graph }$\mr{Sym}^{\F^\diamond}$.
If we call \emph{$\F^\diamond$-cut-component} of $\mc E$ any counter-image $f^{-1}(\mc C)$ of a cut-component $\mc C$ of $\mc E_\F$ and if we denote by $\A_{\F^\diamond}$ the dual graph of  the disjoint union of all the \emph{$\F^\diamond$-cut-components} of $\mc E$, then one can prove:
\begin{thmm}\label{thmB}
 If $\mc E^\diamond$ satisfies assumption (TC) and $[\F^\diamond]\in  \TFol(\mc E^{\diamond})$ we have a natural bijection 
\[\mr{Mod}([\F^\diamond])\iso H^1(\A_{\F^\diamond}, \mr{Sym}^{\F^\diamond})\,.\]
\end{thmm}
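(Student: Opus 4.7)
The plan is to establish Theorem \ref{thmB} as the combination of the two theorems announced in the paragraph preceding the statement, namely Theorem \ref{p1}, which identifies $\mr{Mod}([\F^\diamond])$ with the cohomology of a natural \emph{automorphism} group-graph, and Theorem \ref{isoH1symH1aut}, which transfers this cohomology to the \emph{symmetry} group-graph $\mr{Sym}^{\F^\diamond}$ whose stalks have a more directly computable meaning. The guiding principle is a local-to-global argument on the dual graph $\A_{\F^\diamond}$: by Theorem \ref{star} (and its appendix refinement Theorem \ref{newliftingconjugacies}), any $\sim_\diamond$-equivalence between two representatives is holomorphic away from the nodal corners, so the only freedom to be measured is concentrated along the edges of $\A_{\F^\diamond}$, while the vertices carry holomorphic models that are rigid up to their own internal automorphism groups.

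For Theorem \ref{p1}, given $(\mc G,g)$ representing a class in $\mr{Mod}([\F^\diamond])$, I would construct on each $\F^\diamond$-cut-component $\mc C$ a germ of conjugacy $\Phi_{\mc C}$ between $\F^\sharp$ and $\mc G^\sharp$ that is holomorphic outside the nodal corners. This uses the coincidence of Camacho-Sad indices at all interior singularities and the pointwise equality of the conjugacy classes $\dot{\mc H}^{\F^\diamond}_D=\dot{\mc H}^{\mc G^\diamond}_D$, together with the lifting mechanism of \cite{MM3} as extended in the appendix. The $\Phi_{\mc C}$ are unique up to the internal automorphisms of $\F^\sharp$ along $\mc C$, which are exactly the vertex stalk of the relevant automorphism group-graph. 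Comparing $\Phi_{\mc C}$ with $\Phi_{\mc C'}$ at each nodal corner $s\in\mc C\cap\mc C'$ produces a transition element $\eta_s$; the family $(\eta_s)$ is the sought-after $1$-cocycle and, by a routine verification using the formalism of Section \ref{secGroupGraph}, its cohomology class depends only on the $\sim_\diamond$-class of $(\mc G,g)$. The construction is inverted by regluing the local holomorphic models of $\F^\sharp$ along the prescribed transitions $\eta_s$, producing a foliation whose topological type reproduces the given cocycle.

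For Theorem \ref{isoH1symH1aut}, the gap between the automorphism group-graph and the symmetry one lies in the inner action on local holonomy representations, invisible at the level of the classes $\dot{\mc H}^{\F^\diamond}_D$ but possibly contributing to cocycles. The plan would be to construct an explicit morphism of group-graphs from one to the other and to show that it induces a bijection on $H^1$ by a direct exact-sequence argument adapted to the non-abelian group-graph formalism. Combining the two theorems then yields the bijection asserted in Theorem \ref{thmB}.

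The main obstacle, in my view, is not any individual step but the correct definition of the vertex and edge stalks of $\mr{Sym}^{\F^\diamond}$ and of the associated cocycle condition, in a way which simultaneously accommodates (i) the holomorphic rigidity inside each cut-component, (ii) the merely continuous identifications allowed at the nodal corners, and (iii) the marking up to isotopy fixing $\Sigma$. Condition (TC) enters precisely here: it rules out the degenerate chain configuration depicted in the figure, in which a whole subdivisor could slide without being detected by the rest of the data, and it is what guarantees, via the remark following Theorem \ref{star}, that the Camacho-Sad indices really are topological invariants, so that the map $\wt{\mc H}$ is well defined on $\TFol(\mc E^\diamond)$ and the fibres $\mr{Mod}([\F^\diamond])$ make sense as the object we wish to compute.
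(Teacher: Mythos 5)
Your top-level strategy coincides with the paper's: Theorem \ref{thmB} is indeed just the composition of the bijections of Theorems \ref{p1} and \ref{isoH1symH1aut}. However, your sketch of Theorem \ref{p1} rests on a misreading of the graph $\A_{\F^\diamond}$ that would, if carried out, prove the wrong statement. You take the vertices to be the cut-components and the edges to be the nodal corners, and you propose to build, for each cut-component $\mc C$, a single conjugacy $\Phi_{\mc C}$ between $\F^\sharp$ and $\G^\sharp$ holomorphic at all non-nodal singularities of $\mc C$, the cocycle being the mismatch of the $\Phi_{\mc C}$'s at the nodal corners. This inverts the actual structure of the cut-graph (Definition \ref{CutGraphDef}): its vertices are the individual invariant components $D$, its edges are the \emph{non-nodal} singular points interior to each cut-component, and the nodal corners are precisely the edges that have been deleted, so that $H^1(\A_{\F^\diamond},\cdot)$ is a product over the cut-components of cohomologies of trees. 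More seriously, the conjugacy $\Phi_{\mc C}$ you posit does not exist in general: equality of the Camacho--Sad indices and of the classes $\dot{\mc H}^{\F^\diamond}_D$ only yields, via the path-lifting mechanism, a conjugacy $\psi_D$ along each \emph{single} component $f(D)$, unique up to $\mr{Aut}^{\F^\diamond}_D$, and the obstruction to gluing the $\psi_D$ across the interior singular points of $\mc C$ is exactly the class of the cocycle $(\varphi_{D,s})=(\psi_D^{-1}\circ\psi_{D'})$ that Theorem \ref{p1} computes. Since the gluings at nodal corners and along dicritical components can always be performed (linearizing coordinates at the nodes, the $\mc C^{1}$ compression trick at the dicritical attaching points), the existence of all your $\Phi_{\mc C}$ would force $[\G^\diamond]=[\F^\diamond]$ and hence the triviality of $\mr{Mod}([\F^\diamond])$ --- contradicted by the examples of Section \ref{SectionExamples}, where the moduli space can even be infinite-dimensional.

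Two secondary remarks. First, condition (TC) is not what makes $\wt{\mc H}$ well defined on $\TFol(\mc E^\diamond)$: that follows from the requirement, built into $\sim_\diamond$, that conjugacies be holomorphic at the non-nodal singular points; (TC) is needed to relate $\TFol(\mc E^\diamond)$ to the genuinely topological classification via Theorem \ref{star}, which is not used in the proof of Theorem \ref{thmB} itself. Second, your description of Theorem \ref{isoH1symH1aut} points in the right direction (a group-graph morphism $\mr{Aut}^{\F^\diamond}\to\mr{Sym}^{\F^\diamond}$ inducing a bijection on $H^1$), but the delicate step is the injectivity, which in the paper relies on extending germs of $\mr{Fix}_s^{\F^\diamond}$ to automorphisms along the whole component with small support (Proposition \ref{propext}) rather than on an exact-sequence argument.
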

Without  any other assumption the computation of this cohomological space is difficult and the usefulness of this result is essentially theoretical. However it will  allow us in Section \ref{SectionExamples} to  construct examples for which   $\mr{Mod}([\F^\diamond])$ is an infinite dimensional functional space. 
 To get finiteness we shall  need to restrict  to some Krull open dense subsets  of $\Fe_{\msl{tr}}(\ED)$ by requiring conditions on $\mc H^{\F^\diamond}$  depending only on a   finite jet of a 1-differential form defining $\F$.

\subsection{The generic case: non-degenerate foliations}

Let us call \emph{singular chain}\footnote{Notice that a singular chain may not correspond to a chain of the dual graph of $\mc E$, in the usual sense. Indeed the interior vertices $D_i$, $0<i<\ell$,  may  meet dicritical components and the number of their adjacent edges  can be greater than two, and also  $D_0$ or $D_{\ell}$ may have only two adjacent edges. Conversely a chain of the dual graph of $\mc E$ may not be a singular chain because there may exist points of $\Sigma$ outside the singular locus of $\mc E$.}
of the dual graph   $\mc E_{\F}$ 
any sequence $D_{0},\ldots,D_{\ell}$, $\ell\ge 1$, of invariant irreducible components of $\mc E_{\F}$ 
such that: 
\begin{enumerate}[a)]
\item $D_{0}$ and $D_{\ell}$ contain at least $3$ singular points of $\F^\sharp$,
\item $D_{i}\cap \Sigma_{\F}=\{s_{i},s_{i+1}\}$ with $s_{i}=D_{i-1}\cap D_{i}$, if $1\le i\le \ell-1$.
\end{enumerate}
At all the points $s_i$, $1\leq i\leq \ell-1$, $\F^\sharp$ has the same property of normalization  and we  will 
say that the chain is  linearizable, resonant normalizable or non-normalizable,  non-resonant, if $\F^\sharp$ fulfills this property at these points~$s_i$.
\begin{defin}\label{NDfoliationsDef}
A germ of a  foliation $\F$ is called  \emph{non-degenerate} if it satisfies the following properties:
\begin{enumerate}[(i)] 
\item\label{conditiontrgener} $\F$ fulfills  condition (TR);
\item\label{holonomienonabel} the holonomy group $\mr{Im}(\mc H_D^{\F^\sharp})$ of any invariant irreducible component $D$ of $\mc E_{\F}$ with ${\rm{card}}(D\cap \Sigma_\F)\geq 3$,  is non-abelian;
\item\label{nonpreiodsingchaines} for any singular chain $D_{0},\ldots,D_{\ell}$ in $\mc E_\F$, the local holonomies of $\F^\sharp$ at the singular points $s_{i}=D_{i-1}\cap D_{i}$, $i=1,\ldots,\ell$, are non-periodic. 
\end{enumerate}
The  subset of  $\Fe_\msl{tr}(\mc E^\diamond)$ of all  non-degenerate  foliations 
will be denoted by $\Fe_{\msl{nd}}(\ED)$. 
\end{defin}
The Krull-open-density of $\Fe_{\msl{nd}}(\ED)$ in $\Fe_{\msl{tr}}(\ED)$ is given by Theorem \ref{genericiteNonDeg}.

\begin{thmm}\label{B0}
Suppose that $\mc E^\diamond$ satisfies condition (TC) and let $\F^{\diamond}=(\F,f) \in\ME$ be a   marked foliation with $\F$ non-degenerate. Then we have an identification:
\[
\mr{Mod}([\F^\diamond])\simeq\left.\left(F\oplus B \oplus_{j=1}^{\lambda}(\C^{*}/\alpha_{j}^{\Z})\oplus(\C^{*})^{\nu}
\right)\right/ Z\,,
\]
where $\alpha_{j}\in\C^{*}$,  $F$ is a finite abelian group, $Z$ is a finite subgroup, $B$ is a direct sum of $\beta$ 
 totally disconnected subgroups of $\mb U(1)$ and 
$\lambda$, $\nu$ and $\beta$ are respectively the number of linearizable, resonant normalizable 
and non-resonant non-linearizable
singular chains contained in cut-components of $\mc E_\F$, the factor $F$ corresponding to resonant non-normalizable  chains. In particular,  $\lambda+\nu$ is equal to the codimension $\tau_\F$ of $\F$ defined in \ref{defindetau}.
\end{thmm}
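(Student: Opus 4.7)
The plan is to combine Theorem~\ref{thmB} with a case-by-case analysis of the symmetry group-graph $\mr{Sym}^{\F^\diamond}$ made possible by the non-degeneracy hypotheses. First, Theorem~\ref{thmB} identifies $\mr{Mod}([\F^\diamond])$ with $H^1(\A_{\F^\diamond},\mr{Sym}^{\F^\diamond})$; since $\A_{\F^\diamond}$ is by construction the dual graph of the disjoint union of the $\F^\diamond$-cut-components, this cohomology factors as a product indexed by these components, reducing the computation to each cut-component separately.

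For a non-exceptional cut-component I would distinguish its \emph{core}, i.e.\ the subgraph of vertices $D$ with $\mr{card}(D\cap \Sigma)\ge 3$, from the singular chains joining pairs of core vertices. Condition (\ref{holonomienonabel}) together with~(TR) ensures that the image of $\mc H_D^{\F^\sharp}$ at every core vertex is non-abelian and topologically rigid; hence its centralizer in $\Diff(\C,0)$ reduces to a finite cyclic group of common linear symmetries. Consequently the vertex and edge groups of $\mr{Sym}^{\F^\diamond}$ above the core subgraph are finite, and the restriction of $H^1$ to this piece produces only a finite abelian contribution, which will be absorbed into the factor $F$ (and, after taking the global quotient, into the finite subgroup $Z$).

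The central step is then the contribution to $H^1$ of each singular chain $D_0,\dots,D_\ell$. Non-periodicity of the interior local holonomies, granted by (\ref{nonpreiodsingchaines}), implies that at each interior node $s_i$ the edge group of $\mr{Sym}^{\F^\diamond}$ coincides with the centralizer in $\Diff(\C,0)$ of the local holonomy germ of $\F^\sharp$ at $s_i$. The four mutually exclusive cases listed in the definition before the statement then yield the announced factors: for a linearizable chain the centralizer at each interior node is the full $\C^{*}$ of homotheties commuting with the diagonal normal form, and the chain-level $H^1$ reduces to a single quotient $\C^{*}/\alpha_j^{\Z}$, where $\alpha_j$ is produced by the holonomy of an endpoint around a boundary singularity; a resonant normalizable chain similarly yields a free $\C^{*}$-factor, the resonance preventing any further quotienting; a non-resonant non-linearizable chain produces, via the classical formal-centralizer theorem, a factor inside a totally disconnected subgroup of $\mb U(1)$, contributing to $B$; and a resonant non-normalizable chain gives only a finite cyclic contribution collected in $F$. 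Summing over chains and combining with the finite core contributions yields the displayed direct sum before quotienting.

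The main obstacle will be the precise global identification of the subgroup $Z$ and the verification that the $1$-cocycles along singular chains transform as claimed under the $0$-cochains supported on the core. Finiteness of $Z$ is expected because only finitely many discrete symmetries of the symmetry group-graph act non-trivially on the continuous chain moduli while preserving the Camacho-Sad indices and the conjugacy classes of the holonomy representations. The equality $\tau_\F=\lambda+\nu$ will follow from the definition of codimension in~\ref{defindetau}, which counts precisely the chains admitting continuous holomorphic deformations, i.e.\ the linearizable and resonant normalizable ones.
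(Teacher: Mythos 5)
Your strategy is essentially the paper's: reduce via Theorem~\ref{thmB} to $H^1(\A_{\F^\diamond},\mr{Sym}^{\F^\diamond})$, use the non-abelianness of the holonomy at every vertex of singular valency $\ge 3$ to force its centralizer (hence the corresponding vertex group of $\mr{Sym}^{\F^\diamond}$) to be finite (this is Lemma~\ref{centralinfini}; topological rigidity plays no role here), and read off the contribution of each singular chain from the classification of centralizers in Proposition~\ref{cent}. The subgroup $Z$ is then exactly the image of the $0$-cochains supported at those branching vertices, so its finiteness is already contained in your second paragraph and does not need the separate heuristic of your last one.

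Two points need attention. First, a cut-component is in general \emph{not} the union of its ``core'' and its singular chains: it also contains partial dead branches (maximal tails of components of singular valency $\le 2$ ending at an extremity of the tree), over which the edge groups of $\mr{Sym}^{\F^\diamond}$ can be very large (isomorphic to $\mr{Diff}(\C,0)$ when the local holonomy is periodic, by case $(P)$ of Proposition~\ref{cent}). Your decomposition silently discards these, yet a priori they contribute to $Z^1$. The paper disposes of them by showing (Proposition~\ref{NDimplyTF}) that the union $\br{\msl{R}}_{\F^\diamond}$ of all singular chains is repulsive, because the restriction maps $\mr{Sym}^{\F^\diamond}_D\to\mr{Sym}^{\F^\diamond}_s$ are bijective whenever $\mr{val}_\Sigma(D)\le 2$ by the very definition of $\mr{Sym}^{\F^\diamond}_D$, and then applying the Pruning Theorem~\ref{elagage}; some such step must appear in your argument. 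Second, the edge group at a singular point is not the centralizer $C(h)$ of the local holonomy but the quotient $C(h)/\langle h\rangle$ (Corollary~\ref{isosym}); this quotient is precisely the source of the factors $\C^*/\alpha_j^{\Z}$ in the linearizable case (and of a $\C^*$ times a finite group, absorbed into $F$, in the resonant normalizable case), so it should be stated from the outset rather than attributed loosely to ``the holonomy of an endpoint''. With these two repairs, your chain-by-chain computation and the resulting identification agree with the paper's proof.
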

\noindent The naturality of this identification will be explained by Assertion~(\ref{famcomplthm}) in Theorem \ref{thm} below.

\subsection{Foliations of finite topological type}
We have seen that  $\mr{Mod}([\F^\diamond])$  is  endowed with a very specific structure of topological group of finite dimension if $\F\in \Fe_{\msl{nd}}(\ED)$. However this finiteness  property continues to be valid for a larger class of foliations that we shall call \emph{finite type foliations}. The set  $\Fe_{\mr{ft}}(\ED)\supset \Fe_{\msl{nd}}(\ED)$ of these foliations is defined in Section~\ref{sectionfoliationsTF} and it is optimal for finiteness as Example 3 in  Section~\ref{SectionExamples} shows. Furthermore we  obtain \emph{complete families} of marked foliations parametrized by  finite dimensional spaces, completeness meaning that the family contains all the topological types
of marked foliations by  $\mc E^\diamond$ with  prescribed Camacho-Sad and $\dot{\mc H}$ invariants.

\begin{thmm}\label{thm} 
Suppose that $\mc E^\diamond$ satisfies condition (TC) and let $\F^{\diamond}=(\F,f) \in\ME$ be a 
marked foliation with $\F$ of finite type.
Then $\mr{Mod}([\F^{\diamond}])$ admits an abelian group structure
with identity element $[\F^{\diamond}]$ 
such that:
\begin{enumerate}[(a)]
\item\label{strgroupetf} there is an exact sequence
\begin{equation}\label{exseqmainThm}
\Z^{p}\to \C^{\tau_{\F}}\stackrel{\Lambda}{\to} \mr{Mod}([\F^{\diamond}])\stackrel{\Gamma}{\to} {\bf D}\to 0
\end{equation}
where $\bf D$ is a totally disconnected topological abelian  group and $\tau_\F$ is the codimension of $\F$ given by Definition~\ref{defindetau};
\item\label{famcomplthm}  given a section  $i\mapsto [\F_{i},f_{i}]\in \Gamma^{-1}(i)$ of $\Gamma$, 
there is a family parametrized by $i\in \bf D$ of SL-equisingular deformations  
$(\F^{U}_{i,t})_{t\in \C^{\tau_{\F}}}$ of $\F_i$ such that for all $t\in\C^{\tau_{\F}}$ we have 
$[\F^{U}_{i,t},f_{i,t}^U]=\Lambda(t)\cdot [\F_{i},f_{i}]$,  $f_{i,t}^U$ being the marking induced by $f_{i}$ and the dot $\cdot$ denoting the operation in the group $\mr{Mod}([\F^{\diamond}])$.
\end{enumerate}
\end{thmm}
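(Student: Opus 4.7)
The plan is to apply Theorem \ref{thmB} to identify $\mr{Mod}([\F^\diamond])$ with the cohomology $H^1(\A_{\F^\diamond}, \mr{Sym}^{\F^\diamond})$ of the symmetry group-graph, and transport via this bijection the natural abelian group structure present on non-abelian cohomology with values in an abelian-group-graph. The strategy is to exploit the finite type hypothesis in order to split $\mr{Sym}^{\F^\diamond}$, edge by edge, into a connected Lie part and a totally disconnected part, then assemble the corresponding decomposition at the cohomological level. First I would verify that under the finite type condition, the stalks of $\mr{Sym}^{\F^\diamond}$ along each edge of $\A_{\F^\diamond}$ are commutative groups of the form already encountered in Theorem \ref{B0}: at a linearizable or resonant normalizable singular chain the stalk is (an extension of) the multiplicative group of the formal centralizer, yielding an additive $\C$-contribution; at a non-resonant non-linearizable chain one obtains a totally disconnected subgroup of $\mb{U}(1)$; at a resonant non-normalizable chain one picks up only a finite invariant.

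The second step is to compute $H^1(\A_{\F^\diamond},\mr{Sym}^{\F^\diamond})$ via the Mayer--Vietoris-type spectral sequence attached to a group-graph (from Section~\ref{secGroupGraph}). I would introduce the \emph{logarithmic lift} of $\mr{Sym}^{\F^\diamond}$: the connected Lie components, all isomorphic to quotients $\C^*/\alpha_j^\Z$ or $\C^*$, admit a universal cover by $\C$, producing a short exact sequence of group-graphs
\[
0\to \Lambda_\bullet \to \wt{\mr{Sym}}^{\F^\diamond}\to \mr{Sym}^{\F^\diamond}\to \mathbf{D}_\bullet \to 0
\]
whose associated long exact sequence in cohomology, combined with the vanishing $H^0(\mathbf{D}_\bullet)/\text{image}\to H^1(\Lambda_\bullet)$ calculations, gives the four-term exact sequence \eqref{exseqmainThm}. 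Concretely $\C^{\tau_\F}$ is $H^1(\A_{\F^\diamond},\wt{\mr{Sym}}^{\F^\diamond,0})$ where the superscript denotes the identity components of the Lie stalks, the integer $\tau_\F=\lambda+\nu$ being the number of contributing singular chains; $\Z^p$ is the lattice of relations coming from the quotients by $\alpha_j^{\Z}$; and $\mathbf{D}$ collects the totally disconnected and finite contributions $B$, $F/Z$.

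For part \eqref{famcomplthm} I would construct, for each choice of section $i\mapsto[\F_i,f_i]$, an analytic family $(\F^U_{i,t})_{t\in\C^{\tau_\F}}$ of SL-equisingular deformations of $\F_i$ realising the image of $\Lambda$. The key idea is to parametrize each $\C$-direction by a one-parameter unfolding of $\F_i$ localized in a singular chain: for a linearizable or resonant normalizable chain, one can prescribe multiplicatively the local conjugating diffeomorphism around the chain by a factor $e^t$ and glue using Theorem \ref{p1}, which converts the cocycle datum back into a foliation via the marked conjugation procedure. Additivity of $t$ then matches the operation in $\mr{Mod}([\F^\diamond])$ because the gluing cocycle is exponential in $t$, so $[\F^U_{i,t},f^U_{i,t}]=\Lambda(t)\cdot[\F_i,f_i]$ follows by functoriality of the identification with $H^1$.

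The main obstacle, in my view, is verifying convergence and transversality of the SL-equisingular deformations globally along the entire exceptional divisor: the local prescriptions at different chains must be compatible with the holonomy constraint imposed by $\dot{\mc H}^{\F^\diamond}$ (which has to remain fixed) and glue into a genuine holomorphic family on a neighborhood of $\mc E_\F$. This is precisely where the finite type hypothesis enters: it guarantees that the equations characterizing the continuous moduli reduce to a finite dimensional linear system, so that the cocycles of $\wt{\mr{Sym}}^{\F^\diamond,0}$ are integrable and the section-construction is unobstructed. Once this is in hand, exactness of \eqref{exseqmainThm} and the realisation statement are formal consequences.
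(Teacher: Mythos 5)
Your outline reproduces the broad shape of the argument (identify $\mr{Mod}([\F^\diamond])$ with a first cohomology set, split off a connected part mapped onto by $\C^{\tau_\F}$ and a totally disconnected quotient, realize $\Lambda$ by twisting the gluing cocycle with $\exp t_jX_j$ and re-plumbing via Theorem~\ref{p1}), and the construction you sketch for part~(\ref{famcomplthm}) is essentially the right one. But there is a genuine gap at the very first step. You claim that under the finite type hypothesis the stalks of $\mr{Sym}^{\F^\diamond}$ along \emph{each} edge of $\A_{\F^\diamond}$ are commutative. That is false: by Theorem~\ref{strutgaaut} a vertex or edge $a$ has abelian $\mr{Sym}^{\F^\diamond}_a$ \emph{exactly when $a$ is red}, and at a green edge $s$ (periodic local holonomy) one has $\mr{Sym}^{\F^\diamond}_s\simeq C(h_{D,s})/\langle h_{D,s}\rangle\simeq\mr{Diff}(\C,0)$, as in Example~3 of Section~\ref{SectionExamples}. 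The finite type condition does not abelianize these stalks; what it does is make the green part \emph{repulsive}, so that the Pruning Theorem~\ref{elagage} (via Proposition~\ref{equivrepuls}) gives the bijection $H^1(\A_{\F^\diamond},\mr{Sym}^{\F^\diamond})\simeq H^1(\AR_{\F^\diamond},\mr{Sym}^{\F^\diamond})$ of Theorem~\ref{isoassymarsym}, and it is only the restriction of $\mr{Sym}^{\F^\diamond}$ to the red graph $\AR_{\F^\diamond}$ that is an abelian group-graph. Without this reduction there is no abelian group structure to transport, so your construction does not get off the ground as stated.

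A second, related gap concerns the size of the connected part. You identify $\tau_\F=\lambda+\nu$ with ``the number of contributing singular chains''; that identification is the content of Theorem~\ref{B0} and holds only for \emph{non-degenerate} foliations. For a general finite type foliation $\tau_\F=\mr{rank}\,H_1(\AR_{\F^\diamond}/\AR^0_{\F^\diamond},\Z)$ (Definition~\ref{defindetau}), and proving that $H^1(\AR_{\F^\diamond},\mr{Exp}^{\F^\diamond})$ is a quotient of $\C^{\tau_\F}$ by a finitely generated subgroup is not a formal consequence of a long exact sequence for a ``logarithmic lift'': one must decompose $\AR^1_{\F^\diamond}$ into zones, modify them by blowing up boundary edges, and run Mayer--Vietoris together with the snake lemma to exhibit the active vertices (Lemma~\ref{surjsursommet} and the sequence~(\ref{exkerpi})). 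Likewise the finiteness of the group $F$ (i.e.\ of $H^0(\AR^1_{\F^\diamond},\mr{Dis}^{\F^\diamond})$) and the finite generation of $\ker\Lambda$ rest on the classification of centralizers in Proposition~\ref{cent}, Lemmas~\ref{egalcentrs}--\ref{centralinfini} and Proposition~\ref{Dis}, none of which your argument supplies. Finally, the obstacle you flag for part~(\ref{famcomplthm}) --- convergence and transversality of the deformation --- is not where the difficulty lies: the twisted cocycle is built from flows of local basic holomorphic vector fields, and the real work is in checking that the plumbed manifold blows down to $(\C^2,0)$ and that the resulting family is SL-equisingular in the sense of Definition~\ref{def-SL}, which your proposal does not address.
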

\noindent The group $\bf D$ will be specified in the proof  (Step (i) of Section~\ref{proof}): it is a quotient of a product of a finite family of totally discontinuous subgroups of $\mb U(1):=\{z\in \C\;|\; |z|=1\}$,  that can be uncountable.
However let us highlight that $\bf D$ is ``generically finite'' in the following sense: 

\begin{enumerate}[-]
\item \it there is a  subset $Z$  of zero measure in 
the algebraic subset 
$\overline{{\mr{CS}(\MFe_{\msl{tr}}(\mc E^{\diamond}))}}$ of $\C^{I_{\ED}}$
such that if $\mr{CS}([\F^{\diamond}])\not\in Z$, the formally linearizable singularities of $\F^\sharp$ are holomorphically linearizable, and in this case we can prove that $\mbf D$ is finite.
\end{enumerate}
The notion of \emph{SL-equisingular deformation} roughly means equireductibility and constancy of Camacho-Sad indices and holonomy, the markings $f_i$  extending continuously without ambiguity.
All this is made more precise in {Definition~\ref{def-SL}}, Step (\ref{recallSL}) of Section~\ref{proof}.

As a direct consequence of the proof we can see that if $\wt{\mc H}([\F^{\diamond}])=\wt{\mc H}([\G^{\diamond}])$ then
the sets $\mr{Mod}([\F^{\diamond}])$ and $\mr{Mod}([\G^{\diamond}])$ coincide. However their respective abelian group structures are related by the map $\mu\mapsto \gamma \mu$ where $\gamma=[\G^{\diamond}]\in\mr{Mod}([\F^{\diamond}])$.\\

The paper is organized as follows.
Theorem~\ref{thmB} is proven in Sections~\ref{secGroupGraph}, \ref{sec4} and~\ref{Symmetry-tree-group}. 
In Section~\ref{sectionfoliationsTF} we prove the technical theorem~\ref{isoassymarsym} that will be used throughout the following sections. The group structure on the moduli space is given by Theorem~\ref{strutgaaut}. Theorem~\ref{B0} is proven in Section~\ref{ModulusNonDeg}.
Some applications of Theorems~\ref{thmB} and~\ref{B0} are discussed in Section~\ref{SectionExamples}. 
Sections~\ref{sec9} and~\ref{proof} are devoted to the proof of Theorem~\ref{thm}.
Finally, Theorem~\ref{star}  is a direct consequence of Theorem~\ref{newliftingconjugacies} proven in Appendix.

\section{Group-graphs}\label{secGroupGraph}

In this section we will introduce and study the algebraic notion of group-graph which differs in an essential way from the notion of graph of groups introduced by Serre in \cite{Serre} and that will be a key tool of this work.\\

Let $\GA$ be a finite graph with vertex set $\GV$ and edge set $\GE$.
Denote by 
$$I_{\AG}:=\{(v,e)\in \GV\times \GE\,|\, v\in \partial e\}$$ the set of oriented edges of $\GA$.

\begin{defin} 
A \emph{group-graph $G$ over  $\GA$} is the data of groups $G_{v}$ and $G_{e}$ for each vertex $v\in \GV$ and each edge $e\in \GE$, and of group morphisms $\rho_{v}^{e}:G_{v}\to G_{e}$ for each $(v,e)\in I_{\GA}$ which are called \emph{restriction maps}. A \emph{morphim} $\alpha:F\to G$ between group-graphs over the same graph $\GA$ is given by group morphisms $\alpha_{v}:F_{v}\to G_{v}$ and $\alpha_{e}:F_{e}\to G_{e}$ such that the diagram 
$$\begin{array}{rcl}
F_{v} & \stackrel{\alpha_{v}}{\longrightarrow} & G_{v}\\
\rho_{v}^{e}\downarrow
& & \downarrow {\rho_{v}^{e}}\\
F_{e} & \stackrel{\alpha_{e}}{\longrightarrow} & G_{e}
\end{array}$$
commutes for each $(v,e)\in I_{\GA}$. A group-graph $G$ is called \emph{abelian} if all the groups $G_{v}$ and $G_{e}$ are abelian.
\end{defin}


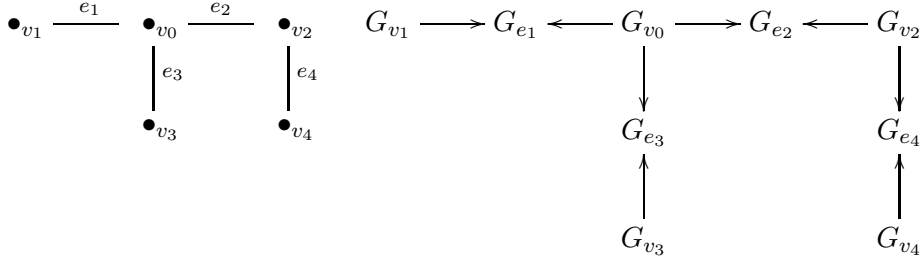
\begin{figure}[ht]
$$\hspace{-3mm}\xymatrix{\bullet_{v_{1}}\ar@{-}[r]^{e_{1}} & {\color{white}v}\bullet_{v_{0}}
\ar@{-}[r]^{e_{2}}\ar@{-}[d]^{e_{3}} &  {\color{white}v}\bullet_{v_{2}}\ar@{-}[d]^{e_{4}}\\ & {\color{white}v}\bullet_{v_{3}}  &  {\color{white}v}\bullet_{v_4}}\quad
\xymatrix{G_{v_{1}}\ar[r]&G_{e_{1}}& G_{v_{0}}\ar[l]\ar[d]\ar[r] & G_{e_{2}}& G_{v_{2}}\ar[d]\ar[l]\\
&& G_{e_{3}}&& G_{e_4}\\ &&G_{v_{3}}\ar[u]&&G_{v_4}\ar[u]}$$
\caption{Dual tree of $(y^2 - x^3)^2 - x^2 y^3=0$ and a group-graph $G$ over it.}
\end{figure}

\begin{obs} One can define in a natural way the notions of image and  kernel of a group-graph morphism $\alpha:F\to G$, which are group-graphs over the same graph.
In the abelian case the cokernel can  also be defined as a group-graph. We also have an obvious notion of restriction of a group-graph over a graph to a subgraph.
\end{obs}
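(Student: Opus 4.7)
The plan is to verify, case by case, that the proposed vertex and edge data do assemble into a group-graph over $\AG$, the single nontrivial ingredient being the commutativity of the defining square $\rho_v^e\circ\alpha_v=\alpha_e\circ\rho_v^e$ for every $(v,e)\in I_\AG$. For the image I would set $(\mr{Im}\,\alpha)_v:=\mr{Im}(\alpha_v)\subset G_v$ and $(\mr{Im}\,\alpha)_e:=\mr{Im}(\alpha_e)\subset G_e$; given $y=\alpha_v(x)$ in $\mr{Im}(\alpha_v)$, commutativity yields $\rho_v^e(y)=\alpha_e(\rho_v^e(x))\in\mr{Im}(\alpha_e)$, so the restriction maps of $G$ restrict to morphisms $(\mr{Im}\,\alpha)_v\to(\mr{Im}\,\alpha)_e$. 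Symmetrically, for the kernel, I would set $(\ker\alpha)_v:=\ker(\alpha_v)\trianglelefteq F_v$ and $(\ker\alpha)_e:=\ker(\alpha_e)\trianglelefteq F_e$; for $x\in\ker(\alpha_v)$, commutativity gives $\alpha_e(\rho_v^e(x))=\rho_v^e(\alpha_v(x))=1$, so $\rho_v^e(x)\in\ker(\alpha_e)$, and restriction maps of $F$ restrict to the kernel.

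For the cokernel in the abelian case, I would set $(\mr{coker}\,\alpha)_v:=G_v/\mr{Im}(\alpha_v)$ and $(\mr{coker}\,\alpha)_e:=G_e/\mr{Im}(\alpha_e)$, which are honest quotient groups because abelianness makes $\mr{Im}(\alpha_v)$ and $\mr{Im}(\alpha_e)$ automatically normal. The image computation above shows $\rho_v^e(\mr{Im}(\alpha_v))\subset\mr{Im}(\alpha_e)$, so each $\rho_v^e$ descends to a well-defined group morphism between the quotients, producing the restriction maps of $\mr{coker}\,\alpha$. Functoriality (a morphism of group-graphs induces morphisms on image, kernel and cokernel) follows from the same diagram chase applied to composites.

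Finally, given a subgraph $\AG'\subset\AG$ with $\GV'\subset\GV$ and $\GE'\subset\GE$ such that $\partial e\subset\GV'$ for every $e\in\GE'$, the restriction $G|_{\AG'}$ is defined by keeping $G_v$ for $v\in\GV'$, $G_e$ for $e\in\GE'$, and $\rho_v^e$ for $(v,e)\in I_{\AG'}\subset I_\AG$; no further compatibility needs to be checked. The only genuine obstacle is the normality issue in the cokernel construction: in the non-abelian setting $\mr{Im}(\alpha_v)$ need not be normal in $G_v$, so the coset space $G_v/\mr{Im}(\alpha_v)$ carries no canonical group structure and the corresponding group-graph simply does not exist, which is exactly why the statement restricts the cokernel construction to abelian group-graphs. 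All remaining verifications are routine diagram chases.
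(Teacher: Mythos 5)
Your verification is correct and is exactly the routine componentwise construction the paper has in mind (the remark is stated without proof precisely because these checks are the standard diagram chases you carry out). The one substantive point — that non-normality of $\mathrm{Im}(\alpha_v)$ obstructs a cokernel group-graph outside the abelian case — is also the right explanation for the restriction in the statement.
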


\begin{defin}
Let $G$ be a group-graph over a graph $\AG$. The \emph{cochain complex}
of $G$ consists of 
$$C^{0}(\GA,G):=\prod\limits_{v\in \GV}G_{v}\qquad \hbox{and} \qquad
C^{1}(\GA,G):=\prod\limits_{(v,e)\in I_{\GA}}G_{v,e}\,, \quad G_{v,e}:=G_e\,,$$
jointly with the right action $C^{0}(\GA,G)\times C^{1}(\GA,G)\to C^{1}(\GA,G)$ given by
$$(g_{v})\,\star_G\,(g_{v,e}):=(\rho_{v}^{e}(g_{v})^{-1}g_{v,e}\rho_{v'}^{e}(g_{v'}))$$
where $\partial e=\{v,v'\}$. 
\end{defin}
The set of \emph{0-cocycles} is the subset $H^{0}(\GA,G)$ of $C^{0}(\GA,G)$ of all elements $(g_{v})$ satisfying the relations
$\rho_{v}^{e}(g_{v})=\rho_{v'}^{e}(g_{v'})$ whenever $\partial e=\{v,v'\}$.
Let us consider the set of \emph{$1$-cocycles}
$$Z^{1}(\GA,G):=\{(g_{v,e})\in C^{1}(\GA,G)\,|\, g_{v,e}g_{v',e}=1 \hbox{  when $\partial e=\{v,v'\}$}
\}
\subset C^{1}(\GA,G)$$  which is invariant by the action of $C^{0}(\GA,G)$ and its quotient, the 1-cohomology set: $$H^{1}(\GA,G):=Z^{1}(\GA,G)/C^{0}(\GA,G).$$
\begin{obs}\label{obsab} The cochains 
$C^{0}(\GA,G)$ and $C^{1}(\GA,G)$ are groups but in general $H^{0}(\GA,G)$ and $H^{1}(\GA,G)$ are merely sets (although $Z^{1}(\GA,G)$ is in bijection with $\prod\limits_{e\in \E}G_{e}$ which is a group). 
However, if the group-graph $G$ is abelian then we can consider the group $C^{2}(\GA,G):=\prod\limits_{e\in\GE}G_e$ 
and the morphisms $\partial^{0}:C^{0}(\GA,G)\to C^{1}(\GA,G)$ and $\partial^{1}:C^{1}(\GA,G)\to C^{2}(\GA,G)$ given by 
\[
\partial^{0}(g_{v}):=(g_{v})\star_G(1)=(\rho_{v}^{e}(g_{v})^{-1}\rho_{v'}^{e}(g_{v'}))\quad\text{and}\quad\partial^{1}(g_{v,e})=(g_{v,e}g_{v',e}) 
\]
whenever $\partial e=\{v,v'\}$.
It turns out that $\partial^{1}\circ\partial^{0}=1$ and we obtain a complex 
$$C^{*}(\GA,G):C^{0}(\GA,G)\stackrel{\partial^{0}}{\to}C^{1}(\GA,G)\stackrel{\partial^{1}}{\to} C^{2}(\GA,G)\to 1$$
whose cohomology is $H^{0}(C^{*}(\GA,G))=H^{0}(\GA,G)=\ker\partial^{0}$, 
$H^{1}(C^{*}(\GA,G))=H^{1}(\GA,G)=Z^{1}(\GA,G)/\partial^{0} C^{0}(\GA,G)$ and $H^{2}(C^{*}(\GA,G))=1$ because $Z^{1}(\GA,G)=\ker\partial^{1}$ and $\mr{coker}\,\partial^{1}=1$. 
\end{obs}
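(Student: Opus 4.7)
The statement to prove is Remark \ref{obsab}, which asserts that in the abelian setting the maps $\partial^0$, $\partial^1$ fit into a complex whose cohomology recovers the previously defined $H^0(\GA,G)$ and $H^1(\GA,G)$, and whose top cohomology is trivial. The whole proof is essentially a bookkeeping exercise on the explicit formulas, but with the caveat that abelianness is used at several points.

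\textbf{Step 1: $\partial^0$ and $\partial^1$ are group morphisms.} I would first expand
\[
\partial^0(g_v h_v)_{(v,e)} = \rho_v^e(g_v h_v)^{-1}\rho_{v'}^e(g_{v'}h_{v'}) = \rho_v^e(h_v)^{-1}\rho_v^e(g_v)^{-1}\rho_{v'}^e(g_{v'})\rho_{v'}^e(h_{v'})
\]
and observe that this equals $\partial^0(g_v)_{(v,e)}\cdot\partial^0(h_v)_{(v,e)}$ \emph{because} $G_e$ is abelian, so the middle factors can be permuted. The same commutativity argument shows $\partial^1$ is a morphism.

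\textbf{Step 2: $\partial^1\circ\partial^0 = 1$.} For an edge $e$ with $\partial e = \{v,v'\}$, the $e$-component of $\partial^1(\partial^0(g_v))$ is
\[
\bigl(\rho_v^e(g_v)^{-1}\rho_{v'}^e(g_{v'})\bigr)\bigl(\rho_{v'}^e(g_{v'})^{-1}\rho_v^e(g_v)\bigr),
\]
which collapses to the identity of $G_e$. Hence $C^\ast(\GA,G)$ is a complex.

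\textbf{Step 3: Identification of cohomology.} The equality $\ker\partial^0 = H^0(\GA,G)$ is immediate: $\partial^0(g_v)=1$ at each $(v,e)$ is precisely the condition $\rho_v^e(g_v)=\rho_{v'}^e(g_{v'})$. Similarly $\ker\partial^1 = Z^1(\GA,G)$ is a direct rewriting of the 1-cocycle relation. For $H^1$, the key observation is that in the abelian case the right action $\star_G$ of $C^0(\GA,G)$ on $C^1(\GA,G)$ coincides with left translation by $\partial^0$: indeed
\[
(g_v)\star_G(g_{v,e}) = \bigl(\rho_v^e(g_v)^{-1} g_{v,e}\rho_{v'}^e(g_{v'})\bigr) = \partial^0(g_v)_{(v,e)}\cdot g_{v,e},
\]
so orbits and cosets coincide and $Z^1(\GA,G)/C^0 = Z^1(\GA,G)/\partial^0 C^0(\GA,G)$. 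Finally $H^2=1$ follows from surjectivity of $\partial^1$: given $(h_e)\in C^2(\GA,G)$, pick for each edge $e$ one of the two incident vertices $v(e)$ and set $g_{v(e),e}=h_e$, $g_{v'(e),e}=1$; then $\partial^1$ of this chain is $(h_e)$.

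The only real subtlety, and the place where I expect to pause, is Step 1: one must keep track of the orientation of $e$ (i.e.\ which vertex plays the role of $v$ versus $v'$) when verifying morphism and complex properties, and check that all formulas are well-defined as maps on $I_{\GA}$ rather than on unordered pairs. Once the abelian hypothesis is invoked the remaining verifications are purely formal.
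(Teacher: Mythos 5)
Your proof is correct and consists of exactly the routine verifications that the paper leaves implicit (the Remark is stated without proof); in particular you correctly isolate where abelianness is genuinely needed (well-definedness of $\partial^{1}$ on unordered edges, the morphism property of $\partial^{0}$ and $\partial^{1}$, and the identification of $\star_G$-orbits with cosets of $\partial^{0}C^{0}(\GA,G)$), while the telescoping identity $\partial^{1}\circ\partial^{0}=1$ and the surjectivity of $\partial^{1}$ are as you describe. Nothing is missing.
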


The following result is straightforward.

\begin{lema}[Functoriality]
Every morphism $\alpha:F\to G$ of group-graphs over the same graph $\GA$ induces well-defined maps
$\alpha_{i}:H^{i}(\GA,F)\to H^{i}(\GA,G)$, for $i=0,1$, given by
$$\alpha_{0}(g_{v})=(\alpha_{v}(g_{v}))\quad\text{and}\quad\alpha_{1}([g_{v,e}])=[\alpha_{e}(g_{v,e})].$$
Moreover, if $F$ and $G$ are abelian then $\alpha_{i}$ are morphisms.
\end{lema}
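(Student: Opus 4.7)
The plan is to verify directly, using only the commuting square built into the definition of a morphism of group-graphs together with the fact that each $\alpha_v$ and each $\alpha_e$ is a group homomorphism. The verification splits into three independent checks: $\alpha_0$ sends $H^0(\GA,F)$ to $H^0(\GA,G)$; the formula defining $\alpha_1$ passes to the quotient defining $H^1$; and, in the abelian case, both induced maps are group morphisms.

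For $\alpha_0$, I start from a $0$-cocycle $(g_v)\in H^0(\GA,F)$ with $\rho_v^e(g_v)=\rho_{v'}^e(g_{v'})$ for each edge $e$ with $\partial e=\{v,v'\}$, apply $\alpha_e$ to both sides, and use the identity $\alpha_e\circ\rho_v^e=\rho_v^e\circ\alpha_v$ to conclude that $(\alpha_v(g_v))$ satisfies the same cocycle relation in $G$. For $\alpha_1$ I first note that componentwise application of $\alpha_e$ sends $Z^1(\GA,F)$ to $Z^1(\GA,G)$, since $\alpha_e(g_{v,e})\alpha_e(g_{v',e})=\alpha_e(g_{v,e}g_{v',e})=\alpha_e(1)=1$. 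The substantive point is that this descends to the quotient by the right action $\star$; I would verify it by the one-line computation
$$\alpha_e\bigl(\rho_v^e(h_v)^{-1}g_{v,e}\rho_{v'}^e(h_{v'})\bigr)=\rho_v^e(\alpha_v(h_v))^{-1}\alpha_e(g_{v,e})\rho_{v'}^e(\alpha_{v'}(h_{v'})),$$
which shows that $\alpha_e$ transports the action of $(h_v)\in C^0(\GA,F)$ into the action of $(\alpha_v(h_v))\in C^0(\GA,G)$, so that the class $[\alpha_e(g_{v,e})]$ depends only on $[g_{v,e}]$.

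Finally, for the morphism statement in the abelian case, I appeal to the reformulation in Remark~\ref{obsab}: $H^0$ is the subgroup $\ker\partial^0$ of $C^0$, and $H^1$ is the quotient of the subgroup $Z^1=\ker\partial^1$ of $C^1$ by $\partial^0C^0$. The componentwise maps $\prod_v\alpha_v$ and $\prod_{(v,e)}\alpha_e$ are already group morphisms at the level of $C^0$ and $C^1$, and the commuting square makes them commute with $\partial^0$ and $\partial^1$; hence they restrict to $\ker\partial^i$ and descend modulo $\partial^0C^0$, yielding the group morphisms $\alpha_0$ and $\alpha_1$ claimed. No step presents a real obstacle: the whole argument amounts to diagram chasing on the single square defining a morphism of group-graphs, in line with the paper's own assessment that the result is \emph{straightforward}.
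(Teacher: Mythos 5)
Your proof is correct and is exactly the routine diagram chase the paper has in mind when it declares the lemma ``straightforward'' (the paper gives no proof): the commuting square $\alpha_e\circ\rho_v^e=\rho_v^e\circ\alpha_v$ yields preservation of the $0$-cocycle relation, of $Z^1$, and equivariance of $\alpha_e$ with respect to the $\star$-actions, and the abelian case follows from Remark~3.5. Nothing further is needed.
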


A short sequence $1\to F\stackrel{\alpha}{\to}G\stackrel{\beta}{\to}J\to 1$ of morphisms of group-graphs over the same graph $\GA$ is exact if for all $a\in\GV\cup\GE$ the corresponding short sequence of groups $1\to F_{a}\stackrel{\alpha_{a}}{\to}G_{a}\stackrel{\beta_{a}}{\to}J_{a}\to 1$ is exact.
In  the abelian case the complexes of abelian groups considered in Remark~\ref{obsab} fit into a short exact sequence $1
\to C^{*}(\GA,F)\to C^{*}(\GA,G)\to C^{*}(\GA,J)\to 1$. The following results are classical.

\begin{lema}[Long exact sequence]\label{LES}
If $1\to F\to G\to J\to 1$ is a short  exact sequence of abelian group-graphs over the same graph $\GA$, then 
there is a long exact sequence $1\to H^{0}(\GA,F)\to H^{0}(\GA,G)\to H^{0}(\GA,J)\to H^{1}(\GA,F)\to H^{1}(\GA,G)\to H^{1}(\GA,J)\to 1$.
\end{lema}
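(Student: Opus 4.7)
The plan is to bootstrap the result from the classical long exact sequence in cohomology attached to a short exact sequence of three-term cochain complexes of abelian groups, exploiting the complex-level description already recorded in Remark~\ref{obsab}.

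First I would promote the given short exact sequence of abelian group-graphs to a short exact sequence of cochain complexes
\[
1 \longrightarrow C^{*}(\GA,F) \stackrel{\alpha_{*}}{\longrightarrow} C^{*}(\GA,G) \stackrel{\beta_{*}}{\longrightarrow} C^{*}(\GA,J) \longrightarrow 1.
\]
Since each $C^{i}(\GA,\cdot)$ is a direct product of abelian groups indexed respectively by $\GV$, by $I_{\GA}$, or by $\GE$, and products of abelian groups preserve exactness, the degreewise exactness of this sequence of complexes follows immediately from the hypothesis that $1\to F_{a}\to G_{a}\to J_{a}\to 1$ is exact for every $a\in \GV\cup\GE$. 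The fact that $\alpha_{*}$ and $\beta_{*}$ commute with the differentials $\partial^{0}$ and $\partial^{1}$ is a direct consequence of the compatibility of $\alpha$ and $\beta$ with the restriction maps~$\rho_{v}^{e}$.

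Second, I would invoke the snake lemma for three-term complexes to produce the zig-zag long exact sequence
\[
1 \to H^{0}(C^{*}(\GA,F)) \to H^{0}(C^{*}(\GA,G)) \to H^{0}(C^{*}(\GA,J)) \stackrel{\delta}{\to} H^{1}(C^{*}(\GA,F)) \to H^{1}(C^{*}(\GA,G)) \to H^{1}(C^{*}(\GA,J)) \to H^{2}(C^{*}(\GA,F)).
\]
By Remark~\ref{obsab}, $H^{i}(C^{*}(\GA,\cdot))=H^{i}(\GA,\cdot)$ for $i=0,1$, while $H^{2}(C^{*}(\GA,F))=1$ because $\partial^{1}$ is surjective onto $C^{2}(\GA,F)$ in the abelian case. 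The leftmost $1$ is automatic because $H^{0}=\ker\partial^{0}$ injects into $C^{0}$. Substituting these identifications yields exactly the asserted long exact sequence.

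The only step warranting explicit attention is the construction of the connecting morphism $\delta\colon H^{0}(\GA,J)\to H^{1}(\GA,F)$, which I would describe concretely: given a $0$-cocycle $(j_{v})$, lift each $j_{v}$ to some $g_{v}\in G_{v}$; then $\partial^{0}(g_{v})=(\rho_{v}^{e}(g_{v})^{-1}\rho_{v'}^{e}(g_{v'}))$ maps to $1$ in $C^{1}(\GA,J)$ since $(j_{v})$ is a $0$-cocycle, hence lifts uniquely to a $1$-cochain $(f_{v,e})\in C^{1}(\GA,F)$; one then checks that $(f_{v,e})\in Z^{1}(\GA,F)$ and that its class in $H^{1}(\GA,F)$ is independent of the choice of lifts $g_{v}$. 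The remaining exactness verifications at each node are the standard diagram chases of the snake lemma, and I do not anticipate any real obstacle: in the abelian setting the twisted right action $\star_{G}$ becomes the ordinary additive coboundary, so the usual proof applies verbatim.
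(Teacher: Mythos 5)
Your proof is correct and follows exactly the route the paper intends: the paper notes just before the lemma that the short exact sequence of group-graphs yields a short exact sequence of the three-term complexes $C^{*}(\GA,\cdot)$ from Remark~3.5 and then declares the long exact sequence classical, which is precisely your snake-lemma argument together with the observation that $H^{2}(C^{*}(\GA,F))=1$. Nothing further is needed.
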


\begin{lema}[Mayer-Vietoris]\label{MV}
Let $G$ be an abelian group-graph over a graph~$\GA$. If $\GA_{0}$ and $\GA_{1}$ are subgraphs of $\GA$ such that $\GA=\GA_{0}\cup \GA_{1}$ then there is an exact sequence $1\to H^{0}(\GA,G)\to H^{0}(\GA_{0},G)\oplus H^{0}(\GA_{1},G)\to H^{0}(\GA_{0}\cap \GA_{1},G)\to H^{1}(\GA,G)\to H^{1}(\GA_{0},G)\oplus H^{1}(\GA_{1},G)\to H^{1}(\GA_{0}\cap \GA_{1},G)\to 1$.
\end{lema}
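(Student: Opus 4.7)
The plan is to derive the Mayer--Vietoris exact sequence from a short exact sequence of cochain complexes of the type introduced in Remark~\ref{obsab}, and then follow the same homological argument that proves Lemma~\ref{LES}. The key ingredient I need is, for each degree $i\in\{0,1,2\}$, the restriction--difference sequence
\[
1\longrightarrow C^{i}(\AG,G)\xrightarrow{\;r\;}C^{i}(\AG_0,G)\oplus C^{i}(\AG_1,G)\xrightarrow{\;d\;}C^{i}(\AG_0\cap\AG_1,G)\longrightarrow 1,
\]
where $r(c):=(c|_{\AG_0},c|_{\AG_1})$ and $d(c_0,c_1):=c_0|_{\AG_0\cap\AG_1}-c_1|_{\AG_0\cap\AG_1}$, the restrictions being the natural projections induced by the inclusions $\Ve_{\AG_j}\subset\Ve_\AG$, $I_{\AG_j}\subset I_\AG$ and $\Ed_{\AG_j}\subset\Ed_\AG$.

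I would then verify exactness in each degree. The product structure of $C^i$ reduces the task to bookkeeping on the sets $\Ve$, $I$ and $\Ed$: injectivity of $r$ follows from the assumption $\AG=\AG_0\cup\AG_1$, since every vertex, oriented edge and edge of $\AG$ belongs to at least one $\AG_j$; exactness at the middle term is the gluing property, as two cochains agreeing on the common vertices and edges patch to a well-defined cochain on the union; and surjectivity of $d$ is obtained by extending any $c\in C^i(\AG_0\cap\AG_1,G)$ by identity elements off the intersection to produce $\tilde c\in C^i(\AG_0,G)$ with $d(\tilde c,0)=c$. Next I would check that $r$ and $d$ are chain maps with respect to the coboundaries $\partial^0$ and $\partial^1$. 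This is automatic because these operators are defined by pointwise formulas involving only a vertex and its adjacent edges, and subgraphs are by definition closed under the incidence relation. The outcome is a short exact sequence of three-term complexes of abelian groups
\[
1\to C^*(\AG,G)\to C^*(\AG_0,G)\oplus C^*(\AG_1,G)\to C^*(\AG_0\cap\AG_1,G)\to 1.
\]

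Finally I would apply the snake lemma to this sequence, exactly as in the proof of Lemma~\ref{LES}, to produce the long cohomology exact sequence running from $H^0(\AG,G)$ through $H^2(\AG,G)$. Since Remark~\ref{obsab} gives $H^2(C^*(\AG,G))=1$, the connecting map into $H^2$ vanishes, which provides the desired surjectivity of $H^1(\AG_0,G)\oplus H^1(\AG_1,G)\to H^1(\AG_0\cap\AG_1,G)$ and yields the six-term sequence. The only step that will require some care is the gluing in the middle of the short exact sequence of cochains, because it forces the correct convention for the intersection subgraph, namely that its sets of vertices, oriented edges and edges be the intersection of the corresponding sets of $\AG_0$ and $\AG_1$; with this convention the whole argument is a routine diagram chase.
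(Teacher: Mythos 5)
Your proposal is correct and follows exactly the paper's proof: the paper also obtains the six-term sequence from the short exact sequence of cochain complexes $1\to C^{*}(\GA,G)\to C^{*}(\GA_{0},G)\oplus C^{*}(\GA_{1},G)\to C^{*}(\GA_{0}\cap\GA_{1},G)\to 1$ together with the long exact sequence in cohomology and the vanishing $H^{2}(C^{*}(\GA,G))=1$ from Remark~\ref{obsab}. The exactness checks you spell out (injectivity of the restriction, gluing in the middle, surjectivity of the difference map by extension by the identity) are exactly the routine verifications the paper leaves implicit.
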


\begin{proof}
We have a short exact sequence of complexes of abelian groups
$$1\to C^{*}(\GA,G)\to C^{*}(\GA_{0},G)\oplus C^{*}(\GA_{1},G)\to C^{*}(\GA_{0}\cap\GA_{1},G)\to 1$$
and we consider the long exact sequence of cohomology.
\end{proof}

\begin{defin}\label{Avalence}
The \emph{valency in $\GA$} or the \emph{$\GA$-valency}  of a vertex $v$ of $\GA$ is the cardinality $\mr{val}_{\GA}(v)$
of the set $\{e\in \E \;;\; v\in \partial e\}$. We say that $v$ is an \emph{extremity of} $\GA$ if $\mr{val}_{\GA}(v)=1$; in that case we will write $v\in\partial\GA$.
\end{defin}

A \emph{partial dead branch $(\MG, v_0)$ of $\GA$} is the data of a vertex $v_0$ of $\GA$ called \emph{attaching point} and a connected subgraph $\GM$ of $\GA$ such that:
\begin{itemize}
\item $\GM$ contains an extremity $v'_0$ of $\GA$,
\item all its vertices are of valency 2 in $\GA$, except possibly its extremities that are $v'_0$ and $v_0$. 
\end{itemize} 
Notice that $\GM$ is always a chain. When 
$\GM\neq \GA$ and $\mr{val}_{\GA}(v_0)\geq 3$ one says that $\GM$ is a \emph{dead branch of $\GA$}. 
We define a \emph{total order $<_{{}_\GM}$} on the sets of its vertices $\Ve_{\GM}:=\{v_0,\ldots, v_\ell :=v'_0\}$ and of its edges $\Ed_{\GM}:=\{e_1,\ldots,e_\ell\}$ with $\partial e_j=\{v_{j-1},v_j\}$, by setting $v_0<_{{}_\GM}\cdots <_{{}_\GM}v_\ell$ and $e_1<_{{}_\GM}\cdots <_{{}_\GM}e_\ell$ $j=1,\ldots,\ell$.
\begin{defin}
For a group-graph $G$, we say that a partial dead branch $\GM$ is $G$-\emph{repulsive}  if the morphisms $\rho_v^{e} : G_v\to G_e$ are surjective for all $e\in \Ed_{\GM}$ such that $\partial e=\{v,v'\}$ and $v'<_{{}_\GM} v$.
\end{defin}
Now we will give a process that will allow us to restrict  a group-graph to a subgraph  without changing its cohomology.
\begin{defin}\label{pruning}
 If $(\GM,v_0)$ is a  partial dead branch of $\GA$, the \emph{pruning $\br \GA$ of $\GM$ in $\GA$}, at the attaching point $v_0$, is the subgraph $\br \GA = (\GA \setminus \GM) \cup \{v_0\}$.
 \end{defin}
\begin{teo}[Pruning]\label{elagage}
Let $G$ be a (not necessarily abelian) group-graph over $\GA$ and $\GM$ a $G$-repulsive partial dead branch of $\GA$ then there is a natural  bijection $H^{1}(\GA,G)\iso H^{1}(\br\GA,\br G)$ where $\br\GA$ is the pruning of $\GM$ in $\GA$ and $\br G$ is the restriction of $G$ to $\br\GA$. Moreover, if $G$ is abelian, this bijection is an isomorphism of groups.
\end{teo}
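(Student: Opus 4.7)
The natural candidate for the map $H^{1}(\GA,G)\to H^{1}(\br\GA,\br G)$ is the one induced by the obvious restriction homomorphism $r_{1}:C^{1}(\GA,G)\to C^{1}(\br\GA,\br G)$, which sends $(g_{v,e})_{(v,e)\in I_{\GA}}$ to its subfamily indexed by $I_{\br\GA}\subset I_{\GA}$. Together with the analogous restriction $r_{0}:C^{0}(\GA,G)\to C^{0}(\br\GA,\br G)$ these intertwine the actions $\star_{G}$ and $\star_{\br G}$ and send $Z^{1}(\GA,G)$ into $Z^{1}(\br\GA,\br G)$, so they induce a well-defined map $r:H^{1}(\GA,G)\to H^{1}(\br\GA,\br G)$. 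The whole content of the theorem is that $r$ is a bijection (resp.~group isomorphism in the abelian case); naturality in $G$ is then automatic.

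\textbf{Surjectivity (trivial extension).} Given any class $[\bar g]\in H^{1}(\br\GA,\br G)$, define $\tilde g\in C^{1}(\GA,G)$ by $\tilde g_{v,e}=\bar g_{v,e}$ for $(v,e)\in I_{\br\GA}$ and $\tilde g_{v_{j-1},e_{j}}=\tilde g_{v_{j},e_{j}}=1$ for each edge $e_{j}$ of $\GM$. Since the cocycle condition $g_{v,e}g_{v',e}=1$ on the edges of $\GM$ is then tautologically satisfied and all the edges of $\br\GA$ are untouched, $\tilde g\in Z^{1}(\GA,G)$ and $r([\tilde g])=[\bar g]$.

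\textbf{Injectivity (main point, using $G$-repulsiveness).} Suppose $[g],[g']\in H^{1}(\GA,G)$ satisfy $r([g])=r([g'])$. Pick $(\bar h_{v})_{v\in\Ve_{\br\GA}}\in C^{0}(\br\GA,\br G)$ with $\bar h\star_{\br G}(g'|_{\br\GA})=g|_{\br\GA}$. I want to extend $(\bar h_{v})$ to a $0$-cochain $(h_{v})_{v\in\Ve_{\GA}}$ on the whole $\GA$ with $h\star_{G}g'=g$. The vertices to define are the $v_{1},\ldots,v_{\ell}$ of $\GM$, and I set $h_{v_{0}}:=\bar h_{v_{0}}$. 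Writing $<_{\GM}$ so that $v_{0}<_{\GM}v_{1}<_{\GM}\cdots<_{\GM}v_{\ell}$, the edge equation along $e_{j}=\{v_{j-1},v_{j}\}$ reads
\[
\rho_{v_{j}}^{e_{j}}(h_{v_{j}})=(g'_{v_{j-1},e_{j}})^{-1}\,\rho_{v_{j-1}}^{e_{j}}(h_{v_{j-1}})\,g_{v_{j-1},e_{j}}\,,
\]
and the $G$-repulsiveness of $\GM$ asserts precisely that $\rho_{v_{j}}^{e_{j}}:G_{v_{j}}\to G_{e_{j}}$ is surjective (since $v_{j-1}<_{\GM}v_{j}$). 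Hence $h_{v_{j}}\in G_{v_{j}}$ can be chosen solving this equation, and by induction on $j$ we define $h_{v_{1}},\ldots,h_{v_{\ell}}$. The companion equation for $g_{v_{j},e_{j}}$ is then automatic from the cocycle relations $g_{v_{j-1},e_{j}}g_{v_{j},e_{j}}=1$ and $g'_{v_{j-1},e_{j}}g'_{v_{j},e_{j}}=1$, so $h\star_{G}g'=g$ and $[g]=[g']$ in $H^{1}(\GA,G)$. This is the only step where the hypotheses on $\GM$ are used, and the forward induction along $\GM$ is exactly matched by the direction in which the restriction morphisms are required to be surjective.

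\textbf{Naturality and abelian case.} The construction of $r$ uses nothing but the restrictions of cochains, so for any morphism $\alpha:G\to G'$ of group-graphs over $\GA$ one obtains a commutative square relating $r$ for $G$ and for $G'$, and similarly for the restriction of $\alpha$ to $\br\GA$. When $G$ is abelian, $r_{0}$ and $r_{1}$ are group morphisms and the above argument shows the induced morphism $r:H^{1}(\GA,G)\to H^{1}(\br\GA,\br G)$ (which is a group morphism by Remark~\ref{obsab}) is bijective, hence a group isomorphism. I expect the main obstacle to be bookkeeping rather than substance: making sure the order convention on $\GM$ in the definition of $G$-repulsive really matches the direction of the inductive solution of the edge equations above, and checking that the companion edge equation follows from the cocycle conditions without any further hypothesis.
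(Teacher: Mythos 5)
Your proposal is correct and follows essentially the same route as the paper: the paper also works with the restriction (projection) map on cochains and the extension-by-$1$ map, and both injectivity arguments reduce to solving the same triangular system of $\ell$ edge equations inductively along $\GM$ using the surjectivity of the $\rho_{v_j}^{e_j}$ guaranteed by $G$-repulsiveness. The only cosmetic difference is that the paper packages the argument as showing $\mathrm{pr}^1_\ast$ and $\mathrm{ext}_\ast$ are mutually inverse, whereas you prove surjectivity and injectivity of the restriction map directly.
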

Before giving the proof let us notice that the natural projections $\mathrm{pr}^{i}: C^i(\GA, G)\to C^i(\br\GA, \br G)$, $i=0,1$, are group morphisms commuting with the actions $\star_G$ and $\star_{\br G}$ and inducing a natural map $\mathrm{pr}^1_\ast : H^1(\GA, G) \to H^1(\br \GA, \br G)$.

On the other hand, we have an ‘‘extension by 1'' map $\mathrm{ext} : C^1(\br\GA, \br G) \to C^1(\GA, G)$
 such that  $\mathrm{pr}^1\circ \mathrm{ext}$ is the identity map, $\mathrm{ext}(Z^1(\br \GA, \br G))\subset Z^1(\GA, G)$ and 
$$
\mathrm{ext}\circ \mathrm{pr}^1((g_{v,e}))=(g'_{v,e})\,,\quad \hbox{with}\quad g'_{v,e}=\left\{
\begin{array}{ccl}
g_{v,e}& \hbox{ if } & (v,e)\in I_\GA\setminus I_{\GM}\\
1& \hbox{ if } & (v,e)\in  I_{\GM}\,.
\end{array}
\right.
$$
\begin{dem2}{of Theorem~\ref{elagage}} We will see that the $G$-repulsivity of $\GM$ implies that: \\
\indent\textit{a)}  $\mathrm{ext}$ induces a map $\mathrm{ext}_\ast : H^1(\br \GA , \br G) \to H^1(\GA , G)$ such that  $\mathrm{pr}^1_\ast\circ \mathrm{ext}_\ast$ is the identity of $H^1(\br \GA, \br G)$,\\ 
\indent\textit{b)} 
$\mathrm{ext}_\ast\circ \mathrm{pr}^1_\ast$ is the identity of $H^1( \GA,  G)$. When $G$ is abelian the maps $\mathrm{pr}^1_\ast$ and $\mathrm{ext}_\ast$ are group morphisms,  one the  inverse  of the other, and the map $H^{1}(\GA,G)\iso H^{1}(\br\GA,\br G)$ induced by $\mathrm{pr}^1_\ast$ is trivially an isomorphism.\\

\indent \textit{a)} By $G$-repulsivity we have the following diagram of groups and morphisms
\[G_{v_0}\stackrel{\rho_{v_0}^{e_1}}{\rightarrow} G_{e_1}\stackrel{\rho^{e_1}_{v_1}}{\twoheadleftarrow} 
G_{v_1}\stackrel{\rho_{v_1}^{e_2}}{\rightarrow} G_{e_2}\stackrel{\rho^{e_2}_{v_2}}{\twoheadleftarrow}
G_{v_2}\rightarrow
\cdots \cdots
\twoheadleftarrow
G_{v_{\ell-1}}
\stackrel{\rho_{v_{\ell-1}}^{e_\ell}}{\rightarrow} G_{e_\ell}\stackrel{\rho^{e_\ell}_{v_\ell}}{\twoheadleftarrow} 
G_{v_\ell}\]
Let $\br h^1=(\br h_{ v, \ e})$ and $\br g^1=(\br g_{ v,  e})$ be two cohomologous elements  if $Z^1(\br \GA, \br G)$:
$$
\br h_{v,e}=\rho _{v}^{e}(\br h_v)^{-1}\, \br g_{v,e}\, \rho_{v'}^{e}(\br h_{v'})\,,\quad \br h^0:= (\br h_{v})\in C^0(\br \GA, \br G)\,.
$$
We will determine $h^0:=(h_v)\in C^0(\GA, G)$ such that $h^0\star_{G} \mathrm{ext}(\br h^1)=\mathrm{ext}(\br g^1)$. We define $h_v=\br h_{v}$ if $v\in \Ve_{\br A}$. For $v\notin \Ve_{\br A}$ it is sufficient to solve the system of $\ell$ equations:
$$
\left\{
\begin{array}{cclc}
1 & = & \rho^{e_1}_{v_0}(h_{v_0})^{-1}\cdot 1 \cdot \rho^{e_1}_{v_1}(h_{v_1}) \,,& h_{v_1}\in G_{v_1}\\
 & \vdots& \\
1 & = & \rho^{e_\ell}_{v_{\ell-1}}(h_{v_{\ell-1}})^{-1}\cdot 1 \cdot \rho^{e_\ell}_{v_\ell}(h_{v_\ell})\,, & h_{v_\ell}\in G_{v_\ell}
\end{array}
\right.
$$
This can be easily done using the surjectivity of the maps $\rho_{v_j}^{e_j}$, $j=1,\ldots , \ell$.\\

\indent \textit{b)} We have to prove that for each $(g_{v, e})\in Z^1(\GA, G)$ there is $(g_v)\in C^0(\GA, G)$ such that:
$$
\mathrm{ext}\circ \mathrm{pr^1}\left((g_{v,e})\right)=\left(\rho^{e}_v(g_v)^{-1} \, g_{v,e}\, \rho_{v'}^{e}(g_{v'})\right)\,.
$$
We define $g_v=1$ when $v\in \Ve_{\br \GA}$, so that in particular $g_{v_0}=1$ and therefore $\rho_{v_0}^{e_1}(g_{v_{0}})=1$. As the maps $\rho_{v_j}^{e_j}$, $1\leq j\leq \ell$, are surjective, the following system of $\ell$ equations has a solution with $g_{v_1}\in G_{v_1},\ldots, g_{v_\ell}\in G_{v_\ell}$
$$
\left\{
\begin{array}{ccl}
1 & = & \rho^{e_1}_{v_0}(g_{v_0})^{-1}\,g_{v_0,e_1} \, \rho^{e_1}_{v_1}(g_{v_1}) \\
1 & = & \rho^{e_2}_{v_1}(g_{v_1})^{-1}\,g_{v_1,e_2} \, \rho^{e_2}_{v_2}(g_{v_2}) \\
 & \vdots& \\
1 & = & \rho^{e_\ell}_{v_{\ell-1}}(g_{v_{\ell-1}})^{-1}\, g_{v_{\ell-1}, e_\ell} \,  \rho^{e_\ell}_{v_\ell}(g_{v_\ell})\,.
\end{array}
\right.
$$
\end{dem2}
\begin{obs}\label{pruningiteres}
By repeating this process we obtain  a subtree $\GA_{\mathrm{pr}}$ of $\GA$ such that the restriction $G_{\mathrm{pr}}$ of $G$ to $\GA_{\mathrm{pr}}$ has no $G_{\mathrm{pr}}$-repulsive partial dead branches and the map $\mathrm{ext} : Z^1(\GA_{\mathrm{pr}}, G_\mathrm{pr}) \to Z^1(\GA,G)$ of extension by 1, induces an isomorphism  $H^1(\GA_{\mathrm{pr}}, G_\mathrm{pr}) \iso H^1(\GA, G)$. In particular if all the morphisms $\rho^e_v : G_v\rightarrow G_e$ are surjective, the subtree $\GA_{\mathrm{pr}}$ is  reduced to a single vertex and   $H^1(\GA, G)$ is trivial.
\end{obs}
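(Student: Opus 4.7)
The plan is to iterate Theorem~\ref{elagage} until no repulsive partial dead branch remains. Concretely, starting from $(\GA,G)$, whenever there exists a $G$-repulsive partial dead branch $(\GM,v_0)$, I prune it to obtain a strictly smaller pair $(\br\GA,\br G)$ together with the bijection $H^1(\br\GA,\br G)\iso H^1(\GA,G)$ induced by $\mathrm{ext}_\ast$. I then reapply the procedure to $(\br\GA,\br G)$, which is allowed because the hypotheses of Theorem~\ref{elagage} are stated purely in terms of the group-graph and a distinguished partial dead branch, with no global assumption on $\GA$. Since $\GA$ is a finite tree and each pruning strictly reduces the number of vertices, the iteration terminates after finitely many steps at a subtree $\GA_{\mathrm{pr}}\subset\GA$ whose restricted group-graph $G_{\mathrm{pr}}$ has no $G_{\mathrm{pr}}$-repulsive partial dead branch.

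To identify the composite of the successive bijections with the map induced by a single extension-by-$1$ map $\mathrm{ext}:Z^1(\GA_{\mathrm{pr}},G_{\mathrm{pr}})\to Z^1(\GA,G)$, I would observe that at each intermediate step the extension sets $g_{v,e}=1$ on the freshly pruned oriented edges. Composing these extensions therefore amounts to setting $g_{v,e}=1$ on every $(v,e)\in I_{\GA}\setminus I_{\GA_{\mathrm{pr}}}$ at once, which is precisely the global $\mathrm{ext}$ appearing in the statement. Passing to cohomology then gives the claimed isomorphism $H^1(\GA_{\mathrm{pr}},G_{\mathrm{pr}})\iso H^1(\GA,G)$, which is an isomorphism of groups in the abelian case because each $\mathrm{ext}_\ast$ is.

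For the ``In particular'' clause, if every restriction $\rho_v^e:G_v\to G_e$ is surjective, then every partial dead branch is automatically $G$-repulsive; in particular, the two-vertex branch consisting of any extremity of the current tree together with its unique incident edge is repulsive. Iterating, one can therefore strip leaves one by one until only a single vertex remains, so $\GA_{\mathrm{pr}}$ has no edges, $Z^1(\GA_{\mathrm{pr}},G_{\mathrm{pr}})$ is trivial, and consequently $H^1(\GA,G)=1$. The only point that requires care is to check, at each iteration, that the relevant surjectivity is inherited by $\br G$; this is immediate because $\br G$ is literally the restriction of $G$ to $\br\GA$ and therefore uses the same morphisms $\rho_v^e$ on the surviving edges. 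I expect this last verification, together with making sure that $\GA_{\mathrm{pr}}$ is canonical (independent of the order of the pruning choices), to be the main, though largely bookkeeping, obstacle.
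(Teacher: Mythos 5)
Your argument is correct and is exactly the reasoning the paper intends: the remark is stated without proof as the finite iteration of Theorem~\ref{elagage}, and your verification that each pruning strictly shrinks the tree, that the composite of the stepwise extensions is the global extension by $1$, and that full surjectivity of the $\rho_v^e$ lets one strip extremities down to a single vertex fills in precisely the bookkeeping the authors leave implicit. No gap.
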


\section{Automorphism group-graph}\label{sec4}
Let us fix once for all a marked divisor $\mc E^\diamond=(\mc E, \Sigma, \imath)$ and a marked foliation $\F^{\diamond}=(\F,f)\in\EF$. We recall that $\mc E^d$ is the union of \emph{dicritical components} $D$ of $\mc E^\diamond$, i.e.  $D\cap \Sigma=\emptyset$, cf. Introduction.\\

The \emph{dual tree $\mathsf{A}_{\mc E}$ of} $\mc E$ is the graph having  $\comp(\mc E)$  and $\sing(\mc E)$ as sets $\msl{Ve}_{\msl{A}_\mc E}$ of vertices and $\msl{Ed}_{\msl{A}_\mc E}$ of edges respectively, with $\partial s=\{D,D'\}$ whenever $D\cap D'=s$.
\begin{defin}\label{CutGraphDef}
The subgraph of $\mathsf A_{\mc E}$ obtained  by removing the vertices  associated to $\mc E^d$, the edges attached with these vertices and the edges  corresponding by $f$  to nodal singularities of  $\F^\sharp$, is called \emph{cut-graph of $\F^\diamond$}. We denote it by $\A_{\F^\diamond}$, or more simply by $\A$ when there is not ambiguity.
\end{defin}
This graph is a disjoint union of trees, denoted by $\A_{\F^\diamond}^i$, or more simply by $\A^i$, that can  be considered as  the dual graphs of the $\F$-cut-components of $\mc E_{\F}$ defined  in the introduction. It only depends on the class $[\F,f]$ and in fact it is constant on the fiber $\mr{CS}^{-1}(\mr{CS}([\F^{\diamond}]))$ that contains $\mr{Mod}([\F^{\diamond}])$.
Notice that if $G$ is a group-graph on $\A$ then 
\[H^{1}(\A_{\F^\diamond},G)= \prod\limits_{i}H^{1}(\A_{\F^\diamond}^{i},G_{i})\]
where $G_{i}$ is the restriction of $G$ to $\A_{\F^\diamond}^{i}$.

\begin{defin}[The {group-graph} $\rm{Aut}^{\F^{\diamond}}$] For $s\in\msl{Ed}_{\msl{A}_{\F^\diamond}}$ and $D\in\msl{Ve}_{\msl{A}_{\F^\diamond}}$, let us denote by
\begin{enumerate}[$\bullet$]
\item $\mr{Aut}^{\F^{\diamond}}_{s}$  the group of germs at $f(s)$  of holomorphic automorphisms of $\F^\sharp$, 
\item $\mr{Aut}^{\F^{\diamond}}_{D}$ the group of germs along $f(D)$ of continuous automorphisms of $\F^\sharp$ preserving $\mc E_{\F}$, that are holomorphic at each 
singular point of $\F^\sharp$ that is not a nodal corner (cf. Introduction) and whose restriction to $f(D)\setminus \mr{Sing}(\F^\sharp)$ is homotopic to the identity.
\end{enumerate}
We define by these data the \emph{automorphism group-graph}  $\mr{Aut}^{\F^{\diamond}}$  over $\A_{\F^\diamond}$, the  morphisms $\rho_{D}^{s}$, $s\in D$,  being  just the restriction maps.
\end{defin}

\begin{obs}\label{saturation} 
If $D$ is not dicritical, the elements of $\mr{Aut}^{\F^{\diamond}}_{D}$ are transversely holomorphic at each point of 
$f(D)\setminus \Sigma_\F$, with $\Sigma_\F:=\mr{Sing}(\F^\sharp)$, because they are holomorphic on an open set whose saturation by $\F^\sharp$ is a neighborhood of $f(D)\setminus \Sigma_\F$, cf. \cite[Theorem~A]{MM4} or \cite[Theorem~2]{CamachoRosas}. 
\end{obs}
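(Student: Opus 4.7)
My plan is to derive transverse holomorphy of any $\Phi \in \mr{Aut}^{\F^{\diamond}}_{D}$ on $f(D)\setminus\Sigma_\F$ by starting with \emph{genuine} holomorphy of $\Phi$ at some non-nodal singular point of $f(D)$ and then propagating the transverse holomorphic structure along the leaves of $\F^\sharp$, using the saturation theorem for non-dicritical components together with the fact that the holonomy of a holomorphic foliation is holomorphic.

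Since $D$ is non-dicritical, $f(D)$ is an $\F^\sharp$-invariant irreducible component of $\mc E_\F$ meeting $\Sigma_\F$; in the generalized-curve setting considered in the paper one may pick a non-nodal singular point $s_{0} \in f(D)\cap\Sigma_\F$ (components without any non-nodal corner lie in isolated connected components of $\A_{\F^\diamond}$ and can be handled separately or ruled out by the standing hypotheses). By the very definition of $\mr{Aut}^{\F^{\diamond}}_{D}$ the germ of $\Phi$ at $s_{0}$ is holomorphic, so there is an open neighborhood $U$ of $s_{0}$ in $M_\F$ on which $\Phi$ is holomorphic, hence in particular transversely holomorphic for $\F^\sharp$. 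I then invoke the saturation theorem \cite[Theorem~A]{MM4} (see also \cite[Theorem~2]{CamachoRosas}), which asserts that for a non-dicritical component $f(D)$ the $\F^\sharp$-saturation of $U$ is a neighborhood of $f(D)\setminus\Sigma_\F$ in $M_\F$. Consequently every point $p\in f(D)\setminus\Sigma_\F$ lies on some leaf of $\F^\sharp$ that also meets $U$.

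To conclude, I fix such a $p$, choose $q\in U$ on the same leaf of $\F^\sharp$ as $p$, and pick germs of transversals $T_{p}, T_{q}, T_{\Phi(p)}, T_{\Phi(q)}$ at the corresponding points. Because $\F^\sharp$ is a holomorphic foliation, the $\F^\sharp$-holonomies $h\colon T_{p}\to T_{q}$ along a suitable path in the leaf through $p,q$, and $h'\colon T_{\Phi(q)}\to T_{\Phi(p)}$ along its image, are biholomorphic germs. Since $\Phi$ is continuous and leaf-preserving, its transverse component at $p$ factors as $h'\circ(\Phi|_{T_{q}})\circ h$; as the middle factor is holomorphic by the first step, so is the composition, which is exactly transverse holomorphy of $\Phi$ at $p$.

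The only delicate ingredient is the saturation statement: it is not obvious a priori that the $\F^\sharp$-saturation of an arbitrarily small neighborhood of a single singular point $s_0$ exhausts a full neighborhood of $f(D)\setminus\Sigma_\F$, and this is precisely the content of the theorems of Mattei--Moussu and Camacho--Rosas invoked above, which rest on a fine dynamical analysis of $\F^\sharp$ near a non-dicritical invariant component. Once this is granted, the propagation of transverse holomorphy along leaves via holomorphic holonomy is a standard rigidity argument, and the remark follows.
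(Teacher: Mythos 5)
Your proposal is correct and follows essentially the same route as the paper's own justification: holomorphy at the non-nodal singular points (built into the definition of $\mr{Aut}^{\F^{\diamond}}_{D}$), the saturation theorem of \cite{MM4} or \cite{CamachoRosas} for non-dicritical components, and transport of transverse holomorphy along leaves by holomorphic holonomy. Your hedge about components with only nodal corners is unnecessary: since a non-dicritical $D$ has negative self-intersection and nodal corners carry strictly positive Camacho--Sad indices, the index theorem forces at least one non-nodal singular point on $f(D)$.
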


Now we will assign to each topological class $\mf g \in \mr{Mod}([\F^{\diamond}])$ a cohomo\-lo\-gy class $\mbf i_{\F^\diamond}(\mf g) \in H^{1}(\A_{\F^\diamond},\mr{Aut}^{\F^{\diamond}})$. To do that we fix a representative $(\G,g)$ of~$\mf g$.

\begin{defin}\label{goodfib}  A \emph{good fibration}  along an invariant component $g(D)$ of $\mc E_{\mc G}$ is a germ along $g(D)$ of smooth map from a neighborhood of $g(D)$ to $g(D)$, that is holomorphic at each singular point of $\mc G^\sharp$, equal to the identity on $g(D)$  and constant  on each  component adjacent to $g(D)$. 
\end{defin}
\noindent Clearly good fibrations along invariant components always exist.\\

For each vertex $D$ of $\A_{\F^\diamond}$ we fix a regular point $o_{D}\in D$ and good fibrations along $f(D)$ and $g(D)$.
Up to isotopy we can suppose that $f$ and $g$ are holomorphic at the singular points of  $\mc E$.
Since $\wt{\mc H}([\F,f])=\wt{\mc H}([\G,g])$, cf. \S\ref{SubsectTopModSpace}.\ref{dd}, 
for each $D\in\CE$  the conjugating  map of these holonomies between 
 fibers of the good fibrations over $f(o_{D})$ and $g(o_{D})$ 
 extends  by the lifting path method to a unique 
germ of a transversely holomorphic homeomorphism 
\begin{equation*}
\psi_D : (M_{\F}, f(D))\rightarrow (M_{\mc G}, g(D))\,, \quad \psi_{D}(\F^\sharp) = \mc G^\sharp\,.
\end{equation*}
 that conjugates the fibrations and the foliations and such that the restriction to $f(D)$ is $f^{-1}\circ g$.
Classically, since the good fibrations are holomorphic near the singular points, $\psi_D$ is  also holomorphic at these points. However, a precision must be given for the extension to the nodal singularities. The good fibrations at the nodal singularities can be chosen to coincide with the projections given by linearizing holomorphic coordinates of the node. In fact, we can choose linearizing holomorphic coordinates for the nodes $f(s)$ and $g(s)$ such that $g^{-1}\circ f$ writes as the identity. Thus, on the linear model $yx^{-\lambda}$, $\lambda\in\R^{+}\setminus\mb Q$, we have an automorphism which is the identity on $|x|=1$. Since it commutes with the linear holonomy $y\mapsto ye^{2i\pi\lambda}$ we deduce that it is linear on the fibers and it extends to $|x|\le 1$ by linearity.

Thus, for each edge $s\in {\msl {Ed}}_{\msl A_{\F^\diamond}}$, the germs at $f(s)$ 
$$
\varphi_{D,s} =\psi_D^{-1}\circ\psi_{D'},\quad \varphi_{D',s} =\psi_{D'}^{-1}\circ\psi_{D}\,, \quad D\cap D'=s\,,
$$
are holomorphic automorphisms of $\F^\sharp$ and the 1-cocycle $\mbf c^1:=(\varphi_{D,s})$, with ${(D,s)\in I_{\A_{\F^\diamond}}}$, is an element of  $Z^{1}(\A_{\F^\diamond},\mr{Aut}^{\F^{\diamond}})$.
If we choose another element $(\breve{\mathcal{G}}, \breve{g})$ in  $\mf g$, and
good fibrations for $\breve{\G}$, taking in the same way homeomorphisms   
 \[
 \breve{\psi}_D : (M_{\F}, f(D))\rightarrow (M_{\mc G}, \breve{g}(D))\,,\quad 
 \breve{\psi}_{D}(\F^\sharp ) = \breve{\G}^\sharp\,,
 \]
 we obtain another  1-cocycle  
 \[
 \breve{\mbf{c}}^1=(\breve{\varphi}_{D,s})=
 (\breve{\psi}_{D}^{-1}\circ\breve{\psi}_{D'})\in H^{1}(\A_{\F^\diamond},\mr{Aut}^{\F^\diamond})\,.
 \]
Since $(\G,g)\sim_{\diamond}(\breve{\G},\breve{g})$ there is a homeomorphism $\Phi$ between neighborhoods of the exceptional divisors $\mc E_{\mc G}$ and $\mc E_{\breve{\mc G}}$ of the reductions of these foliations that conjugates  $\mathcal{G}^\sharp$ and $\breve{\mathcal{G}}^\sharp$ and is holomorphic at the singular points of $\mc G^\sharp$, except perhaps at the nodal corners; moreover,  when restricted to $\mc E_{\mc G}$, $\Phi$  is isotopic to $\breve{g} \circ g^{-1}$.

Let us denote by $\Phi_D$ the germs of $\Phi$ along the invariant components $D$ of $\mathcal{E}_{\mathcal{G}}$. One checks easily that the 0-cochain  $\mbf{c}^0 =
(\psi_{D}^{-1}\circ\Phi_{D}^{-1}\circ\breve{\psi}_{D})\in 
C^{0}(\A_{\F^\diamond},\mr{Aut}^{\F^\diamond})$ fulfills 
$\mbf c^0\,\star\, \mbf c^1=\breve{\mbf c}^1$. 
This proves that  the cohomology class of $\mbf c^1$ does not depend on the choice of the representative of the class $\mf g$ neither on the good fibrations used to define it. We denote this cohomology class by $\mbf i_{\F^{\diamond}}(\mf g)$.
\begin{teo}\label{p1}
The map
\begin{equation}\label{eq0}
\mbf i_{\F^{\diamond}}:\mr{Mod}([\F^{\diamond}])\;\iso \;H^{1}(\A_{\F^\diamond},\mr{Aut}^{\F^{\diamond}})
\end{equation}
is bijective. Moreover, $\mbf i_{\F^{\diamond}}([\F^{\diamond}])=[(\mr{id})]$.
\end{teo}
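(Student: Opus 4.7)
The plan is to prove bijectivity by constructing an explicit inverse via a clutching procedure, in the spirit of classical \v{C}ech-cohomological reconstructions. The well-definedness of $\mathbf{i}_{\F^\diamond}$ has already been established in the construction. The identity claim $\mathbf{i}_{\F^\diamond}([\F^\diamond])=[(\mathrm{id})]$ is immediate: taking $(\G,g)=(\F,f)$ and all $\psi_D=\mathrm{id}$ yields $\varphi_{D,s}=\mathrm{id}$ for every $(D,s)\in I_{\A_{\F^\diamond}}$.

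For \textbf{injectivity}, suppose $\mathbf{i}_{\F^\diamond}(\mathfrak g_1)=\mathbf{i}_{\F^\diamond}(\mathfrak g_2)$ with representatives $(\G_i,g_i)$, associated homeomorphisms $\psi^i_D$ and cocycles $\mathbf{c}^i=(\varphi^i_{D,s})$. By assumption there exists $(\alpha_D)\in C^{0}(\A_{\F^\diamond},\mr{Aut}^{\F^\diamond})$ with $(\alpha_D)\star_{\mr{Aut}^{\F^\diamond}}\mathbf{c}^1=\mathbf{c}^2$. Since each $\alpha_D$ is an automorphism of $\F^\sharp$ defined along $f(D)$, we may replace $\psi^1_D$ by $\psi^1_D\circ\alpha_D$; the identity $\varphi^i_{D,s}=(\psi^i_D)^{-1}\circ\psi^i_{D'}$ then shows that the new cocycle equals $\mathbf{c}^2$. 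Along each $g_1(D)$ the germ $\Phi_D:=\psi^2_D\circ(\psi^1_D)^{-1}$ is a transversely holomorphic homeomorphism conjugating $\G_1^\sharp$ to $\G_2^\sharp$, and the equality $\varphi^1_{D,s}=\varphi^2_{D,s}$ forces $\Phi_D=\Phi_{D'}$ near every singularity $g_1(s)$ with $s$ an edge of $\A_{\F^\diamond}$. These germs therefore patch along the cut-components; at nodal corners and along dicritical components one extends using the prescribed isotopy class of $g_2\circ g_1^{-1}$ (exploiting Remark~\ref{saturation} for transverse holomorphy on invariant components and the linear normal forms at the nodes discussed in the construction of $\psi_D$). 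The resulting global homeomorphism $(M_{\G_1},\mc E_{\G_1})\to(M_{\G_2},\mc E_{\G_2})$ realizes $(\G_1,g_1)\sim_\diamond(\G_2,g_2)$.

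For \textbf{surjectivity}, given $[(\varphi_{D,s})]\in H^{1}(\A_{\F^\diamond},\mr{Aut}^{\F^\diamond})$ I would reconstruct a marked foliation $(\G,g)\in\MFe_{\mr{tr}}(\mc E^\diamond)$ by clutching. Choose a system of open neighborhoods $V_D\subset M_\F$ of each $f(D)$ (for $D$ a vertex of $\A_{\F^\diamond}$), together with neighborhoods of the dicritical components and of the nodal corners, such that the $V_D$ overlap only near the singularities $f(s)$ corresponding to edges of $\A_{\F^\diamond}$. The cocycle condition $\varphi_{D,s}\circ\varphi_{D',s}=\mathrm{id}$ ensures that regluing $V_D$ to $V_{D'}$ near $f(s)$ via $\varphi_{D,s}$, and keeping the original identifications elsewhere, produces a complex surface $M_\G$ with a normal crossings divisor $\mc E_\G$ and a foliation $\G^\sharp$. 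Since each $\varphi_{D,s}$ is a biholomorphic automorphism of $\F^\sharp$, the local analytic type at every singularity, the Camacho--Sad indices and the holonomy representations are preserved; in particular $\G^\sharp$ has no saddle-node, $\mc E_\G$ has the same combinatorics as $\mc E_\F$, and the marking $g:\mc E\to\mc E_\G$ induced from $f$ through the clutching satisfies $\imath_\G(g(D),g(D'))=\imath(D,D')$. Castelnuovo-type contractibility for the exceptional divisor then yields a germ $\G$ at $0\in\C^{2}$ in $\Fe_{\mr{tr}}(\mc E^\diamond)$ with $\wt{\mc H}([\G,g])=\wt{\mc H}([\F^\diamond])$, so $[\G,g]\in\mr{Mod}([\F^\diamond])$, and choosing $\psi_D=\mathrm{id}$ in the construction of $\mathbf{i}_{\F^\diamond}$ gives back precisely the cocycle $(\varphi_{D,s})$.

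\textbf{The main obstacle} is the surjectivity step: one must verify that the abstract complex surface produced by clutching is actually (biholomorphic to) the reduction $(M_\G,\mc E_\G)$ of some germ $(\G,0)\subset(\C^{2},0)$, and that the non-degeneracy conditions defining $\Fe_{\mr{tr}}(\mc E^\diamond)$ as well as the isotopy class of the marking are preserved. The first point relies on the fact that the clutchings $\varphi_{D,s}$ leave each exceptional component and its normal bundle invariant, so the intersection matrix remains negative definite and Grauert's contraction applies; the second follows because the regluings act trivially on holonomy and Camacho--Sad indices, hence condition (TR) is inherited from $\F$ and $g\circ f^{-1}$ is isotopic to $\mathrm{id}$ on each component.
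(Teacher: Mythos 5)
Your proposal follows essentially the same route as the paper: for injectivity you absorb the $0$-cochain into the $\psi_D$'s and patch the resulting germs $\psi^2_D\circ(\psi^1_D)^{-1}$ along the cut-components, and for surjectivity you reglue neighborhoods via the cocycle and contract the resulting surface back to a germ at $0\in\C^2$ (the paper invokes Mattei's equisingularity paper for this realization step, which plays the role of your Grauert/Castelnuovo argument, and likewise sets the gluings equal to the identity at nodal corners and dicritical attaching points). The one place where the paper does genuinely more work than your sketch is the gluing of the cut-component conjugations $\Phi_i$ with the conjugations $\Phi_D$ along dicritical components and across nodal corners --- these need not agree on overlaps, and the paper corrects them by an explicit interpolation trick (an automorphism $F_D$ of $\G^\sharp$ equal to $\Phi_D^{-1}\circ\Phi_i$ near the attaching point with support in a polydisk, plus linearizing coordinates at the nodes), which your phrase ``one extends using the prescribed isotopy class'' does not by itself supply.
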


\begin{dem} 
Let us recall that $\msl{A}_{\F^\diamond}$ is the common cut-graph to all marked foliations $(\mc G,g)$ with $[\mc G,g]\in \msl{Mod}([\F^\diamond])$; in this proof we will denote it by $\msl{A}$ and by $\msl A^i$ its connected components. 
Let us show first the injectivity of $\mbf i_{\F^{\diamond}}$.
If 
$$\mbf i_{\F^{\diamond}}([\G,g])=[(\varphi_{D,s})]=[(\breve{\varphi}_{D,s})]=\mbf i_{\F^{\diamond}}([\breve{\G},\breve{g}])$$ then  for each 
$D\in\CE$ there exists $\xi_{D}\in\mr{Aut}^{\F^{\diamond}}_{D}$ such that
\[\xi_{D}^{-1}\circ\varphi_{D,s}\circ \xi_{D'}=\breve{\varphi}_{D,s}.\]
Writing $\varphi_{D,s}=\psi_{D}^{-1}\circ\psi_{D'}$ and $\breve{\varphi}_{D,s}=\breve{\psi}_{D}^{-1}\circ\breve{\psi}_{D'}$, with $s=D\cap D'$, we deduce that 
$$\breve{\psi}_{D}\circ \xi_{D}^{-1}\circ \psi_{D}^{-1}=\breve{\psi}_{D'}\circ \xi_{D'}^{-1}\circ \psi_{D'}^{-1}$$
 defines conjugations $\Phi_{i}:W_{i}\to\breve{W}_{i}$ between the foliations  
 $\G^\sharp$ and $\breve{\G}^\sharp$ restricted to some tubular neighborhoods $W_{i}$ and $\breve{W}_{i}$ of $\mc E_{\mc G}^i:=\bigcup_{D\in\Ve_{{\A^{i}}}}g(D)\subset\mc E_{\G}$ and $\bigcup_{D\in\Ve_{{\A^{i}}}}\breve{g}(D)\subset\mc E_{\breve{\G}}$ respectively.
By composing $\Phi_i$ with suitable automorphisms of $\mc G^\sharp$
isotopic to the identity along the leaves and whose supports are disjoint from the singular locus of $\mc G^\sharp$,
we can assume that $\Phi_i$ conjugates the attaching points of the adjacent dicritical components.

On the other hand, since the self-intersection of $g(D)$ and $\breve{g}(D)$ coincides for each $D\in\mr{Comp}(\mc E^d)$, there is a conjugation $\Phi_{D}$ between the foliations $\G^\sharp$ and $\breve{\G}^\sharp$ restricted to some tubular neighborhoods $W_{D}$ and $\breve{W}_{D}$ of $g(D)$ and $\breve{g}(D)$ whose restriction to $g(D)$ is $\breve{g}\circ g^{-1}$.

In order to glue the conjugations $\Phi_{i}$ and $\Phi_{D}$ we use the following trick:
\begin{itemize}
\item[]
\it 
For each $0<\varepsilon<1$, any germ of biholomorphism of $(\C^{2},0)$
preserving the fibration $(x,y)\mapsto x$ and the curve $\{y=0\}$ can be represented by a $\mc C^{1}$ diffeomorphism $F$ from $\mb D_1\times\mb D_1$ onto a neighborhood of $(0,0)$
satisfying the same properties with support in $\{|x|<\varepsilon\}$.
\end{itemize}
\rm
This implies that there is an automorphism $F_{D}$ on a neighborhood of $g(D)$ preserving $g(D)$ and $\mc G^\sharp$, which is equal to $\Phi_{D}^{-1}\circ \Phi_{i}$ in a neighborhood of the attaching point $g(D)\cap\mc E_{\G}^{i}$ with support 
a polydisk centered at this point.  
Shrinking the  neighborhood of definition of $\Phi_{D}\circ F_{D}$ 
we obtain a conjugation of pairs $(\G^\sharp,g(D))$ and $(\breve{\G}^\sharp, \breve{g}(D))$
which can be glued with $\Phi_{i}$.

The gluing of $\Phi_{i}$ and $\Phi_{j}$ at the nodal singularities is made by  using linearizing coordinates for  $(\G,g(s))$ and $(\breve{\G},\breve{g}(s))$ as in \cite[\S8.5]{MM3}. In this way we obtain a global conjugation $\Phi:M_{\G}\to M_{\breve{\G}}$ between the foliations $\G^\sharp$ and $\breve{\G}^\sharp$ which is holomorphic at the singular points.

By definition of $\mr{Aut}^{\F^{\diamond}}_{D}$, the restrictions of $\xi_{D}$ to the divisor are isotopic to the identity. Hence the restriction of $\Phi$ to $\mc E_{\mc G}^{\diamond}$ is isotopic to $\breve{g}\circ g^{-1}$. Therefore $[\G,g]=[\breve{\G},\breve{g}]$. 
    
To prove the surjectivity of $\mbf i_{\F^\diamond}$ we consider a cocycle $\mbf c=(\varphi_{D,s})$ in a given class  of $H^{1}(\A_{\F^\diamond},\mr{Aut}^{\F^\diamond})$. We define $\varphi_{D,s}=\mr{id}$ when $f(s)$ is a nodal singularity or an attaching point of a dicritical component.
By gluing open neighborhoods $U_{D}$ of $f(D)$ using the local biholomorphisms $\varphi_{D,s}$ we obtain a complex manifold $M_{\mbf c}$ endowed with a foliation $\F_{\mbf c}$, a  divisor $\mc E_{\mbf c}$ and a biholomorphism between $\mc E_{\mbf c}$ and $\mc E_{\F}$ sending the singular locus of $\F_{\mbf c}$ onto  $\Sigma_\F$. There is a composition of blow-ups $E':M'\to(\C^{2},0)$ and a biholomorphism $g:M_{\mbf c}\to M'$ sending $\mc E_{\mbf c}$ onto the exceptional divisor $E'^{-1}(0)$, see for instance \cite[p. 306]{M}. We obtain a foliation $\F'=(E'\circ g)(\F_{\mbf c})$ on $(\C^{2},0)$ and a biholomorphism $h:\mc E_{\F}\to\mc E_{\F'}$ satisfying  $h(\Sigma_\F)=\Sigma_{\F'}$.
We define $f':=h\circ f:\mc E\to\mc E_{\F'}$. By construction $\mbf i_{\F^{\diamond}}([\F',f'])=[\mbf c]$.
\end{dem}

\section{Symmetry group-graph}\label{Symmetry-tree-group} 
We keep the notations and the fixed data of the previous section.
In order to define the remaining group-graph associated to $\F$,   we moreover fix for each  
$D\in\CE$ a regular point  $o_{D}\in D$  and  a transverse section $\Delta_{D}$ to $f(D)$ passing through  $f(o_{D})$.

\begin{defin}
For $s\in\Ed_{\A_{\F^\diamond}}$ we say that $\phi\in\mr{Aut}_{s}^{\F^{\diamond}}$ \emph{fixes the leaves} of $\F^\sharp$ if for every neighborhood $V$ of $f(s)$ there is a neighborhood $V'$ of $f(s)$ such that $\phi(V')\subset V$
and for all $p\in V'$ the points $p$ and $\phi(p)$ belong to the same leaf of $\F^\sharp_{|V}$. We denote by $\mr{Fix}_{s}^{\F^{\diamond}}$ the (normal) subgroup of   
$\mr{Aut}_{s}^{\F^{\diamond}}$ of these automorphisms.
\end{defin}

\begin{obs}\label{exfix} 
It is easy to see that an example of element of $\mr{Fix}_{s}^{\F^{\diamond}}$ is provided by $\phi\in\mr{Aut}_{s}^{\F^{\diamond}}$ such that $\phi|_{f(D)}=\mr{id}_{f(D)}$ and $F_{p}\circ\phi=F_{p}$ for any local first integral $F_{p}$ at every point $p\in f(D\setminus \Sigma)$ in a neighborhood of $f(s)$. The diffeomorphisms of the flow of a vector field tangent to the foliation fulfill this property for small times and, by composition, all the diffeomorphisms of the flow are in $\mr{Fix}^{\F^{\diamond}}_s$.
\end{obs}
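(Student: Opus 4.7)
The remark makes two assertions: first, any $\phi\in\mr{Aut}_s^{\F^\diamond}$ satisfying $\phi|_{f(D)}=\mr{id}_{f(D)}$ and $F_p\circ\phi=F_p$ for all local first integrals $F_p$ at every regular $p\in f(D\setminus\Sigma)$ close to $f(s)$ lies in $\mr{Fix}_s^{\F^\diamond}$; and second, the time-$t$ maps of the flow of any holomorphic vector field tangent to $\F^\sharp$ and vanishing at $f(s)$ lie in $\mr{Fix}_s^{\F^\diamond}$ for all $t$. Both rely on the fact that $s\in\Ed_{\A_{\F^\diamond}}$, which excludes nodal corners and makes $f(s)$ a non-nodal singularity of $\F^\sharp$.

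For the first assertion, fix a neighborhood $V$ of $f(s)$, shrunk so that $V\cap\mc E_\F=(f(D)\cup f(D'))\cap V$ where $D'$ is the other irreducible component of $\mc E$ meeting $D$ at $s$. Choose $V'\subset V$ with $\phi(V')\subset V$ and verify that $q$ and $\phi(q)$ lie in a common leaf of $\F^\sharp|_V$ for every $q\in V'$ by case analysis. If $q\in f(D)$ the first hypothesis gives $\phi(q)=q$. If $q\in f(D')\setminus\Sigma_\F$, then $\phi$, being a germ of biholomorphism preserving $\F^\sharp$ and fixing the separatrix $f(D)$ pointwise, must also preserve the other local separatrix $f(D')$ setwise (it cannot swap the two), so both $q$ and $\phi(q)$ lie in the leaf $(f(D')\setminus\Sigma_\F)\cap V$. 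If $q$ is off the divisor, I use that in any non-nodal local model at $f(s)$, leaves of $\F^\sharp|_V$ accumulate on both separatrices, hence for $V$ small every non-separatrix leaf meets any prescribed neighborhood of every regular $p\in f(D)\setminus\Sigma_\F$ close to $f(s)$. Choosing such a $p$ on the leaf through $q$, the hypothesis $F_p\circ\phi=F_p$ says $\phi$ preserves each local level $\{F_p=c\}$ setwise, hence preserves the leaf of $\F^\sharp|_V$ through the corresponding plaque, which is the leaf through $q$. This third case is the main obstacle: at a nodal corner some leaves accumulate on a single separatrix and this accumulation argument breaks down, which is exactly why such edges have been removed from $\A_{\F^\diamond}$.

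For the second assertion, let $X$ be a holomorphic vector field tangent to $\F^\sharp$ near $f(s)$ with $X(f(s))=0$, and write $\phi_t^X$ for its local flow. For $|t|$ small, $\phi_t^X$ maps some $V'$ into $V$, and the trajectory $\{\phi_\sigma^X(q)\}_{0\le\sigma\le t}$ stays inside $V$; since $X$ is tangent to $\F^\sharp$, this arc lies in a single leaf of $\F^\sharp|_V$, whence $\phi_t^X\in\mr{Fix}_s^{\F^\diamond}$ for small $t$. A short unwinding of the definition---given $V$, first choose $V_1$ for the outer factor $\phi_1$, then $V'\subset V_1$ for the inner factor $\phi_2$, using $\F^\sharp|_{V_1}\subset\F^\sharp|_V$ to concatenate leaves---shows that $\mr{Fix}_s^{\F^\diamond}$ is closed under composition, hence a subgroup of $\mr{Aut}_s^{\F^\diamond}$. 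Writing $\phi_t^X=(\phi_{t/n}^X)^n$ for $n$ large then places every flow diffeomorphism $\phi_t^X$ in $\mr{Fix}_s^{\F^\diamond}$.
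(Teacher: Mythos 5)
Your overall architecture is sound: the case analysis over $q$ on $f(D)$, on $f(D')$, and off the divisor, together with the observation that $\mr{Fix}_s^{\F^\diamond}$ is stable under composition so that $\phi^X_t=(\phi^X_{t/n})^n$ handles large times, is exactly what is needed (the paper offers no written proof to compare against). The second and third assertions of your argument are fine.

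There is, however, a genuine gap in the crucial off-divisor case. You justify it by claiming that in any non-nodal local model the leaves of $\F^\sharp|_V$ accumulate on both separatrices, so that ``every non-separatrix leaf meets any prescribed neighborhood of every regular $p$''. Both claims are false. Take the linearizable resonant saddle with first integral $x^py^q$ and $f(D)=\{y=0\}$: the leaf $\{x^py^q=c\}\cap V$, $c\neq 0$, is a closed curve on which $|y|\ge (|c|/\varepsilon^p)^{1/q}>0$, so it is bounded away from both axes and misses any sufficiently small neighborhood of a regular point $p\in f(D)$; the same happens for any linearizable singularity with negative real Camacho--Sad index, and these occur in abundance (e.g.\ whenever $\F^\sharp$ admits a local holomorphic first integral). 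What is true, and what your argument actually requires, is the correctly quantified statement: for a \emph{non-nodal} reduced singularity, a neighborhood $V$, and a fixed regular point $p\in f(D)\cap V\setminus\{f(s)\}$, there is a smaller $V'\ni f(s)$ such that every leaf of $\F^\sharp|_V$ passing through $V'$ meets the domain of $F_p$ along a path contained in $V\cap\mathrm{dom}(\phi)$. This is the saturation/localization property of non-nodal singularities --- precisely the fact the paper invokes in Remark~\ref{saturation} via \cite[Theorem~A]{MM4} and \cite[Theorem~2]{CamachoRosas}, and again in the retraction discussion preceding Lemma~\ref{noyaufix} --- and it is a cited theorem, not a consequence of leaf accumulation. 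With that substitution the rest of your chain ($q\sim_V q'$, $\phi(q')\sim_V q'$ by preservation of the levels of $F_p$, and $\phi(q)\sim_V\phi(q')$ because $\phi$ maps the connecting path into a leaf of $\F^\sharp|_V$ once $\phi(V'')\subset V$) goes through.
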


\begin{obs}\label{systinverses} For a fundamental system $(V_\alpha)$ of  open neighborhoods of $f(s)$ let us  denote by $\mc Q_{V_\alpha}^\F$ the leaf space of the restriction of the foliation $\F^\sharp$ to $V_\alpha\setminus \mc E_\F$. The inclusion relation on the leaves  induces an inverse system of continuous maps $\mc Q^{\F^\diamond}(s) :=(\mc Q_{V_\alpha}^\F \leftarrow \mc Q_{V_\beta}^\F)_{V_{\beta}\subset V_\alpha}$. 
Every $\psi\in\mr{Aut}_{s}^{\F^{\diamond}}$ defines an automorphism\footnote{{
The system $\mc Q^{\F^\diamond}(s)$ is an element of the categoy 
$\underleftarrow{\mr{Top}}$ of the  pro-objects associated to the category  of the topological spaces and continuous maps. The objects of  this category  are the inverse families  of topological spaces  and 
${\mr{Aut}}(\mc Q^{\F^\diamond}(s))$ is the  group of the invertible elements of 
${\underleftarrow{\mathrm{lim}}}_\beta
{\underrightarrow{\mathrm{lim}}}_\alpha\mathcal{C}^0(\mc Q_{V_\alpha}^\F,\,\mc Q_{V_\beta}^\F)$,  cf. \cite[\S2.8]{Douady} or \cite[\S3.1]{MM3}.}}
of this inverse system $\wt\psi\in\mr{Aut}(\mc Q^{\F^\diamond}(s))$ and the map $\zeta : \psi \mapsto \wt\psi$ is a group morphism. It turns out that $\wt\psi$ is the identity if and only if $\psi\in\mr{Fix}_{s}^{\F^{\diamond}}$, i.e. we have an exact sequence:
\begin{equation*}
1\to\mr{Fix}_{s}^{\F^{\diamond}}\longrightarrow\mr{Aut}_{s}^{\F^{\diamond}}\stackrel{\zeta}{\longrightarrow}\mr{Aut}(\mc Q^{\F^\diamond}(s))\,.
\end{equation*}
\end{obs}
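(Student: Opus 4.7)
My plan is to establish the two statements packaged inside this remark: that $\zeta$ is a well-defined group morphism into $\mr{Aut}(\mc Q^{\F^\diamond}(s))$, and that its kernel is exactly $\mr{Fix}_s^{\F^\diamond}$ (normality of the kernel then yields the claimed exact sequence for free).

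\medskip

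\textbf{Construction of $\zeta$.} Any $\psi\in\mr{Aut}_s^{\F^\diamond}$ admits a representative, still denoted $\psi$, that is a biholomorphism $V'\to V''$ between two open neighborhoods of $f(s)$ preserving $\F^\sharp$ and $\mc E_\F$. For any $V_\alpha$ in the chosen fundamental system with $V_\alpha\subset V''$, pick $V_\beta$ in the system with $V_\beta\subset V'\cap\psi^{-1}(V_\alpha)$. Since $\psi$ sends leaves of $\F^\sharp|_{V_\beta\setminus\mc E_\F}$ into leaves of $\F^\sharp|_{V_\alpha\setminus\mc E_\F}$, it induces a continuous map $\psi_{\beta,\alpha}:\mc Q^\F_{V_\beta}\to\mc Q^\F_{V_\alpha}$. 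The compatibility of these maps with the natural restriction maps of the inverse system $\mc Q^{\F^\diamond}(s)$ (which follows because composition of $\psi$ with an inclusion is still $\psi$) shows that the family $(\psi_{\beta,\alpha})_\alpha$ defines an element $\wt\psi$ of $\underleftarrow{\lim}_\alpha\underrightarrow{\lim}_\beta\mathcal{C}^0(\mc Q^\F_{V_\beta},\mc Q^\F_{V_\alpha})$, whose class is independent of all the choices made. Applying the same construction to $\psi^{-1}$ produces a two-sided inverse, so $\wt\psi\in\mr{Aut}(\mc Q^{\F^\diamond}(s))$. Functoriality of the leaf-space construction then gives $\widetilde{\psi_1\circ\psi_2}=\wt\psi_1\circ\wt\psi_2$, so $\zeta$ is a group morphism.

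\medskip

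\textbf{Identification of the kernel.} I will show both inclusions by unwinding the definition of the identity morphism in the pro-category. If $\psi\in\mr{Fix}_s^{\F^\diamond}$, then for every $V_\alpha$ there is $V_\beta\subset V_\alpha$ with $\psi(V_\beta)\subset V_\alpha$ and such that $p$ and $\psi(p)$ lie in the same leaf of $\F^\sharp|_{V_\alpha}$ for every $p\in V_\beta\setminus\mc E_\F$. This says precisely that the induced map $\psi_{\beta,\alpha}:\mc Q^\F_{V_\beta}\to\mc Q^\F_{V_\alpha}$ coincides with the canonical inclusion map, which is the definition of $\wt\psi=\mr{id}$ in $\mr{Aut}(\mc Q^{\F^\diamond}(s))$. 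Conversely, if $\wt\psi=\mr{id}$ in the pro-object sense, then by definition for every $V_\alpha$ there exists $V_\beta\subset V_\alpha$ such that $\psi_{\beta,\alpha}$ equals the canonical map; reading off the consequence at the level of points yields exactly the fixing-leaves condition defining $\mr{Fix}_s^{\F^\diamond}$.

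\medskip

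\textbf{Exact sequence and remaining point.} Together these give $\ker(\zeta)=\mr{Fix}_s^{\F^\diamond}$; exactness at $\mr{Fix}_s^{\F^\diamond}$ is trivial since the previous arrow is the inclusion, and the asserted normality of $\mr{Fix}_s^{\F^\diamond}$ in $\mr{Aut}_s^{\F^\diamond}$ is immediate from its description as $\ker\zeta$. The only mildly delicate point is the careful bookkeeping needed to check well-definedness of $\wt\psi$ as an element of the pro-category (independence of the choice of representative of $\psi$ and of the cofinal subfamily $(V_\beta)$); this is the step I expect to require the most attention but is purely formal once the framework of \cite[\S2.8]{Douady} or \cite[\S3.1]{MM3} is invoked. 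Everything else reduces to re-reading the definition of $\mr{Fix}_s^{\F^\diamond}$ in leaf-space language.
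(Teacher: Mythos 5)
Your proof is correct and is essentially the only argument available: the paper states this as a remark with no written proof, deferring the pro-category bookkeeping to the cited references, and your construction of $\zeta$ and identification of $\ker\zeta$ with $\mr{Fix}_{s}^{\F^{\diamond}}$ is exactly the intended definitional unwinding. Nothing further is needed.
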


\begin{defin}\label{SymEtValenceSinguliere}
For $s\in\msl{Ed}_{\A_{\F^\diamond}}$ and  $D\in\Ve_{\A_{\F^\diamond}}$ we consider the groups 
\[\mr{Sym}_{s}^{\F^{\diamond}}:=\mr{Aut}^{\F^{\diamond}}_{s}/\mr{Fix}_{s}^{\F^{\diamond}}\]
and 
 $$\mr{Sym}^{\F^{\diamond}}_{D}:=\left\{\begin{array}{lcc}
C(H_{D}) &\textrm{if}& \mr{val}_\Sigma(D)\geq 3,\\
& &\\
C(H_{D})/H_{D} & \textrm{if} & \mr{val}_{\Sigma}(D) \leq 2,
\end{array}\right.$$
where $H_{D}\subset\mr{Diff}({{\Delta}}_{D},f(o_{D}))$ is the holonomy group of $\F^\sharp$ along $f(D)$, $C(H_{D})$ is its \emph{centralizer}  inside $\mr{Diff}({{\Delta}}_{D},f(o_{D}))$ and $\mr{val}_\Sigma(D)$, called here \emph{singular valency of} $D$, is the number of elements of $D\cap \Sigma$.
\end{defin}

In order to define maps $\rho_{D}^{s}:\mr{Sym}^{\F^{\diamond}}_{D}\to\mr{Sym}^{\F^{\diamond}}_{s}$, $s\in D$ we will need the following result:

\begin{lema}\label{17}
If $\psi\in\mr{Aut}^{\F^{\diamond}}_{D}$ satisfies $\psi_{|f(D)}=\mr{id}_{f(D)}$ and $\psi_{|{{\Delta}}_{D}}=\mr{id}_{{{\Delta}}_{D}}$ then the germ of $\psi$ at $f(s)$ belongs to $\mr{Fix}_{s}^{\F^{\diamond}}$.
\end{lema}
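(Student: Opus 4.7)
The plan is to invoke Remark \ref{exfix}, which asserts that $\psi\in\mr{Fix}_s^{\F^\diamond}$ as soon as $\psi_{|f(D)}=\mr{id}$ (granted by hypothesis) and, for every $p\in f(D\setminus\Sigma)$ in a neighborhood of $f(s)$ and every local first integral $F_p$ of $\F^\sharp$ at $p$, one has $F_p\circ\psi=F_p$. By Remark \ref{saturation}, $\psi$ is transversely holomorphic at every such $p$, so after picking a small holomorphic transversal $\Delta_p$ to $f(D)$ at $p$ it induces a germ $\psi_p\in\mr{Diff}(\Delta_p,p)$, and the required equality $F_p\circ\psi=F_p$ for all local first integrals is equivalent to $\psi_p=\mr{id}$.

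The key step is to transport the triviality $\psi_{o_D}=\mr{id}$, which is a direct reformulation of $\psi_{|\Delta_D}=\mr{id}$, to every $\psi_p$ via holonomy. I fix a path $\gamma\subset f(D\setminus\Sigma)$ from $o_D$ to $p$, and denote by $h_\gamma:(\Delta_D,o_D)\iso(\Delta_p,p)$ the corresponding holonomy germ of $\F^\sharp$. The crucial point is the equivariance identity
\[
\psi_p\circ h_\gamma \;=\; h_\gamma\circ \psi_{o_D},
\]
which follows from the facts that $\psi$ sends leaves to leaves and fixes the base path $\gamma$ pointwise: for $z\in\Delta_D$ close to $o_D$, both sides assign to $z$ the unique point of $\Delta_p$ near $p$ lying on the leaf of $\psi(z)$. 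Because $\psi_{o_D}=\mr{id}$ and $h_\gamma$ is a germ of biholomorphism, this forces $\psi_p=\mr{id}$. Applying this at every $p\in f(D\setminus\Sigma)$ close to $f(s)$ and plugging the conclusion into Remark \ref{exfix} ends the proof.

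The only part demanding a bit of care --- and really the only non-bookkeeping point --- is the equivariance above: it combines the transverse holomorphicity of Remark \ref{saturation} (so that $\psi_p$ is a well-defined holomorphic germ on the transverse leaf space), the leaf-preserving property of $\psi$, and the hypothesis $\psi_{|f(D)}=\mr{id}$ which ensures that the base path $\gamma$ is fixed pointwise by $\psi$. Everything else in the argument is formal, so no further subtlety is expected.
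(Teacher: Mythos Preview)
Your proof is correct and follows essentially the same approach as the paper: both reduce to Remark~\ref{exfix} and propagate the triviality of $\psi$ on transversals from $o_D$ to every $p\in f(D\setminus\Sigma)$. The paper packages this propagation as an open--closed argument on the set $\Omega=\{p : F_p\circ\psi=F_p\}$, while you carry it out explicitly via holonomy equivariance along a path; the underlying mechanism is identical.
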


\begin{proof}
For each $p\in f(D)\setminus\mr{Sing}(\F^\sharp)$ we choose a local holomorphic first integral $F_{p}$ of $\F$ defined in a neighborhood of $p$. 
The set 
\[\Omega:=\{p\in f(D)\setminus\mr{Sing}(\F^\sharp)\,|\,F_{p}\circ\psi=F_{p}\}\]
is open and closed in $f(D)\setminus\mr{Sing}(\F^\sharp)$ and it contains $f(o_{D})={{\Delta}}_{D}\cap f(D)$. Hence $\Omega=f(D)\setminus\mr{Sing}(\F^\sharp)$ and we conclude thanks to Remark~\ref{exfix}.
\end{proof}

Using good fibrations (cf. Definition~\ref{goodfib}), each element $\phi$ of $C(H_D)$ can be  extended to an element of $\mr{Aut}^{\F^\diamond}_{D}$. Thanks to Lemma~\ref{17},  the class modulo $\mr{Fix}_{s}^{\F^\diamond}$ of the germ at $f(s)$ of this extension does not depend on the way of extending and hence on the choice of the good fibrations. We define $\rho^s_D(\phi)$ as this class in case $\mr{val}_\Sigma(D)\ge 3$.\\

Before defining $\rho^{s}_{D}$ for $D$ with $\mr{val}_\Sigma(D)\leq 2$, we must make some preliminary considerations. 
Let us fix for each  point $s\in \Sigma\cap D$ a conformal compact disc $\mc K_s\subset \mc E$  such that $ s\in \inte{\mc K}_s$, $o_D\in \partial \mc K_s$ and  $\Sigma\cap \mc K_s=\{s\}$. The simple loops $\gamma_s$ that parametrize $\partial \mc K_s$ with the conformal orientation, form
 a generator system of the fundamental group
$\pi_1(D \setminus \Sigma, o_D)$. Hence the holonomies $h_{D,s}\in \mr{Diff}(\Delta_D, f(o_D))$ of the foliation $\F^\sharp$ along $f\circ\gamma_s$, $s\in \Sigma\cap D$,  generate $H_D$.
\begin{defin}\label{localHolonomies}
We call $h_{D,s}$ the  
\emph{local holonomies} given by the \emph{appropriate compact discs system} $(\mc K_s)_{s\in D\cap\Sigma}$.
\end{defin}
Let us denote $K_s:=f(\mc K_s)$.
If we fix  a good fibration, any element $\phi$ of the \emph{centralizer} $C(h_{D,s})$ of $h_{D,s}$ in $\mr{Diff}(\Delta_{D},f(o_{D}))$ has an unique extension to a neighborhood of $K_s$, by a homeomorphism $\phi^{\mathrm{ext}}$ that leaves invariant the foliation $\F^\sharp$ and each fiber of the fibration; moreover $\phi^{\mathrm{ext}}$ is necessarily  holomorphic at $f(s)$. Taking its germ at $f(s)$ we obtain a map
\begin{equation}\label{extension1}
 \mr{ext} : C(h_{D,s})\longrightarrow \mr{Aut}_s^{\F^{\diamond}} \,,\quad \phi\mapsto \phi^{\mr{ext}}
\end{equation}
and the property of uniqueness of the extensions imply that this map is a group morphism. 

Let us consider  the inverse system $\mc Q^{\F^\diamond}({\mc K_s}) =(\mc Q^\F_{W_\alpha} \leftarrow \mc Q^\F_{W_\beta})_{W_\beta\subset W_\alpha}$, where $(W_\alpha)_\alpha$ is the fundamental system of neighborhoods of $K_s$ and $\mc Q^\F_{W_\alpha}$ is the leaf space of the restriction of the foliation to $W_\alpha\setminus \mc E_\F$. Let  ${V}_s\subset \inte K_s$ be a small open disc centered at $f(s)$. Over $K_s\setminus  V_s$ the foliation is a collar; thus we have an isomorphism of  inverse systems (i.e.  an isomorphism of the category $\underleftarrow{\mr{Top}}$)
\begin{equation}\label{iso1}
\mc Q^{\F^\diamond}(s) \iso \mc Q^{\F^\diamond}({\mc K_s})\,.
\end{equation}

We also consider  
the orbit spaces $ \mc Q^{h_{D,s}}_\alpha$ of the pseudogroup defined by the restriction of  $h_{D,s}$ to $\Delta_D \cap W_\alpha$; they form an  inverse system $\mc Q^{h_{D,s}} =  (\mc Q^{h_{D,s}}_\alpha \leftarrow \mc Q^{h_{D,s}}_\beta )_{W_\beta\subset W_{\alpha}}$. We can choose  each $W_\alpha$  such that there are  retractions along the leaves  from $W_\alpha\setminus \mc E_\F$ on $ (W_\alpha\setminus \mc E_\F)\cap \pi_D^{-1}(\partial K_s)$, $\pi_{D}$ being the good fibration; moreover we can require that $W_\alpha\cap \pi_D^{-1}(\partial K_\alpha)$ is a set of suspension type, cf. \cite[Definition~3.1.1]{MM1}.  This property implies that the leaf space of the restriction of the foliation to this set can be  identified to the orbit space $ \mc Q^{h_{D,s}}_\alpha$ of the  restriction of  $h_{D,s}$ to $\Delta_D \cap W_\alpha$. Hence using~(\ref{iso1}),  the retractions induce  isomorphisms
\[
\tau : \mc Q^{\F^\diamond}({s})\iso \mc Q^{h_{D,s}}
\hbox{ and }
\tau_{\ast}  : \mr{Aut}(\mc Q^{\F^\diamond}({s}))\iso \mr{Aut}(\mc Q^{h_{D,s}}) \,, \;\tau_\ast(\varphi):= 
\tau\circ\varphi\circ\tau^{-1},
\]
the inverse of $\tau$ being given by the inclusion relations of the orbits of $h_{D,s}$ in the leaves of the foliation on neighborhoods of $K_s$. \\

Each element $\phi$ of $C(h_{D,s})$ induces an automorphism $\xi(\phi):=\tau_{\ast}(\zeta(\phi^\mr{ext}))$  of $\mc Q^{h_{D,s}}$ and the map $\xi : C(h_{D,s})\rightarrow \mr{Aut}(\mc Q^{h_{D,s}})$ is a group morphism.
\begin{lema}\label{noyaufix}
The kernel of the morphism $\xi $ is the cyclic group generated by~$h_{D,s}$.
\end{lema}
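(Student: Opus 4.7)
My plan is to reformulate the kernel condition as a pointwise statement on the transversal $\Delta_D$ and then conclude by a cardinality argument on zero loci of one‑variable holomorphic germs.

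By the exact sequence recalled in Remark~\ref{systinverses}, $\ker\zeta=\mr{Fix}_s^{\F^\diamond}$ and $\tau_\ast$ is an isomorphism, so $\phi\in\ker\xi$ is equivalent to $\phi^{\mr{ext}}\in\mr{Fix}_s^{\F^\diamond}$. Through the isomorphism $\tau:\mc Q^{\F^\diamond}(s)\iso\mc Q^{h_{D,s}}$ obtained from the suspension‑type neighborhoods $(W_\alpha)$ of $K_s$, this is in turn equivalent to the following pointwise statement on a small open neighborhood $U\subset\Delta_D$ of $f(o_D)$: for every $p\in U$ the image $\phi(p)$ lies in the $h_{D,s}$‑orbit of $p$, i.e.\ there exists an integer $n(p)$ with $\phi(p)=h_{D,s}^{n(p)}(p)$.

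The inclusion $\langle h_{D,s}\rangle\subset\ker\xi$ is then immediate: for every $n\in\Z$, the map $h_{D,s}^n$ sends each $h_{D,s}$‑orbit to itself, so its extension preserves every leaf of $\F^\sharp$ near $f(s)$ and therefore lies in $\mr{Fix}_s^{\F^\diamond}$. For the reverse inclusion, let $\phi\in\ker\xi$ and for each $k\in\Z$ introduce the analytic subset
\[
A_k:=\{\,p\in U\,:\,\phi(p)=h_{D,s}^k(p)\,\}\subset U,
\]
which is the zero locus of the one‑variable holomorphic germ $\phi-h_{D,s}^k$. Either $A_k=U$, which means precisely that $\phi=h_{D,s}^k$ as germs at $f(o_D)$, or, after shrinking $U$, one has $A_k=\{f(o_D)\}$. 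The pointwise statement gives $U=\bigcup_{k\in\Z}A_k$; if no $A_k$ coincided with $U$, the right‑hand side would be a countable union of singletons, contradicting the uncountability of the open set $U$. Hence some $A_k=U$, that is, $\phi=h_{D,s}^k\in\langle h_{D,s}\rangle$.

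The main step requiring care, rather than routine calculation, is the first one: the translation of the pro‑object kernel condition into the pointwise statement $\phi(p)=h_{D,s}^{n(p)}(p)$. It relies essentially on the choice of the neighborhood basis $(W_\alpha)$ with the suspension‑type property, which is what allows one to identify leaves of $\F^\sharp$ near $f(s)$ with $h_{D,s}$‑orbits on $\Delta_D$ through $\tau$. Once this identification is set up, the holomorphic dichotomy ‘‘identically zero or isolated zeros'' together with the cardinality argument dispatches both the linearizable, resonant and non‑normalizable cases at once, and one does not need to split the proof according to the analytic type of $h_{D,s}$.
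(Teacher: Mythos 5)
Your reduction of the pro-object kernel condition to the pointwise statement ``for every $p\in U$ there is $n(p)$ with $\phi(p)=h_{D,s}^{n(p)}(p)$'' is correct, as is the easy inclusion $\langle h_{D,s}\rangle\subset\ker\xi$; both match the opening of the paper's proof. The gap is in the cardinality argument, and it sits exactly where the paper is forced to argue case by case. The iterate $h_{D,s}^{k}$ is not a holomorphic function on a fixed neighborhood $U$ of $f(o_D)$: as an element of the pseudogroup generated by $h_{D,s}$ on $V$ it is only defined on the open set $V_k$ of points whose first $|k|$ iterates stay in $V$, and these sets shrink with $|k|$ and may be disconnected. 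Hence $A_k$ is the zero locus of $\phi-h_{D,s}^{k}$ on $V_k\cap U$, and the dichotomy for zero sets of holomorphic germs only says that $A_k$ is either countable or contains a whole connected component $W$ of $V_k\cap U$ --- a component which need not contain $f(o_D)$. Knowing $\phi\equiv h_{D,s}^{k}$ on such a $W$ does not give equality of germs at $f(o_D)$, since $W$ and $f(o_D)$ may lie in different components of the domain of $h_{D,s}^{k}$ and there is no connected set carrying both functions through which to propagate the identity. Your phrase ``after shrinking $U$, one has $A_k=\{f(o_D)\}$'' hides the same difficulty: the shrinking depends on $k$, so the shrunken sets no longer cover a fixed $U$ (and on a fixed $U$ a discrete zero set is countable rather than a singleton --- that part is repairable, the domain issue is not).

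This is precisely why the paper splits according to the analytic type of $h_{D,s}$. When $|h_{D,s}'(f(o_D))|\neq 1$ the centralizer is linear and the claim is immediate. Otherwise one needs an uncountable set $K$, invariant under $h_{D,s}$, containing $f(o_D)$ and contained in the connected component of $V_n$ containing $f(o_D)$ for every $n$: then $K=\bigcup_{k}(A_k\cap K)$, some $A_k\cap K$ is uncountable, its accumulation point lies in the correct component, and the isolated-zeros principle yields equality of germs at $f(o_D)$. Producing such a $K$ is the non-routine input: an invariant conformal disc in the linearizable case, petals in the resonant non-linearizable case, and the P\'erez Marco hedgehog in the non-resonant non-linearizable case. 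Your claim that the argument ``dispatches all cases at once'' without this input is exactly where the proof fails; for Cremer-type singularities the existence of the required invariant set is a deep theorem, not a consequence of the identity theorem.
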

\begin{proof}
Let us take  $\phi\in \ker(\xi)$. This means that for each open neighborhood $U$ of $f(o_D)$ in $\Delta_D$ there is an open set  $V\supset U$ such that for each $z\in U$, $z$ and $\phi(z)$ are in the same $V$-orbit  of $h_{D,s}$. The $V$-orbit of $z$ being the set of points $z'$ of $V$ such that either there exists $n\in \N$ fulfilling either  $\phi(z),\ldots, \phi^n(z)\in V$ and $\phi^n(z)=z'$, or $\phi^{-1}(z),\ldots, \phi^{-n}(z)\in V$ and $\phi^{-n}(z)=z'$. If $|h_{D,s}'(f(o_{D}))|\neq 1$ then $h_{D,s}(z)=\lambda z$ and $\phi(z)=\mu z$. Hence 
$\phi(z)=\mu z=\lambda^{\nu(z)}z=h_{D,s}^{\nu(z)}(z)$ implies $\nu(z)$ constant.

Otherwise, let $V_{n}$ be the set of points $z\in U$ such that $h_{D,s}^{k}(z)\in V$ for each $k=0,\ldots,n$.
There is an  uncountable set $K$ invariant by $h_{D,s}$ such that for all $n\in \Z$
it is  contained in the connected component of $V_{n}$ containing $f(o_{D})$.
If $h_{D,s}$ is linearizable (conjugated to a rotation) then we can take an invariant conformal disc as $K$.
If $h_{D,s}$ is resonant non linearizable then $K$ is a union of petals contained in $U$.
If $h_{D,s}$ is non resonant non linearizable we take as $K$ the hedgehog associated to $U$, cf. \cite{Perez-Marco-Bourbaki}.

For each $z\in K$ there is  an integer $\nu(z)$ such that $\phi(z)=h_{D,s}^{\nu(z)}(z)$. Thus, there is $n\in\Z$ such that $\phi$ and $h_{D,s}^{n}$ coincide on an uncountable subset of $K$ and by isolated zeros principle 
they coincide on the connected component of $V_{n}$ containing $f(o_{D})$. Then the germs of $\phi$ and $h_{D,s}^{n}$ at $f(o_D)$ are equal, that achieves the proof.
\end{proof}

\begin{cor}\label{isosym}
The extension map (\ref{extension1}) induces an isomorphism: 
$$[\mr{ext}] : C(h_{D,s})/\langle h_{D,s}\rangle\stackrel{\sim}{\to}\mr{Sym}_{s}^{\F^\diamond}\,.$$
\end{cor}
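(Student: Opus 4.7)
The plan is to realize both $C(h_{D,s})/\langle h_{D,s}\rangle$ and $\mr{Sym}_s^{\F^\diamond}$ as subgroups of the common group $\mr{Aut}(\mc Q^{h_{D,s}})$ and show that these two images coincide. By Remark~\ref{systinverses}, the morphism $\zeta$ has kernel exactly $\mr{Fix}_s^{\F^\diamond}$, so it descends to an injection $\bar\zeta:\mr{Sym}_s^{\F^\diamond}\hookrightarrow\mr{Aut}(\mc Q^{\F^\diamond}(s))$; composing with the isomorphism $\tau_\ast$ yields an injection $\mr{Sym}_s^{\F^\diamond}\hookrightarrow\mr{Aut}(\mc Q^{h_{D,s}})$. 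Similarly, Lemma~\ref{noyaufix} says that $\xi = \tau_\ast\circ\zeta\circ\mr{ext}$ has kernel $\langle h_{D,s}\rangle$, so it induces an injection $\bar\xi:C(h_{D,s})/\langle h_{D,s}\rangle\hookrightarrow\mr{Aut}(\mc Q^{h_{D,s}})$. The very definition of $\xi$ gives a commutative triangle relating $[\mr{ext}]$ with $\bar\zeta$, $\tau_\ast$ and $\bar\xi$.

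From this triangle, well-definedness and injectivity of $[\mr{ext}]$ follow essentially for free. Well-definedness amounts to $h_{D,s}^{\mr{ext}}\in\mr{Fix}_s^{\F^\diamond}$, which is immediate because, by construction of the extension via a good fibration, $h_{D,s}^{\mr{ext}}$ transports each point along the holonomy of $\F^\sharp$ and thus keeps it on the same leaf. For injectivity, if $\phi\in C(h_{D,s})$ satisfies $[\mr{ext}](\phi)=1$ in $\mr{Sym}_s^{\F^\diamond}$, then $\xi(\phi)=\tau_\ast(\bar\zeta([\mr{ext}](\phi)))=1$, so Lemma~\ref{noyaufix} forces $\phi\in\langle h_{D,s}\rangle$.

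The main obstacle is surjectivity, equivalently the inclusion $\mr{Im}(\bar\zeta)\subset\mr{Im}(\bar\xi)$ inside $\mr{Aut}(\mc Q^{h_{D,s}})$ (after identification via $\tau_\ast$). Given $\psi\in\mr{Aut}_s^{\F^\diamond}$, the idea is to transport $\psi$ to a germ of diffeomorphism on $\Delta_D$ commuting with $h_{D,s}$. Choose a regular point $y\in f(D)$ very close to $f(s)$ lying in the domain of $\psi$, and let $T=\pi_D^{-1}(y)$ be the corresponding fiber of the good fibration. The restriction of $\psi$ sends $T$ to another transverse section $\psi(T)$; projecting back to $T$ along leaves via a short path and then transporting from $T$ to $\Delta_D$ via the holonomy of $\F^\sharp$ along a fixed path in $f(D)\setminus\Sigma_\F$ produces a germ $\phi\in\mr{Diff}(\Delta_D,f(o_D))$. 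The fact that $\psi$ preserves leaves forces $\phi$ to commute with the monodromy around $\gamma_s$, hence $\phi\in C(h_{D,s})$, while the indeterminacy in the choice of path (going around $f(s)$ an extra time) changes $\phi$ only by a power of $h_{D,s}$, so $[\phi]\in C(h_{D,s})/\langle h_{D,s}\rangle$ is well defined. By construction, $\phi^{\mr{ext}}$ and $\psi$ induce the same automorphism of $\mc Q^{h_{D,s}}$, so $\phi^{\mr{ext}}\psi^{-1}\in\ker\zeta=\mr{Fix}_s^{\F^\diamond}$ thanks to Remark~\ref{systinverses}. The delicate point lies in checking holomorphy and the naturality of this transport, which relies on the isomorphism~(\ref{iso1}) and on the fact that $f(s)$ is a reduced singularity of $\F^\sharp$ (since $\F$ is a generalized curve).
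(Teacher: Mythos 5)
Your proof is correct and follows essentially the same route as the paper's: both rest on the commutative diagram linking $\mr{ext}$, $\zeta$, $\xi$ and the isomorphism $\tau$, with $\ker\zeta=\mr{Fix}_s^{\F^\diamond}$ (Remark~\ref{systinverses}) and $\ker\xi=\langle h_{D,s}\rangle$ (Lemma~\ref{noyaufix}) giving well-definedness and injectivity. The only difference is that you spell out the surjectivity step (transporting a germ $\psi\in\mr{Aut}_s^{\F^\diamond}$ to an element of $C(h_{D,s})$ via the good fibration and holonomy, with the path ambiguity absorbed by powers of $h_{D,s}$), which the paper leaves implicit in the phrase ``because $\tau$ is an isomorphism''; your explicit argument is sound and actually fills in a point the published proof glosses over.
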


\begin{proof}
By construction the following diagram is commutative
$$\xymatrix{1 \ar[r] & \mr{Fix}_{s}^{\F^\diamond}\ar[r] & \mr{Aut}^{\F^\diamond}_{s}
\ar[r]^\zeta & \mr{Aut}(\mc Q^{\F^\diamond}(s))\ar[d]^\tau\\ 
1\ar[r] &\langle h_{D,s}\rangle\ar[r]& C(h_{D,s})\ar[r]^\xi\ar[u]^{\mr{ext}} & \mr{Aut}(\mc Q^{h_{D,s}})}$$
Thanks to Remark (\ref{systinverses}) and Lemma (\ref{noyaufix}) the lines are  exact. Because $\tau$ is an isomorphism, $\mr{ext}$ induces an isomorphism between $C(h_{D,s})/\langle h_{D,s}\rangle$ and $\mr{Aut}^{\F^\diamond}_{s}/\mr{Fix}_{s}^{\F^\diamond}=\mr{Sym_s^{\F^\diamond}}$. 
\end{proof}

\begin{obs}\label{exemplesymlin}
Suppose that  there are local coordinates $u_1$, $u_2$ at $s$ for which $\F^\sharp$ is defined by a linear differential 1-form $\omega=\mu u_2du_1-u_1du_2$ and $D=\{u_2=0\}$, $D'=\{u_1=0\}$ are the components of $\mc E_\F$. On the transversals $\{u_i=1\}$ the local holonomies are $h_{D_1,s}(u_2)=e^{2\pi i\mu}u_2$ and $h_{D_2,s}(u_1)=e^{2\pi i\frac{1}{\mu}}u_1$ and their centralizers are formed by the linear automorphisms  in the coordinates $u_i$,  $C(h_{D_i,s})=\C^\ast u_i$. Therefore we have isomorphisms
\[
\frac{C(h_{D_1,s})}{\langle h_{D_1,s}\rangle}\simeq 
\frac{\C}{2\pi i(\Z+\mu \Z)}
 \stackrel{\tau_1}{\longrightarrow}\mr{Sym}_s^{\F^\diamond}
\stackrel{\tau_2}{\longleftarrow}
\frac{\C}{2\pi i(\Z+\frac{1}{\mu} \Z)}
\simeq
\frac{C(h_{D_2,s})}{\langle h_{D_2,s}\rangle}
\]
To describe $\tau_2^{-1}\circ \tau_1$ let us remark that  the automorphisms $(u_1,u_2)\mapsto (e^t u_1, e^{\mu t}u_2)$, $t\in \C$ are elements of $\mr{Fix}_{s}^{\F^\diamond}$. Thus the automorphisms $(u_1,u_2)\mapsto (e^t u_1, u_2)$ and $ (u_1,u_2)\mapsto  (u_1, e^{\mu t}u_2)$ in $\mr{Aut}_s^{\F^\diamond}$ that extend $h_{D_1,s}$ and $h_{D_2,s}$ respectively,  define the same element of $\mr{Sym}_s^{\F^\diamond}$. It follows: 
\[
\tau_2^{-1}\circ \tau_1: 
{\C}/{2\pi i(\Z+\mu \Z)}
\longrightarrow 
{\C}/{2\pi i(\Z+\frac{1}{\mu} \Z)}\,, 
\quad
\dot t \mapsto -\frac{1}{\mu}
\dot t\,.
\]
\end{obs}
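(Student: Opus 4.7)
The plan is to verify the four assertions in the remark in sequence: first the explicit formulas for the local holonomies $h_{D,s}$ and $h_{D',s}$; then the description of their centralizers as linear groups; then the identifications $\tau_i$ of the quotients with $\C/2\pi i(\Z+\mu_i\Z)$ via the isomorphism $[\mr{ext}]$ of Corollary~\ref{isosym}; and finally the formula for the transition map $\tau_2^{-1}\circ\tau_1$, which I expect to be the only non-routine point.

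For the holonomies, I would use the (multivalued) first integral $u_2/u_1^\mu$ of $\omega$ to describe the leaves of $\F^\sharp$ as the graphs $u_2=Cu_1^\mu$. Parametrizing the standard generator of $\pi_1(D\setminus\Sigma,o_D)$ by $u_1=e^{2\pi i\theta}$, $\theta\in[0,1]$, and lifting along these leaves on the transversal $\Delta_D=\{u_1=1\}$ shows that analytic continuation sends $u_2$ to $e^{2\pi i\mu}u_2$; the symmetric computation with $u_1\leftrightarrow u_2$ and $\mu\leftrightarrow 1/\mu$ yields the formula for $h_{D',s}$. For the centralizers, assuming (as is implicit in the statement) that $\mu\notin\mathbb{Q}$, the rotation $z\mapsto e^{2\pi i\mu_i}z$ has infinite order and the classical fact that any holomorphic germ commuting with an irrational rotation is itself linear gives $C(h_{D_i,s})=\{u_i\mapsto cu_i\mid c\in\C^*\}$. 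The exponential $\C\twoheadrightarrow\C^*$, $t\mapsto e^t$, whose kernel is $2\pi i\Z$, induces the announced isomorphism $C(h_{D_i,s})/\langle h_{D_i,s}\rangle\simeq\C/2\pi i(\Z+\mu_i\Z)$, and composing with $[\mr{ext}]$ yields $\tau_i$.

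The main point is to compute $\tau_2^{-1}\circ\tau_1$. The key observation is that the radial vector field $X=u_1\partial_{u_1}+\mu u_2\partial_{u_2}$ satisfies $\omega(X)=\mu u_1u_2-\mu u_1u_2=0$, so it is tangent to $\F^\sharp$, and by Remark~\ref{exfix} every time-$r$ map
\[F_r(u_1,u_2)=(e^ru_1,e^{\mu r}u_2)\]
of its flow lies in $\mr{Fix}_s^{\F^\diamond}$. Tracing through the construction of $[\mr{ext}]$ with the good fibration $(u_1,u_2)\mapsto u_1$ adapted to $D$ (resp.\ $(u_1,u_2)\mapsto u_2$ adapted to $D'$), the class $\tau_1(\dot t)$ is represented by the holomorphic extension $(u_1,u_2)\mapsto(u_1,e^tu_2)$, while $\tau_2(\dot s)$ is represented by $(u_1,u_2)\mapsto(e^su_1,u_2)$; here one uses that the leaves are $u_2/u_1^\mu=\mathrm{const}$, so the fibration-preserving extensions are forced to act by the formulas above. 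Composing the latter with $F_r$ gives $(e^{r+s}u_1,e^{\mu r}u_2)$, which coincides with $(u_1,e^tu_2)$ exactly when $r+s=0$ and $\mu r=t$, i.e.\ $s=-t/\mu$. Thus $\tau_2^{-1}\circ\tau_1(\dot t)=-\dot t/\mu$; well-definedness on the quotient is automatic since multiplication by $-1/\mu$ maps $\Z+\mu\Z$ bijectively onto $\Z+\tfrac{1}{\mu}\Z$.
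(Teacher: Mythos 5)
Your proposal is correct and follows essentially the same route as the paper: the whole computation hinges on observing that the flow $(u_1,u_2)\mapsto(e^ru_1,e^{\mu r}u_2)$ of the vector field $u_1\partial_{u_1}+\mu u_2\partial_{u_2}$ tangent to $\F^\sharp$ lies in $\mr{Fix}_s^{\F^\diamond}$, and using it to compare the fibration-preserving extensions of the two centralizers inside $\mr{Sym}_s^{\F^\diamond}$. Your explicit verification of the holonomies, of the linearity of the centralizers (under the implicit hypothesis $\mu\notin\mathbb{Q}$), and your sign bookkeeping for $\tau_2^{-1}\circ\tau_1$ are all accurate and consistent with the paper's final formula $\dot t\mapsto-\dot t/\mu$.
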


\begin{obs}
If  $\mr{val}_{\Sigma}(D)\ge 3$,
then $[\mr{ext}]^{-1}\circ \rho^s_D$ is the quotient map 
\[\mr{Sym}^{\F^\diamond}_D=\;
C(H_D) \hookrightarrow C(h_{D,s})\rightarrow C(h_{D,s})/\langle h_{D,s}\rangle\,.
\]
\end{obs}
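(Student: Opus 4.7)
My plan is to unwind the two definitions and verify they represent the same construction. First I would check that the inclusion $C(H_D)\subset C(h_{D,s})$ of the statement is legitimate: by Definition~\ref{localHolonomies} the local holonomies $h_{D,s'}$, $s'\in D\cap\Sigma$, generate $H_D$, so every $\phi\in C(H_D)$ commutes in particular with $h_{D,s}$. Hence the composite $C(H_D)\hookrightarrow C(h_{D,s})\twoheadrightarrow C(h_{D,s})/\langle h_{D,s}\rangle$ is well defined, and via the isomorphism $[\mr{ext}]$ of Corollary~\ref{isosym} it takes values in $\mr{Sym}_s^{\F^\diamond}$, which is the target of $\rho_D^s$.

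Next I would recall the construction of $\rho_D^s(\phi)$ in the case $\mr{val}_\Sigma(D)\geq 3$. Fix a good fibration $\pi_D$ along $f(D)$ that is holomorphic at the singular points of $\F^\sharp$. The element $\phi\in C(H_D)$, a priori defined only on $\Delta_D$, extends to a homeomorphism $\tilde\phi$ on a neighborhood of $f(D)$ that preserves $\pi_D$, preserves $\F^\sharp$, and restricts to $\phi$ on $\Delta_D$; the compatibility of $\phi$ with every local holonomy $h_{D,s'}$ is precisely what makes this extension single-valued upon transport around loops in $f(D)\setminus\Sigma_\F$. Then $\rho_D^s(\phi)$ is, by definition, the class in $\mr{Sym}_s^{\F^\diamond}$ of the germ of $\tilde\phi$ at $f(s)$, and Lemma~\ref{17} ensures this class is independent of $\pi_D$.

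The crucial observation is that the map $\mr{ext}$ of (\ref{extension1}) is defined by exactly the same fiber-preserving, leaf-preserving extension procedure, but only on a neighborhood of $K_s$ instead of the whole of $f(D)$: given $\phi\in C(h_{D,s})$, its unique extension $\phi^{\mr{ext}}$ is the homeomorphism near $K_s$ preserving $\pi_D$ and $\F^\sharp$ and restricting to $\phi$ on $\Delta_D$. Using the same good fibration $\pi_D$ in both constructions, the uniqueness of such extensions (which is what makes $\mr{ext}$ well defined in the first place) forces $\tilde\phi$ and $\phi^{\mr{ext}}$ to coincide on the overlap of their domains, and in particular to define the same germ at $f(s)$ in $\mr{Aut}_s^{\F^\diamond}$. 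Passing to the quotient by $\mr{Fix}_s^{\F^\diamond}$ yields $\rho_D^s(\phi)=[\mr{ext}]([\phi])$, which is precisely the assertion that $[\mr{ext}]^{-1}\circ\rho_D^s$ is the composition of the inclusion $C(H_D)\hookrightarrow C(h_{D,s})$ and the quotient map.

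The only potential obstacle is the uniqueness of the fibration-and-foliation preserving extension on which the whole argument hinges; but this is exactly what was used already to define the map $\mr{ext}$ of (\ref{extension1}) and to prove Lemma~\ref{17}, so no new technical work is needed. The remark is therefore a direct unpacking of the two definitions once one notes that they share the same local ingredient.
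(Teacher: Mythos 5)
Your proposal is correct and coincides with the justification the paper intends: the remark is stated without proof precisely because it is the direct unwinding of the two extension constructions (the definition of $\rho^s_D$ via good fibrations together with Lemma~\ref{17}, and the definition of $\mr{ext}$ in~(\ref{extension1})), which is exactly what you carry out. Your identification of the uniqueness of the fibre- and leaf-preserving extension as the single point where the two constructions are forced to agree near $f(s)$ is the right observation, and no further work is needed.
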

For $D$ containing at most two singular points of $\F^\sharp$ we  define 
\[\rho_{D}^{s}:=[\mr{ext}]\colon\mr{Sym}_{D}^{{\F^\diamond}}=C(h_{D,s})/\langle h_{D,s}\rangle\to\mr{Sym}_{s}^{{\F^\diamond}}.\]

\begin{defin}
We call \emph{symmetry group-graph} and we denote by $\mathrm{Sym}^{\F^\diamond}$  the group-graph consisting of  the groups $\mathrm{Sym}^{\F^\diamond}_D$, $\mathrm{Sym}^{\F^\diamond}_s$, with  $D\in \msl{Ve}_{\msl{A}_{\F^\diamond}}$, $s\in \msl{Ed}_{\msl{A}_{\F^\diamond}}$ and the morphisms $\rho_{D}^s$, $s\in D$.
\end{defin}

Now, we are going to define a group-graph morphism $\alpha:\mr{Aut}^{{\F^\diamond}}\to\mr{Sym}^{{\F^\diamond}}$ which will induce an isomorphism on the 1-cohomology. 
If $s\in\msl{Ed}_{{\msl A}_{\F^\diamond}}$, we define $\alpha_s$ as  the quotient map $\mr{Aut}^{{\F^\diamond}}_{s}\to\mr{Sym}^{{\F^\diamond}}_{s}$. 
If $D\in{\msl{Ve}}_{\msl{A}_{\F^\diamond}}$, we define $\alpha_{D}:\mr{Aut}^{{\F^\diamond}}_{D}\to\mr{Sym}^{{\F^\diamond}}_{D}$ as follows.
Fix $\Phi\in\mr{Aut}^{{\F^\diamond}}_{D}$ and take an homotopy $\phi_{t} : f(D\setminus \Sigma)
\rightarrow  f(D\setminus \Sigma)$, $t\in[0,1]$,
between
$\phi_{0} :=\Phi_{|f(D\setminus \Sigma)}$ 
and $\phi_{1}:=\mr{id}_{ f(D\setminus \Sigma)}$.
Consider the path 
$\beta(t)=\phi_{t}(o_{D})$ and the holonomy map $h_{\beta}:(\Phi(\Delta_{D}),\Phi(o_{D}))\to(\Delta_{D},o_{D})$ associated to it. It is easy to see that $h_{\beta}\circ\Phi|_{\Delta_{D}}$ belongs to $C(H_{D})$.
If $D$ has singular valency $\mr{val}_\Sigma(D)\ge 3$,  the group consisting of the homeomorphisms of $f(D\setminus\Sigma)$ which are homotopic to the identity is simply connected \cite{pi1homeo}; consequently $h_{\beta}$ does not depend on the chosen isotopy $\phi_{t}$ and we can put $\alpha_D(\Phi):= h_{\beta}\circ\Phi$.
Finally, if $v(D)\le 2$ then only the class $[h_{\beta}\circ \Phi_{|\Delta_{D}}]$ of $h_{\beta}\circ\Phi_{|\Delta_{D}}$ modulo $H_{D}$ is well-defined and we put $\alpha_D(\Phi):=[h_{\beta}\circ \Phi_{|\Delta_{D}}]$.

\begin{lema}
$\alpha:\mr{Aut}^{\F^\diamond}\to\mr{Sym}^{\F^\diamond}$ is a group-graph morphism.
\end{lema}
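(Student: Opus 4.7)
The plan is to verify the three conditions defining a group-graph morphism: each $\alpha_s$ is a group homomorphism, each $\alpha_D$ is a group homomorphism, and for every $(D,s)\in I_{\msl{A}_{\F^\diamond}}$ the square $\alpha_s\circ\rho_D^s=\rho_D^s\circ\alpha_D$ commutes.

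The edge part is immediate: by construction $\alpha_s$ is the canonical projection onto $\mr{Aut}_s^{\F^\diamond}/\mr{Fix}_s^{\F^\diamond}$, and this quotient is well-defined because $\mr{Fix}_s^{\F^\diamond}$ arises as the kernel of the morphism $\zeta$ of Remark~\ref{systinverses}, hence is normal in $\mr{Aut}_s^{\F^\diamond}$.

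For the vertex morphism I would first reconfirm well-definedness. When $\mr{val}_\Sigma(D)\ge 3$, two admissible isotopies from $\Phi|_{f(D\setminus\Sigma)}$ to the identity yield paths $\beta,\beta'$ whose concatenation $\beta'\cdot\beta^{-1}$ is null-homotopic in $f(D\setminus\Sigma)$ by \cite{pi1homeo}, so $h_\beta=h_{\beta'}$. When $\mr{val}_\Sigma(D)\le 2$ the loop $\beta'\cdot\beta^{-1}$ has holonomy in $H_D$, which is killed in the target $C(H_D)/H_D$. For multiplicativity, given $\Phi_1,\Phi_2$ with isotopies $\phi_t^i$ and paths $\beta_i$, I would build an isotopy from $(\Phi_1\Phi_2)|_{f(D)}$ to the identity by concatenating $\Phi_1\circ\phi_{2t}^2$ on $[0,\tfrac12]$ with $\phi_{2t-1}^1$ on $[\tfrac12,1]$; this produces the path $\beta=(\Phi_1\circ\beta_2)\cdot\beta_1$. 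Using that $\Phi_1$ preserves $\F^\sharp$ and is transversely holomorphic (Remark~\ref{saturation}), the holonomy transforms by the conjugation rule $h_{\Phi_1\circ\beta_2}=\Phi_1\circ h_{\beta_2}\circ\Phi_1^{-1}$, and a short substitution then gives $\alpha_D(\Phi_1\Phi_2)=(h_{\beta_1}\Phi_1|_{\Delta_D})\circ(h_{\beta_2}\Phi_2|_{\Delta_D})=\alpha_D(\Phi_1)\alpha_D(\Phi_2)$; the same computation holds modulo $H_D$ in the low-valency case.

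For commutativity of the square, fix $(D,s)$ and $\Phi\in\mr{Aut}_D^{\F^\diamond}$. The composite $\alpha_s\circ\rho_D^s$ sends $\Phi$ to the class of its germ at $f(s)$ modulo $\mr{Fix}_s^{\F^\diamond}$, whereas $\rho_D^s\circ\alpha_D$ sends $\Phi$ to the class of an extension $(h_\beta\circ\Phi|_{\Delta_D})^{\mr{ext}}$ produced by a good fibration. Setting $\Psi:=\Phi\circ((h_\beta\circ\Phi|_{\Delta_D})^{\mr{ext}})^{-1}$, a direct check gives $\Psi|_{\Delta_D}=h_\beta^{-1}$. Since $h_\beta$ is a holonomy map along a path contained in leaves, $h_\beta^{-1}(p)$ and $p$ lie on the same leaf for every $p\in\Delta_D$. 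Under the isomorphism $\tau_\ast:\mr{Aut}(\mc Q^{\F^\diamond}(s))\iso\mr{Aut}(\mc Q^{h_{D,s}})$ from Section~\ref{Symmetry-tree-group}, the action of $\Psi$ therefore corresponds to the trivial action on the $h_{D,s}$-orbit space; by Remark~\ref{systinverses} this forces $\Psi\in\mr{Fix}_s^{\F^\diamond}$, proving commutativity. In the val~$\leq 2$ case the identifications of Corollary~\ref{isosym} reduce $\rho_D^s$ on the Sym side to the identity up to $[\mr{ext}]$, and the same argument applies.

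The main obstacle is this last step: one must thread together the isomorphisms $\tau_\ast$ and $[\mr{ext}]$ from Corollary~\ref{isosym} and recognise that the transverse correction $h_\beta$ has been designed precisely to be absorbed into the $\langle h_{D,s}\rangle$-ambiguity built into the definition of $\mr{Sym}_s^{\F^\diamond}$. Once this is understood, the lemma reduces to the bookkeeping outlined above.
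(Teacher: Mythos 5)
Your verification that $\alpha_s$ and $\alpha_D$ are group homomorphisms is correct and in fact more detailed than the paper, which only checks the commutation of the square; in particular the conjugation formula $h_{\Phi_1\circ\beta_2}=\Phi_1\circ h_{\beta_2}\circ\Phi_1^{-1}$ and the resulting computation of $\alpha_D(\Phi_1\Phi_2)$ are fine.

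The gap is in the commutativity of the square. You set $\Psi:=\Phi\circ\bigl((h_\beta\circ\Phi|_{\Delta_D})^{\mr{ext}}\bigr)^{-1}$, observe that $\Psi|_{\Delta_D}=h_\beta^{-1}$, and conclude that $\Psi$ acts trivially on $\mc Q^{h_{D,s}}$ because $p$ and $h_\beta^{-1}(p)$ lie on the same leaf. But ``same leaf'' here means the same leaf of $\F^\sharp$ restricted to a neighbourhood of the whole path $\beta$, which is an arbitrary path in $f(D\setminus\Sigma)$ from $\Phi(o_D)$ to $o_D$ and need not be contained in $K_s$; the orbit space $\mc Q^{h_{D,s}}$, and likewise the definition of $\mr{Fix}_s^{\F^\diamond}$, only see leaves of the restriction of $\F^\sharp$ to arbitrarily small neighbourhoods of $K_s$, resp.\ of $f(s)$. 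Two points joined by a holonomy path that leaves $K_s$ (for instance one winding around another singular point of $D$) are in general \emph{not} in the same $h_{D,s}$-orbit, so triviality of $\tau_\ast(\zeta(\Psi))$ does not follow; note also that $h_\beta$ is in general not absorbed by the $\langle h_{D,s}\rangle$-ambiguity, contrary to what your closing paragraph suggests. Moreover $\zeta(\Psi)$ is determined by the germ of $\Psi$ at $f(s)$, while $\Delta_D$ sits at $o_D$, a fixed positive distance away; passing from information on $\Delta_D$ to information near $f(s)$ requires a propagation argument such as Lemma~\ref{17}, whose hypotheses ($\Psi|_{f(D)}=\mr{id}$ and $\Psi|_{\Delta_D}=\mr{id}$) your $\Psi$ does not satisfy. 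The paper closes exactly this gap by a different device: it realizes $h_\beta$ as the transverse trace of a global automorphism $\phi'\in\mr{Aut}^{\F^\diamond}_{D}$ built from flows of vector fields tangent to $\F^\sharp$ with supports in conformal disks covering $\beta$ and disjoint from $f(D)\cap\mr{Sing}(\F^\sharp)$; then $\phi'\circ\Phi$ is an admissible extension of $\alpha_D(\Phi)$ whose germ at $f(s)$ is literally the germ of $\Phi$, and Lemma~\ref{17} identifies its class modulo $\mr{Fix}_s^{\F^\diamond}$ with $\rho_D^s(\alpha_D(\Phi))$. Your argument needs this localization of the holonomy correction away from $f(s)$ (or an equivalent substitute) to go through.
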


\begin{proof}
We must see that the following diagram is commutative:
$$\xymatrix{
\mr{Aut}^{{\F^\diamond}}_{D} \ar@{->}[r]^{\alpha_{D}}\ar@{->}[d]_{\breve{\rho}^{s}_{D}} & \mr{Sym}^{{\F^\diamond}}_{D}\ar[d]^{{\rho}_{D}^{s}} 
\\ \mr{Aut}^{{\F^\diamond}}_{s}\ar@{->}[r]^{\alpha_{s}} & \mr{Sym}^{{\F^\diamond}}_{s}
}$$
where  $\breve{\rho}^s_D$, resp. $\rho^s_D$, denote the ``restriction morphisms'' of the group-graphs $\mr{Aut}^{\F^\diamond}$, resp. $\mr{Sym}^{\F^\diamond}$. Let us consider two cases, depending on the singular valency $\mr{val}_\Sigma(D)$.  First let us assume $\mr{val}_\Sigma(D)\ge 3$ and let us  fix $\phi\in\mr{Aut}^{{\F^\diamond}}_{D}$.
Then $\alpha_{D}(\phi)=h_{\beta}\circ \phi_{|\Delta_{D}}$ with $h_{\beta}:\phi(\Delta_{D})\to\Delta_{D}$ the holonomy along a path~$\beta$. There exists $\phi'\in\mr{Aut}^{{\F^\diamond}}_{D}$ with compact support outside $f(D)\cap\mr{Sing}(\F^\sharp)$ whose restriction to $\Delta_{D}$ coincides with $h_{\beta}$. Indeed $\phi'$ can be constructed by composition of flows of tangent vector fields whose supports intersect the divisor $D$ in conformal disks disjoint from the singularities and which cover the image of~$\beta$. Thus $\alpha_{D}(\phi)=\phi'\circ\phi_{|\Delta_{D}}$ and ${\rho}_{D}^{s}(\alpha_{D}(\phi))$ coincides with the class modulo $\mr{Fix}^{{\F^\diamond}}_{s}$ of the germ of $\phi'\circ\phi$ at $s$ thanks to Lemma~\ref{17}. This germ is just the germ of $\phi$ at $s$ because the support of $\phi'$ does not intersect the singularities. This achieves the proof  in the case $\mr{val}_\Sigma(D)\ge 3$. 
If $\mr{val}_\Sigma(D)\le 2$, the only difference is that only the class of $h_{\beta}\circ\phi_{|\Delta_{D}}$ modulo $H_{D}=\langle h_{D,s}\rangle$ is well-defined; but we can proceed analogously choosing arbitrarily~$h_{\beta}$.
\end{proof}

\begin{prop}[Extension]\label{propext} 
Let $W$ be a neighborhood  of $f(s)$, $s\in {\E}_{\F^\diamond}$, then each germ $\phi\in\mr{Fix}^{{\F^\diamond}}_{s}$ can be extended to a germ $\Phi\in\mr{Aut}^{{\F^\diamond}}_{D}$ along $f(D)$, whose support fulfills  $\mr{supp}(\Phi)\cap f(D)\subset W$.
\end{prop}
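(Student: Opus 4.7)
The plan is to produce $\Phi$ as the flow of a cut-off version of an infinitesimal generator of $\phi$. Fix a representative of the germ $\phi$ defined on an open polydisc $V\subset W$ centered at $f(s)$, small enough that $f(s)$ is the only singular point of $\F^\sharp$ in $V$. The crucial input, discussed below, is a holomorphic vector field $X$ on $V$, tangent to $\F^\sharp$, whose time-$1$ flow equals $\phi$. Choose a smooth non-negative function $\rho$ on a neighborhood of $f(D)$ with $\rho\equiv 1$ on a smaller polydisc $V'\Subset V$ around $f(s)$ and with $\mr{supp}(\rho)\Subset V$, and define $\Phi$ to be the time-$1$ flow of $\rho X$, extended by the identity outside $V$.

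Since $\rho X$ is still tangent to $\F^\sharp$, $\Phi$ is a continuous automorphism of $\F^\sharp$ along $f(D)$; it is holomorphic on $V'$ (where $\rho\equiv 1$), in particular at the singular point $f(s)$, and it is the identity outside $\mr{supp}(\rho)$, so that $\mr{supp}(\Phi)\cap f(D)\subset V\cap f(D)\subset W$. Because $X$ vanishes at the fixed point $f(s)$ of $\phi$, shrinking yet again ensures the flow lines $t\mapsto\exp(tX)(p)$, $t\in[0,1]$, stay inside $V'$ for $p$ close enough to $f(s)$; on such a neighborhood $\Phi$ agrees with $\exp(X)=\phi$, so $\Phi$ extends the germ $\phi$. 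Finally the isotopy $t\mapsto\exp(t\rho X)|_{f(D)\setminus\Sigma_\F}$, $t\in[0,1]$, shows that $\Phi|_{f(D)\setminus\Sigma_\F}$ is isotopic to the identity, hence $\Phi\in\mr{Aut}^{\F^\diamond}_D$.

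The genuine obstacle is to produce the vector field $X$ with $\exp(X)=\phi$. Heuristically, fixing the leaves forces any infinitesimal generator of $\phi$ to be tangent to $\F^\sharp$; conversely, Remark~\ref{exfix} tells us that every flow of a field tangent to $\F^\sharp$ yields an element of $\mr{Fix}^{\F^\diamond}_s$, so one expects $\mr{Fix}^{\F^\diamond}_s$ to coincide, at least locally near the identity, with the image of the exponential map on such fields. To make this rigorous one exploits that $\F$ is a generalized curve, which forces $f(s)$ to be a reduced singularity of one of three types (linear non-resonant, resonant linearizable, or resonant non-linearizable normalizable); in each case an explicit holomorphic normal form and a (possibly multivalued) local first integral $F$ of $\F^\sharp$ let one solve for $X$ holomorphic with $F\circ\exp(X)=F$ and $\exp(X)=\phi$ on a neighborhood of $f(s)$. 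When $\phi$ is not close to the identity one first extracts an $n$-th root $\phi^{1/n}\in\mr{Fix}^{\F^\diamond}_s$ so that $\log(\phi^{1/n})$ converges, and sets $X:=n\log(\phi^{1/n})$; alternatively one writes $\phi$ as a finite composition of such flows, extends each factor by the construction above, and composes the extensions, the inclusion $\mr{supp}(\Phi_{1}\circ\Phi_{2})\subset\mr{supp}(\Phi_{1})\cup\mr{supp}(\Phi_{2})$ preserving the support condition $\mr{supp}(\Phi)\cap f(D)\subset W$.
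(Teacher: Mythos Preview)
Your cut-off-flow strategy is clean once the vector field $X$ with $\exp(X)=\phi$ is in hand, but producing such an $X$ is precisely where the argument breaks down, and the patches you propose do not close the gap. A single holomorphic tangent field $X$ with $\exp(X)=\phi$ need not exist: already the restriction $\phi|_{f(D)}\in\mr{Diff}(\C,0)$ can be arbitrary (e.g.\ at a linearizable singularity with Camacho--Sad index $-1$, first integral $F=uv$, $f(D)=\{v=0\}$, the map $\phi(u,v)=(g(u),\,v\,u/g(u))$ lies in $\mr{Fix}^{\F^\diamond}_s$ for \emph{any} $g\in\mr{Diff}(\C,0)$), and a generic parabolic $g$ is not the time-$1$ map of any convergent vector field. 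Your case list also omits the non-resonant non-linearizable and the resonant non-normalizable singularities, both of which occur in this paper (Proposition~\ref{cent}, Example~4). The $n$-th-root device is circular---you do not construct $\phi^{1/n}$ inside $\mr{Fix}^{\F^\diamond}_s$, and even for germs tangent to the identity the formal logarithm need not converge---and the ``finite composition of flows'' alternative is asserted, not proved.

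The paper's proof sidesteps this obstruction rather than attacking it head-on. It first handles only the one-dimensional restriction $\zeta:=\phi|_{f(D)}$, writing $\zeta=\zeta_1\circ\zeta_2$ with $|\zeta_i'(0)|\neq 1$ so that each factor is linearizable and hence embeds in a flow; composing with the resulting extension reduces to $\phi|_{f(D)}=\mr{id}$. The remaining two-dimensional problem is then treated not at $f(s)$ but on an \emph{annulus} $C\subset f(D)$ bounded away from the singularity: there the implicit function theorem gives a holomorphic time-function $\tau$ with $u\circ\phi=u\circ\Phi^Z_\tau$, and after composing with a cut-off flow $\Phi^Z_{\alpha\tau}$ one obtains a map preserving each fibre $\{u=c\}$ over $C$. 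Lemma~\ref{noyaufix} then forces this fibrewise map to be a \emph{fixed integer iterate} of the holonomy, namely $\Phi^Z_{2\pi i p}$ for a single $p\in\Z$, which is easily interpolated to the identity. Thus no global logarithm of $\phi$ is ever taken; the flows used in the extension are constructed only away from $f(s)$, and it is the discreteness of $\langle h_{D,s}\rangle$ in Lemma~\ref{noyaufix} that makes the pieces match up.
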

\begin{proof} 
At the point $f(s)$ let us fix local holomorphic coordinates  $(u, v)$, $u(f(s))=v(f(s))=0$ such that the axes  are invariant by the foliation, and $v=0$ is a local equation of $f(D)$. We denote by  $Z= u\DD u + vB(u,v)\DD v$ the holomorphic vector field  tangent to the foliation.

First, we will see that each germ  of biholomorphism $\zeta : (f(D),f(s))\to (f(D),f(s))$ can be extended as  an element $g$ of $\mr{Aut}^{{\F^\diamond}}_{D}$ whose germ at $f(s)$  belongs to  $\mr{Fix}^{{\F^\diamond}}_{s}$ and whose support intersect  $f(D)$ inside $W$. This is easy to prove  when $\zeta$ is embedded in the flow $(\psi_t)_t$ of a vector field $a(u)u \DD{u}$, i.e.  $\zeta=\psi_1$. 
Indeed in this case, let us consider the real vector field $Y$ whose flow is the flow of $aZ$, but with real times. Let us take a real smooth function $\rho$ equal to $1$ on an open  neighborhood of $f(s)$, such that  $\mathrm{supp}(\rho)\cap f(D)$ is contained in  $W$ and in a definition domain of $Y$. Then $\rho Y$ extends by zero along $f(D)$ and the elements  $\Psi_t$ of its flow induce  homeomorphisms defined on  neighborhoods of $f(D)$.
Their   supports are contained in the support of $\rho$ and their germs at $f(s)$ are element of $\mr{Fix}^{{\F^\diamond}}_{s}$, cf. Example (\ref{exfix}). 
Clearly the  restriction  of $\Psi_1$ to $f(D)$ is equal to $\zeta$ near $f(s)$. Now, when $\zeta$ is not embedded in a flow, we decompose $\zeta=\zeta_1\circ \zeta_2$, with $\vert \zeta_1'(0)\vert$, $\vert \zeta_2'(0)\vert \neq 1$. Both $\zeta_1$ and $\zeta_2$ are linearizable. Thus they can both be embedded in a flow and have convenient extensions. Their composition extends $\phi$ along $f(D)$, and fulfils  the required properties.

Now, up to composition  we can suppose that the restriction of the germ $\phi$ to $f(D)$ is the identity. Let us choose $\varepsilon>0$ such that the compact disc $\overline{\mb D_{2\varepsilon}}\subset f(D)$ defined by $\vert u\vert \leq 2\varepsilon$, is contained in $W$ and in a definition domain of $\phi$. Denote by $C$ the compact annulus contained in $\overline{\mb{D}_{2\varepsilon}}$ given by $\varepsilon\leq \vert u \vert \leq 2 \varepsilon$. 
By implicit function theorem, there is a holomorphic function $\tau$ defined in an open neighborhood $\Omega$ of  $C$, that verifies:
$$(u\circ \phi)(m)=u\circ \Phi^Z_{\tau(m)}(m) \quad \hbox{and}\quad  \tau_{\vert f(D)}=0\,,$$ 
$\Phi^Z_t$ being the flow of the  previous vector field $Z$. 
Let us take a $\mc C^\infty$ function $\alpha : f(D)\to \mb R$ with  compact support  in $\Omega\cap f(D)$, that is equal to $1$ on a neighborhood of $C$. 
The map 
$$
\xi : m\mapsto \xi(m):=\Phi^Z_{\alpha(u(m))\tau(m)}(m)
$$ 
is a $\mathcal{C}^\infty$ diffeomorphism, because  its restriction to $f(D)$ is the identity and moreover it is a local diffeomorphism. Indeed  using coordinates $(u, z)$ at each point of $f(D)$, with $z$ a local first integral of the foliation,  we easily see that the jacobian matrix of $\xi$ is the identity. 
Clearly $\chi :=\phi\circ\xi^{-1}$  coincides with $\phi$ near $f(s)$, it preserves  the foliation and it  leaves invariant each fiber of $u$:
 $$
 \Delta_c:=\{u=c\},\quad \varepsilon\leq \vert  c\vert\leq 2\varepsilon\,.
 $$
 Thus the restriction $\chi_{\vert \Delta_c}$ of $\chi$ to $\Delta_c$ leaves invariant the orbits of the holonomy map of $\F^\sharp$ around $f(s)$ represented on $\Delta_c$ -which is equal to the restriction of $\Phi^Z_{2\pi}$ to $\Delta_c$. By Lemma \ref{noyaufix},  $\chi_{\vert \Delta_c}$ is an iterated of this holonomy map. We deduce of it the existence of  an integer  $p\in \mb Z$ such that near $C$,  $\chi$ coincides with $\Phi^Z_{2i\pi  p}$. 

Let us take now a function $\sigma : [0, 2\varepsilon]\to \mb R$ vanishing on $[0,\varepsilon]$ and being  equal to $1$ on $[\frac{3}{2}\varepsilon,2 \varepsilon[$.
The homeomorphism $\Theta : m\mapsto \Phi^Z_{2i\pi p\sigma(\vert u(m)\vert)}(m)$ is the identity near $f(s)$,  it coincides with $\Psi$ for $\frac{3}{2}\varepsilon\leq \vert u\vert\leq 2\varepsilon$ and it  leaves $\F$ invariant. To end the proof, we define the required  diffeomorpism $\Phi$ as the germ along $f(D)$ of the  diffeomorphism  equal to $\Theta^{-1}\circ\Psi$ when $\vert u\vert \leq 2\varepsilon$ and equal to  the identity otherwise. 
\end{proof}

\begin{teo}\label{isoH1symH1aut}
The morphism of group-graphs $\alpha : \mr{Aut}^{{\F^\diamond}}\to\mr{Sym}^{{\F^\diamond}}$  induces a natural bijection 
\begin{equation}\label{eq1}
\alpha_{1}:H^{1}({\A}_{\F^\diamond},\mr{Aut}^{{\F^\diamond}})\stackrel{\sim}{\to}H^{1}({\A}_{\F^\diamond},\mr{Sym}^{{\F^\diamond}}).
\end{equation}
\end{teo}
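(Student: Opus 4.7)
The plan is to prove that $\alpha_1$ is bijective by treating surjectivity and injectivity separately, directly at the cocycle level. Since the group-graphs $\mr{Aut}^{\F^\diamond}$ and $\mr{Sym}^{\F^\diamond}$ are non-abelian, the long exact sequence of Lemma \ref{LES} is unavailable. The three pillars of the argument will be: (a) the normality of $\mr{Fix}_s^{\F^\diamond}$ in $\mr{Aut}_s^{\F^\diamond}$, which is built into the definition of $\mr{Sym}_s^{\F^\diamond}$ as a quotient group; (b) the surjectivity of each vertex morphism $\alpha_D:\mr{Aut}_D^{\F^\diamond}\to\mr{Sym}_D^{\F^\diamond}$, which follows from extending elements of $C(H_D)$ via good fibrations as in the paragraph preceding Lemma \ref{17}; and (c) the Extension Proposition \ref{propext}, which realizes any prescribed germ in $\mr{Fix}_s^{\F^\diamond}$ by an element of $\mr{Aut}_D^{\F^\diamond}$ with arbitrarily small support on $f(D)$ near $f(s)$.

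For surjectivity, I would exploit the description of $Z^1$ from Remark \ref{obsab}: picking an orientation on each edge $s\in\Ed_{\A_{\F^\diamond}}$, the cocycle identity $g_{D,s}g_{D',s}=1$ merely fixes $g_{D',s}$ from $g_{D,s}$. Given any $\bar g^1=(\bar g_{D,s})\in Z^1(\A_{\F^\diamond},\mr{Sym}^{\F^\diamond})$, I lift each $\bar g_{D,s}$ in the chosen orientation to some $g_{D,s}\in\mr{Aut}_s^{\F^\diamond}$ via the surjective quotient map $\alpha_s$, and define $g_{D',s}:=g_{D,s}^{-1}$. The resulting element of $Z^1(\A_{\F^\diamond},\mr{Aut}^{\F^\diamond})$ maps to $\bar g^1$.

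For injectivity, suppose $[g^1]$ and $[h^1]$ in $H^1(\A_{\F^\diamond},\mr{Aut}^{\F^\diamond})$ satisfy $\alpha_1([g^1])=\alpha_1([h^1])$, witnessed by a cochain $(\bar c_D)\in C^0(\A_{\F^\diamond},\mr{Sym}^{\F^\diamond})$. Using surjectivity of $\alpha_D$ at each vertex, I lift $(\bar c_D)$ to $(c_D)\in C^0(\A_{\F^\diamond},\mr{Aut}^{\F^\diamond})$ and replace $h^1$ by the cohomologous cocycle $(c_D)\star h^1$, reducing to the case where $g_{D,s}h_{D,s}^{-1}\in\mr{Fix}_s^{\F^\diamond}$ for every $(D,s)\in I_{\A_{\F^\diamond}}$. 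I then construct a cochain $(d_D)\in C^0(\A_{\F^\diamond},\mr{Aut}^{\F^\diamond})$ with $(d_D)\star h^1=g^1$ by propagation along each tree component of $\A_{\F^\diamond}$: fix a root $D_0$ with $d_{D_0}:=\mathrm{id}$, and whenever an edge $s$ joins an already-chosen vertex $D_1$ to a new vertex $D_2$, the cocycle relation forces $\breve\rho_{D_2}^s(d_{D_2})=h_{D_1,s}^{-1}\breve\rho_{D_1}^s(d_{D_1})g_{D_1,s}$. Normality of $\mr{Fix}_s^{\F^\diamond}$ places this prescribed germ in $\mr{Fix}_s^{\F^\diamond}$, and Proposition \ref{propext} produces $d_{D_2}\in\mr{Aut}_{D_2}^{\F^\diamond}$ realizing it, with support on $f(D_2)$ confined to an arbitrarily small neighborhood of $f(s)$.

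The main obstacle, and the reason Proposition \ref{propext} is formulated with the support-localization clause, is the consistency of this inductive construction at vertices of high valency. Because the chosen $d_{D_2}$ is trivial near every other singular point of $f(D_2)$, its restriction $\breve\rho_{D_2}^{s'}(d_{D_2})$ is the identity at each subsequent edge $s'\in\partial D_2\setminus\{s\}$, so the inductive invariant ``$d_D$ is trivial at every edge of $D$ save the one leading back to the root'' is preserved. Combined with the acyclicity of $\A_{\F^\diamond}$, this decouples the equations across branches and guarantees that the process terminates without obstruction, yielding the required conjugating cochain and completing the proof of injectivity.
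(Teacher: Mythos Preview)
Your proof is correct and follows essentially the same approach as the paper's own proof. The surjectivity argument is identical (choose an orientation, lift via the surjections $\alpha_s$), and for injectivity both proofs use the same three ingredients you name: surjectivity of $\alpha_D$ to lift the $\mr{Sym}^{\F^\diamond}$-cochain, normality of $\mr{Fix}_s^{\F^\diamond}$, and Proposition~\ref{propext} with its support clause to absorb the residual $\mr{Fix}_s^{\F^\diamond}$-discrepancies. The only organizational difference is that the paper picks, for every edge $s$, one endpoint $\delta(s)\in\partial s$ and extends the discrepancy $F_s$ along $f(\delta(s))$ all at once, whereas you root each tree and propagate; your construction is precisely the paper's with $\delta(s)$ chosen to be the endpoint farther from the root, so that each non-root vertex receives exactly one $F_s$ and the root receives none.
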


\begin{proof}
The surjectivity of $\alpha_{1}$ follows easily from the surjectivity of $\alpha_{s}:\mr{Aut}^{{\F^\diamond}}_{s}\to\mr{Sym}_{s}^{{\F^\diamond}}$. For this we fix an orientation $\prec$ of the tree and for $(c_{D,s})\in Z^{1}({\A}_{\F^\diamond},\mr{Sym}^{{\F^\diamond}})$, we choose for each edge $s$ with $\partial s=\{D,D'\}$, $D\prec D'$, an element $\varphi_{D,s}$ such that $\alpha_s(\varphi_{D,s})=c_{D,s}$  and we set $\varphi_{D',s}:=\varphi_{D,s}^{-1}$. Clearly the family $(\varphi_{D,s})$ is an element of $Z^{1}({\A}_{\F^\diamond},\mr{Aut}^{{\F^\diamond}})$ defining a lift of $(c_{D,s})$.

To prove the injectivity of $\alpha_{1}$ we consider
 $[\phi_{D,s}],[\wt\phi_{D,s}]\in H^{1}({\A}_{\F^\diamond},\mr{Aut}^{{\F^\diamond}})$ such that $\alpha_{1}([\phi_{D,s}])=[\alpha_{s}(\phi_{D,s})]=[\alpha_{s}(\wt\phi_{D,s})]=\alpha_{1}([\wt\phi_{D,s}])$. 
Then there is $(g_{D})\in C^{0}({\A}_{\F^\diamond},\mr{Sym}^{{\F^\diamond}})$ such that
 $$\alpha_{s}(\wt\phi_{D,s})=\rho_{D}^{s}(g_{D})^{-1}\circ\alpha_{s}(\phi_{D,s})\circ\rho_{D'}^{s}(g_{D'})\in \mr{Sym}^{{\F^\diamond}}_{s}$$
where $s=D\cap D'$. Let $\varphi_{D}\in\mr{Aut}^{{\F^\diamond}}_{D}$ be extensions of $g_{D}\in\mr{Sym}^{{\F^\diamond}}_{D}$ and let us denote by $(\varphi_{D})_{s}\in\mr{Aut}^{{\F^\diamond}}_{s}$ its germ at $s$. Then 
$$\alpha_{s}(\wt\phi_{D,s})=\alpha_{s}((\varphi_{D}^{-1})_{s})\circ\alpha_{s}(\phi_{D,s})\circ\alpha_{s}((\varphi_{D'})_{s})$$
and there is $F_{s}\in\mr{Fix}_{s}^{{\F^\diamond}}$ such that
$\wt\phi_{D,s}=(\varphi_{D}^{-1})_{s}\circ\phi_{D,s}\circ(\varphi_{D'})_{s}\circ F_{s}$. Now we choose a map $\delta:{\msl{Ed}}_{\msl{A}_{\F^\diamond}}\to{\msl{Ve}}_{\msl{A}_{\F^\diamond}}$ such that $s\in\delta(s)$ for each $s\in{\msl{Ed}}_{\msl{A}_{\F^\diamond}}$ and we define $\bar F_{D}$ as the composition over the set $\{s\in{\E}_{\F^\diamond}\,|\,\delta(s)=D\}$ of extensions of $F_{s}$ to a neighborhood of $\delta(s)$ with disjoint supports given by  Proposition~\ref{propext}. Finally putting $\bar\varphi_{D}=\varphi_{D}\circ \bar F_{D}\in\mr{Aut}^{{\F^\diamond}}_{D}$ we have that
$$\wt\phi_{D,s}=\bar\varphi_{D}^{-1}\circ\phi_{D,s}\circ\bar\varphi_{D'}^{-1}\in\mr{Aut}^{{\F^\diamond}}_{s},$$
i.e. $[\phi_{D,s}]=[\wt\phi_{D,s}]$ in $H^{1}({\A}_{\F^\diamond},\mr{Aut}^{{\F^\diamond}})$.
\end{proof}

\begin{dem2}{of Theorem~\ref{thmB}} It follows immediately from Theorem~\ref{p1} and Theorem~\ref{isoH1symH1aut}.
\end{dem2}

\section{Foliations of finite type}\label{sectionfoliationsTF}
In this section we introduce the optimal condition on a singular germ of foliation $\F$ in order to have a finite dimensional moduli space $\mr{Mod}([\F^\diamond])$.
We keep all the notations of previous sections.\\

Given a marked foliation $\F^\diamond=(\F,f)$ and a sheaf $Q$  defined on a neighborhood of $\mc E_\F$ in the ambient space $M_\F$ of $\F^\sharp$, we can associate a group-graph, denoted $Q^{\F^\diamond}$,  over the cut-graph $\A_{\F^\diamond}$  as follows: if $s\in\msl{Ed}_{\msl A_{\F^\diamond}}$ then $Q_{s}^{\F^\diamond}$ is the stalk of $Q$ at $f(s)$ and  if $D\in\msl{Ve}_{\msl A_{\F^\diamond}}$, then
\[Q_{D}^{\F^\diamond}:=H^{0}(f(D),\iota_{f(D)}^{-1}Q),\] 
$\iota_{f(D)}$ being the inclusion map of $f(D)$ in $M_\F$ and for $s\in D$ the morphism $\rho_{D}^{s}:Q_{D}^{\F^\diamond}\to Q_{s}^{\F^\diamond}$ being the canonical restrictions.
\begin{defin}\label{transverseSymetries}
We call \emph{group-graph of transverse infinitesimal symmetries} of $\F$ the group-graph $\mc T^{\F^\diamond}$ associated to the sheaf $\mc T^{\F^\sharp}:=\mc B^{\F^\sharp}/\mc X^{\F^\sharp}$ on $M_\F$ equal to the quotient of the sheaf $\mc B^{\F^\sharp}$ of $\F^\sharp$-basic\footnote{i.e. whose flow leaves $\F^\sharp$ invariant.}  
holomorphic vector fields tangent to the $\F^\sharp$-invariant components of $\mc E_\F$, 
by the sheaf $\mc X^{\F^\sharp}$ of holomorphic vector fields tangent to $\F^\sharp$.
\end{defin}

\begin{obs}\label{vfsigma}
For each $(D,s)\in I_{\mc E^\diamond}$ let us consider the local holonomies $h_{D,s}$ as in Definition~\ref{localHolonomies}. There are linear isomorphisms, depending on the choice of an appropriate disc system,
\[
\mc T^{\F^\diamond}_D \iso \mc T_{H_D}\,,\qquad \mc T_s^{\F^\diamond}\iso \mc T_{h_{D,s}}\,,
\]
where $\mc T_{H_D}$ (resp. $\mc T_{h_{D,s}}$) is the vector space of 
 germs at $f(o_{D})$ of vector fields on the transversal disc  $\Delta_{D}$  which are invariant by the holonomy group $H_{D}$ of $\F^\sharp$ along $f(D)$ (resp. invariant by $h_{D,s}$), see \cite{Loray, MS}.
Moreover, if $\mc X_{\Delta_{D}}^{0}$ denotes the set of germs of  vector fields  on $(\Delta_{D}, f(o_D))$ vanishing at $f(o_{D})$, then
 the exponential map $\exp:\mc X_{\Delta_{D}}^{0}\to\mr{Diff}(\Delta_{D},f(o_{D}))$ sends 
 $\mc T_{h_{D,s}}$ into $C(h_{D,s})$ and $\mc T_{H_D}$ into $C(H_{D})$.
\end{obs}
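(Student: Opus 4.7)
The plan is to realise both isomorphisms through the standard ``projection to a transversal'' construction for basic vector fields. Away from $\Sigma_\F$, a foliated chart $(z,w)$ with $w$ a local first integral displays a basic holomorphic vector field as $X=a(z,w)\partial_z+b(w)\partial_w$, so that modulo $\mc X^{\F^\sharp}$ it is represented by its transverse part $b(w)\partial_w$. This identifies the sheaf $\mc T^{\F^\sharp}$ on $M_\F\setminus\Sigma_\F$ with the sheaf of transverse holomorphic vector fields, whose parallel transport along a leaf is precisely the push-forward by the holonomy pseudogroup. Fixing the appropriate compact disc system $(\mc K_s)_{s\in D\cap\Sigma}$ together with the transversal $\Delta_D$ at $f(o_D)$ therefore provides a well-defined linear restriction map
\[
r_D:\mc T^{\F^\diamond}_D=H^0(f(D),\iota_{f(D)}^{-1}\mc T^{\F^\sharp})\longrightarrow\mc X(\Delta_D,f(o_D)),
\]
whose image lands in $\mc T_{H_D}$: for each generator $h_{D,s}$ of $H_D$ coming from the loop $\gamma_s$ of Definition~\ref{localHolonomies}, a global section of $\iota_{f(D)}^{-1}\mc T^{\F^\sharp}$ is by construction invariant under transport along $\gamma_s$, and this transport is exactly $(h_{D,s})_\ast$.

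For injectivity of $r_D$, a basic germ whose transverse component vanishes on $\Delta_D$ is tangent to $\F^\sharp$ along a transversal, hence everywhere along the saturation by $\F^\sharp$, so it represents $0$ in $\mc T^{\F^\diamond}_D$. For surjectivity, starting from $v\in\mc T_{H_D}$ I extend $v$ across $f(D)\setminus\Sigma_\F$ by foliated path-lifting; the $H_D$-invariance guarantees single-valuedness. At each $f(s)$ with $s\in D\cap\Sigma$ the reduced singularity of $\F^\sharp$ is non-nodal, non-saddle-node, and the explicit description of $\F^\sharp$-basic vector fields modulo tangent ones at such points, as in \cite{Loray, MS}, shows that every $h_{D,s}$-invariant transverse germ extends to a basic germ in a neighbourhood of $f(s)$. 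Gluing these local extensions furnishes the required preimage section. This is the main technical step: translating abstract $h_{D,s}$-invariance on the transversal into a genuine basic extension across the singular point $f(s)$.

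The stalk isomorphism $\mc T^{\F^\diamond}_s\iso\mc T_{h_{D,s}}$ is the strictly local version of the same argument: projection to $\Delta_D$ near $f(s)$ sends a basic germ modulo tangent to an $h_{D,s}$-invariant germ on $\Delta_D$, and the cited local structure of basic vector fields at a reduced singularity produces the inverse. Through these identifications, the restriction morphisms $\rho_D^s$ of the group-graph $\mc T^{\F^\diamond}$ correspond to the natural inclusions $\mc T_{H_D}\hookrightarrow\mc T_{h_{D,s}}$.

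Finally, the exponential statement rests on the elementary identity $h\circ\exp(v)=\exp(h_\ast v)\circ h$ valid for any germ $v$ of vector field on $(\Delta_D,f(o_D))$ and any germ $h\in\mr{Diff}(\Delta_D,f(o_D))$. If $v\in\mc T_{h_{D,s}}\cap\mc X^0_{\Delta_D}$, then $(h_{D,s})_\ast v=v$ yields $h_{D,s}\circ\exp(v)=\exp(v)\circ h_{D,s}$, i.e.\ $\exp(v)\in C(h_{D,s})$; applying the same identity to every $h\in H_D$ shows $\exp(v)\in C(H_D)$ when $v\in\mc T_{H_D}$. The requirement $v\in\mc X^0_{\Delta_D}$ ensures that $\exp(v)$ fixes $f(o_D)$ and converges as a germ of biholomorphism, which concludes the sketch.
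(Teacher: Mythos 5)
Your argument is correct and is essentially the proof the paper has in mind: the statement appears there as a remark with no proof, referring to Loray and Mattei--Salem for exactly the local structure of basic-modulo-tangent vector fields at a reduced non-saddle-node singularity that you single out as your ``main technical step''. The projection to the transverse component in a foliated chart, holonomy invariance as the monodromy of that transverse field along $f(D)\setminus\Sigma_\F$, injectivity via the fact that a basic field tangent to $\F^\sharp$ near one point is tangent everywhere, and the commutation identity $h\circ\exp(v)=\exp(h_\ast v)\circ h$ all match the intended (standard) reasoning.
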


Now we define \emph{a coloring on $\A_{\F^\diamond}$}  by saying: 
\begin{enumerate}
\item\label{Vvert}\it $D\in \msl{Ve}_{\msl A_{\F^\diamond}}$ is \emph{green} if the holonomy group $H_{D}$ is finite,
\item\label{Evert}\it $s\in \msl{Ed}_{\msl A_{\F^\diamond}}$ is   \emph{green} if for each $D\in \partial s$  the holonomy map $h_{D,s}$ is periodic.
\item \it $D\in \msl{Ve}_{\msl A_{\F^\diamond}}$ or $s\in \msl{Ed}_{\msl A_{\F^\diamond}}$ are \emph{red} otherwise.
\end{enumerate}
Let us denote by $\mc J^{\F^\diamond}$ the \emph{group-graph of holomorphic first integrals} associated to the sheaf of germs of holomorphic first integrals of $\F^\sharp$. Because $\F^\sharp$ does not have saddle-node singularities, an element $a\in\msl{Ve}_{\msl A_{\F^\diamond}}\cup\msl{Ed}_{\msl A_{\F^\diamond}}$  is green iff $\mc J^{\F^\diamond}_{a}\neq\C$, see \cite{MatMou}.
Notice that  if an edge $s=D\cap D'$ of $\A_{\F^\diamond}$ is red then the vertices $D$ and $D'$ are also red. Hence the set of red elements of $\A_{\F^\diamond}$ is a subgraph called \emph{red graph of $\F^\diamond$} and denoted by $\AR_{\F^\diamond}$. 
\begin{prop}\label{equivrepuls}
Let  $s$ and  $D\in \partial s$ be a green edge and a green vertex of $\A_{\F^\diamond}$. Then
the following properties are equivalent:
\begin{enumerate}
 \item\label{holloc} the holonomy group $H_D$ is generated by $h_{D,s}$;
\item\label{surjintprem} $\mc J^{\F^\diamond}_D\to\mc J_s^{\F\diamond}$ is surjective;
\item\label{surjtau} $\mc T_{D}^{\F^\diamond}\to\mc T_{s}^{\F^\diamond}$
 is surjective;
\item\label{surjSym} $ \mr{Sym}^{\F^\diamond}_{D}\to\mr{Sym}_{s}^{\F^\diamond}$
 is surjective.
\end{enumerate}
\end{prop}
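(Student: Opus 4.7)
The plan is to reduce all four conditions to the same numerical criterion via a linearization of the finite cyclic holonomy group $H_D$, with the case $\mr{val}_\Sigma(D)\leq 2$ handled separately as essentially trivial.

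First, I would dispose of the case $\mr{val}_\Sigma(D)\leq 2$. If $D$ contains a single singular point $s$, then $\pi_1(D\setminus\Sigma)$ is trivial and $H_D=\langle h_{D,s}\rangle=\{1\}$; if $\mr{val}_\Sigma(D)=2$, the fundamental group is cyclic generated by the loop around $s$, whose companion loop is its inverse, so again $H_D=\langle h_{D,s}\rangle$. Condition (1) is therefore automatic, and each of (2), (3), (4) reduces to an identity between the relevant objects, using the definition of $\mr{Sym}_D^{\F^\diamond}$ in the low-valency case, the isomorphism of Corollary~\ref{isosym}, and the identifications in Remark~\ref{vfsigma}.

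Now assume $\mr{val}_\Sigma(D)\geq 3$. Since $H_D$ is a finite subgroup of $\mr{Diff}(\C,0)$, Bochner's linearization theorem produces a coordinate $z$ on $\Delta_D$ in which $H_D=\langle z\mapsto\zeta z\rangle$ with $\zeta=e^{2\pi i/n}$, $n=|H_D|$. Writing $h_{D,s}(z)=\zeta^k z$ for a suitable integer $k$, one has $|\langle h_{D,s}\rangle|=n/\gcd(n,k)=:q$, so (1) becomes the arithmetic condition $q=n$. Using the existence theorem of Mattei-Moussu for first integrals attached to green singularities, together with Remark~\ref{vfsigma}, one obtains the explicit descriptions
\[
\mc J_D^{\F^\diamond}\simeq\C\{z^n\},\quad \mc J_s^{\F^\diamond}\simeq\C\{z^q\},\quad \mc T_D^{\F^\diamond}\simeq z^{n-1}\C\{z^n\}\,\partial_z,\quad \mc T_s^{\F^\diamond}\simeq z^{q-1}\C\{z^q\}\,\partial_z,
\]
while $C(H_D)$ (resp.\ $C(h_{D,s})$) is the group of invertible series $\sum a_j z^j$ with $a_j=0$ unless $j\equiv 1\pmod n$ (resp.\ $\pmod q$). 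Since $q\mid n$ the natural maps in (2) and (3) are simply inclusions of $\C$-subalgebras (resp.\ $\C$-subspaces), each of which is an equality exactly when $n=q$; this settles (\ref{surjintprem})$\iff$(\ref{holloc}) and (\ref{surjtau})$\iff$(\ref{holloc}).

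For (\ref{surjSym})$\iff$(\ref{holloc}), I would use the paper's description that $[\mr{ext}]^{-1}\circ\rho_D^s$ is the composite $C(H_D)\hookrightarrow C(h_{D,s})\twoheadrightarrow C(h_{D,s})/\langle h_{D,s}\rangle$. Since $\langle h_{D,s}\rangle\subset H_D\subset C(H_D)$, surjectivity amounts to $C(H_D)\cdot\langle h_{D,s}\rangle=C(h_{D,s})$. The implication (\ref{holloc})$\Rightarrow$(\ref{surjSym}) is trivial, and the converse is the only nontrivial point: each element of $\langle h_{D,s}\rangle$ acts by scalar multiplication in the linearizing coordinate, which does not alter the set of exponents where a series has nonzero coefficient. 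Thus $C(H_D)\cdot\langle h_{D,s}\rangle$ still consists of series supported on exponents $\equiv 1\pmod n$, so equality with $C(h_{D,s})$ forces the congruence classes modulo $n$ and modulo $q$ to coincide, whence $n=q$. The main obstacle is this last bookkeeping step, where one must separate the action of $h_{D,s}$ by left composition (scalar multiplication in the good coordinate) from its action by right composition inside $C(h_{D,s})$; once this is observed, the verification is elementary.
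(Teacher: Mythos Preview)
Your approach is essentially identical to the paper's: linearize the finite cyclic group $H_D$, compute the first integrals, invariant vector fields, and centralizers explicitly in the linearizing coordinate, and reduce each equivalence to the arithmetic criterion $n=q$ (with the low-valency case dispatched as tautological). One minor slip: the $H_D$-invariant vector fields are $z\,\C\{z^n\}\,\partial_z$ (monomials $z^j\partial_z$ with $j\equiv 1\pmod n$), not $z^{n-1}\C\{z^n\}\,\partial_z$; this does not affect the logic of your argument.
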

\begin{proof}
Let  $D$ be a green vertex of $\A_{\F^\diamond}$ and let $z:(\Delta_D,f(o_D))\to \C$ be a linearizing coordinate of the holonomy group $H_{D}\subset\mr{Diff}(\C,0)$  which is finite. For each singular point  $s$ of $D$ (necessarily a green edge of $\A_{\F^\diamond}$) there is $n_{D,s}\in\N$ such that the local holonomies $h_{D,s}$ given in Definition~\ref{localHolonomies} are $h_{D,s}(z)=\zeta_{D,s}\,z$ for some primitive $n_{D,s}$-root of unity.
Let us denote by $n_{D}\in\N$ the least common multiple of $\{n_{D,s},\ s\in D\cap\Sigma\}$ and by $\zeta_D$ a primitive $n_D$-rooth of unity. 
Because a first integral is completely determined by its restriction to the transversal $\Delta _D$, we can consider $\mc J_D^{\F^\diamond}$ as subrings of  $\C\{z\}$. In the same way, by extending the elements of $\mc J_s^{\F^\diamond}$ along the compact discs used in Definition \ref{localHolonomies} to define $h_{D,s}$, we can also  consider $\mc J_s^{\F^\diamond}$ as a subring of $\C\{z\}$.  With these identifications and using Remark \ref{vfsigma}, we have the following well known equalities and isomorphisms:
\[
\mc J^{\F^\diamond}_{D}=\C\{z^{n_{D}}\}\,,\ 
\mc T_{D}^{\F^\diamond}\simeq\mc T_{H_D}=\mc J_D^{\F^\diamond} z\partial_{z}\,,\ 
 C(H_{D})=\{z(\alpha+\mf N_D)\,|\, \alpha\in\C^{*}\}\,,
\]
\[
\mc J^{\F^\diamond}_{s}=\C\{z^{n_{D,s}}\}\,,\ 
\mc T_{s}^{\F^\diamond}\simeq\mc T_{h_{D,s}}= \mc J_s^{\F^\diamond} z\partial_{z}\,,\ 
 C(h_{D,s})=\{z(\alpha+\mf N_{s})\,|\, \alpha\in\C^{*}\}\,,
\]
with $\mf N_D\subset \mc J_D^{\F^\diamond}$ and $\mf N_{s}\subset\mc J_{s}^{\F^\diamond}$ being the maximal ideals.  Furthermore  $H_D$ is cyclic, generated by $\zeta_D\, z$. The required equivalences follow immediately.
\end{proof}

If $B$ is a nonempty connected subgraph of  a connected component $\A_{\F^\diamond}^i$ of $\A_{\F^\diamond}$, then for every vertex 
$D\notin B$ of $\A_{\F^\diamond}^i$ or $D\in \partial B$  there is a unique geodesic $[D,B]\subset\A_{\F^\diamond}^i$ joining $D$ and $\partial B$. We define the following \emph{pre-order relation} on the set of 
vertices of the closure of $\A_{\F^\diamond}^{i}\setminus B$ by means of
\begin{equation*}
D'\le D \Longleftrightarrow D'\in[D,B].
\end{equation*}

\begin{defin} We say that   $B\subset A^i_{\F^\diamond}$   is \emph{repulsive} if for each  edge $s=D\cap D'$ of $(\A^{i}_{\F^\diamond}\setminus B)$ with $D'\le D$, 
$\mr{Sym}^{\F^\diamond}_{D}\to\mr{Sym}_{s}^{\F^\diamond}$
 is surjective.
\end{defin}

\noindent A change of marking induces an isomorphism of colored graphs compatible with the repulsiveness property that gives sense to the following definition:

\begin{defin}\label{FT}
The foliation $\F$ is of \emph{finite type} if for each connected component $\A_{\F^\diamond}^i$ of $\A_{\F^\diamond}$ we have: either the subgraph $\A_{\F^\diamond}^{i}\cap \AR_{\F^\diamond}$ is nonempty, connected and repulsive, or it is empty and there exists a green repulsive vertex in $\A^i_{\F^\diamond}$.
\end{defin}

\noindent This finiteness property does not depend on the marking. In fact thanks to Theorem~ \ref{newliftingconjugacies} of Appendix, it depends only on the topological class of the germ $\F$ at $0\in\C^2$ and it is fulfilled by all the foliations $\mc G$ with $[\mc G^\diamond]\in \mr{Mod}([\F^\diamond])$ as soon as it holds for one of them.

\begin{teo} \label{isoassymarsym} If $\F$ is of finite type then  the extension by identity  map
$Z^{1}(\AR_{\F^\diamond},\mr{Sym}^{{\F^\diamond}})\to Z^{1}(\A_{\F^\diamond},\mr{Sym}^{{\F^\diamond}})$ induces a bijection :
\begin{equation}\label{eq2}
H^{1}(\A_{\F^\diamond},\mr{Sym}^{{\F^\diamond}})\simeq H^{1}(\AR_{\F^\diamond},\mr{Sym}^{{\F^\diamond}})\,.
\end{equation}
\end{teo}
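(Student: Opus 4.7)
The plan is to deduce the statement from an iterated application of the Pruning Theorem~\ref{elagage}, as formalized in Remark~\ref{pruningiteres}. Since $\A_{\F^\diamond}$ is a disjoint union of its connected components $\A^{i}_{\F^\diamond}$, both sides factor as products indexed by $i$, so I reduce to showing that for each $i$ the extension-by-identity map induces a bijection
\[
H^{1}(\A^{i}_{\F^\diamond},\mr{Sym}^{\F^\diamond})\iso H^{1}(B^{i},\mr{Sym}^{\F^\diamond}),\qquad B^{i}:=\A^{i}_{\F^\diamond}\cap\AR_{\F^\diamond}.
\]
I intend to obtain this bijection by repeatedly cutting off $\mr{Sym}^{\F^\diamond}$-repulsive partial dead branches of $\A^{i}_{\F^\diamond}$ from its green part until only $B^{i}$ remains.

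\textbf{Case $B^{i}\neq\emptyset$.} By Definition~\ref{FT}, $B^{i}$ is connected and repulsive. The complement $\A^{i}_{\F^\diamond}\setminus B^{i}$ is then a disjoint union of maximal green subtrees $T_{k}$, each attached to $B^{i}$ at a single vertex $v_{k}\in\partial B^{i}$. Pick any leaf $v'$ of $T_{k}$, which is also a leaf of $\A^{i}_{\F^\diamond}$, and follow the unique path from $v'$ inward until either reaching $v_{k}$ or meeting a vertex of $\A^{i}_{\F^\diamond}$-valency $\geq 3$. The resulting chain $\GM$ is a partial dead branch of $\A^{i}_{\F^\diamond}$. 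Repulsivity of $B^{i}$ says that for every edge $e=D\cap D'$ of $\A^{i}_{\F^\diamond}\setminus B^{i}$ with $D'\leq D$ (i.e.~$D'$ closer to $B^{i}$), the map $\rho_{D}^{e}:\mr{Sym}_{D}^{\F^\diamond}\to\mr{Sym}_{e}^{\F^\diamond}$ is surjective; since the chain $\GM$ attaches at a point of $B^{i}$, the ordering $<_{{}_\GM}$ of vertices of $\GM$ from the attaching point coincides with $\leq$, so $\GM$ is indeed $\mr{Sym}^{\F^\diamond}$-repulsive. Theorem~\ref{elagage} then yields a bijection $H^{1}(\A^{i}_{\F^\diamond},\mr{Sym}^{\F^\diamond})\iso H^{1}(\br{\A}^{i},\mr{Sym}^{\F^\diamond})$, where $\br{\A}^{i}$ is the pruning of $\GM$. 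Iterating this — always picking a leaf of the currently pruned subgraph that lies outside $B^{i}$ (new such leaves appear as branch points lose branches), and noting that the surjectivity hypothesis is inherited for every remaining edge in the green part — one reduces $\A^{i}_{\F^\diamond}$ to $B^{i}$ in finitely many steps, exactly as in Remark~\ref{pruningiteres}.

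\textbf{Case $B^{i}=\emptyset$.} By Definition~\ref{FT}, $\A^{i}_{\F^\diamond}$ is entirely green and contains a green repulsive vertex $v_{0}$. I run the same argument with $B:=\{v_{0}\}$: the very definition of repulsive vertex provides the surjectivity of $\rho_{D}^{e}$ along every edge oriented toward $v_{0}$, so iterated pruning shrinks $\A^{i}_{\F^\diamond}$ to the single vertex $\{v_{0}\}$. Its $H^{1}$ is trivial, which matches $H^{1}(\AR_{\F^\diamond}\cap\A^{i},\mr{Sym}^{\F^\diamond})=H^{1}(\emptyset,\mr{Sym}^{\F^\diamond})=\{*\}$, and the induced bijection is automatic.

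\textbf{Main obstacle.} The only delicate point is keeping track of orientations when matching the preorder $\leq$ of Definition~\ref{FT} (oriented toward $B$) with the local order $<_{{}_\GM}$ used in the $G$-repulsivity condition of Theorem~\ref{elagage} (oriented away from the attaching point $v_{0}$). The choice of attaching points inside $B^{i}$ (resp.~at $v_{0}$ in Case~2) makes these orders compatible along each pruned chain, but I would have to verify this carefully when a pruning step produces a new leaf in the interior of a previously multi-branched vertex; the key observation is that the repulsivity hypothesis is a condition on \emph{all} edges of $\A^{i}_{\F^\diamond}\setminus B^{i}$, so it survives every intermediate pruning step and there is no cumulative loss to worry about.
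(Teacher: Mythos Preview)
Your proposal is correct and takes the same approach as the paper, whose proof is the single line ``It is a direct consequence of Pruning Theorem~\ref{elagage} and Proposition~\ref{equivrepuls}''; you have simply made the iterated pruning explicit. One minor imprecision: you write that ``the chain $\GM$ attaches at a point of $B^{i}$'', but your own construction allows the attaching point to be a green valency-$\geq 3$ vertex inside $T_k$; this does no harm, since the geodesic from any vertex of $\GM$ to $B^i$ still passes through that attaching point, so the two orders $<_{{}_\GM}$ and $\leq$ remain compatible --- exactly as you note in your ``Main obstacle'' paragraph.
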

\begin{proof}
It is a direct consequence of  Pruning Theorem~\ref{elagage} and Proposition~\ref{equivrepuls}.
\end{proof}

When the foliation $\F$ has  finite type let us now give the precise value of the integer $\tau_{\F}$ in the statement of Theorem~\ref{thm} in the introduction. 
Let us consider  the following subgraph $\AR^0_{\F^\diamond}$ whose
 \begin{itemize}
 \item  vertices correspond by $f$  to the invariant irreducible components of $\mc E_{\F}$ whose holonomy groups do not leave invariant  a non-trivial vector field,
 \item  edges correspond by $f$ to the 
 resonant non-normalizable or non-resonant non-linearizable
 singularities of $\F^\sharp$.
 \end{itemize} 
 As before changes of marking induce isomorphisms between the graphs $\AR^0_{\F^\diamond}$; that allows us to put:
 
\begin{defin}\label{defindetau} If $\F$ is of finite type we call \emph{codimension of $\F$} the integer
 \[\tau_{\F}:=\mr{rank}\, H_{1}({\AR}_{\F^\diamond}/\AR^0_{\F^\diamond},\,\Z),\]
 where ${\AR}_{\F^\diamond}/\AR^0_{\F^\diamond}$ is the  quotient graph obtained from ${\AR}_{\F^\diamond}$ by collapsing $\AR^0_{\F^\diamond}$ to a single vertex. 
 \end{defin}

We will highlight now a group structure on $H^{1}(\AR_{\F^\diamond},\mr{Sym}^{{\F^\diamond}})$ when $\F$ has finite type. Let us choose an arbitrary  map $s\mapsto D_s$ from $\Ed_{\AR_{\F^\diamond}}$ to $\Ve_{\AR_{\F^\diamond}}$ with $D_s\in \partial s$. 
Since $H^{1}(\AR_{\F^\diamond},\mr{Sym}^{{\F^\diamond}})$ is the quotient of
\[
Z^{1}(\AR_{\F^\diamond},\mr{Sym}^{{\F^\diamond}})\simeq
\bigoplus_{s\in\Ed_{\AR_{\F^\diamond}}}\mr{Sym}_{s}^{{\F^\diamond}}\simeq \bigoplus_{s\in\Ed_{\AR_{\F^\diamond}}}\frac{C(h_{D_s,s})}{\langle h_{D_s,s}\rangle}
\]
by $C^{0}(\AR_{\F^\diamond},\mr{Sym}^{{\F^\diamond}})=\bigoplus\limits_{D\in\Ve_{\AR_{\F^\diamond}}}\mr{Sym}_{D}^{{\F^\diamond}}$, we must pay attention to the centralizers $C(h)$ of the local holonomy transformations $h=h_{D,s}\in \mr{Diff}(\Delta_{D},f(o_{D}))$. 

Trivially $h$  is of one and only one following type:
\begin{itemize}
\item[$(P)^{\hphantom{0}}$]  periodic;
\item[$(L^{1})$] linearizable and non-periodic;
\item[$(L^{0})$] formally linearizable but non-linearizable;
\item[$(R^{1})$] resonant non-linearizable but normalizable;
\item[$(R^{0})$] resonant non-linearizable and non-normalizable.
\end{itemize}
Classically, in the first three cases there exists a (only formal, in the case~$(L^{0})$) local coordinate $u$ on $(\Delta_{D}, f(o_D))$, such that $u\circ h=\alpha u$, with $\alpha\in\C^{*}$.  In these situations  $h=\exp X$ , with $X:= \log(\alpha) u  \partial_u $.
In the resonant cases~$(R^{0})$ and~$(R^{1})$ there exists a  coordinate $u$ on $\Delta_{D}$, only formal in the case~$(R^{0})$,  such that $h=\ell^{r}\circ\exp t_0 X$, $X:=\frac{u^{p+1}}{1+\lambda u^{p}}\partial_u$, where  $p+1$ is the contact order of $h^k$ with the identity when $h'(0)^k=1$,   $\ell$ is the formal diffeomorphism defined by $u\circ \ell:=e^{2i\pi/p}u$, $h'(0)=e^{2i\pi r/p}$, $t_0\in \C^\ast$ and we can choose $t_0=1$, (remark that $\ell$ and $\exp X$ commute). In all cases $u$ is  unique up  to multiplication by an element of $\C^\ast$.
Let us denote by $\wh C(h)$ the centralizer of $h$ inside the group $\wh{\mr{Diff}}(\Delta_{D},f(o_{D}))$ of formal diffeomorphisms of $(\Delta, f(o_D))$. Clearly  $C(h)=\wh C(h)\cap\mr{Diff}(\Delta_{D}, f(o_{D}))$. As in Remark~\ref{vfsigma}
let us denote by $\mc T_h$  the space of germs of holomorphic vector fields on $(\Delta,f(o_D))$  invariant by $h$. 
The following result contains several well-known facts.
\begin{prop}\label{cent}
According to the type of $h\in\mr{Diff}(\Delta_{D},f(o_{D}))$ we have:
\begin{itemize}
\item [${(P)}$ ] $C(h)=\{g\in \mr{Diff}(\Delta_{D},f(o_{D})) \;|\; u\circ g=u(\alpha+F(u^{q})),\ \alpha\in\C^{*},\ F\in u \C\{u\}\}$; $C(h)/\langle h\rangle\simeq \mr{Diff}(\C,0)$;
$\mc T_h= \C\{u^q\}u\partial_{u}$; 
\item[${(L)}$ ] $\wh{C}(h)=\{g\in \wh{\mr{Diff}}(\Delta_{D},f(o_{D}))\;|\; u\circ g=\lambda g,\ \lambda\in\C^{*}\}$ $=\exp \C X\simeq \C^\ast$; 
\item[${(R)}$ ] $\wh{C}(h)=\{\ell^{n}\circ\exp tX,\ n\in\Z/p\Z,\ t\in\C\}=\langle \ell \rangle \oplus \exp \C X\simeq \Z/p\Z\oplus \C$.
\end{itemize}
Moreover:
\begin{itemize}
\item[${(L^1)}$ ]  $\mc T_h =\C X$ and $C(h)/\exp \mc T_h=\{1\}$;
\item[${(R^{1})}$ ] $\mc T_h =\C X$, $C(h)/\exp \mc T_h\simeq \Z/p\Z$ and 
$C(h)/\langle h,\exp \mc T_h\rangle\simeq \Z/(p,r)$;
\item[${(L^{0})}$ ] $\mc T_h=\{0\}$;  $C(h)=\{g\in \mr{Diff}(\Delta_{D},f(o_{D}))\;|\; u\circ g = \lambda u,\lambda\in \bf{D}\}\simeq \mbf D$, where
 $\mbf D:=\{\lambda\in \C^\ast\;|\; u^{-1}\circ(\lambda u) \hbox{ is convergent}\}$ 
 is a totally discontinuous subgroup of $\mb U(1)$, that can be uncountable  \cite{Perez-Marco};
\item[${(R^{0})}$ ] $\mc T_h=\{0\}$, there exists $m\in \N^\ast$ such that the  sequence 
\[
0\to \Z\stackrel{\alpha}{\longrightarrow}  C(h)\stackrel{\beta}{\longrightarrow} \Z/p\Z\,, \quad \alpha(t)=\exp\frac{t}{m} X\,,\quad \beta(g)=\frac{1}{2i\pi}\log g'(0)
\]
is exact and $C(h)/\langle h\rangle$  is finite.
\end{itemize}
\end{prop}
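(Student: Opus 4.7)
The plan is to argue case by case on the (formal or convergent) normal form of $h$ on the transversal $(\Delta_D, f(o_D))$. In each case the formal centralizer $\wh C(h)$ is computed by a direct Taylor-coefficient matching in the normalizing coordinate; then one decides which of its elements survive as convergent diffeomorphisms in the original coordinate, and does the analogous computation for invariant vector fields.

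First, in the periodic case $(P)$, choose a linearizing coordinate $u$ with $u\circ h=\alpha u$, $\alpha$ a primitive $q$-th root of unity, and write $g(u)=\sum_{n\ge 1}a_n u^n$. The commutation $g\circ h=h\circ g$ gives $a_n(\alpha^n-\alpha)=0$, so $a_n=0$ unless $n\equiv 1\pmod q$, yielding the stated form $u\circ g=u(\alpha'+F(u^q))$. The isomorphism $C(h)/\langle h\rangle\iso\mr{Diff}(\C,0)$ is obtained by passing to $v:=u^q$, which identifies $(\Delta_D, f(o_D))/\langle h\rangle$ with $(\C,0)$ and sends the class of $g$ to the induced germ. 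The same coefficient argument for vector fields gives $\mc T_h=\C\{u^q\}u\partial_u$.

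In the linearizable non-periodic case $(L)$ the formal coordinate $u$ satisfies $u\circ h=\alpha u$ with $\alpha\in\C^*$ not a root of unity; the same matching now forces $a_n=0$ for all $n\ge 2$, so $\wh C(h)=\{u\mapsto\lambda u\}\cong\C^*$, which coincides with $\exp\C X$. In subcase $(L^1)$ the coordinate converges, so $C(h)=\C^*=\exp\mc T_h$ and the quotient is trivial. In subcase $(L^0)$ one must decide which multipliers $\lambda$ give a conjugation that converges back in the original coordinate: this is exactly the totally disconnected subgroup $\mbf D\subset\mb U(1)$ of P\'erez-Marco, and $\mc T_h=\{0\}$ follows because the only formal candidate lies in $\C X$ and its flow meets the convergent group only at $\mbf D$. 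For the resonant cases $(R)$, start from the formal normal form $h=\ell^r\circ\exp X$ with $\ell(u)=e^{2\pi i/p}u$ and $X=\frac{u^{p+1}}{1+\lambda u^p}\partial_u$; any formal $g\in\wh C(h)$ commutes with $h^p=\exp(pX)$, hence with $X$, and the centralizer of $X$ among formal vector fields is $\C X$; projecting onto linear parts gives $\wh C(h)=\langle\ell\rangle\oplus\exp\C X$. In $(R^1)$ the normal form converges, so $C(h)=\wh C(h)$ and $\mc T_h=\C X$; the quotient $C(h)/\exp\mc T_h=\langle\ell\rangle\simeq\Z/p\Z$ is immediate, and $C(h)/\langle h,\exp\mc T_h\rangle$ is the cokernel in $\Z/p\Z$ of the subgroup generated by $r$, whose order is $\gcd(p,r)$. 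In $(R^0)$ the vector field $X$ does not converge, whence $\mc T_h=\{0\}$; the exact sequence comes from letting $m$ be the smallest positive integer for which some $\exp(X/m)$ (possibly up to composition with a power of $\ell$ that falls in $\langle h\rangle$) converges in the original coordinate, and $\beta$ reads off the multiplier; finally $C(h)/\langle h\rangle$ is finite because $\alpha^{-1}\langle h\rangle$ is a non-zero subgroup of $\Z$.

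The main obstacle lies in the non-normalizable subcases $(L^0)$ and $(R^0)$: the identification $C(h)\cong\mbf D$ rests on P\'erez-Marco's hedgehog and Brjuno-type analysis of convergent linearizations, and the existence of the integer $m$ in $(R^0)$ together with the exactness of the sequence rest on the \'Ecalle--Voronin / Martinet--Ramis theory of sectorial normalizations and the structure of the centralizer of a parabolic germ. I would invoke these results rather than reprove them; everything else in the proposition reduces to the formal coefficient bookkeeping sketched above, carried out in each normal form.
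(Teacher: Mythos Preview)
Your approach matches the paper's: case-by-case via the (formal) normal form, citing Loray/CCD for the formal centralizers and \'Ecalle's theory for $(R^0)$. The computations for $(P)$, $(L)$, $(L^1)$, $(R)$, $(R^1)$ are fine, and your finiteness argument for $C(h)/\langle h\rangle$ in $(R^0)$ is correct.

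There is one genuine gap in your treatment of $(L^0)$. You assert that the convergent multipliers form a subgroup $\mbf D\subset\mb U(1)$ that is totally disconnected, but you do not justify either claim; invoking P\'erez-Marco only tells you that $\mbf D$ can be uncountable, not that it sits in the unit circle. The paper supplies the missing step: if $g\in C(h)$ had $|\lambda|\neq 1$ then $g$ would be hyperbolic, hence holomorphically linearizable (Koenigs), and since $h$ commutes with $g$ it would be linear in the same convergent coordinate, contradicting the hypothesis $(L^0)$. The total disconnectedness follows similarly: a subgroup of $\mb U(1)$ that is not totally disconnected equals $\mb U(1)$, and then one can pick a hyperbolic $\lambda$ and rerun the argument (or note directly that having all rotations convergent forces linearizability). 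Your justification of $\mc T_h=\{0\}$ in $(L^0)$ is also a bit circular as written; the clean argument is that any nonzero convergent $h$-invariant vector field would integrate to a convergent linearization.
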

\begin{proof} The periodic case has been already described in the proof of Proposition~\ref{equivrepuls} unless the isomorphism $C(h)/\langle h\rangle\simeq\mr{Diff}(\C,0)$ which follows easily from the fact that every $g\in\mr{Diff}(\C,0)$ commuting with a rotation $z\mapsto e^{2i\pi/q}z$ writes as $(g^\flat(z^q))^{\frac{1}{q}}$ for a unique $g^\flat\in\mr{Diff}(\C,0)$. The description of the formal centralizers is given for instance in \cite[Proposition~1.3.2]{Loray} and \cite[p. 150]{CCD}, where it is also shown that $C(h)=\wh C(h)$ in the normalizable cases~$(L^{1})$ and~$(R^{1})$. 
In addition,
in the case $(L^{1})$, $\mc T_h=\C u\DD u$ and $\exp:\mc T_h\to C(h)$ can be canonically identified to the surjective morphism $\C\to\C^{*}$ given by $\mu\mapsto e^{\mu}$.
In the case $(R^{1})$, $\mc T_h$ is equal to $\C X$ and  
\[C(h)/\langle h,\exp \C X\rangle\simeq(\Z/p\Z\oplus \C)/\langle\dot{r}\oplus 1,0\oplus\C\rangle\simeq\Z/\langle p,r\rangle.\] 
Thanks to the description of $\wh{C}(h)$, in the case $(R^0)$  the kernel of $\beta$ consists  of the convergent  elements of the flow of $X$ and by \'Ecalle-Liverpool Theorem \cite[Corollary 2.8.2]{Loray} it is equal to $\alpha(\Z)$ for a suitable $m\in \N^\ast$.
Finally in the case $(L^{0})$, let $g$ be  an element of $C(h)=\wh{C}(h)\cap\mr{Diff}(\C,0)$. Then $u\circ g=\lambda u$ with $|\lambda|=1$. Indeed if $|\lambda|\neq 1$,  $g$ would be linearizable in a convergent coordinate and $h$, that commutes with $g$, would be also linearizable in the same coordinate, contradicting the assumption $(R^0)$. On the other hand, $\bf D$ is a totally discontinuous subgroup of $\mb U(1)$ because otherwise $\mbf{D}=\mb U(1)$ and $h$ would be linearizable.
\end{proof}
It follows from this proposition and from Remark \ref{obsab},

\begin{teo}\label{strutgaaut}
Given $a\in  \msl{Ve}_{\msl{A}_{\F^\diamond}} \cup \msl{Ed}_{\msl{A}_{\F^\diamond}}$ we have the equivalences:
\[
a \hbox{ is red} \Longleftrightarrow \mr{Sym}^{\F^\diamond}_a \hbox{ is abelian }\Longleftrightarrow \mr{dim}_\C\,
 \mc T_a^{\F^\diamond}<\infty\]
\[a\in  \msl{Ve}_{\AR^0_{\F^\diamond}}\cup\msl{Ed}_{\AR^0_{\F^\diamond}}\Longleftrightarrow \mc T^{\F^\diamond}_a =  0\,.
\]
Furthermore the abelian structure of the group-graph $\mr{Sym}^{{\F^\diamond}}$ over $\AR_{\F^\diamond}$  induces an abelian group structure on  $H^1(\AR_{\F^\diamond},\mr{Sym}^{{\F^\diamond}})$.
\end{teo}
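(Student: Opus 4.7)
The plan is to reduce every assertion to the case-by-case dictionary packaged in Proposition~\ref{cent}, together with the identifications $\mr{Sym}^{\F^\diamond}_s\simeq C(h_{D,s})/\langle h_{D,s}\rangle$ provided by Corollary~\ref{isosym}, $\mc T^{\F^\diamond}_s\simeq\mc T_{h_{D,s}}$ and $\mc T^{\F^\diamond}_D\simeq\mc T_{H_D}$ of Remark~\ref{vfsigma}, and the general machinery of Remark~\ref{obsab} turning an abelian group-graph into a complex of abelian groups. Once each stalk $\mr{Sym}^{\F^\diamond}_a$ on $\AR_{\F^\diamond}$ is shown to be abelian, the group structure on $H^1$ will come for free.

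First I would dispatch the edges. Fix $s\in\msl{Ed}_{\msl{A}_{\F^\diamond}}$ and $D\in\partial s$. If $s$ is green then $h_{D,s}$ is of type $(P)$, so Proposition~\ref{cent} gives $C(h_{D,s})/\langle h_{D,s}\rangle\simeq\mr{Diff}(\C,0)$ (non-abelian) and $\mc T_{h_{D,s}}=\C\{u^q\}u\partial_u$ (infinite-dimensional). If $s$ is red, then at least one $h_{D,s}$ falls in one of the four types $(L^1),(L^0),(R^1),(R^0)$; in each of these cases the (formal) centralizer is contained in $\langle\ell\rangle\oplus\exp\C X$, which is abelian, and $\dim_\C\mc T_{h_{D,s}}\le 1$. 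Reading the same table shows $\mc T_{h_{D,s}}=0$ precisely in the non-normalizable cases $(L^0)$ and $(R^0)$, which are exactly the edges declared to lie in $\msl{Ed}_{\AR^0_{\F^\diamond}}$ by its definition.

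Next I would treat the vertices in a parallel way. When $D$ is green (so $H_D$ is finite) the computation opening the proof of Proposition~\ref{equivrepuls} shows that $H_D$ is cyclic, $\mc T_{H_D}=\C\{z^{n_D}\}z\partial_z$ is infinite-dimensional, and $C(H_D)$ (respectively $C(H_D)/H_D$, when $\mr{val}_\Sigma(D)\le 2$) is isomorphic to $\mr{Diff}(\C,0)$, hence non-abelian. When $D$ is red, $H_D$ is infinite and must contain a non-periodic element $h$, because every torsion subgroup of $\mr{Diff}(\C,0)$ is finite cyclic; then $C(H_D)\hookrightarrow C(h)$ and $\mc T_{H_D}\hookrightarrow\mc T_h$, so by the previous paragraph $\mr{Sym}^{\F^\diamond}_D$ is abelian and $\mc T^{\F^\diamond}_D$ is finite-dimensional. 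The equivalence $D\in\msl{Ve}_{\AR^0_{\F^\diamond}}\Leftrightarrow\mc T^{\F^\diamond}_D=0$ is then just the definition of $\AR^0_{\F^\diamond}$ at vertex level.

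It remains to assemble the group structure on $H^1(\AR_{\F^\diamond},\mr{Sym}^{\F^\diamond})$: having shown that every stalk $\mr{Sym}^{\F^\diamond}_a$ with $a\in\msl{Ve}_{\AR_{\F^\diamond}}\cup\msl{Ed}_{\AR_{\F^\diamond}}$ is abelian, Remark~\ref{obsab} supplies a complex $C^*(\AR_{\F^\diamond},\mr{Sym}^{\F^\diamond})$ of abelian groups whose first cohomology is exactly $H^1(\AR_{\F^\diamond},\mr{Sym}^{\F^\diamond})=Z^1/\partial^0 C^0$, thereby inheriting an abelian group structure. The main obstacle in the whole argument is concentrated in Proposition~\ref{cent}, and in particular in the classical fact that every torsion subgroup of $\mr{Diff}(\C,0)$ is finite and cyclic — without this input one could not produce a non-periodic element inside an infinite $H_D$ in the red vertex case. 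Everything above this ingredient is routine bookkeeping.
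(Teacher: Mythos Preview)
Your proposal is correct and follows the same approach as the paper, which simply states that the theorem ``follows from this proposition and from Remark~\ref{obsab}'' (the proposition being Proposition~\ref{cent}). Your write-up is in fact considerably more explicit than the paper's one-line justification: you spell out the case distinctions for edges and vertices and invoke exactly the right auxiliary facts (Corollary~\ref{isosym}, Remark~\ref{vfsigma}, the computation in the proof of Proposition~\ref{equivrepuls}, and the torsion-subgroup fact for $\mr{Diff}(\C,0)$). The only minor imprecision is saying that $C(H_D)$ itself is isomorphic to $\mr{Diff}(\C,0)$ in the green vertex case; strictly it is $C(H_D)/H_D\simeq\mr{Diff}(\C,0)$, but since $H_D$ is central in $C(H_D)$ the non-abelianity conclusion you need still holds.
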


\begin{obs}\label{point-base}
By following the natural bijections~(\ref{eq0}), (\ref{eq1}) and~(\ref{eq2}) provided by Theorems~\ref{p1}, \ref{isoH1symH1aut} and~\ref{isoassymarsym} respectively,  one can check that if ${\mc H}([\F^{\diamond}])={\mc H}([\G^{\diamond}])$ then
 $\mr{Mod}([\F^{\diamond}])$ and $\mr{Mod}([\G^{\diamond}])$ coincide as sets, but their respective abelian group structures are related by the map
$\mu\mapsto \gamma \mu$ where $\gamma=[\G^{\diamond}]\in\mr{Mod}([\F^{\diamond}])$.
\end{obs}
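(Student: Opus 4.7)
The plan is to follow the composite bijection
$\tilde{\mbf i}_{\F^\diamond}:=(\text{Thm.~\ref{isoassymarsym}})\circ\alpha_{1}\circ\mbf{i}_{\F^\diamond}:\mr{Mod}([\F^\diamond])\iso H^{1}(\AR_{\F^\diamond},\mr{Sym}^{\F^\diamond})$ given by~(\ref{eq0}), (\ref{eq1}) and~(\ref{eq2}), together with its analogue $\tilde{\mbf i}_{\G^\diamond}$, and to compare the cocycles they attach to an arbitrary class. The set-theoretic equality $\mr{Mod}([\F^\diamond])=\mr{Mod}([\G^\diamond])$ is immediate: both sides are by definition the fibre of $\wt{\mc H}$ over the common value $\wt{\mc H}([\F^\diamond])=\wt{\mc H}([\G^\diamond])$. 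Moreover the coloured cut-graph $\AR$ is determined by $\wt{\mc H}$, so $\AR_{\F^\diamond}=\AR_{\G^\diamond}=:\AR$. By Theorem~\ref{p1} we have $\tilde{\mbf i}_{\F^\diamond}([\F^\diamond])=0$ and $\tilde{\mbf i}_{\G^\diamond}([\G^\diamond])=0$, so once the two targets are naturally identified the statement reduces to proving the additive identity $\tilde{\mbf i}_{\F^\diamond}(\mu)=\tilde{\mbf i}_{\G^\diamond}(\mu)+\gamma$ in a single abelian group.

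Next I would set up this identification of targets. At every red vertex $D$ and red edge $s$, the abelian groups $\mr{Sym}^{\F^\diamond}_{a}$ are canonical quotients of the centralizers $C(H_{D})$ or $C(h_{D,s})$, which depend only on the conjugacy classes of the holonomies, i.e.\ on the datum $\wt{\mc H}([\F^\diamond])$. Concretely, using the conjugating homeomorphisms $\psi^{\F\to\G}_{D}$ along the invariant components that already enter the construction of $\mbf{i}_{\F^\diamond}$ in \S\ref{sec4}, their germs at the singular points give morphisms $\mr{Aut}^{\F^\diamond}_{s}\to\mr{Aut}^{\G^\diamond}_{s}$; the two such morphisms obtained from $\psi^{\F\to\G}_{D}|_{s}$ and $\psi^{\F\to\G}_{D'}|_{s}$ differ by an inner automorphism by $\varphi^{\F\to\G}_{D,s}$, which becomes trivial on the abelian quotient $\mr{Sym}^{\F^\diamond}_{s}$ at red edges (Theorem~\ref{strutgaaut}). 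Hence they descend to a well defined group-graph isomorphism $\sigma:\mr{Sym}^{\F^\diamond}\iso\mr{Sym}^{\G^\diamond}$ over $\AR$, inducing an isomorphism $\sigma_{\ast}:H^{1}(\AR,\mr{Sym}^{\F^\diamond})\iso H^{1}(\AR,\mr{Sym}^{\G^\diamond})$.

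The core of the proof is then the cocycle computation. Pick a representative $[\mc K^\diamond]\in\mr{Mod}([\F^\diamond])$ and conjugating homeomorphisms $\psi^{\G\to\mc K}_{D}$ along the invariant components, and take $\psi^{\F\to\mc K}_{D}:=\psi^{\G\to\mc K}_{D}\circ\psi^{\F\to\G}_{D}$. Expanding and inserting $\psi^{\F\to\G}_{D}\circ(\psi^{\F\to\G}_{D})^{-1}$ between the two middle factors yields, for each edge $s$ with $\partial s=\{D,D'\}$,
\[
\varphi^{\F\to\mc K}_{D,s}
=\bigl[(\psi^{\F\to\G}_{D})^{-1}\circ\varphi^{\G\to\mc K}_{D,s}\circ\psi^{\F\to\G}_{D}\bigr]\circ\varphi^{\F\to\G}_{D,s}
\]
in the nonabelian group $\mr{Aut}^{\F^\diamond}_{s}$. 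After applying $\alpha_{1}$ and restricting to $\AR$, the bracketed conjugation descends exactly to the edge component $\sigma_{s}^{-1}$ of $\sigma^{-1}$, and the composition becomes a sum in the abelian $\mr{Sym}^{\F^\diamond}_{s}$. Passing to cohomology gives the relation
\[
\tilde{\mbf i}_{\F^\diamond}([\mc K^\diamond])=\sigma_{\ast}^{-1}\bigl(\tilde{\mbf i}_{\G^\diamond}([\mc K^\diamond])\bigr)+\gamma,
\qquad\gamma=\tilde{\mbf i}_{\F^\diamond}([\G^\diamond])\in H^{1}(\AR,\mr{Sym}^{\F^\diamond}).
\]
Identifying source and target through $\sigma_{\ast}$ and transporting back through $\tilde{\mbf i}_{\F^\diamond}^{-1}$ and $\tilde{\mbf i}_{\G^\diamond}^{-1}$ shows that the map $\mu\mapsto\gamma\cdot_{\F}\mu$ intertwines the two structures, i.e.\ is a group isomorphism $(\mr{Mod},\cdot_{\F})\iso(\mr{Mod},\cdot_{\G})$ (a quick check confirms that $\gamma\cdot_{\F}\mu\cdot_{\F}\gamma\cdot_{\F}\nu=\gamma\cdot_{\F}(\mu\cdot_{\G}\nu)$ since $\mu\cdot_{\G}\nu=\mu\cdot_{\F}\nu\cdot_{\F}\gamma^{-1}$).

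The main obstacle is to make the identification $\sigma$ genuinely canonical, independent of the intermediate choices. Both $\mbf{i}_{\F^\diamond}$ and $\mbf{i}_{\G^\diamond}$ depend on choices of good fibrations and of conjugating homeomorphisms, and the conjugation used to build $\sigma$ inherits all of these. The above argument shows that the ambiguities are exactly inner automorphisms or coboundaries, both of which are killed upon passing to the abelian quotient $\mr{Sym}^{\F^\diamond}$ on the red subgraph $\AR$ via Theorem~\ref{strutgaaut}, together with the pruning isomorphism of Theorem~\ref{elagage}. In other words, the restriction to the red subgraph and the passage from $\mr{Aut}$ to $\mr{Sym}$ are precisely what convert the tautological, noncanonical comparison by homeomorphisms into the translation formula $\mu\mapsto\gamma\mu$.
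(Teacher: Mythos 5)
Your proposal is correct and follows exactly the route the paper intends: the paper offers no written proof of this remark beyond the instruction to trace the bijections~(\ref{eq0}), (\ref{eq1}), (\ref{eq2}), and your cocycle factorization $\varphi^{\F\to\mc K}_{D,s}=\bigl[(\psi^{\F\to\G}_{D})^{-1}\circ\varphi^{\G\to\mc K}_{D,s}\circ\psi^{\F\to\G}_{D}\bigr]\circ\varphi^{\F\to\G}_{D,s}$, together with the observation that the ambiguities become inner automorphisms killed in the abelian groups $\mr{Sym}^{\F^\diamond}_a$ over $\AR$, is precisely the verification being left to the reader. One small slip: the parenthetical identity $\gamma\cdot_{\F}\mu\cdot_{\F}\gamma\cdot_{\F}\nu=\gamma\cdot_{\F}(\mu\cdot_{\G}\nu)$ is false as written (the left side is $\gamma^{2}\mu\nu$, the right side $\mu\nu$ in the $\F$-structure); the correct check is $(\gamma\mu)\cdot_{\G}(\gamma\nu)=\gamma\mu\gamma\nu\gamma^{-1}=\gamma\cdot_{\F}(\mu\cdot_{\F}\nu)$, which follows from the identity $\mu\cdot_{\G}\nu=\mu\cdot_{\F}\nu\cdot_{\F}\gamma^{-1}$ that you do establish.
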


We end this section by proving some properties of centralizers which will be useful  in the sequel. 
\begin{lema}\label{egalcentrs}
If $g$ and $h$ are  non-periodic and  $g\in C(h)$, then $C(g)=C(h)$. 
\end{lema}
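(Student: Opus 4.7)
The plan is to reduce the equality of centralizers to the single fact that the centralizer of a non-periodic germ is abelian. Once that is established, the lemma follows by a two-line symmetric argument, since $g\in C(h)$ is equivalent to $h\in C(g)$.

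First I would verify, by inspecting each of the four non-periodic types of Proposition~\ref{cent}, that $C(h)$ is abelian whenever $h$ is non-periodic. In type $(L^{1})$, $C(h)=\{u\mapsto \lambda u\mid \lambda\in\C^{*}\}\simeq\C^{*}$ in the linearizing coordinate. In type $(L^{0})$, $C(h)\simeq\mathbf{D}\subset\mathbb{U}(1)$ in the formal linearizing coordinate. In type $(R^{1})$, $\wh C(h)=\langle\ell\rangle\oplus\exp\C X\simeq \Z/p\Z\oplus\C$ is abelian (recall that $\ell$ and $\exp X$ commute), and $C(h)=\wh C(h)\cap\mr{Diff}(\Delta_D,f(o_D))$ inherits this. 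In type $(R^{0})$ the formal centralizer has the same abelian form $\Z/p\Z\oplus\C$, so again $C(h)\subset\wh C(h)$ is abelian.

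Having this, the lemma is almost immediate. Since $g\in C(h)$ and $C(h)$ is abelian, every element of $C(h)$ commutes with $g$, so
\[
C(h)\subseteq C(g).
\]
Conversely, $g\in C(h)$ means $h\in C(g)$; and since $g$ is non-periodic as well, the same observation applied to $g$ in the abelian group $C(g)$ gives
\[
C(g)\subseteq C(h).
\]
Combining the two inclusions yields $C(g)=C(h)$.

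The main (and only) obstacle is the abelianness statement, which is why the case analysis of Proposition~\ref{cent} is needed; no further calculation is required. I would therefore write the proof as one short paragraph citing Proposition~\ref{cent} for the abelianness of $C(h)$ and $C(g)$, followed by the two symmetric inclusions above.
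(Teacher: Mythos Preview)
Your proof is correct and genuinely different from the paper's. The paper argues structurally: in the non-periodic cases $(L)$ and $(R)$, the centralizer $C(h)$ is determined by the (formal) linearizing or normalizing coordinate $u$ together with, in the resonant case, the invariants $p,\lambda$; since any non-periodic $g\in C(h)$ is linearized or normalized in the \emph{same} coordinate $u$ and shares the same invariants, one concludes $C(g)=C(h)$. Your argument instead extracts from Proposition~\ref{cent} only the fact that $C(h)$ is abelian whenever $h$ is non-periodic, and then runs the symmetric two-inclusion argument $C(h)\subseteq C(g)$ and $C(g)\subseteq C(h)$ using nothing but commutativity. This is cleaner and avoids tracking coordinates and invariants; the paper's approach, on the other hand, makes explicit why the equality holds (same normal form) and that information is reused just below in Proposition~\ref{memetypes}, where one needs that all non-periodic elements of $C(h)$ have the same type. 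So the paper's route feeds naturally into the next statement, while yours isolates the minimal hypothesis needed for the lemma itself.
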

\begin{proof} The group $C(h)$ in case $(L)$ only depends on the formal coordinate $u$ that linearize $h$. Similarly in case $(R)$ all the non-periodic elements of $C(h)$ have the same invariants $p$, $\lambda$ and if we fix these invariants, the centralizers of resonant diffeomorphisms only depend on the normalizing coordinate $u$.
Thus the lemma follows from the fact 
that in  both cases all the non-periodic elements of a centralizer  can be linearized or normalized by using the same coordinate.
\end{proof}

\begin{lema}\label{centralinfini}
Let $H$ be a finitely generated subgroup of $\mr{Diff}(\C,0)$. Suppose that $H$ and its centralizer are infinite. Then $H$ is abelian, it contains a non-periodic element $h$ and  $ H\subset C(H)=C(h)$.
\end{lema}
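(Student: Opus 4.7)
The plan is to proceed in three steps: first extract a non-periodic element $h\in H$; then, combine this with the infiniteness of $C(H)$ to force $H$ to be abelian; and finally deduce $C(H)=C(h)$.

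For the first step I would examine the derivative homomorphism $\mr{der}\colon H\to\C^*$, $g\mapsto g'(0)$. Its kernel consists of diffeomorphisms tangent to the identity, and any periodic diffeomorphism tangent to the identity must itself be the identity (by Bochner-type averaging/linearization of a periodic holomorphic map). Hence, if every element of $H$ were periodic, $\mr{der}$ would be injective with image in the roots of unity, making $H$ a finitely generated torsion subgroup of $\C^*$ and therefore finite, contradicting the hypothesis. So $H$ contains a non-periodic element $h$.

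For the second step, since $h$ commutes with every element of $C(H)$, we have $C(H)\subset C(h)$, and $C(h)$ is abelian by Proposition~\ref{cent}. The strategy is to produce a non-periodic element $g\in C(H)$: once it exists, $H\subset C(g)$, and $C(g)$ is again abelian, making $H$ abelian. Applying $\mr{der}$ to $C(H)$, I would split into three subcases. If $\ker(\mr{der}|_{C(H)})$ is non-trivial, any non-identity element of the kernel is tangent to the identity and thus non-periodic. If the map is injective but its image contains a non-root-of-unity, the preimage of that element is the desired non-periodic $g$. The residual case is when $C(H)$ injects into the torsion of $\C^*$; here, if $h$ is of resonant type $(R^0)$ or $(R^1)$, the image of $\wh C(h)$ under $\mr{der}$ is contained in the finite group of $p$-th roots of unity, forcing $C(H)$ finite and yielding a contradiction. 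The only remaining possibility is that $h$ is of linearizable type $(L^0)$ or $(L^1)$: working in the (possibly only formal) linearizing coordinate $u$ for $h$, every $g\in C(H)$ acts as $u\mapsto g'(0)\,u$; expanding $\hat\phi(u)=\sum_k a_k u^k$ for $\phi\in H$ and imposing $\hat\phi\,\hat g=\hat g\,\hat\phi$ yields $a_k(g'(0)^{k-1}-1)=0$ for all $k$, and since the torsion orders of $g'(0)$, $g\in C(H)$, are unbounded, we get $a_k=0$ for every $k\ge 2$. Hence $\hat\phi\in\wh C(h)$, so $\phi\in\wh C(h)\cap\mr{Diff}(\C,0)=C(h)$, and $H$ is abelian.

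For the third step, $H$ abelian with $h\in H$ yields $H\subset C(h)$; since $C(h)$ is abelian, every element of $C(h)$ commutes with every element of $H$, giving the reverse inclusion $C(h)\subset C(H)$ and thus $C(H)=C(h)$. The main obstacle will be the residual all-torsion subcase of the second step: one cannot extract a non-periodic element from $C(H)$ directly, and one must instead exploit the rigidity that the commutation with infinitely many rotations of unbounded order, acting linearly in the normalizing coordinate of $h$, imposes on the Taylor coefficients of the elements of $H$.
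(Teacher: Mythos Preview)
Your proof is correct, and it reaches the same conclusion by a route that differs from the paper's in a couple of instructive ways.

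For the existence of a non-periodic $h\in H$, the paper argues by cases: if $H$ is abelian, a non-periodic element exists because a finitely generated periodic abelian group is finite; if $H$ is non-abelian, any nontrivial commutator is tangent to the identity and hence non-periodic. Your derivative-homomorphism argument unifies these two cases neatly.

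For the abelianity of $H$, the paper proceeds more tersely: from the description of $C(h)$ in Proposition~\ref{cent} it asserts that every infinite subgroup of $C(h)$ contains an element of infinite order, applies this to $C(H)\subset C(h)$ to extract a non-periodic $g\in C(H)$, and concludes $H\subset C(g)$ is abelian. Your explicit case split via $\mr{der}|_{C(H)}$ does the same job with more detail; in particular, your ``residual'' Taylor-coefficient argument in the $(L^0)$/$(L^1)$ case is more careful, since it directly shows that if $C(H)$ is infinite then all elements of $H$ become linear in the (formal) coordinate $u$, bypassing the need to exhibit a non-periodic $g$.

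For the identity $C(H)=C(h)$, the paper uses Lemma~\ref{egalcentrs}: having found a non-periodic $g\in C(H)\subset C(h)$, it gets $C(g)=C(h)$ and sandwiches $C(g)\subset C(H)\subset C(h)$. Your argument is more elementary: once $H$ is abelian and $h\in H$, you have $H\subset C(h)$, and since $C(h)$ is abelian every element of $C(h)$ centralizes $H$, giving $C(h)\subset C(H)\subset C(h)$. This avoids Lemma~\ref{egalcentrs} entirely.

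In short: same skeleton, but your version is a bit longer yet more self-contained, trading the appeal to Lemma~\ref{egalcentrs} for a direct use of the abelianity of $C(h)$.
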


\begin{proof} If $H$ is abelian, we can take as $h$ a non-periodic element: these can not all be periodic, otherwise $H$ would be finite. If $H$  is not abelian,  any non trivial commutator is non-periodic.
On the other hand, the description of centralizers  given by Proposition \ref{cent} allows us to see that  each infinite subgroup of $C(h)$ contains an element of infinite order. Then there is a non-periodic element $g$ in $C(H)\subset C(h)$. Because $C(g)$ is abelian and  clearly contains $H$, we have $C(g)\subset C(H)\subset C(h)$. By applying  Lemma \ref{egalcentrs} to $g\in C(h)$, we obtain  the equality $C(H)=C(h)$. The inclusion $H\subset C(H)$ follows from the  abelianity of $H$.
\end{proof}

It follows immediately:
\begin{prop}\label{memetypes}
Under the  hypothesis of the previous lemma, all the non-periodic elements of $H$ are of the same type $(L^1)$, $(L^0)$, $(R^1)$ or $(R^0)$.
\end{prop}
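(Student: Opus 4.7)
The plan is to reduce the statement, via Lemmas~\ref{centralinfini} and~\ref{egalcentrs}, to a structural assertion about a single centralizer, and then to read off the answer from Proposition~\ref{cent}.

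By Lemma~\ref{centralinfini}, $H$ is abelian, contains a non-periodic element $h$, and satisfies $H\subset C(H)=C(h)$. For any non-periodic $g\in H$ we then have $g\in C(h)$, and Lemma~\ref{egalcentrs} gives $C(g)=C(h)$. It thus suffices to prove that two non-periodic elements $g,h\in\mr{Diff}(\C,0)$ with $g\in C(h)$ have the same type.

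The formal type (linear versus resonant) follows at once from the description of $\wh C(h)$ in Proposition~\ref{cent}. If $h$ is linear, then $\wh C(h)=\exp(\C X)$, and any non-periodic element of it has multiplier not a root of unity, hence is itself linear. If $h$ is resonant, then $\wh C(h)=\langle\ell\rangle\oplus\exp(\C X)$, and any non-periodic element has a nontrivial $\exp(tX)$-component, so has root-of-unity multiplier and is resonant.

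The delicate step is agreement on the analytic subtype, i.e.\ distinguishing $(L^1)$ from $(L^0)$ (resp.\ $(R^1)$ from $(R^0)$). The key point is that $g$ and $h$ can be put simultaneously in normal form by the same coordinate $u$. In the linear case, if $u\circ h=\alpha u$ formally, then for $g\in C(h)$ the formal series $\phi=u\circ g\circ u^{-1}$ commutes with multiplication by $\alpha$; since $\alpha$ is not a root of unity this forces $\phi(t)=\lambda t$, so $u$ also linearizes $g$. In the resonant case, the explicit form $\wh C(h)=\langle\ell\rangle\oplus\exp(\C X)$ in the normalizing coordinate $u$ of $h$ exhibits $g$ directly as $\ell^n\circ\exp(tX)$, so $u$ normalizes $g$ as well. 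Since the formal linearizing (resp.\ normalizing) coordinate of any diffeomorphism is unique up to a scalar (and a $p$-th root of unity in the resonant case), convergence of $u$ is equivalent to convergence of a normalizing coordinate for $g$. Therefore $h$ is analytically linearizable/normalizable if and only if $g$ is, and $g,h$ are of the same type.
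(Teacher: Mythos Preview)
Your proof is correct and follows essentially the same approach as the paper. The paper does not give a separate proof but writes ``It follows immediately'' after Lemma~\ref{centralinfini}; the implicit content, namely that all non-periodic elements of $C(h)$ are linearized or normalized by the \emph{same} formal coordinate $u$ (already used in the proof of Lemma~\ref{egalcentrs}), is exactly what you spell out in detail using Proposition~\ref{cent}. Your invocation of Lemma~\ref{egalcentrs} for $C(g)=C(h)$ is not actually needed once you argue directly that $u$ normalizes $g$, but it does no harm.
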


\section{Non-degenerate foliations}\label{ModulusNonDeg}
Before proving Theorem~\ref{B0} stated in Introduction, we are going to look at the genericity of non-degenerate foliations. In \cite{MS}  one finds results of this type but in a more restricted framework. Theorem~6.2.1 of this paper claims  the Krull open density, in the set of 1-forms defining foliations of  second kind (in particular generalized curves),  of the set of 1-forms defining foliations fulfilling conditions (\ref{holonomienonabel}) and (\ref{nonpreiodsingchaines}) of Definition \ref{NDfoliationsDef}. We can resume the proof of this theorem by using Theorem~A of \cite{MRR} instead of \cite[Lemma~6.2.7]{MS}. We obtain in this way:
\begin{teo}\label{genericiteNonDeg} If $\eta$ is germ at $0\in\C^2$ of  a holomorphic differential 1-form  with isolated singularity  defining a generalized curve foliation $\mc G$,  then for all $p\in \N$ there exists a 1-form $\eta'$ with same $p$-jet as $\eta$ and an integer $n\ge p$ such that any foliation $\F$ defined by a differential form $\omega$ with same $n$-jet at $0$ as $\eta'$,  fulfills 
\begin{enumerate}
\item the holonomy group $\mr{Im}(\mc H_D^{\F^\sharp})$ of any component $D$ of $\mc E_{\F}$ with $v:=\mr{card}(D\cap \Sigma_\F)\geq 3$,  is a free product of $v-1$ cyclic subgroups of $\mr{Diff}(\C,0)$; in particular it is non solvable hence topologically rigid;
\item for any singular chain $D_{0},\ldots,D_{\ell}$ in $\mc E_\F$, the local holonomies of $\F^\sharp$ at the singular points $s_{i}=D_{i-1}\cap D_{i}$, $i=1,\ldots,\ell$, are non-periodic; \end{enumerate}
and in particular   $\F$  is non-degenerate.
\end{teo}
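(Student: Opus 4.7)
The plan is to follow the strategy of \cite[Theorem~6.2.1]{MS} but substituting the key step (the analogue of \cite[Lemma~6.2.7]{MS}) by an application of Theorem~A of \cite{MRR}. First I rely on the jet-determinacy of the reduction process for generalized curve foliations: for every $p\in\N$ there is $n_0\geq p$ such that any two 1-forms with the same $n_0$-jet at $0$ define equireducible foliations sharing the same dual graph of the exceptional divisor and the same Camacho--Sad indices at every singular point of their reductions (these indices depend only on the linear part of the pulled-back form at the singularity, hence on a finite jet of the original form). Thus, once $n_0$ is fixed, the combinatorial data attached to the reduction is uniform over all 1-forms with the same $n_0$-jet as a chosen one.

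Next I would construct a 1-form $\eta_1$ with same $p$-jet as $\eta$ fulfilling condition~(2), namely non-periodicity of the local holonomies at every singular chain. The local holonomy $h_{D_{i-1},s_i}$ is non-periodic as soon as the corresponding Camacho--Sad index is irrational. Since these indices are polynomial functions of a finite jet of the defining form and their rational values form a countable set, a small perturbation inside the jets of order at most $n_0$ makes all indices attached to chain singularities irrational, and this property persists for every 1-form with the same $n_0$-jet as $\eta_1$.

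Then I would upgrade $\eta_1$ to $\eta'$ realizing condition~(1). For each invariant irreducible component $D$ with $v:=\mr{card}(D\cap\Sigma)\geq 3$, the group $\pi_1(D\setminus\Sigma,\cdot)$ is free of rank $v-1$, and the holonomy representation is determined by $v-1$ local holonomies. Theorem~A of \cite{MRR} provides a sufficient condition, depending on a suitably high jet $n\geq n_0$ of the defining 1-form, under which the group generated by these local holonomies is the free product of the cyclic subgroups they generate; such a group is non-solvable hence topologically rigid, and obviously non-abelian when $v\geq 3$, so that conditions~(\ref{conditiontrgener}) and~(\ref{holonomienonabel}) of Definition~\ref{NDfoliationsDef} hold. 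Applying this criterion component by component, and perturbing $\eta_1$ only in the jets of order strictly between $n_0$ and $n$ so as not to modify the finite-jet data used in the previous step, one obtains $\eta'$ and the integer $n$ as in the statement.

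The main obstacle I foresee is the compatibility between the successive perturbations: one has to check that the sufficient conditions of Theorem~A of \cite{MRR} can be arranged by perturbations of order $>n_0$, so that the irrationality of the Camacho--Sad indices achieved in the previous step is preserved, and that the resulting combined condition is Krull-open in the space of 1-forms with the fixed $n_0$-jet of $\eta_1$. Once this is established, the non-degeneracy property~(\ref{conditiontrgener}) of Definition~\ref{NDfoliationsDef} follows from the topological rigidity granted by the free-product structure, while~(\ref{holonomienonabel}) and~(\ref{nonpreiodsingchaines}) follow from~(1) and~(2) respectively, yielding the conclusion.
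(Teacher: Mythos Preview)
Your overall strategy coincides with the paper's: both simply say ``follow the proof of \cite[Theorem~6.2.1]{MS} replacing \cite[Lemma~6.2.7]{MS} by Theorem~A of \cite{MRR}''. The paper gives no further details, so your sketch is in fact more explicit than what the paper provides.

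There is, however, a genuine gap in your treatment of condition~(2). You argue that one can make the Camacho--Sad indices at chain singularities irrational by perturbing the jets of order between $p+1$ and $n_0$. But the chain singularities $s_i=D_{i-1}\cap D_i$ are corners of the exceptional divisor, hence their positions are fixed once the reduction is fixed, and the corresponding Camacho--Sad indices are rational (not polynomial) functions of a \emph{bounded} jet of the defining form---bounded by an integer $m$ depending only on the combinatorics of the reduction. If the given $p$ satisfies $p\geq m$, then these indices are already determined by the $p$-jet of $\eta$ and cannot be moved by any perturbation of order $>p$. In that situation your irrationality argument is unavailable.

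The remedy, which is what \cite{MS} actually does, is to observe that non-periodicity of the local holonomy $h_{D,s}$ does not require its linear part $e^{2i\pi\,\mr{CS}(D,s)}$ to be of infinite order: when the index is rational one perturbs the foliation at orders beyond $m$ so that $h_{D,s}$ becomes resonant non-linearizable (its first non-trivial resonant coefficient depends on arbitrarily high jets of the form and is generically non-zero), hence non-periodic. This higher-order perturbation does not affect the Camacho--Sad data, and it is precisely the kind of compatibility issue you flagged as ``the main obstacle''; it must be resolved in this way rather than by adjusting the indices themselves. Once this is fixed, the rest of your outline---including the use of \cite{MRR} for the free-product structure yielding (TR) and non-abelianity---is in line with the intended argument.
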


\noindent It is well known  that for $p$ large enough all the foliations $\F$  have the same reduction map, the same singular points on the exceptional divisor, the same Camacho-Sad indices  and the same dicritical components as $\mc G$.\\

Let us call
\emph{$\F^\diamond$-singular chain of $\mc E$} any
sequence $D_0,\ldots,D_{\ell_{\mc C}}$ of irreducible components of $\mc E$ defining a
 connected subgraph 
\begin{equation}\label{shemadechaine}
(\mc C)\phantom{AA}
\xymatrix{
\bullet_{D_{0}}\ar@{-}[r]^{s_{1}}
& \bullet_{D_{1}}\cdots
& 
\cdots\ar@{-}[r]^{s_{\ell_{\mc C}}}& \bullet_{D_{\ell_{\mc C}}}
}
\phantom{AA\mc C}
\end{equation}
of $\A_{\F^{\diamond}}$
such that the singular valency (cf. Definition \ref{SymEtValenceSinguliere})  of its  \emph{extremities}  $D_0$ and $D_{\ell_{\mc C}}$ is at least three, and  that of the others, called \emph{interior vertices}, being exactly two. 
If $\ell_{\mc C}=1$, then $\mc C$ consists only of two adjacent divisors of singular valency at least three: $\xymatrix{
\bullet_{D_{0}}\ar@{-}[r]^{s_{1}}& 
\bullet_{D_{1}}}$.
The image by the marking map $f$ of a $\F^\diamond$-singular chain of $\mc E$ is a singular chain of $\mc E_\F$ as considered in Introduction.

\begin{prop}\label{NDimplyTF} Let $\F^\diamond$ be a non-degenerate marked foliation. Then the union $\br{\msl{R}}_{\F^\diamond}$ of all $\F^\diamond$-singular chains is a connected repulsive subgraph of $\msl{A}_{\F^\diamond}$ contained in $\msl{R}_{\F^\diamond}$;  hence $\F$ is of finite type and 
\[H^1(\msl{R}_{\F^\diamond}, \mr{Sym}^{\F^\diamond})\simeq H^1(\br{\msl{R}}_{\F^\diamond}, \mr{Sym}^{\F^\diamond})\,.\]
\end{prop}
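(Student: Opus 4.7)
The plan is to proceed in three stages: first verify the inclusion $\br{\msl R}_{\F^\diamond}\subset\msl R_{\F^\diamond}$ using the non-degeneracy hypotheses, then establish the connectedness and repulsiveness of $\br{\msl R}_{\F^\diamond}$ inside each cut-component, and finally combine these with a pruning argument to deduce both the finite-type property and the cohomological isomorphism.

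For the inclusion, I consider a singular chain $D_0,\ldots,D_\ell$. At an extremity $D_0$ or $D_\ell$, condition~(ii) of Definition~\ref{NDfoliationsDef} forces $H_D$ to be non-abelian, hence infinite, so $D$ is red. At an interior vertex $D_i$ ($0<i<\ell$) one has $\mr{val}_\Sigma(D_i)=2$ with $D_i\cap\Sigma=\{s_i,s_{i+1}\}$; by condition~(iii) both local holonomies $h_{D_i,s_i}$ and $h_{D_i,s_{i+1}}$ are non-periodic, so $H_{D_i}=\langle h_{D_i,s_i}\rangle$ is infinite and $D_i$ is red. Condition~(iii) also makes every edge of the chain red.

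Next, fix a connected component $\A_{\F^\diamond}^i$ and set $\Ve^{\geq 3}:=\{D\in\Ve_{\A_{\F^\diamond}^i}\,:\,\mr{val}_\Sigma(D)\geq 3\}$. I will check that $\br{\msl R}_{\F^\diamond}\cap\A_{\F^\diamond}^i$ coincides with the convex hull of $\Ve^{\geq 3}$ inside the tree $\A_{\F^\diamond}^i$: each singular chain is the geodesic between its two endpoints in $\Ve^{\geq 3}$, and conversely any geodesic between two elements of $\Ve^{\geq 3}$ splits into singular chains at the $\Ve^{\geq 3}$-points it meets. Being a convex hull in a tree, it is either empty (when $|\Ve^{\geq 3}|\leq 1$) or connected. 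For repulsiveness, take an edge $s=D\cap D'$ of $\A_{\F^\diamond}^i\setminus\br{\msl R}_{\F^\diamond}$ with $D'\leq D$. Since $D\notin\br{\msl R}_{\F^\diamond}$ and $\Ve^{\geq 3}\subset\br{\msl R}_{\F^\diamond}$ whenever $|\Ve^{\geq 3}|\geq 2$, the vertex $D$ satisfies $\mr{val}_\Sigma(D)\leq 2$; by the definition of $\mr{Sym}^{\F^\diamond}_D$ in that case together with Corollary~\ref{isosym}, the restriction map $\rho^s_D=[\mr{ext}]$ is an isomorphism, hence surjective.

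Finally, the same singular-valency analysis applies to every red vertex outside $\br{\msl R}_{\F^\diamond}$: it also satisfies $\mr{val}_\Sigma\leq 2$, so every edge attached to it yields an isomorphism $\rho^s_D$. The components of $\msl R_{\F^\diamond}\setminus\br{\msl R}_{\F^\diamond}$ thus assemble into $\mr{Sym}^{\F^\diamond}$-repulsive partial dead branches of $\msl R_{\F^\diamond}$ attached to $\br{\msl R}_{\F^\diamond}$, and iterated application of the Pruning Theorem~\ref{elagage} produces the natural isomorphism $H^1(\msl R_{\F^\diamond},\mr{Sym}^{\F^\diamond})\iso H^1(\br{\msl R}_{\F^\diamond},\mr{Sym}^{\F^\diamond})$. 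The finite-type condition (Definition~\ref{FT}) then follows component by component: when $\A_{\F^\diamond}^i\cap\msl R_{\F^\diamond}\neq\emptyset$, an additional check using the same singular-valency analysis shows the red graph itself is connected and repulsive; when it is empty, every vertex of $\A_{\F^\diamond}^i$ is green of singular valency $\leq 2$ and any such vertex is a green repulsive vertex. The delicate point I expect is a clean handling of the boundary cases $|\Ve^{\geq 3}(\A_{\F^\diamond}^i)|\leq 1$, where $\br{\msl R}_{\F^\diamond}\cap\A_{\F^\diamond}^i$ is empty while the red graph may still be nonempty, so the connectedness/repulsiveness required by Definition~\ref{FT} must be extracted directly from the cyclic structure of the remaining holonomy groups.
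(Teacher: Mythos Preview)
Your proposal is correct and follows essentially the same route as the paper's proof: both rely on the single observation that every vertex $D$ outside $\br{\msl R}_{\F^\diamond}$ has $\mr{val}_\Sigma(D)\leq 2$, so that $\rho^s_D=[\mr{ext}]$ is an isomorphism by Corollary~\ref{isosym}, and then apply the Pruning Theorem. Your convex-hull description of $\br{\msl R}_{\F^\diamond}\cap\A^i_{\F^\diamond}$ makes the connectedness transparent, whereas the paper simply asserts it; and your explicit invocation of Corollary~\ref{isosym} is exactly the content of the paper's phrase ``by definition of the group-graph $\mr{Sym}$, the morphisms $\rho_{D_j}^{s_j}$ are bijective''.

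The boundary case you flag, $|\Ve^{\geq 3}(\A^i_{\F^\diamond})|\leq 1$, is in fact handled by the same mechanism and needs no new idea. When $|\Ve^{\geq 3}|=1$ the unique such vertex $D^*$ is red by condition~(ii); every other vertex of $\A^i$ has $\mr{val}_\Sigma\leq 2$ and therefore the same colour as each of its adjacent edges (since for such $D$ one has $H_D=\langle h_{D,s}\rangle$), so each branch off $D^*$ is monochromatic and $\msl R_{\F^\diamond}\cap\A^i$ is $D^*$ together with the red branches, hence connected; repulsiveness is again the isomorphism $\rho^s_D=[\mr{ext}]$. When $|\Ve^{\geq 3}|=0$ the whole $\A^i$ is a chain of vertices with $\mr{val}_\Sigma\leq 2$, the colour is constant along it except possibly at the extremities, so $\msl R_{\F^\diamond}\cap\A^i$ is an interval (connected) with no green edges in its complement (repulsiveness vacuous); if this interval is empty, any vertex serves as the green repulsive vertex required by Definition~\ref{FT}. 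So the ``delicate point'' dissolves once you carry the same singular-valency bookkeeping one step further.
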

In order to simplify the notations in the two proofs below, we will write 
$\msl R$, $\br{\msl R}$, $\mr{Sym}$, instead of  
$\msl R_{\F^\diamond}$, $\br{\msl R}_{\F^\diamond}$, 
$\mr{Sym}^{\F^\diamond}$.  

\begin{dem2}{of Proposition \ref{NDimplyTF}}
Clearly $\br{\msl{R}}$ is connected and the closure of $\msl{R}\setminus\br{\msl R}$ in $\msl R$  is exactly the union of all connected subgraphs $\mc C$ denoted as in  (\ref{shemadechaine}) but with singular valencies satisfying $\mr{val}_\Sigma (D_0)\geq 3$, $\mr{val}_\Sigma(D_j) = 2$ for $0 < j <\ell_{\mc C}$ and $\mr{val}_{\Sigma}(D_{\ell_{\mc C}})=1$ or $2$.
By definition  of the group-graph $\mr{Sym}$, for $j\geq 1$ the  morphisms $\rho_{D_{j}}^{s_{j}}: \mr{Sym}_{D_{j}}\to \mr{Sym}_{s_{j}}$  are bijective and $\br{\msl{R}}$ is repulsive in $\AR$. Because $\F$ is non degenerate all the vertices and edges of $\br{\msl{R}}$ are red; thus $\AR$ is also repulsive and connected. By using Pruning Theorem~\ref{elagage} we obtain the group isomorphism $
H^1(\AR, \mr{Sym})\simeq H^1(\br\AR, \mr{Sym})$.  
\end{dem2}

\begin{dem2}{of Theorem~\ref{B0}}
To have uniqueness
of  the numbering   in the notation (\ref{shemadechaine}) of a singular chain $\mc C$,  we fix in the sequel an extremity $\br D$ of $\br\AR$ and we prescript  that $D_0$ belongs to the geodesic joining $D_{\ell_{\mc C}}$ to $\br D$.  We will say that $D_0$, resp.  $s_{\ell_{\mc C}}$, is the \emph{ initial vertex}, resp. \emph{terminal edge} of $\mc C$.

For an interior vertex $D_j$ of $\mc C$ the  morphisms $\rho_{D_{j}}^{s_{j}}
$ and $\rho_{D_{j}}^{s_{j+1}}$  are bijective and by composition they induce isomorphisms 
\[\xi_{D_j}: \mr{Sym}_{s_{1}}\simeq \mr{Sym}_{D_{j}}\,, \quad
\quad \xi_{s_j}: \mr{Sym}_{s_{1}}\simeq \mr{Sym}_{s_j}\,,\quad
0<j<\ell_{\mc C}\,.\] 
Let us consider the subgroups:
\begin{itemize}
\item $\wt{Z}^{1}(\br\AR, \mr{Sym})\subset Z^{1}(\br\AR, \mr{Sym})$ of the 1-cocycles $(\phi_{D,a})_{(D,a)\in I_{\br\AR}}$ such that $\phi_{D,a}=1$ if $a$ is not the terminal edge of some singular chain,
\item $\wt{C}^{0}(\br\AR, \mr{Sym})\subset C^{0}(\br\AR, \mr{Sym})$ of the 0-cochains $(\phi_{D})_{D\in \Ve_{\br{\AR}}}$ such that 
$\phi_{D}=
\xi_D\circ \rho^{s_1}_{D_0}(\phi_{D_{0}})$
 for all the  interior vertices $D$ of any  singular chain, $D_0$ denoting its initial vertex.
 \end{itemize}
Notice that the coboundary morphism $\partial^0$ defined in Remark \ref{obsab} maps $\wt{C}^{0}(\br\AR, \mr{Sym})$ in $\wt{Z}^{1}(\br\AR, \mr{Sym})$, allowing us to define the group
\[
\wt{H}^{1}(\br\AR, \mr{Sym}):=\mr{coker}(\partial^0 : \wt{Z}^{0}(\br\AR, \mr{Sym}) \longrightarrow \wt{Z}^{1}(\AR, \mr{Sym}))\,.
\]
We  easily see that each element of $H^{1}(\br\AR, \mr{Sym})$ can be represented by a cocycle belonging to  $\wt Z^{1}(\br\AR, \mr{Sym})$. We deduce that the morphism 
\[
\tau : \wt{H}^{1}(\br\AR, \mr{Sym}) \longrightarrow {H}^{1}(\br\AR, \mr{Sym})
\]
is surjective.
On the other hand if a cocycle $\mbf c^0:=(\phi_D)_{D\in \Ve_{\br\AR}}\in {Z}^{1}(\br\AR, \mr{Sym})$ satisfies   $\partial^0 (\mbf c^0)\in \wt{Z}^{1}(\br\AR, \mr{Sym})$, then for each singular chain $\mc C$ of lenght $\ell_{\mc C}\geq 2$ denoted as in (\ref{shemadechaine}) we have the equalities 
\[
\rho_{D_j}^{s_j}(\phi_{D_j})=\rho_{D_{j-1}}^{s_j}(\phi_{D_{j-1}})\,,\quad  j<\ell_{\mc C}\,.
\]
It follows that  $\mbf c^0\in \wt{C}^{0}(\br\AR, \mr{Sym})$. Therefore $\ker(\tau)$ is trivial and  $\tau$ is an isomorphism. 
To achieve the proof of  Theorem~\ref{B0},  let us first  notice that  the group $\wt{C}^{0}(\br\AR, \mr{Sym})$ is finite.
Indeed it is isomorphic to the product   of  all centralizers of  holonomy groups  associated to the   vertices of $\AR$ having singular valency at least three. These holonomy groups  being non-abelian by non-degenerate assumption,   thanks to Lemma \ref{centralinfini} their centralizers are finite.  
On the other hand,  Proposition \ref{cent} gives a decomposition of   $\wt{Z}^{1}(\br\AR, \mr{Sym})$ as  $F\oplus B \oplus_{j=1}^{\lambda}(\C^{*}/\alpha_{j}^{\Z})\oplus(\C^{*})^{\nu}$; that completes the proof of Theorem~\ref{B0}.
\end{dem2}

\section{Examples}\label{SectionExamples}

Before proving Theorem~\ref{thm} in full generality let us motivate its statement by computing the moduli space of some non-trivial examples using the identification $\mr{Mod}([\F^\diamond])\simeq H^1(\AR_{\F^\diamond},\mr{Sym}^{\F^\diamond})$.\\
 
 \indent $\bullet$ \emph{Example 0: a logarithmic generic multicusp}\\ 
 Let $\mc L$ be the logarithmic germ foliation at $0\in \C^2$ defined by the meromorphic form
 \[
 \omega:=\sum_{i=1}^p \alpha_i \frac{d(y^2+a_ix^3)}{y^2+a_ix^3} +
 \delta \frac{d(y-x)}{y-x}+
 \sum_{i=1}^p \beta_i \frac{d(x^2-b_iy^3)}{x^2-a_iy^3}\,,
 \]
with $a_i, b_i\in \C$ mutually distincts. 
We normalize the coefficients $\alpha_i, \beta_i,\delta\in\C^\ast$ by requiring $\delta + 2\sum_{i=1}^p(\alpha_i +\beta_i)=1$. To simplify the exposition we suppose that  $\mc E$ is equal to the exceptional divisor $\mc E_{\mc L}$, the marking being the identity map and $\mc L^\diamond = (\mc L, \mr{id}_{\mc E_{\mc L}})$. Clearly $\mc E$ is formed by five irreducible components, its dual graph is equal to $\A_{\mc L^\diamond}$
\[(\A_{\mc L^\diamond})\phantom{AAA}
\xymatrix{
\bullet_{C'}\ar@{-}[r]^{s'_{\infty}}
& \bullet_{D'}\ar@{-}[r]^{s'_{0}}
& \bullet_{D}\ar@{-}[r]^{s''_{0}}
& \bullet_{D''}\ar@{-}[r]^{s''_{\infty}}
& \bullet_{C''}
}
\phantom{(\A)\phantom{AAA}}
\]
\[
\sigma^\diamond(C')=\sigma^\diamond(C')=1\,, \quad
\sigma^\diamond(D')=\sigma^\diamond(D'')=p+2\,, \quad
\sigma^\diamond(D)=3\,,
\]
and it decomposes into one singular chain that is the red part $\AR_{\mc L^\diamond}$ of the graph 
\[
(\AR_{\mc L^\diamond})\phantom{AAA}
\xymatrix{
\bullet_{D'}\ar@{-}[r]^{s'_{0}}& 
\bullet_{D}\ar@{-}[r]^{s''_{0}}& 
\bullet_{D''}
}
\phantom{(\AR)\phantom{AAA}}
\]
and two dead branches $
\xymatrix{
\bullet_{C'}\ar@{-}[r]^{s'_{\infty}}
& 
\bullet_{D'}
}
$ 
and $
\xymatrix{
\bullet_{D''}\ar@{-}[r]^{s''_{\infty}}& 
\bullet_{C''}
}
$ necessarily  green. Thus the restriction morphisms  $\mr{Sym}^{\mc L^\diamond}_{C'}\to \mr{Sym}^{\mc L^\diamond}_{s'_\infty}$, $\mr{Sym}^{\mc L^\diamond}_{C''}\to \mr{Sym}^{\mc L^\diamond}_{s''_\infty}$ are surjective and by Pruning Theorem~\ref{elagage}, the group  $H^1(\A_{\mc L^\diamond}, \mr{Sym}^{\mc L^\diamond})$ is isomorphic to $H^1(\AR_{\mc L^\diamond}, \mr{Sym}^{\mc L^\diamond})$. On $\AR=\AR_{\mc L^\diamond}$ the morphism $\partial^0$ defined in Remark~\ref{obsab} decomposes, with additive notations on abelian groups, as 
\[
{\xymatrix{C^{0}(\AR,\mr{Sym}^{\mc L^\diamond})=\ar[d]_{\partial^0} \!\!\!\!\!\!&
 \!\!C(H_{D'})\ar[rd]_{\xi_1} \!\!&
 \!\!\oplus& C(H_{D})\ar[ld]^{\xi_2}\ar[rd]_{\xi_3} \!\!&
 \!\!\oplus& C(H_{D''})\ar[ld]_{\xi_4} \\
Z^{1}(\AR,\mr{Sym}^{\mc L^\diamond})=\!\!\!\! \!\!&
\!\! \!\!&
\!\!\mr{Sym}^{\mc L^\diamond}_{s_{0}'}
\!\!&
\!\!\oplus\!\!&
\!\! \mr{Sym}^{\mc L^\diamond}_{s_{0}''} &}}
\]
\[
\partial^0(c_1\oplus c_2\oplus c_3)=(\xi_1(c_1)+\xi_2(c_2))\oplus(\xi_3(c_2)+\xi_4(c_3))\,.
\]
On the other hand, $\mr{Sing}(\mc L^\sharp)\cap D'$, resp. $\mr{Sing}(\mc L^\sharp)\cap D''$,  is formed by 
$s'_\infty$, $s'_{0}$, resp. $s''_\infty$, $s''_{0}$, and 
the attaching points $s'_i$, resp.  $s''_i$, of the strict transforms of the curve $\{y^2+a_ix^3=0\}$, resp.  $\{x^2+b_iy^3=0\}$, $i=1,\ldots, p$; and $\mr{Sing}(\mc L^\sharp)
\cap D$ is formed by $s'_0$, $s''_0$ and the attaching  point $s_1$ of the strict transform of $\{y-x=0\}$. The Camacho-Sad indices of $\mc L^\sharp$ at these points are
\[
\mr{CS}(D',s'_{\infty})=CS(D'', s''_\infty)=1/2\,, \quad 
\mr{CS}(D, s_1)= -\delta\,,\quad
\mr{CS}(D',s'_0)=-2\wt\alpha\,,
\]
\[
\mr{CS}(D'',s''_0)=-2\wt\beta\,,
\quad
\mr{CS}(D',s'_j)=-\frac{\alpha_j}{2}\wt\alpha
\quad
\mr{CS}(D'',s''_j)=-\frac{\beta_j}{2}\wt\beta\,,
\]
with $\wt\alpha:=(1+\sum_{i=1}^p\alpha_i)$, $\wt\beta:=(1+\sum_{i=1}^p\beta_i)$. Assuming that $p\geq 3$, we choose  $\alpha_i$, $\beta_i$ and $\delta$ sufficiently generic so that no Camacho-Sad index is a real number,  except at the points $s'_\infty$ and $s''_\infty$.  All the singularities of $\mc L^\sharp$ are linearizable and  according to Proposition \ref{cent} the centralizers $C(H_{D'})$, $C(H_{D})$, $C(H_{D''})$ are isomorphic to $\C^\ast=\C/2\pi i\Z$. Using Remark \ref{exemplesymlin} we obtain:
\[
\xymatrix{
\frac{\C}{2i\pi\mb Z}\ar@{>>}[rd]_{\xi_1} 
&\oplus 
& 
\frac{\C}{2i\pi\mb Z}\ar@{->>}[rd]_{\xi_3}\ar@{->>}[ld]^{\xi_2} 
&\oplus
& 
\frac{\C}{2i\pi\mb Z}\ar@{->>}[ld]^{\xi_4}\\ 
&
\C/2i\pi(\mb Z+2\wt\alpha\mb Z) 
&\oplus
& \C/2i\pi(\mb Z+2\wt\beta\mb Z)
&
}\]
moreover $\xi_2$ and $\xi_3$ are induced by the identity map, but $\xi_1$ is induced by $z\mapsto \frac{1}{2\wt\alpha}z$ and $\xi_4$ by $z\mapsto \frac{1}{2\wt\beta}z$.  It immediately follows that $\partial^0$ is surjective and $H^1(\AR, \mr{Sym}^{\mc L^\diamond})=0$.  We conclude that $H^1(\A,\mr{Aut}^{\mc L^\diamond})=0$ but we can not deduce from this the topological SL-rigidity of $[\mc L^\diamond]$ because $\mc L$ does not satisfy condition~(TR), see \cite[Th\'eor\'eme~3.5]{Paul}.\\

 \indent $\bullet$ \emph{Example 1: non-degenerate  multicusps}\\ 
Let us perform a generic perturbation $\F_1$ of the previous example, provided by Theorem~\ref{genericiteNonDeg},  that does not change  the Camacho-Sad indices but such that the holonomy groups along $D'$ and $D''$ are  non abelian. In this case it is well-known that $\F_1$ satisfy condition~(TR) and therefore we can compute $\mr{Mod}([\F_1^\diamond])$ by identifying it with $H^1(\AR,\mr{Sym}^{\F_1^\diamond})$.
According to Lemma~\ref{centralinfini} their centralizers are finite groups $F_1'$, $F_1$ and $F_1''$ respectively, however  $\mr{Sym}^{{\F_1^\diamond}}_{s'_0}$ and $\mr{Sym}^{{\F_1^\diamond}}_{s''_0}$ remain isomorphic to $\mr{Sym}^{\mc L^\diamond}_{s'_0}$ and $\mr{Sym}^{\mc L^\diamond}_{s''_0}$ because the singularities of $\F_1^\sharp$ at $s'_0$ and $s''_0$ are again linearizable.
\[
\xymatrix{
F_1'\ar@{>>}[rd]_{\xi_1} 
&\oplus 
&F_1\ar@{->>}[rd]_{\xi_3}\ar@{->>}[ld]^{\xi_2} 
&\oplus
& F_1''\ar@{->>}[ld]^{\xi_4}\\ 
&
\C/2i\pi(\mb Z+2\wt\alpha\mb Z) 
&\oplus
& \C/2i\pi(\mb Z+2\wt\beta\mb Z)
&
}
\]
It follows:
\begin{itemize}
\item[\bf -]\it $\mr{Mod}([\mc F_1^\diamond])$  is a finite quotient of a product of two
 elliptic curves.
\end{itemize}

\indent $\bullet$ \emph{Example 2:  partially degenerate  multicusps}\\ 
With the induction technique used in the proof of Theorem~\ref{genericiteNonDeg}, we can perform a perturbation of $\mc L$  that provides a  foliation $\F_2$ with  same Camacho-Sad indices, with non abelian holonomy groups $F_2'$ and $F_2''$   along $D'$ and $D''$,   
but such that  there is a biholomorphism between neighborhoods of $D$ that conjugates $\mc F_2$ with $\mc L$. We have 
\[
\xymatrix{
F_2'\ar@{>>}[rd]_{\xi_1} 
&\oplus 
& \C/2i\pi\mb Z\ar@{->>}[rd]_{\xi_3}\ar@{->>}[ld]^{\xi_2} 
&\oplus
& F_2''\ar@{->>}[ld]^{\xi_4}\\ 
&
\C/2i\pi(\mb Z+\mu'\mb Z) 
&\oplus
& \C/2i\pi(\mb Z+\mu''\mb Z)
&
}\]
where again $\xi_2$ and $\xi_3$ are induced by the identity map. We easily obtain the exact sequence
\[
K\longrightarrow\C/2\pi i(\Z+2\wt\alpha \Z+2\wt\beta\Z) \longrightarrow \mr{Mod}([\mc F_2^\diamond])\longrightarrow 0\,,
\]
$K$ being finite. If $1$, $2\wt\alpha$, $2\wt\beta$ are $\Z$-independent then\begin{itemize}
\item[\bf -] \it $\mr{Mod}([\mc F_2^\diamond])$ is not a finite quotient of a product of elliptic curves; in particular,   in  the statement of Theorem~\ref{thm} we cannot replace $\Z^p$ by a finite group in the exact sequence (\ref{exseqmainThm}).
\end{itemize}

\indent $\bullet$ \emph{Example 3: infinite type  multicusps}\\ 
First  we choose the coefficients $\alpha_i$, $\beta_i$, $\delta$ in the expression of the 1-form $\omega$, so that $\mr{CS}(D',s'_0)\in \mb Z_{<0}$, the other Camacho-Sad indices being in $\C\setminus \R$, except for the points $s'_\infty$ and $s''_\infty$. 
At $s'_0$ the foliation $\mc L^\sharp$ possesses now a germ of holomorphic first integral, the local holonomy is a periodic rotation, thus  $\mr{Sym}^{\mc L^\diamond}_{s'_0}$ is isomorphic to $\mr{Diff}(\C,0)$. 
Then we perform a perturbation $\F_3$ of $\mc L$  changing only the holonomy groups $H_{D'}$ and  $H_D$ that become  non abelian, without changing $H_{D''}$ neither the local analytic types at  any singular point.
For such foliation $\F_3$ the group $\mr{Sym}^{\F^\diamond_3}_{s'_0}$ is always  isomorphic to $\mr{Sym}^{\F^\diamond_3}_{s'_0}\simeq\mr{Diff}(\C,0)$. The group-graph $\mr{Sym}^{\mc F_3^\diamond}$ is not abelian and its cohomology is no longer given by the cokernel of a morphism $\partial^0$.
However  
\[
\xi_4 : \mr{Sym}^{\F^\diamond_3}_{D''}\simeq \C/2\pi i\Z \longrightarrow \mr{Sym}^{\F^\diamond_3}_{s''_0}\simeq \C/2\pi i(\Z+\mu''\Z)
\]
 is always a submersion. Thus performing a new pruning we have an isomorphism  $H^1(\AR_{\F_3^\diamond}, \mr{Sym}^{\F^\diamond_3})\simeq H^1(\AR', \mr{Sym}^{\F^\diamond_3})$ with 
$\AR' := \xymatrix{
\bullet_{D'}\ar@{-}[r]^{s'_{0}}
& \bullet_{D}
}
$. The centralizer of $H_{D'}$ and $H_D$ being finite by Lemma \ref{centralinfini}, we obtain:
\begin{itemize}
\item[\bf -] \it $\mr{Mod}([\F_3^\diamond])$ is a quotient of $\mr{Diff}(\C,0)$ by the action of a finite group and $\AR_{\F_3^\diamond}$ is not connected.
\end{itemize}

\indent $\bullet$ \emph{Example 4:  Cremer  multicusps}\\ 
By gluing  techniques and thanks to realization Theorem \cite{Perez-Marco-Yoccoz} and P\'erez Marco results \cite{Perez-Marco} we can build a  foliation $\F_4$ with same separatrices, thus same resolution, as in previous examples, with non abelian holonomy groups $H_{D'}$, $H_D$, $H_{D''}$, but whose local holonomies at $s'_0$ and $s''_0$ are Cremer with uncountable centralizers. In this case 
\begin{itemize}\it
\item [\bf -] $\mr{Mod}([\F_4^\diamond])$ is a finite quotient of a product of two uncountable totally discontinuous subgroups of $\mb U(1):=\{z\in\C\;|\; \vert z\vert =1 \}$.
\end{itemize} 

\indent $\bullet$ \emph{Example 5: non-degenerate foliations with a single separatrix.}\\
For  such a foliation $\F_5$, after pruning all dead branches of the dual graph of $\mc E_{\F_5}$, the obtained graph is the red graph $\AR_{\F_5^\diamond}$  which is reduced to a geodesic segment
$\xymatrix{
\bullet_{D_0}\ar@{-}[r]^{s_1}
& 
\cdots\ar@{-}[r]^{s_\ell}
& 
\bullet_{D_\ell}
}$. 
All Camacho-Sad indices are rational numbers. The singular chains in $\AR_{\F_5^\diamond}$ are in two categories: the \emph{normalizable chains} whose edges $s$ correspond to normalizable resonant singularities of 
$\F_5^\sharp$ and  the {non-normalizable chains}. For the first one the group $\mr{Sym}^{\F_5^\diamond}_s$ is isomorphic to $\C^\ast$ and for non-normalizable chains it is isomorphic with $\Z/m_s\Z$ for a suitable $m_s\in \N$. It follows:
\begin{itemize}\it
\item[\bf -]  $\mr{Mod}([\F_5^\diamond])\simeq (\bigoplus_{i=1}^{\mu} \Z/m_i\Z \oplus \C^\ast{}^\nu)/ Z$, with $Z$ a finite subgroups, $\mu$, resp $\nu$,  the number of non-normalizable, resp. normalizable singular chains; furthermore $\mu+\nu$ is equal to the number of Puiseux pairs of the unique separatrix. 
\end{itemize}
Another specificity of this foliation $\F_5$ is that  the mapping class group of $\mc E_{\F_5}^\diamond$ 
is trivial because every singular point of $\mc E_{\F_5}$ is fixed\footnote{Each element of the mapping class group of $\mc E_{\F_5}^{\diamond}$ preserves dead branches so it must fix every singular point except maybe the attaching points of the two dead branches of the extremity valency $3$ divisor. But these two points have different Camacho-Sad indices, as can be easily deduced from \cite[p. 164]{LJDE}.} by $\mr{Mcg}(\mc E_{\F_5}^\diamond)$
and the pure mapping class group of $\mb P^1$ punctured at three points is trivial \cite[Proposition~2.3]{Farb}. From \S\ref{SubsectTopModSpace}.\ref{cc} we obtain that
\[\mr{Mod}([\F_5^\diamond])\subset
\TFe(\mc E_{\F_5}^{\diamond})
\simeq[\Fe_{\msl{tr}}(\mc E_{\F_5}^{\diamond})]_{\mc C^0}.\]
 \indent $\bullet$ \emph{Example 6: some topologically $\mr{SL}$-rigid foliations.}\\ Whenever for a  marked foliation $\F^\diamond$ the red part of any cut-component of  $\A_{\F^\diamond}$  is reduced to one vertex, the moduli space   $\mr{Mod}([\F^\diamond])$ is reduced to one element. In particular this is the case for: 
\begin{enumerate}[\bf -]
\item any non dicritical foliation reduced after only one blow-up, its separatrices being smooth curves mutually transversal, or more generally any topologically quasi-homogeneous germ, see \cite{DM},
\item absolutely dicritical foliations of Cano-Corral \cite{CanoCorral},
\item dicritical foliations that are non singular after one blow-up, see \cite{Calsamiglia} and \cite{OBRGV2}.
 \end{enumerate}

\section{Exponential and disconnected group-graphs}\label{sec9}
We keep all notations used in Section \ref{sectionfoliationsTF}.
For technical reasons the last group-graphs that we must consider will be defined uniquely over the red graph $\AR_{\F^\diamond}\subset \A_{\F^\diamond}$.
Recall that $\mc X^{\F^{\diamond}}$, $\mc B^{\F^{\diamond}}$ and $\mc T^{\F^{\diamond}}$ denote the group-graphs over $\A_{\F^\diamond}$ associated to the sheafs $\mc X^{\F^\sharp}$, $\mc B^{\F^\sharp}$ and $\mc T^{\F^\sharp}=\mc B^{\F^\sharp}/\mc X^{F^\sharp}$ of tangent, basic and transverse holomorphic vector fields for $\F^\sharp$, respectively.
\begin{lema}
For $s\in\msl{Ed}_{\msl{A}_{\F^\diamond}}$ the exponential map $\exp:\mc B_{s}^{\F^\diamond}\to\mr{Aut}^{\F^\diamond}_{s}$
induces a well-defined map $\exp_{s}^{\F^\diamond}:\mc T_{s}^{\F^\diamond}\to\mr{Sym}^{{\F^\diamond}}_{s}$.
\end{lema}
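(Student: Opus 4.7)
The exponential map sends a germ of basic holomorphic vector field $X$ at $f(s)$ (vanishing at $f(s)$) to its time-$1$ flow, which lies in $\mr{Aut}_s^{\F^\diamond}$ by hypothesis. To descend $\exp$ to the quotients $\mc T_s^{\F^\diamond} = \mc B_s^{\F^\diamond}/\mc X_s^{\F^\diamond}$ and $\mr{Sym}_s^{\F^\diamond} = \mr{Aut}_s^{\F^\diamond}/\mr{Fix}_s^{\F^\diamond}$, it suffices to prove the following: for any $X \in \mc B_s^{\F^\diamond}$ and any $Y \in \mc X_s^{\F^\diamond}$, the element $\exp(X+Y) \circ \exp(X)^{-1}$ belongs to $\mr{Fix}_s^{\F^\diamond}$.

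The key geometric fact is that for every point $p$ in a sufficiently small neighborhood of $f(s)$, the two points $\exp(X)(p)$ and $\exp(X+Y)(p)$ lie on the same leaf of $\F^\sharp$. Indeed, $X$ being basic, its time-$t$ flow sends leaves to leaves, so it descends to a flow on the (local) leaf space of $\F^\sharp$ near $f(s)$. Moreover this induced transverse flow depends only on the class of $X$ modulo $\mc X^{\F^\sharp}$, since the flow of any tangent vector field fixes each leaf setwise. Applied to $X$ and $X+Y$, whose classes modulo $\mc X^{\F^\sharp}$ coincide because $Y \in \mc X_s^{\F^\diamond}$, this gives the claim.

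Setting $F := \exp(X+Y) \circ \exp(X)^{-1} \in \mr{Aut}_s^{\F^\diamond}$, the previous claim says that for every $q$ in some neighborhood of $f(s)$, the points $q$ and $F(q)$ lie on the same leaf of $\F^\sharp$. By Remark~\ref{exfix}, this is exactly the defining condition of $\mr{Fix}_s^{\F^\diamond}$, so $F \in \mr{Fix}_s^{\F^\diamond}$. Hence $\exp(X+Y)$ and $\exp(X)$ have the same image in $\mr{Sym}_s^{\F^\diamond}$, and $\exp_s^{\F^\diamond}$ is therefore well defined on $\mc T_s^{\F^\diamond}$.

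The main obstacle is making the leaf-space argument rigorous, since near the singular point $f(s)$ the leaf space is non-Hausdorff and need not carry a sensible flow in the usual sense. The cleanest way around this is to work with the pro-object $\mc Q^{\F^\diamond}(s) = (\mc Q_{V_\alpha}^{\F})_{\alpha}$ of Remark~\ref{systinverses} and the morphism $\zeta : \mr{Aut}_s^{\F^\diamond} \to \mr{Aut}(\mc Q^{\F^\diamond}(s))$, whose kernel is precisely $\mr{Fix}_s^{\F^\diamond}$. The argument then reduces to showing that $\zeta \circ \exp$ vanishes on $\mc X_s^{\F^\diamond}$, which in turn reduces to the elementary observation that the flow of a tangent holomorphic vector field preserves each leaf, and hence induces the identity on every $\mc Q_{V_\alpha}^\F$.
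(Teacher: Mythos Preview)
Your overall strategy is right, but the justification of the ``key geometric fact'' has a real gap, and the attempted repair in the last paragraph does not close it. You infer that the induced transverse flow depends only on the class modulo $\mc X^{\F^\sharp}$ ``since the flow of any tangent vector field fixes each leaf setwise.'' That inference is invalid: $\exp$ is not additive, so $\exp(X+Y)\neq\exp(X)\circ\exp(Y)$ in general, and the fact that $\exp(Y)$ fixes leaves says nothing by itself about how $\exp(X+Y)$ compares to $\exp(X)$. The same error recurs in your final paragraph: reducing to ``$\zeta\circ\exp$ vanishes on $\mc X_s^{\F^\diamond}$'' would require $\zeta\circ\exp$ to be a homomorphism from $(\mc B_s^{\F^\diamond},+)$ to $\mr{Aut}(\mc Q^{\F^\diamond}(s))$, and that is exactly the statement you are trying to establish.

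The paper closes this gap by interpolating between $X$ and $X+Y$. For each neighborhood $V$ of $f(s)$ one chooses $U\subset V$ so that $\phi(p,t,\alpha):=\exp\bigl(t(X+\alpha Y)\bigr)(p)$ maps $U\times\mb D_2\times\mb D_2$ into $V$, and proves that for every $p\in U$ the curve $\alpha\in[0,1]\mapsto\phi(p,1,\alpha)$ lies in a single leaf of $\F^\sharp_{|V}$. Near $p$ one takes a local first integral $F$: since $Y(F)=0$ one has $\partial_t F(\phi)=(X+\alpha Y)(F)(\phi)=X(F)(\phi)$, and since $X$ is basic $X(F)$ is itself a function of $F$; thus $F(\phi)$ solves an ODE in $t$ that is independent of $\alpha$, with $\alpha$-independent initial value $F(p)$, giving $\partial_\alpha F(\phi)=0$ for small $t$. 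Analytic continuation in $t$ (e.g.\ via the pullback of a defining $1$-form of $\F^\sharp$ on $V$) then propagates this to $t=1$. Your local-coordinates intuition --- that in a foliated chart $X$ and $X+Y$ share the same transverse component, hence their flow trajectories share the same transverse coordinate --- is precisely this computation in disguise, but without the interpolating family $\alpha\mapsto X+\alpha Y$ and the passage from small $t$ to $t=1$ it is not a proof.
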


\begin{dem} 
We must prove that
$\exp(Z+X)\equiv\exp(Z)$ modulo $\mr{Fix}^{{\F^\diamond}}_{s}$  if $Z\in\mc B^{\F^\diamond}_{s}$ and $X\in\mc X^{\F^\diamond}_{s}$. For that it suffices to show that for each neighborhood $V$ of $f(s)$ there is another neighborhood $U$ of $f(s)$ such that for each $p\in U$ the curve $\alpha\in[0,1]\mapsto\exp(Z+\alpha X)(p)$ is contained in a leaf of $\F_{|V}$. 
We choose $U\subset V$ such that the map $\phi:U\times \mb D_2\times\mb D_2\to V$ given by $\phi(p,t,\alpha)=\exp(t(Z+\alpha X))(p)$ is well-defined.  Fix $p\in U$ and take a local holomorphic first integral $F$ defined in a neighborhood $W$ of $p$. If $t$ is small enough then $\phi(p,t,\alpha)\in W$ and 
\begin{align*}
\frac{\partial}{\partial t}\left(\frac{\partial}{\partial\alpha}F(\phi(p,t,\alpha))\right)&=\frac{\partial}{\partial\alpha}\left(\frac{\partial}{\partial t}F(\phi(p,t,\alpha))\right)\\&=\frac{\partial}{\partial\alpha}\left((Z+\alpha X)(F)\circ\phi(p,t,\alpha)\right)\\
&=\left[X(F)\circ\phi(p,t,\alpha)\right]\frac{\partial}{\partial\alpha}\phi(p,t,\alpha)=0
\end{align*} 
because $X$ is tangent to $\F^\sharp$. Since $F(p,0,\alpha)=p$ does not depend on $\alpha$ we obtain that $\frac{\partial}{\partial\alpha}F(\phi(p,t,\alpha))=0$ for $t$ small enough. As $\phi$ is holomorphic, we conclude that the curve $\alpha\mapsto\phi(p,1,\alpha)$ is contained in a leaf of $\F^\sharp_{|V}$. 
\end{dem}

\begin{obs}\label{exps}
It can be checked that under the identifications ${\mc T}_{s}^{\F^\diamond}\simeq \mc T_{h_{D,s}}$ and $C(h_{D,s})/\langle h_{D,s}\rangle\simeq \mr{Sym}^{{\F^\diamond}}_{s}$ given by Remark~\ref{vfsigma} and Corollary~\ref{isosym}, the morphism
$\exp_{s}^{\F^\diamond}$ coincides with the composition of the restriction  $ \mc T_{h_{D,s}}\to C(h_{D,s})$ of the exponential map on the transverse section $\Delta_{D}$ and the quotient map $C(h_{D,s})\to C(h_{D,s})/\langle h_{D,s}\rangle$.
\end{obs}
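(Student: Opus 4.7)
The plan is to represent the class $[Z]\in\mc T^{\F^\diamond}_s$ of a basic vector field by a particularly convenient representative whose flow visibly preserves the transversal $\Delta_D$, and then invoke the uniqueness in the construction of the extension map appearing before Corollary~\ref{isosym}. Concretely, I would fix a good fibration $\pi_D$ along $f(D)$ (cf. Definition~\ref{goodfib}) whose fiber through $f(o_D)$ is $\Delta_D$, and use the fact that $\F^\sharp$ and $\pi_D$ are mutually transverse away from $\Sigma_\F$ and remain transverse and holomorphic at the reduced singularity $f(s)$ itself, where $f(D)$ and the fiber $\pi_D^{-1}(f(s))$ are the two invariant branches. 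This yields a holomorphic splitting of the tangent bundle near $f(s)$ into the direct sum of $T\F^\sharp$ and $\ker d\pi_D$; along it one decomposes $Z=Z'+X$ with $X\in\mc X^{\F^\diamond}_s$ tangent to $\F^\sharp$ and $Z'$ tangent to the fibers of $\pi_D$. Note that $Z'$ is again basic since $Z$ is and $X$ is trivially basic.

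Second, I would invoke the previous lemma to conclude that $\exp(Z)\equiv\exp(Z')$ modulo $\mr{Fix}^{\F^\diamond}_s$. Since $Z'$ is tangent to the fibers of $\pi_D$, its time-$1$ flow preserves each fiber and in particular preserves $\Delta_D$, where it restricts to $\exp(Z'|_{\Delta_D})$. Under the identification $\mc T^{\F^\diamond}_s\iso\mc T_{h_{D,s}}$ of Remark~\ref{vfsigma}, the class of $Z$ corresponds precisely to $Z_\Delta:=Z'|_{\Delta_D}$: this identification is induced by restriction to $\Delta_D$, and subtracting $X\in\mc X^{\F^\diamond}_s$ does not alter the class.

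To conclude, I would observe that $\exp(Z')$ is a germ at $f(s)$ of a biholomorphism that preserves $\F^\sharp$, preserves each fiber of $\pi_D$, and restricts to $\exp(Z_\Delta)\in C(h_{D,s})$ on $\Delta_D$; by the uniqueness clause in the definition of $\mr{ext}$ in~\eqref{extension1}, this forces $\exp(Z')=(\exp(Z_\Delta))^{\mr{ext}}$ as germs at $f(s)$. Combining the two steps gives $\exp^{\F^\diamond}_s([Z])=[(\exp(Z_\Delta))^{\mr{ext}}]$, which under the isomorphism of Corollary~\ref{isosym} is the class of $\exp(Z_\Delta)$ in $C(h_{D,s})/\langle h_{D,s}\rangle$, as required. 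The main obstacle is justifying the holomorphic splitting $Z=Z'+X$ in a neighborhood of $f(s)$ that includes the singular point itself; this reduces to choosing local coordinates $(u_1,u_2)$ at $f(s)$ that are simultaneously adapted to $\F^\sharp$ (say $\F^\sharp$ defined by $\mu u_2\,du_1-u_1\,du_2$ as in Remark~\ref{exemplesymlin}) and to the good fibration $\pi_D=u_1$, in which coordinates the two summands are the coordinate directions, so the splitting is immediate and holomorphic.
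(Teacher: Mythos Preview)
Your approach is sound and, since the paper states this remark without proof (``It can be checked that\ldots''), there is no alternative argument to compare against. The strategy of choosing a good fibration, splitting $Z=Z'+X$ with $Z'$ vertical and $X$ tangent to $\F^\sharp$, invoking the preceding lemma to replace $\exp(Z)$ by $\exp(Z')$ modulo $\mr{Fix}^{\F^\diamond}_s$, and then appealing to the uniqueness in the construction of~\eqref{extension1} is exactly the natural verification. Your justification of the holomorphic splitting at the singular point $f(s)$ via adapted coordinates is also correct; alternatively one can observe that any basic vector field at a reduced singularity must vanish there and be tangent to both separatrices, which forces the $\partial_{u_1}$-coefficient to be divisible by $u_1$.

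There is, however, one imprecision worth tightening. You treat $\Delta_D$ as though it were a fiber of $\pi_D$ near $f(s)$, writing ``$Z'|_{\Delta_D}$'' and ``restricts to $\exp(Z_\Delta)$ on $\Delta_D$''. But $\Delta_D$ is the transversal through $f(o_D)\in\partial K_s$, not through $f(s)$; the germs $Z'$ and $\exp(Z')$ live at $f(s)$ and cannot be directly restricted to $\Delta_D$. The bridge is the compact disc $K_s$: the isomorphism $\mc T^{\F^\diamond}_s\iso\mc T_{h_{D,s}}$ of Remark~\ref{vfsigma} is precisely the transport of the transverse class $[Z']$ along $K_s$ (equivalently, along the leaves over $K_s\setminus\{f(s)\}$) to $\Delta_D$, and the extension map~\eqref{extension1} is the analogous transport for diffeomorphisms. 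Since holonomy transport intertwines the exponential of vector fields with the corresponding diffeomorphisms, the germ at $f(s)$ of $(\exp Z_\Delta)^{\mr{ext}}$ agrees with $\exp(Z')$ on any fiber close to $f(s)$, and hence---both being fiber- and leaf-preserving---they coincide as germs. Once you insert this transport step explicitly, the argument is complete.
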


Motivated by the above remark, for $D\in\msl{Ve}_{\msl{A}_{\F^\diamond}}$ we define the map   
$\exp_{D}^{\F^\diamond}:\mc T^{\F^\diamond}_{D}\to\mr{Sym}^{{\F^\diamond}}_{D}$ as the composition
$\mc T^{\F^\diamond}_{D}\simeq\mc T_{H_D}\stackrel{\exp}{\to}C(H_D)\to\mr{Sym}_{D}^{\F^\diamond}$.
From Remark~\ref{exps} it is clear that the following diagram is commutative:
\begin{equation}\label{expDs}
\xymatrix{\mc T_{D}^{\F^\diamond}\ar[d]_{\rho_{D}^{s}}
\ar[r]^{\exp_{D}^{\F^\diamond}}& \mr{Sym}^{{\F^\diamond}}_{D}\ar[d]^{\rho_{D}^{s}}
\\
\mc T_{s}^{\F^\diamond}\ar[r]^{\exp_{s}^{\F^\diamond}}& \mr{Sym}^{{\F^\diamond}}_{s}}
\end{equation}
the vertical maps being the restriction maps of the group-graphs $\mc T^{\F^\diamond}$ and $\mr{Sym}^{\F^\diamond}$, written with same notation.\\

Although the exponential map
\[\exp:\C\{z\}z\partial_{z}\to\mr{Diff}(\Delta_{D},f(o_{D}))\simeq\mr{Diff}(\C,0),\]
$z:(\Delta_D,f(o_D))\to(\C,0)$ being a germ of coordinate,
is not a morphism of groups, its restriction to a subspace of complex dimension $\le 1$ is. 
On the other hand it is well-known that $\dim_{\C}\mc T_{s}^{\F^\diamond}\le 1$ if $\mc J^{\F^\diamond}_{s}=\C$. Since $\mc T_{D}^{\F^\diamond}\subset\mc T_{s}^{\F^\diamond}$ for $s\in D$ we deduce that $\exp^{\F^\diamond}_D$ and $\exp^{\F^\diamond}_s$  define a morphism 
\[\exp^{\F^\diamond}:\mc T^{\F^\diamond}\to\mr{Sym}^{{\F^\diamond}}\] 
of abelian group-graphs over $\AR_{\F^\diamond}$.
\begin{defin}\label{arbregrpexp}
The group-graph over $\AR_{\F^\diamond}$ image of $\mathrm{exp}^{\F^\diamond}$ is called  the \emph{exponential} group-graph of $\F^\diamond$. We denote it by $\mr{Exp}^{{\F^\diamond}}$.
\end{defin}

At this point it is clear that the subset of $\AR_{\F^\diamond}$ consisting of all $a\in\msl{Ve}_{\msl{A}_{\F^\diamond}}\cup\msl{Ed}_{\msl{A}_{\F^\diamond}}$ such that $\mr{Exp}^{\F^\diamond}_{a}=0$ is just  the subgraph $\AR^0_{\F^\diamond}$  of $\AR_{\F^\diamond}$ previously defined in Section \ref{sectionfoliationsTF} and characterized by the second equivalence in Theorem~\ref{strutgaaut}.
Let us denote by  $\AR^1_{\F^\diamond}$  the \emph{completion} of $\AR_{\F^\diamond}\setminus \AR^0_{\F^\diamond}$, i.e. the minimal subgraph of $\AR_{\F^\diamond}$ containing $\AR_{\F^\diamond}\setminus \AR^0_{\F^\diamond}$.

\begin{lema}\label{surj}
If $(D,s)\in I_{\mathsf{R_{\F^\diamond}}}$ and $\mc T_{D}^{\F^\diamond}\neq 0$, then the restriction map $\rho':\mc T_{D}^{\F^\diamond}\to \mc T_{s}^{\F^\diamond}$ is an isomorphism and all the red singular points
in $D$ share the same character linearizable or resonant; we will say that $D$ is resonant or linearizable according to the case. Furthermore, the isomorphism class of the group $\mr{Exp}^{\F^\diamond}_{D}$ is given by the following table
\begin{equation}\label{tab}
\begin{tabular}{c|c|c|}
 $\mr{Exp}_{D}^{\F^\diamond}$ & {$ \mr{val}_{\Sigma}(D) \le 2$} & {$\mr{val}_{\Sigma}(D)\ge 3$}\\
\hline
$D$ \text{resonant} & $\C/\Z$ & $\C$\\
\hline
$D$ \text{linearizable} & $\C^{*}/\alpha^{\Z}$ & $\C^{*}$\\
\hline
\end{tabular}
\end{equation}
the restriction morphism $\rho_{D}^{s}:\mr{Exp}_{D}^{\F^\diamond}\to\mr{Exp}_{s}^{\F^\diamond}$ is surjective and 
$$\ker\rho_{D}^{s}\simeq\left\{
\begin{array}{lll}\mb Z & \text{if} & \mr{val}_{\Sigma}(D)\ge 3,\\
0 & \text{if} & \mr{val}_{\Sigma}(D)\le 2.
\end{array}\right.$$
\end{lema}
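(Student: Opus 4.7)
The plan is to reduce every assertion to the structural information on centralizers and invariant vector fields collected in Proposition~\ref{cent}, Lemma~\ref{centralinfini} and Proposition~\ref{memetypes}. First I would use $\mc T_D^{\F^\diamond}\neq 0$ to note that $C(H_D)$ is infinite, since it contains the non-trivial one-parameter family $\exp(tX)$ for any non-zero $X$; combined with the infiniteness of $H_D$ (which follows from $D$ being red), Lemma~\ref{centralinfini} then yields $H_D$ abelian, a non-periodic element $h\in H_D$, and $H_D\subset C(H_D)=C(h)$. The vanishing $\mc T_h=0$ in cases $(L^0)$ and $(R^0)$ of Proposition~\ref{cent} is incompatible with the non-zero inclusion $\mc T_{H_D}\subset\mc T_h$, hence $h$ must be of type $(L^1)$ or $(R^1)$, and Proposition~\ref{memetypes} propagates this common type to all non-periodic elements of $H_D$: this is the dichotomy ``$D$ linearizable or resonant''.

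Next I would observe that for any reduced non saddle-node singularity $s=D\cap D'$ the holonomies $h_{D,s}$ and $h_{D',s}$ are simultaneously periodic or not (they are governed by the common eigenvalue ratio at $s$), so a red edge $s$ at $D$ forces $h_{D,s}$ to be non-periodic, hence of type $(L^1)$ or $(R^1)$. Proposition~\ref{cent} then gives $\mc T_s^{\F^\diamond}\simeq \mc T_{h_{D,s}}=\C X$ of dimension one; since $\mc T_D^{\F^\diamond}\hookrightarrow \mc T_s^{\F^\diamond}$ is a non-zero inclusion between one-dimensional spaces, $\rho'$ is an isomorphism. The table~(\ref{tab}) then reduces to a direct case-by-case computation: for $\mr{val}_\Sigma(D)\ge 3$ one has $\mr{Sym}^{\F^\diamond}_D=C(H_D)=C(h_{D,s})$, so $\mr{Exp}^{\F^\diamond}_D=\exp(\C X)$ is $\C^*$ in case $(L^1)$ and $\{0\}\oplus\C\simeq\C$ in case $(R^1)$ by Proposition~\ref{cent}; for $\mr{val}_\Sigma(D)\le 2$ the cyclicity of $\pi_1(D\setminus\Sigma)$ gives $H_D=\langle h_{D,s}\rangle$ and the further quotient produces $\C^*/\alpha^\Z$ in case $(L^1)$ (with $\alpha=h_{D,s}'(0)$) and $\C/\tfrac{p}{(p,r)}\Z\simeq\C/\Z$ in case $(R^1)$.

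Finally, the surjectivity of $\rho_D^s:\mr{Exp}^{\F^\diamond}_D\to\mr{Exp}^{\F^\diamond}_s$ is immediate from the commutative square~(\ref{expDs}) once $\rho'$ is known to be an isomorphism, as it gives $\rho_D^s(\mr{Exp}^{\F^\diamond}_D)=\exp_s^{\F^\diamond}(\rho'(\mc T_D^{\F^\diamond}))=\mr{Exp}^{\F^\diamond}_s$. For the kernel I would split once more by valency: when $\mr{val}_\Sigma(D)\ge 3$ the map $\mr{Sym}^{\F^\diamond}_D\to\mr{Sym}^{\F^\diamond}_s$ is the canonical projection $C(h_{D,s})\twoheadrightarrow C(h_{D,s})/\langle h_{D,s}\rangle$, so $\ker(\rho_D^s|_{\mr{Exp}^{\F^\diamond}_D})=\mr{Exp}^{\F^\diamond}_D\cap\langle h_{D,s}\rangle\simeq\Z$ by the same computation already performed for the table; when $\mr{val}_\Sigma(D)\le 2$ the map $\rho_D^s$ is induced by the identity of $C(h_{D,s})/\langle h_{D,s}\rangle$, hence injective. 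The one piece of the argument that really requires care is the resonant case, in which the cyclic subgroup $\langle h_{D,s}\rangle=\langle \ell^r\circ\exp X\rangle$ must be tracked inside the splitting $C(h_{D,s})\simeq\Z/p\Z\oplus\C$ of Proposition~\ref{cent} in order to identify both the image $\mr{Exp}^{\F^\diamond}_D$ and the kernel of $\rho_D^s$ up to isomorphism; this is the main bookkeeping obstacle of the proof.
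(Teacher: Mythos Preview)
Your proof is correct and follows essentially the same approach as the paper's, reducing everything to Proposition~\ref{cent} and the commutative square~(\ref{expDs}). The one place where the paper argues slightly differently is the injectivity of $\rho':\mc T_D^{\F^\diamond}\to\mc T_s^{\F^\diamond}$: you rely (implicitly) on the identifications of Remark~\ref{vfsigma}, under which $\rho'$ becomes the set-theoretic inclusion $\mc T_{H_D}\subset\mc T_{h_{D,s}}$, whereas the paper gives a direct sheaf-theoretic argument (a basic vector field tangent to $\F^\sharp$ on an open subset of a connected set is tangent everywhere, so the stalk map $\mc T^{\F^\sharp}(f(D))\to\mc T_{f(s)}^{\F^\sharp}$ is injective).
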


\begin{proof} The homogeneity of singular types in $D$ is given by Proposition \ref{memetypes}.
Notice that if a basic vector field for $\F^\sharp$ defined on a connected open set $U\subset M_\F$ is tangent to the foliation in a neighborhood of a point of $U$ then it is tangent to the foliation on the whole $U$. Using this fact 
it is easy to see that if $W$ is a connected subset of a $\F^\sharp$-invariant component of $ \mc E_\F$, the stalk maps $\mc T^{\F^\sharp}(W)\rightarrow \mc T_{m}^{\F^\sharp}$, $m\in W$ are injective. 
Taking $W=D$ we deduce that the restriction map
${\rho'}^s_D : \mc T_{D}^{\F^\diamond}\to\mc T^{\F^\diamond}_{s}$ is injective.  
Since $\dim \mc T_{D}^{\F^\diamond}=\dim\mc T_{s}^{\F^\diamond}=1$, ${\rho'}^s_D$ is an isomorphism. In fact, $\mc T_{s}^{\F^\diamond}\simeq\mc T_{D}^{\F^\diamond}$ can be identified to a line $\C X$ in the space of germs of vector fields on $(\Delta_{D}, f(o_D))$. Then Table~(\ref{tab}) follows from Proposition~\ref{cent}.

On the other hand  $\exp_{s}^{\F^\diamond}:\mc T_{s}^{\F^\diamond}\to \mr{Exp}^{\F^\diamond}_{s}$ being surjective by definition and the restriction map ${\rho'}^s_D$ being an isomorphism, it follows that
 $\rho_{D}^{s} \circ \exp_{D}^{\F^\diamond}=\exp_{s}^{\F^\diamond}\circ{\rho'}^s_D$ is surjective. Therefore $\rho_{D}^{s}$ is also surjective. We conclude thanks to Remark~\ref{exps} and the commutativity of the diagram~(\ref{expDs}).
\end{proof}
 
Thanks to Theorem~\ref{strutgaaut},
$H^{1}(\AR_{\F^\diamond},\mr{Sym}^{\F^\diamond})$ is an abelian group
and the natural inclusion $\mr{Exp}^{\F^\diamond}\to\mr{Sym}^{\F^\diamond}$ is an injective morphism of abelian group-graphs over $\AR_{\F^\diamond}$.
\begin{defin}
The quotient group-graph over $\AR_{\F^\diamond}$
\[\mr{Dis}^{\F^\diamond}:=\mr{Sym}^{\F^\diamond}/\mr{Exp}^{\F^\diamond}\] 
given by $\Dis_{a}:=\mr{Sym}^{\F}_{a}/\mr{Exp}^{\F}_{a}$ for every $a\in\msl{Ve}_{\AR_{\F^\diamond}}\cup\msl{Ed}_{\AR_{\F^\diamond}}$, is called the \emph{disconnected group-graph of $\F^\diamond$}.
\end{defin}
\noindent Obviously, we have  over $\AR_{\F^\diamond}$ the short exact sequence of abelian group-graphs:
\begin{equation}\label{seq}
0\to \mr{Exp}^{\F^\diamond}\to\mr{Sym}^{\F^\diamond}\to\Dis\to 0.
\end{equation}
The name ``disconnected'' is explained by the following proposition. 
\begin{prop}\label{Dis}
For $s\in\msl{Ed}_{\AR_{\F^\diamond}}$ and $D\in\msl{Ve}_{\AR_{\F^\diamond}}$ we have that 
\begin{enumerate}
\item\label{diss} the abelian group $\mr{Dis}^{\F^\diamond}_{s}\simeq C(h_{D,s})/\langle h_{D,s},\, \exp (\mc T_{h_{D,s}})\rangle$  is:
\begin{enumerate}[(a)]
\item trivial if $f(s)$ is a linearizable (not-periodic) singularity
\item a finite abelian group if $f(s)$ is a resonant (non-periodic) singularity, cyclic in the normalizable case;
\item \label{casdis} a cyclic quotient of a totally disconnected subgroup of $\mb U(1)$ if $f(s)$ is a non-resonant and non-linearizable singularity;
\end{enumerate}
\item\label{disD}  the abelian group $\mr{Dis}_{D}^{\F^\diamond}$ is 
\begin{enumerate}[(a)]
\item\label{disDresnnor} infinite and of finite type if $H_D$ is abelian and all the red singularities on $f(D)$ are resonant non-normalizable;
\item\label{disDnresnlin} a cyclic quotient of a totally disconnected subgroup of $\mb U(1)$ if $H_D$ is abelian and all the red singularities on $f(D)$ are  non-resonant non-linearizable;
\item\label{remaining} finite in all the remaining cases;
\end{enumerate}
\item\label{disfini0} $\mr{Dis}_s^{\F^\diamond}$ and $\mr{Dis}_D^{\F^\diamond}$ are finite if $s\in \msl{Ed}_{\AR^1_{\F^\diamond}}$ and $D\in\msl{Ve}_{\AR^1_{\F^\diamond}}$.
\end{enumerate}
\end{prop}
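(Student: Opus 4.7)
The plan is to unwind the definition of $\mr{Dis}^{\F^\diamond}$ stalk-by-stalk using the explicit descriptions provided by Corollary~\ref{isosym}, Remark~\ref{exps}, Lemma~\ref{surj} and Proposition~\ref{cent}, and then to proceed case-by-case on the dynamical type of the local holonomies.

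For (1), I first combine Corollary~\ref{isosym} with Remark~\ref{exps} to identify $\mr{Dis}_s^{\F^\diamond}$ with $C(h_{D,s})/\langle h_{D,s},\exp \mc T_{h_{D,s}}\rangle$, which is the formula stated in~(\ref{diss}). Assertions (a)--(c) then follow by direct inspection of Proposition~\ref{cent}: in case $(L^1)$ one has $\exp\mc T_h=C(h)$, so the quotient is trivial; in case $(R^1)$ the proposition directly gives $C(h)/\langle h,\exp\C X\rangle\simeq\Z/(p,r)$, which is cyclic; in cases $(R^0)$ and $(L^0)$ one has $\mc T_h=0$, so $\mr{Dis}_s^{\F^\diamond}=C(h)/\langle h\rangle$, which is finite abelian in $(R^0)$ and a cyclic quotient of the totally disconnected subgroup $\mbf D\subset \mb U(1)$ in $(L^0)$.

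For (2), I split on whether $H_D$ is abelian. Since $D$ is red, $H_D$ is infinite, so if $H_D$ is non-abelian, Lemma~\ref{centralinfini} forces $C(H_D)$ to be finite; then both $\mr{Sym}_D^{\F^\diamond}$ and $\mr{Dis}_D^{\F^\diamond}$ are finite, covering this part of~(c). If $H_D$ is abelian, Proposition~\ref{memetypes} ensures that all its non-periodic elements share a single type, so the hypotheses of~(a), resp.~(b), force this type to be $(R^0)$, resp.~$(L^0)$; in both instances $\mc T_D^{\F^\diamond}=0$, hence $\mr{Exp}_D^{\F^\diamond}=0$, and $\mr{Dis}_D^{\F^\diamond}=\mr{Sym}_D^{\F^\diamond}$ is a (possibly quotient by $H_D$ of a) subgroup of $C(H_D)=C(h)$ for any non-periodic $h\in H_D$, by Lemma~\ref{centralinfini}. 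The structure of $C(h)$ in Proposition~\ref{cent} then yields~(a), an extension of a subgroup of $\Z/p\Z$ by $\Z$ (infinite, of finite type), and~(b) a cyclic quotient of a totally disconnected subgroup of $\mb U(1)$. The remaining situations---an abelian $H_D$ whose non-periodic elements are of type $(L^1)$ or $(R^1)$---are handled by Lemma~\ref{surj}, which shows that $\mr{Exp}_D^{\F^\diamond}$ has finite index inside $\mr{Sym}_D^{\F^\diamond}$, yielding a finite quotient, completing~(c).

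Assertion~(3) is then almost formal: $a\in\AR^1_{\F^\diamond}$ means $\mc T_a^{\F^\diamond}\neq 0$, which by Proposition~\ref{cent} for edges and by Lemma~\ref{surj} together with Proposition~\ref{memetypes} for vertices excludes the types $(R^0)$ and $(L^0)$; only the finite cases of~(1) and~(2) remain. The main obstacle I anticipate lies in the vertex analysis of~(2)(a) and~(2)(b) when $H_D$ strictly contains the cyclic subgroup $\langle h\rangle$ generated by a single non-periodic holonomy: one must carefully describe how $H_D$ embeds in the abelian group $C(h)$ from Proposition~\ref{cent} to check that quotienting $C(H_D)$ by $H_D$ (when $\mr{val}_\Sigma(D)\leq 2$) preserves the claimed structure---either ``infinite of finite type'' in the resonant case, or ``cyclic quotient of a totally disconnected subgroup of $\mb U(1)$'' in the non-resonant non-linearizable case.
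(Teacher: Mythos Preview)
Your arguments for parts (1) and (2) follow essentially the same route as the paper's proof, relying on Proposition~\ref{cent} and Lemmas~\ref{egalcentrs}, \ref{centralinfini}, \ref{memetypes}. The paper organizes (2) a bit differently---first disposing of the case $\mr{val}_\Sigma(D)\le 2$ by noting that then $\mr{Dis}_D^{\F^\diamond}=\mr{Dis}_s^{\F^\diamond}$, and then arguing by contraposition (assume $\mr{Dis}_D^{\F^\diamond}$ infinite and land in case~(a) or~(b))---but your case-split is equivalent in content.

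There is, however, a genuine gap in your treatment of~(3). The implication ``$a\in\AR^1_{\F^\diamond}\Rightarrow\mc T_a^{\F^\diamond}\neq 0$'' is correct for \emph{edges} (every edge of $\AR^1_{\F^\diamond}$ already lies in $\AR_{\F^\diamond}\setminus\AR^0_{\F^\diamond}$), but it fails for \emph{vertices}: recall that $\AR^1_{\F^\diamond}$ is defined as the \emph{completion} of $\AR_{\F^\diamond}\setminus\AR^0_{\F^\diamond}$, so a vertex $D$ may well lie in $\AR^1_{\F^\diamond}\cap\AR^0_{\F^\diamond}$, i.e.\ satisfy $\mc T_D^{\F^\diamond}=0$ while being the endpoint of some edge $s$ with $\mc T_s^{\F^\diamond}\neq 0$. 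For such a $D$ your reasoning does not apply, and a priori $\mr{Dis}_D^{\F^\diamond}=\mr{Sym}_D^{\F^\diamond}$ could be one of the infinite groups described in~(2a) or~(2b).

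The paper closes this gap with the following observation: since $\mc T_s^{\F^\diamond}\neq 0$, the singularity at $f(s)$ is of type $(L^1)$ or $(R^1)$; in particular the red edge $s$ adjacent to $D$ is neither resonant non-normalizable nor non-resonant non-linearizable. Hence the hypothesis of~(2a) (\emph{all} red singularities on $f(D)$ of type $(R^0)$) and that of~(2b) (\emph{all} of type $(L^0)$) both fail, and $D$ necessarily falls under the ``remaining cases''~(2c), giving $\mr{Dis}_D^{\F^\diamond}$ finite. You should add this step.
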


\begin{proof}
Assertions (\ref{diss})  result directly from Proposition \ref{cent}. 

To obtain Assertions (\ref{disD}) we can suppose that the singular valency of $D$ is at least three, otherwise $\mr{Dis}^{\F^\diamond}_D=\rm{Dis}^{\F^\diamond}_s$ for $s\in D\cap \Sigma$ and Assertions~(\ref{disD})  result again directly from Proposition~\ref{cent}.  Now let us suppose also that $\mr{Dis}^{\F^\diamond}_D$ -thus also $C(H_D)$- is infinite. Because $D$ is red, $H_D$ is infinite and it follows from Lemmas~\ref{egalcentrs} and~\ref{centralinfini}  that the set $H'$ of all non periodic elements of $H_D$ is non empty and for every $h\in H'$ we have $C(H_D)=C(h)$, therefore $\mr{Dis}^{\F^\diamond}_D=C(h)/\mr{exp}(\mc T_h)$. By Proposition \ref{cent} the only case where this group is infinite is when $h$ is resonant non normalizable or non resonant non linearizable. To see that these two possibilities correspond to the cases (\ref{disDresnnor}) and (\ref{disDnresnlin}) above, it is enough to notice that the local holonomies $h_{D,s}$, $s\in D\cap \Sigma$, that generate $H_D$, cannot be all periodic (otherwise by abelianity $H_D$ would be finite), and to use Proposition~\ref{memetypes}.

Assertion (\ref{disfini0})  follows immediately from Assertions (\ref{diss}) and (\ref{disD}) except for $\rm{Dis}^{\F^\diamond}_{D}$ when $D$ is a common vertex of $\msl{R}^1_{\F^\diamond}$ and $\msl{R}^0_{\F^\diamond}$. In this case although $\mr{Exp}^{\F^\diamond}_D=0$, at the meeting points $s$ of $D$ with components $D'$ of $\msl{R}^1_{\F^\diamond}$ we have $\mr{Exp}^{\F^\diamond}_s\neq 0$ because $\mr{Exp}^{\F^\diamond}_{D'}\neq 0$. Therefore $D$ does not correspond to case  (\ref{disDresnnor})  nor case (\ref{disDnresnlin}) and  $\rm{Dis}^{\F^\diamond}_D$ is finite
according to~(\ref{remaining}).
\end{proof}

\section{Proof of Theorem~\ref{thm}}\label{proof}
In order to simplify the notations in the proofs below, we will write again $\msl A$, $\msl R$, ${\msl R}^0$, ${\msl R}^1$, $\mr{Aut}$,  $\mr{Sym}$, $\mr{Exp}$, $\mr{Dis}$ and $\tau$,  
instead of  $\msl A_{\F^\diamond}$, $\msl R_{\F^\diamond}$, ${\msl R}_{\F^\diamond}^0$,  ${\msl R}_{\F^\diamond}^1$, $\mr{Aut}^{\F^\diamond}$,
$\mr{Sym}^{\F^\diamond}$, $\mr{Exp}^{\F^\diamond}$, $\mr{Dis}^{\F^\diamond}$ and $\tau_\F$.\\

We have already shown that there are ``natural'' bijections:
\begin{equation}\label{eq5}
\mr{Mod}([\F^{\diamond}])
\stackrel{(\ref{eq0})}{\simeq}
H^{1}(\A,\mr{Aut})
\stackrel{(\ref{eq1})}{\simeq}
H^{1}(\A,\mr{Sym})
\stackrel{(\ref{eq2})}{\simeq}
H^{1}(\AR,\mr{Sym})\,,
\end{equation}
the bijection $(\ref{eq2})$ being only valid when $\F$ has finite type.
Moreover $\mr{Sym}$ is an abelian group-graph over $\AR$ and consequently $H^{1}(\AR,\mr{Sym})$ is an abelian group, cf. Theorem~\ref{strutgaaut}.
Recall that $\AR^{0}$ is the subgraph of $\AR$ constituted by all the vertices and edges $b$ satisfying
 $\mr{Exp}^{\F}_{b}=0$ 
 and $\AR^1$ is the completion of $\AR\setminus\ARZ$.
The rest of the proof is divided in several steps:
\begin{enumerate}[ (i)]
\item\label{Rsym-Rexp}
 The abelian group $H^{1}(\AR,\mr{Sym})$ fits into an exact sequence
 \[0\to F\to H^{1}(\AR,\mr{Exp})\to H^{1}(\AR,\mr{Sym})\to \mbf D\to 0,\]
 where $F$ is a finite abelian group and $\bf D$ is a totally disconnected topological abelian group.
 \item\label{R-zones} We have  group isomorphisms
$$H^{1}(\AR,\mr{Exp})\simeq H^{1}(\AR^1,\mr{Exp})\simeq\bigoplus_{\alpha}H^{1}(\AZ^{\alpha},\mr{Exp}),$$ 
where $\AR^1=:\bigcup
\limits_{\alpha\in\pi_0(\AR\setminus\ARZ)}\AZ^{\alpha}$ where each \emph{zone} $\AZ^{\alpha}$ is the completion of a connected component of 
$\AR\setminus\AR^0=\AR^1\setminus \AR^0$.
\item \label{extendedzone} To simplify the computation of the cohomology groups  $H^{1}(\AZ^{\alpha},\mr{Exp})$  we modify each zone (not reduced to a single vertex) without changing the number of its extremities neither its cohomology, by adding a vertex and an edge, for each of its extremities. The modified zones fulfill the following property: \\

\noindent $(\ast)$ \textit{each extremity of $\AZ^{\alpha}$ is joined by its edge to a vertex of valency~$2$ in $\AZ^{\alpha}$.}\\
\item\label{zone-sommets} We  decompose each modified zone $\AZ$ as $\AZ=\AZ_{0}\cup \AZ_{1}$ where $\AZ_{0}$ is either empty or  a disjoint union of $n+1\ge 1$ segments  $\bullet_{D'_i}{\ftrait}\bullet_{D_i}$  with  $D'_i\in \ARZ$, $D_i\in \ARU$ and  $\AZ_{0}\cap \AZ_{1}=\{D_{0},\ldots,D_{n}\}$.
 We prove  that $H^{1}(\AZ,\mr{Exp})$ is trivial if $\AZ_0$ is empty, and it is a finite type quotient of $\bigoplus\limits_{i=1}^{n}\mr{Exp}_{D_{i}}$ if $\AZ_0\neq \emptyset$.
\item\label{sommets} Since 
$\mr{Exp}_{D_i}$ is isomorphic to $\C$ or $\C/\Z\simeq\C^{*}$ or $\C^{*}/\alpha^{\Z}$ by Lemma~\ref{surj}, we can construct a  morphism $\Lambda :\C^\tau\to \mr{Mod}([\F^{\diamond}])$ with totally disconnected cokernel and finite type kernel.
\item\label{recallSL} We  will  specify  the notion of semi-local-equisingularity, denoted by SL-equisingularity. This notion was introduced in \cite{MS} for germs of deformations and in this paper we adapt it to the context of a global parameter space.
\item\label{famille} We construct  SL-equisingular families of foliations $\F^{U}_{t,i}$ satisfying Theorem~\ref{thm}.
\end{enumerate}

\medskip

\paragraph{\textbf{Step (\ref{Rsym-Rexp})}}
Consider the long exact sequences 
\begin{equation}\label{fin}
\cdots\to
H^{0}(\AR, \mr{Dis})
\to H^{1}(\AR,\mr{Exp})\stackrel{\chi}{\to} H^{1}(\AR,\mr{Sym})\to H^{1}(\AR,\mr{Dis})\to 0
\end{equation}
and
\[\cdots{\to} H^{0}(\AR_1,\mr{Dis})
\to H^{1}(\AR_1,\mr{Exp})\stackrel{\chi_1}{\to} H^{1}(\AR_1,\mr{Sym})\to H^{1}(\AR_1,\mr{Dis})\to 0
\]
associated by Lemma~\ref{LES} to the short exact sequence (\ref{seq}) of abelian group-graphs.
By the first part of Proposition~\ref{Dis},  $Z^{1}(\AR,\mr{Dis})$ is a finite product of totally disconnected subgroups of $\mb U(1)$ and $H^{1}(\AR,\mr{Dis})$ is thus a totally disconnected abelian topological group. 
Moreover when all the singularities of the foliation are resonant or linearizable, the case \ref{casdis}) is excluded and $Z^{1}(\AR,\mr{Dis})$ is finite. 
\color{black} 
In order to conclude this step it only remains to prove that $\ker \chi$ is finite. 
Let us notice that $H^{0}(\AR_{0}\cap\AR_{1},\mr{Exp})=0$, 
$H^{1}(\AR_{0},\mr{Exp})=0$ and $H^{1}(\AR_{0}\cap\AR_1,\mr{Exp})=0$.
By applying Mayer-Vietoris Lemma~\ref{MV} to the union $\AR=\AR_0\cup \AR_1$ we obtain the following commutative 
diagram with exact rows
\begin{equation}\label{snake}
\xymatrix{ & 0\ar[r] & H^{1}(\AR,\mr{Exp})\ar[r]\ar[d]^{\chi}  & 0\oplus H^{1}(\AR_{1},\mr{Exp})\ar[r]\ar[d]^{0\oplus \chi_1} & 0& \\
  & \cdots \ar[r] & H^{1}(\AR,\mr{Sym})\ar[r] & H^{1}(\AR_{0},\mr{Sym})\oplus H^{1}(\AR_{1},\mr{Sym}) \ar[r] &\cdots
  }
\end{equation}
Thus $\ker \chi$ is isomorphic to a subgroup of $\ker \chi_1$  and it is sufficient to prove that $H^0(\AR_1, \mr{Dis}^\F)$ is finite. But using the second part of Proposition \ref{Dis}  we obtain that $C^0(\AR_1, \mr{Dis}^\F)$ is finite. 
\medskip

\paragraph{\textbf{Step (\ref{R-zones})}}

Diagram~(\ref{snake}), coming from the Mayer-Vietoris sequence, gives us the isomorphism
\begin{equation}\label{isorun}
H^{1}(\AR,\mr{Exp})\simeq H^{1}(\AR_1,\mr{Exp})\,.
\end{equation}
\color{black}
Clearly $\ARU$ is a finite union of zones $\AZ^{\alpha}$. Moreover, $\AZ^{\alpha}\cap \AZ^{\beta}$ is either empty or a single vertex of $\ARZ$. Therefore $$H^{0}(\AZ^{\alpha}\cap \AZ^{\beta},\mr{Exp})=0\quad\textrm{and}\quad H^{1}(\AZ^{\alpha}\cap \AZ^{\beta},\mr{Exp})=0.$$ 
By applying recursively Mayer-Vietoris Lemma~\ref{MV} we deduce that 
\begin{equation}\label{isomayerzones}
H^{i}(\ARU,\mr{Exp})\simeq 
\bigoplus\limits_{\alpha\in\pi_0(\AR\setminus\ARZ)} H^{i}(\AZ^{\alpha},\mr{Exp})\,,\quad i=0,1\,.
\end{equation}

\paragraph{\textbf{Step (\ref{extendedzone})}} 
If a zone $\AZ^\alpha$ is reduced to a  single vertex then $H^1(\AZ^\alpha, \mr{Exp})$ is clearly trivial. If this is not the case, we  
modify $\AZa$ in  the following way:   if   $v'$ is an extremity of $\AZa$ and $v''\in\VZa$ is the unique vertex joined to $v'$ by an edge $e'$,  we  replace  the segment $\bullet_{v''}\stackrel{e'}{\ftrait}\bullet_{v'}$ by 
$\bullet_{v''}\stackrel{e''}{\ftrait}
\bullet_{v}\stackrel{e}{\ftrait}\bullet_{v'}$. 
We also extend the group-graph  $\mathrm{Exp}$ to the new edges and vertices by  defining 
\begin{equation}\label{Modif}
\mathrm{Exp}_{e''}:=\mathrm{Exp}_{e}:=
\mathrm{Exp}_{v}:=\mathrm{Exp}_{e'}\,,\qquad \rho_{v}^{e''}:=\rho_{v}^{e}:=\mathrm{id}_{\mathrm{Exp}_{e'}}\,,
\end{equation}
$$ 
\rho_{v''}^{e''}=\rho_{v''}^{e'}\,,\qquad \rho_{v'}^{e}=\rho_{v'}^{e'}\,.$$
We call this operation the \emph{blow-up of the edge $e'$}. By performing these blow-ups for each extremity of $\AZa$ we get a new graph  $\wAZa$ called \emph{a modified zone}.
Clearly $\wAZa$ fulfills property $(\ast)$ of  (\ref{extendedzone}).

By doing this process on each zone, we get a \emph{modified graph} $\wt\AR$ endowed with a group-graph still denoted by $\mathrm{Exp}$.
We define now  a contraction map: 
$$Z^1(\wt\AR,\mr{Exp})\stackrel{\flat}{\rightarrow} Z^1(\AR, \mathrm{Exp})\,,\qquad c=(\phi_{ ve})\mapsto c^\flat=(\phi^\flat_{ve})$$
where $\phi_{ve}^\flat=\phi_{ve}$ if $e$ is not produced by a blow-up, and $\phi_{v''e'}:=\phi_{v''e''}\phi_{ve}=:\phi_{v'e'}^{-1}$ 
 if $\bullet_{v''}\stackrel{e''}{\ftrait}
\bullet_{v}\stackrel{e}{\ftrait}\bullet_{v'}$ is given by the blow-up of $\bullet_{v''}\stackrel{e'}{\ftrait}\bullet_{v'}$.
It is easy to see that this map induces  group isomorphisms 
\begin{equation}\label{isomod}
H^1(\wAZa, \mathrm{Exp})\simeq H^1(\AZa,\mathrm{Exp})\,,\quad  H^1(\wt\AR, \mr{Exp})\simeq H^1(\AR,\mathrm{Exp})\,.
\end{equation}

\medskip

\paragraph{\textbf{Step (\ref{zone-sommets})}} Fix $\AZ=\wAZa$ a modified zone of $\ARU $. 
Let $\AZ_1$ be the maximal subgraph ot $\AZ$ with all vertices and edges $b$ satisfying $\mr{Exp}_{b}\neq 0$. Denote by $\AZ_0$ the completion of $\AZ\setminus \AZ_1$. Clearly $\AZ=\AZ_{0}\cup \AZ_{1}$ and  $\AZ_{0}$ is either empty or a disjoint union of $n+1\ge 1$ segments $\bullet_{D'}{\ftrait}\bullet_{D}$ with $\mr{Exp}_{D'}= 0$,  $\mr{Exp}_{D}\neq 0$,  $D'$ being an extremity of $\AZ$ and $\mathrm{val}_{\msl Z}( D)=2$. Notice that $H^{1}(\AZ_{1},\, \mathrm{Exp})=0$. Indeed  the restriction morphisms of the group-graph  $\mathrm{Exp}^\F$ over $\AZ_1$ are surjective by Lemma \ref{surj}. We  apply recursively Pruning Theorem~\ref{elagage} and we conclude by Remark \ref{pruningiteres} that $H^1(\AZ, \mathrm{Exp})=0$ if $\AZ_0$ is empty.

Now suppose that $\AZ_0\neq\emptyset$. We will apply  Mayer-Vietoris Lemma~\ref{MV}  to $\AZ=\AZ_0\cup\AZ_1$. Using again Lemma~\ref{surj} we see that  $H^{1}(\AZ_{0},\mr{Exp})=0$ and $H^{0}(\AZ_{0},\mr{Exp}^{\F})=0$  by construction of the modified zones.  We obtain the exact sequence
\begin{equation}\label{seq1}
H^{0}(\AZ_{1},\mr{Exp}^{\F})\stackrel{\sigma_{\alpha}}{\longrightarrow}H^{0}(\AZ_{0}\cap\AZ_{1},\mr{Exp})\stackrel{\delta_{\alpha}}{\to} H^{1}(\AZ,\mr{Exp}^{\F})\to 0,
\end{equation}
 $\sigma_\alpha$ being   the restriction map and $\delta_\alpha$ the connecting map.  \\

In the sequel we will choose one  vertex $D_{0}$ in $\AZ_{0}\cap \AZ_{1}=\{D_{0},\ldots,D_{n}\}$ and we will call the remaining vertices  $D_{1},\ldots,D_{n}$ the \emph{active vertices of the zone $\AZ$}.

\begin{lema}\label{surjsursommet}
The projection $\pi_\alpha : H^{0}(\AZ_{1}, \mathrm{Exp})\to H^{0}(D_{0}, \mathrm{Exp})\simeq \mr{Exp}_{D_0}$ is surjective and its kernel is an abelian group of finite type.
\end{lema}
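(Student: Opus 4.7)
The plan is to exploit two key facts from Lemma~\ref{surj}: on the subgraph $\AZ_1$, every restriction morphism $\rho^s_D : \mr{Exp}_D \to \mr{Exp}_s$ is surjective, and its kernel is isomorphic either to $\Z$ (when $\mr{val}_{\Sigma}(D)\ge 3$) or to the trivial group (when $\mr{val}_{\Sigma}(D)\le 2$). One also needs that $\AZ_1$ is a finite \emph{connected} subtree of $\AR$: the modified zone $\AZ$ is itself a tree, and $\AZ_0$ consists of disjoint pendant segments of length one attached at the vertices $D_0,\ldots,D_n\in\AZ_1$, so removing these segments leaves a connected subtree containing $D_0$.

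For surjectivity of $\pi_\alpha$, I would root $\AZ_1$ at $D_0$ and propagate: given $\phi_{D_0}\in\mr{Exp}_{D_0}$, define $\phi_D$ by induction on the distance from $D_0$ in the tree. If $\phi_D$ has already been chosen and $s=D\cap D'$ leads to a not-yet-defined vertex $D'$, surjectivity of $\rho^s_{D'}$ provides some $\phi_{D'}\in\mr{Exp}_{D'}$ with $\rho^s_{D'}(\phi_{D'})=\rho^s_D(\phi_D)$. Because $\AZ_1$ is a tree, every vertex is joined to $D_0$ by a unique geodesic and no compatibility conflict arises; the resulting family $(\phi_D)_{D\in\Ve_{\AZ_1}}$ is a $0$-cocycle lifting $\phi_{D_0}$ under $\pi_\alpha$.

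For the kernel, proceed by induction on $\#\Ve_{\AZ_1}$, grafting one leaf at a time. When a new leaf $D$ is attached to the current subtree via an edge $s$, the updated kernel fits into a short exact sequence
\[
0 \longrightarrow \ker \rho^s_D \longrightarrow \ker \pi_\alpha^{\mathrm{new}} \longrightarrow \ker \pi_\alpha^{\mathrm{old}} \longrightarrow 0,
\]
whose left-hand term is $\Z$ or $0$ by Lemma~\ref{surj}. Since the base case $\AZ_1=\{D_0\}$ has trivial kernel and extensions of finitely generated abelian groups remain finitely generated, $\ker\pi_\alpha$ is of finite type; in fact it embeds into $\Z^N$ where $N$ is the number of vertices of $\Ve_{\AZ_1}\setminus\{D_0\}$ with singular valency at least three. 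The only subtlety, which is the main thing to check, is that grafting a new leaf does not retroactively impose any constraint on values already chosen on the existing subtree; this is immediate from the tree structure, since a newly added edge meets the old subtree only at a single already-defined vertex.
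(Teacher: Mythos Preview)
Your proof is correct and follows essentially the same approach as the paper's: both arguments root the tree $\AZ_1$ at $D_0$ and propagate outward using the surjectivity of the restriction maps $\rho^s_D$ from Lemma~\ref{surj}, noting that the ambiguity at each step lies in $\ker\rho^s_D\in\{0,\Z\}$. Your inductive short exact sequence merely makes explicit the ``by induction we easily deduce'' in the paper's proof, and your observation that $\AZ_1$ is connected (needed for the propagation to reach every vertex) is implicit in the paper's description of $\AZ_0$ as a union of pendant segments.
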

\begin{proof}
From Lemma~\ref{surj} for each $(s,D)\in I_{\AZ_{1}}$ the restriction map $\rho_{D}^{s}$ is surjective with kernel $0$ or $\Z$. 
Let $s_i$, $i=1, \ldots,\ell$, be the edges such that $D_{0}\in \partial s_i$.
For each  $y_{0}\in\mr{Exp}_{D_{0}}$ there are $y_{i}\in\mr{Exp}_{s_{i}}$, $i=1,\ldots,\ell$, such that $\rho_{D_{i}}^{s_{0}}(y_{i})=\rho_{D_{0}}^{s_{0}}(y_{0})$, see Figure~\ref{es}. 
Moreover, the different choices of $y_{i}$ are parametrized by $\Z^{k}$ with $k\leq \ell$. By induction we easily deduce that $\pi_\alpha$ is surjective and its kernel is of finite type.
\end{proof}

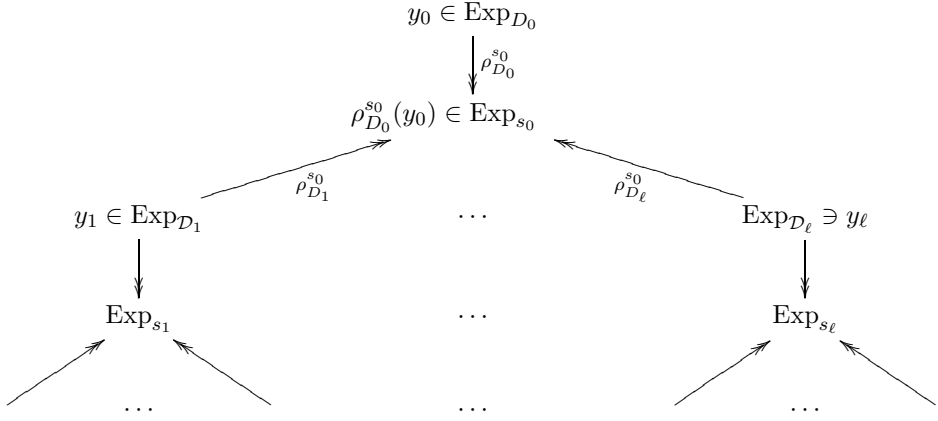
\begin{figure}
$$\scalebox{.9}{\xymatrix{& & & y_{0}\in\mr{Exp}_{D_{0}}\ar@{->>}[d]^{\rho^{s_{0}}_{D_{0}}}&&&\\ 
& & &\rho_{D_{0}}^{s_{0}}(y_{0})\in\mr{Exp}_{s_{0}}\hphantom{lllllll}\ar@{<<-}[rrd]_{\rho_{D_{\ell}}^{s_{0}}}\ar@{<<-}[dll]^{\rho^{s_{0}}_{D_{1}}} &&&\\ 
& y_{1}\in\mr{Exp}_{\mc D_{1}}\ar@{->>}[d]   && \cdots  && \mr{Exp}_{\mc D_{\ell}}\ni y_{\ell}\ar@{->>}[d]&\\ 
&\mr{Exp}_{s_{1}}& & \cdots & & \mr{Exp}_{s_{\ell}} &\\ 
\ar@{->>}[ur]&\cdots & \ar@{->>}[ul] &\cdots &  \ar@{->>}[ur]& \cdots & \ar@{->>}[ul]}}$$
\caption{Schematic diagram for the group-graph $\mr{Exp}$ restricted to $\AZ_{1}$.}\label{es}
\end{figure}

Denote by $\rho:H^{0}(\AZ_{0}\cap \AZ_{1}, \mathrm{Exp})\to H^0(D_0, \mathrm{Exp})\simeq\mr{Exp}_{D_0} $ the projection map.
Then we have the following morphism of exact sequences
$$\xymatrix{&H^{0}(\AZ_{1},\mr{Exp})\ar[r]^{\sigma_{\alpha}\phantom{aaa}}\ar[d]_{\pi_\alpha}&  H^{0}(\AZ_{0}\cap \AZ_{1},\mr{Exp})\ar[r]^{\phantom{aaa}\delta_\alpha}\ar[d]^{\rho} & H^{1}(\AZ,\mr{Exp})\ar[r]\ar[d] &0\\
 0\ar[r]&\mr{Exp}_{D_{0}}\ar[r]^{\mr{id}}& \mr{Exp}_{D_{0}}\ar[r]& 0&&}$$
Since $\AZ_{0}\cap \AZ_{1}$ only contains the vertices $D_{i}$ we have that $H^{0}(\AZ_{0}\cap Z_{1}, \mathrm{Exp})=\bigoplus_{i=0}^{n}\mr{Exp}_{D_{i}}$ and $\ker\rho$ is just $\bigoplus_{i=1}^{n}\mr{Exp}_{D_{i}}$.
Because $\pi_\alpha$ is surjective, by applying Snake Lemma  we obtain the following exact sequence:
\begin{equation}\label{exkerpi}
\ker(\pi_\alpha)\to \bigoplus_{i=1}^{n}\mr{Exp}_{D_{i}}\stackrel{\delta_\alpha}{\rightarrow} H^{1}(\AZ,\mr{Exp})\to \mathrm{coker}(\pi_\alpha)\,.
\end{equation}
Since $\pi_\alpha$ is surjective and $\ker(\pi_\alpha)$ is of finite type, thanks to  Lemma~\ref{surjsursommet} we get that $H^{1}(\AZ,\mr{Exp})$  is a quotient of $\C^n$ by a finite type subgroup.

\medskip

\paragraph{\textbf{Step (\ref{sommets})}}
First  we notice that  the number of active vertices of a modified zone $\AZ=\wt{\AZ}^\alpha $ is equal to the rank of the homology groups  $H_1(\AZ/\AZ^0, \Z) \simeq H_1(\AZ^\alpha/(\AZ^\alpha\cap \ARZ),\Z)$ 
of the corresponding quotient graphs. 
We easily deduce that the number of all active vertices $a_r$ for all the modified zones
is equal to the rank
$\tau:=\mr{rank}\, H_{1}(\AR/\mathsf{R}_{0},\Z)$ introduced in Definition \ref{defindetau}.

Each active vertex $a_r$, $r=1,\ldots,\tau$,  belonging to some modified  zone, is produced by the blow-up of an edge 
$\bullet_{v''_r}\stackrel{s_r}{\ftrait}\bullet_{v'_r}$, $v'_r$ being an extremity of the  zone. By construction,  $\mr{Exp}_{a_r}=\mr{Exp}_{s_r}$, cf.  (\ref{Modif}).
With this identification and thanks to the isomorphisms (\ref{isorun}),  (\ref{isomayerzones}) and  (\ref{isomod})  it can be easily checked, using the proof of  Lemma~\ref{MV}, that the  map $\delta=\oplus_{\alpha}\delta_\alpha$ given by the connecting maps $\delta_\alpha$ of the Mayer-Vietoris exact sequences (\ref{seq1}) is the surjective  morphism 
$$\delta \; : \;\bigoplus\limits_{r=1}^{\tau}\mr{Exp}_{a_r} =  \bigoplus\limits_{r=1}^{\tau}\mr{Exp}_{s_r}\;\ni\;
(\varphi_r)\longmapsto [(\phi_{ve})]\in  H^{1}(\AR,\mr{Exp})
$$
with $\phi_{v''_rs_r}:=\varphi_{r}$, $\phi_{v'_rs_r}:=\varphi_{r}^{-1}$ and $\phi_{ve}$ is trivial otherwise.
\\

Now for each $r=1,\ldots,\tau$  we choose  a local holomorphic basic vector field $X_r$ transverse to the foliation $\F^\sharp$ and defined on a neighborhood of $f(s_r)$ in the ambient space of $\F^\sharp$. 
We define the group morphism  $\Lambda:\C^\tau\to \mr{Mod}([\F^{\diamond}])$ of  Theorem~\ref{thm}, as the composition
$$\C^{\tau}\stackrel{\xi}{\twoheadrightarrow}\bigoplus\limits_{r=1}^{\tau}\mr{Exp}_{s_r}
\stackrel{\delta}{\twoheadrightarrow} H^{1}(\AR,\mr{Exp})\stackrel{\chi}{\to} H^{1}(\AR,\mr{Sym})\stackrel{(\ref{eq5})}{\simeq}\mr{Mod}([\F^{\diamond}])\,,$$
where $\xi(t)=(\overset{{}_\bullet}{\exp} t_{1}X_{1},\ldots, \overset{{}_\bullet}{\exp} t_{\tau}X_{\tau})$,
$\overset{{}_\bullet}{\exp} t_rX_r$ denotes the class of $\exp t_r X_r$ in $\mr{Aut}_{s_r}/\mr{Fix}_{s_r}=\mr{Sym}_{s_r}$ and $\chi$ is induced by the natural inclusion of group-graphs $\mr{Exp}\hookrightarrow \mr{Sym}$, see (\ref{seq}).
The last bijection  (\ref{eq5}) induces an abelian group structure on $\mr{Mod}([\F^{\diamond}])$. Moreover, if we define $\mbf D:=H^{1}(\AR,\mr{Dis}^{\F})$ and  $\Gamma : \mr{Mod}([\F^{\diamond}])\to \mbf D$ as the composition of the isomorphism~(\ref{eq5}) and  the last arrow in the sequence~(\ref{fin}),  then the sequence 
$$
\C^{\tau}\stackrel{\Lambda}{\to} \mr{Mod}([\F^{\diamond}])\stackrel{\Gamma}{\to}\mbf D\to 0
$$
is exact.
It remains to check that $\ker \Lambda =\ker(\chi\circ\delta\circ\xi)$ is of finite type. 
Since $\ker\chi$ is finite by step (i) and $\ker\delta$ is of finite type thanks to Lemma~\ref{surjsursommet} and~(\ref{exkerpi}),  it suffices to see that $\ker\xi$ is also of finite type. In fact, 
we will conclude by proving that the kernel of each group morphism $\xi_r : \C \to \mr{Sym}_{s_r}$, $t\mapsto \overset{{}_\bullet}{\exp} t_rX_r$  is of finite type. Since Remark~\ref{exps} allows us to work on a transversal, we can use Proposition~\ref{cent} to describe  $\ker (\xi_r)$ as the kernel of the morphism  $\C\to C(h_{r})/\langle h_{r} \rangle$ given by $t\mapsto [\exp tX_{r}]$, which is of finite type thanks to Lemma~\ref{surj}.

\medskip
\paragraph{\textbf{Step (\ref{recallSL})} }
Let $P$ be a  holomorphic connected manifold and $t_0$ a point of~$P$.  A \emph{deformation
of $\F$} with parameter space the manifold $P$ pointed at $t_0$, is a germ  along all $\{0\}\times P$ of a $1$-dimensional holomorphic  singular foliation
$\F_P$ defined on a open neighborhood of  $\{0\}\times P$ in $ \C^2\times P$, which  is  locally  tangent to the  fibers of the projection $\pi_P: \C^2\times P\to P$ and such that $\F$ is equal to the restric\-tion of $\F_P$ to $\C^2\times \{t_0\}$, with the identification  $\C^2\iso \C^2\times \{t_0\}$,  $(x,y)\mapsto (x,y,t_0)$.

 We say that $\F_P$ is \emph{equireducible} if there exists a map $E_{\F_P} :\mathcal{M}\to \C^2\times P$ obtained by composition of blow-up maps $E_j : \mathcal{M}_{j+1}\to \mathcal{M}_{j}$ fulfilling:
\begin{enumerate}
 \item  each center of blow-up $C_{j}\subset \mc M_{j}$ of $E_j$ is biholomorphic to $P$ by  the map $\pi_j := \pi_P\circ E_0\circ E_1\circ\cdots\circ E_{j-1}:\mc M_j\to P$, 
 \item   the singular locus of the foliation $E_{\F_P}^{\ast}\F_P$ is smooth, contained in the exceptional divisor $\mc E_{\F_P}:=E_{\F_P}^{-1}(\{ 0 \}\times P)$  and the restriction  of $\pi_P\circ E_{\F_P}$ to each of its connected component is a biholomorphism onto $P$,
 \item the restriction of $E_{\F_P}$ to  $\mc M_t:=(\pi_P\circ E_{\F_P})^{-1}(t)$ is exactly the minimal reduction map of the  foliation $\F_t$ on $\C^2\times \{t\}$ induced by $\F_P$;
 \end{enumerate}
Notice that $\mc E_{\F_P}$ is a topological product over $P$, i.e.  there is a homeomorphism $\Phi_P : \mc M_{t_0}\times P \iso \mc M$ such that $\pi_P\circ \Phi_P$ is the second projection map. By identifying $\C^2\times \{t_0\}$ with $\C^2$, each marking
$f:\mc E\to\mc E_{\F_{t_0}}$ of $\F_{t_0}$ by $\mc E^\diamond$ extends via $\Phi_P$  to  markings $f_t : \mc E\to \mc E_{\F_{t}}\subset \mc M_t$ of  $\F_t$, $t\in P$, defining in this way a map   
\[
P\to \mathrm{Mod}([\F^{\diamond}])\,,\quad t\mapsto [\F_t,f_t]
\,.\] 
On the other hand,  given a point $t'\in P$, for each base point $o_D$ in a component $D$ of $\mc E$ introduced at the beginning of Section~\ref{Symmetry-tree-group}, let us choose a $(1+\dim P$)-dimensional submanifold  $\Delta_{P, D}$ of $ \mc M$, transverse to $f_{t'}(D)$ at the point $f_{t'}(o_D)$. 
The representation of  $\F_P$-holonomy of the leaf $f_{t'}(D\setminus \Sigma)$  defines a representation $\mc H_{P,D}^{t'}$ of the fundamental group  $\pi_1(D\setminus \Sigma, o_D)$ in the group $\mathrm{Diff}(\Delta_{P,D}, f_{t'}(o_D))$ of germs  of holomorphic automorphisms of $(\Delta_{P,D}, f_{t'}(o_{D}))$.

\begin{defin}\label{def-SL} 
We say that $\F_P$ is \emph{SL-equisingular at a point  $t'$ of $ P$} if 
\begin{enumerate}
\item $\F_P$ is equireducible,
\item for $t\in P$ sufficienty close to $t'$ and for each $(s, D)\in \msl{Ed}_{\msl{A}_{\F^\diamond}}\times \msl{Ve}_{\msl{A}_{\F^\diamond}}$, $D\in \partial s$, the Camacho-Sad indices $CS(E_t^\ast\F_t, f_t(D), f_t(s))$ do not depend on $t$;
\item  there is a germ of biholomorphism $\psi : (\Delta_{P,D}, f_{t'}(o_D)) \iso (\C\times P, (0, t'))$ such that 
\begin{enumerate}
\item the composition of $\psi$ by the second projection $\C\times P\to P$ is equal to $\pi_P\circ E_{\F_P}$ restricted to $\Delta_{P,D}$;
\item for all $\gamma\in \pi_1(D\setminus\Sigma, o_D)$ the biholomorphism $(z,t)\mapsto \psi\circ \mc H_{P,D}^{t'}(\gamma)\circ \psi^{-1}(z,t)$ does not depend on $t$.
\end{enumerate}
\end{enumerate}
We say that $\F_P$ is \emph{SL-equisingular} if it is SL-equisingular at each point of~$P$.
\end{defin}

\paragraph{\textbf{Step (\ref{famille})}} Consider the elements $[\F_{i},f_{i}]$ of $\mr{Mod}([\F^{\diamond}])$, $i\in \mbf D$, given in the statement of Theorem~\ref{thm}. By Isomorphism (\ref{eq5}) they are represented by cocycles $\mbf c^{i}=(c^{i}_{D,s})\in Z^{1}(\AR,\mr{Sym})$.
Now we fix an orientation $\prec$ of  $\A$ and as  in the proof of Theorem~\ref{isoH1symH1aut}, we lift this cocycle to a cocycle $(\varphi_{D,s}^{i})\in  Z^{1}(\AR,\mr{Aut})$ and we continue to denote by $s_1,\ldots,s_\tau$ the edges that produce the complete system $a_1,\ldots,a_\tau$ of active vertices used in step (\ref{sommets}). 
We define then $ (\varphi_{D,s}^{i,t})\in Z^{1}(\A,\mr{Aut})$ by setting
\begin{align*}
&\varphi_{D,s}^{i,t}  = 
\left\{
\begin{array}{rcl}
\mr{id} & \textrm{if} & s\in\E\setminus\ER\,,\\
\varphi_{D,s}^{i} & \textrm{if} & s\in\ER\setminus\{s_{1},\ldots,s_{\tau}\}\,,\\
\varphi_{D,s}^{i}\circ\exp t_{j}X_{j} & \textrm{if} & s=s_j\in\{s_1,\ldots,s_{\tau}\}\,,
\end{array}
\right.\\
&\varphi_{D',s}^{i,t}  =(\varphi_{D,s}^{i,t})^{-1}\,,
\end{align*}
for $s\in \E$ with $\partial s=\{D,D'\}$ and $D\prec D'$.
Using these cocycles, for each $i\in \mbf D$ we glue suitable neighborhoods $W_{D}$ of $D\times\C^{\tau}$ inside $M_{\F}\times\C^{\tau}$.  
We obtain a manifold $\mc M_{i}$ endowed with a submersion map onto $\C^{\tau}$, a flat divisor  over $\C^{\tau}$ and  a foliation by curves tangent to the fibers of the submersion and to the divisor. 
By the same arguments used in Theorem~\ref{p1} we obtain a open neighborhood of $\{0\}\times\C^{\tau}$ in $\C^{2}\times\C^{\tau}$ and on this neighborhood  an holomorphic vector field defining a one-dimensional equireducible foliation tangent to the fibers of the projection onto $\C^{\tau}$, whose  singular locus is $\{0\}\times\C^{\tau}$.
By construction, after equireduction the exceptional divisor, as intrinsic analytic space,  is holomorphicaly trivial over $\C^\tau$ and   along each of its  irreducible components   the reduced foliation is holomorphically trivial. 
Hence we have obtained a  SL-equisingular deformation $\F^{U}_{i,t}$ of $\F_{i}$, see \cite{MS2},  
and biholomorphisms $h_{i,t} : \mathcal{E}_{\F_i}\iso \mathcal{E}_{\F_{i,t}^U}$, $i\in \mbf D$. 
We define the markings $f_{i,t}^U : \mathcal{E}\iso \mathcal{E}_{\F_{i,t}^U}$ by $f_{i,t}^U:= h_{i,t}\circ f_i$. 

Notice that  by the  construction of $\Lambda$ in step (iv), $\alpha(t)$ is represented in $H^{1}(\A,\mr{Aut})$ by the following 1-cocycle with support in $\ARU$:
$$a^t_{D,s} :=
\left\{\begin{array}{rcl}
\mr{id} & \textrm{if} & s\in\E\setminus\ER\,,\\
\mathrm{id} & \textrm{if} & s\in\ER\setminus\{s_{1},\ldots,s_{\tau}\}\,,\\
\exp t_{j}X_{j} & \textrm{if} & s=s_{j}\,.
\end{array}\right.
$$
Thanks to Theorems~\ref{isoassymarsym} and~\ref{strutgaaut} we have  in $H^{1}(\AR,\mr{Sym})$ the equality
$$
\left[(\overset{{}_\bullet}{\varphi}_{D,s}^{i,t}) \right] = \left[\mbf c^{i} \right] \cdot \left[ (\overset{{}_\bullet}{a}^{t}_{D,s}) \right] \,,
$$
where $\overset{{}_\bullet}{\varphi}_{D,s}^{i,t} $ and $\overset{{}_\bullet}{a}^{t}_{D,s}$ denote the classes of $\varphi_{D,s}^{i,t}$ and $a^{t}_{D,s}$ in $\mathrm{Sym}_s$.  The abelian group structure on $\mr{Mod}([\F^{\diamond}])$ being induced by the one on $H^{1}(\AR,\mr{Sym}^{\F})$ by~(\ref{eq5}),   the previous equality  proves that 
in $\mr{Mod}([\F^{\diamond}])$ we have 
$$
[\F^{U}_{i,t},f_{i,t}^U]=\alpha(t)\cdot [\F_{i},f_{i}]\,,\quad i\in \mbf D \,.$$

\section{Appendix}
Let us denote by $B_r\subset \C^2$ the ball $\{|x|^2+|y|^2\leq r\}$ and for a curve $S\ni 0$ in $\C^2$ let us call  \emph{Milnor ball} any ball $B=B_R$ such that  $S\cap B\setminus \{0\}$ is regular and meets transversely each sphere  $\partial B_r$, $0<r\leq R$. We fix a germ $\F$ at $0\in \C^2$  of a singular holomorphic foliation.
\begin{defin}\label{apropriateSeparatices} A germ of an invariant curve $S$ at $0\in\C^2$ will be be called \emph{$\F$-appropriate} 
if $S$ is invariant by $\F$,  contains  all the  \emph{isolated separatrices}\footnote{i.e. their  strict transforms meet invariant components of the exceptional divisor.} and its strict transform by the reduction of $\F$ meets any  dicritical component $D$ of $\mc E_\F$-valency one, i.e.   $\mr{card}(D\cap \mr{Sing}(\mc E_\mc  \F))=1$.
\end{defin}

The following incompressibility property is proven under some additional assumptions in \cite{MM1}, \cite{MM4} and an optimal version is obtained  by L. Teyssier in  \cite{Loic1}:

\begin{teo}\label{incompressibiliteTh}
If $\F$ is a generalized curve and $S$ is an $\F$-appropriate curve in a Milnor ball $B$. Then there exists a fundamental system $\mc U=(U_n)_{n\in \N}$ of open neighborhoods of $S$ in $B$ such that  for each $n\in \N$
\begin{enumerate}
\item the inclusion map $U_n\hookrightarrow B$ induces an isomorphism between the fundamental groups of $U_n\setminus S$ and $B\setminus S$;
\item for each leaf $L$ of the foliation $\F_{| (U_n\setminus S)}$ the inclusion map  $L\hookrightarrow U_n\setminus S$ induces an injective morphism $\pi_1(L, \cdot) \hookrightarrow \pi_1(U_n\setminus S,\cdot)$;
\item there is a finite union of curves on $U_n\setminus S$ whose preimage $\Omega$ in the universal covering  $\wt{U}_n^\ast$ of $U_n\setminus S$ is a  disjoint union of embedded conformal discs $\Omega_\alpha$ such that  each leaf $L$ of the foliation $\wt\F_n$  induced by $\F$ on $\wt{U}_n^\ast$ meets $\Omega$ and  $\mr{card}(L\cap \Omega_\alpha) \leq1$ for any $\alpha$. 
\end{enumerate}
\end{teo}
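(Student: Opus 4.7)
The plan is to work entirely in the resolution $M_\F$ via the canonical reduction $E_\F$, and construct the $U_n$ as the push-forwards of a decreasing fundamental system of ``plumbing'' neighborhoods of the total transform $\tilde S := E_\F^{-1}(S)$. Concretely, I would build $V_n \subset M_\F$ by gluing three kinds of pieces: (a) disc-bundle neighborhoods of each $\F^\sharp$-invariant component $D$ of $\mc E_\F$ restricted over $D$ minus small discs around $D \cap \Sigma_\F$, on which $\F^\sharp$ has a suspension structure given by its holonomy representation; (b) Milnor-type polydisc neighborhoods of each reduced singular point $s \in \Sigma_\F$, adapted to the local normal form of $\F^\sharp$; (c) tubular neighborhoods of each dicritical component, where $\F^\sharp$ is transverse to $D$. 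The $\F$-appropriate assumption guarantees that every invariant or valency-one component meets $\tilde S$ transversely, which is exactly what is needed for the pieces in (b) and (c) to retract well onto their intersection with $\tilde S$.

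Assertion~(1) reduces to a van Kampen / Mayer-Vietoris computation on this plumbing decomposition. Since $E_\F$ is a biholomorphism away from $0$, it suffices to verify that the analogous inclusion $V_n\setminus \tilde S \hookrightarrow M_\F \setminus \tilde S_{\rm large}$ is a $\pi_1$-isomorphism for a slightly larger reference neighborhood; this in turn is a standard disc-bundle-minus-zero-section argument applied piecewise and glued along the interface transversals attached to the $\Sigma_\F$-points.

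For Assertion~(2) one argues locally and then glues. At a reduced singularity $s \in \Sigma_\F$ with both axes in $\tilde S$, the generalized-curve hypothesis (no saddle-node) lets one use Mattei-Moussu's local holonomy dictionary: every leaf in a Milnor polydisc minus the two separatrices has $\pi_1$ generated by its local holonomy loop, which maps injectively into the local ambient $\Z^2$. On a suspension piece over $D \setminus \Sigma_\F$, leaves are honest covering spaces of an open subset of $D \setminus \Sigma_\F$, and the fibered inclusion is $\pi_1$-injective. Near a dicritical $D$ the leaves project biholomorphically in a neighborhood, so injectivity is immediate. The global injectivity follows by an induction on the dual tree of $\mc E_\F$: at each new interface, the van Kampen presentation of $\pi_1(V_n \setminus \tilde S)$ becomes an amalgamated product over the $\pi_1$ of a punctured transversal disc, and the corresponding leaf assembly is built from glued leaf-pieces whose $\pi_1$'s are generated by the same local holonomy loops seen incompressibly on both sides; a Bass-Serre / amalgamated-product argument then promotes the local injections to a global one.

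For Assertion~(3) I would take as the finite collection of curves, for each invariant component $D$, a transverse conformal disc $\Delta_D$ at a regular basepoint $o_D \in D \setminus \Sigma_\F$, together with small transversals at the attaching points of dicritical components. Since each $\Delta_D \setminus \tilde S$ is simply connected and its inclusion into $U_n \setminus S$ is incompressible (a direct consequence of~(1) and the construction), each lifts to $\wt{U}_n^\ast$ as a disjoint union of embedded conformal discs $\Omega_\alpha$. The suspension structure across each divisor piece ensures that every leaf $L$ crosses at least one $\Delta_D$, giving $L \cap \Omega \ne \emptyset$; and if a leaf met some $\Omega_\alpha$ in two points it would produce a loop in $L$ that bounds a disc in the ambient covering, contradicting the leaf $\pi_1$-injectivity of~(2). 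The main obstacle is the core leaf-incompressibility step~(2) at non-linearizable reduced singularities (Cremer/resonant types), where local leaves can have wild dynamics (hedgehogs, petals); it is precisely the absence of saddle-nodes that lets one verify leaf injectivity uniformly in all these cases, and the point of Teyssier's optimal version \cite{Loic1} is to make this local analysis fit into a single fundamental system $(U_n)$ satisfying all three conclusions simultaneously.
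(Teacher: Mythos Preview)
The paper does not provide its own proof of this theorem: immediately before the statement it says that the result ``is proven under some additional assumptions in \cite{MM1}, \cite{MM4} and an optimal version is obtained by L. Teyssier in \cite{Loic1}'', and the theorem is simply quoted as a tool for the Appendix. So there is nothing in the paper to compare your argument against.

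That said, your outline is broadly aligned with the strategy actually carried out in those references: one works upstairs in the resolution, builds a plumbing-type fundamental system of neighborhoods of the total transform from suspension pieces over the invariant components, local Milnor-type pieces at the reduced singularities, and tubes along dicritical components; one proves (1) by a van Kampen computation on this decomposition; one proves (2) by combining local leaf-incompressibility at each reduced singularity with a Bass--Serre/amalgamated-product induction along the dual tree; and one obtains (3) from a finite family of transverse discs whose lifts give the conformal discs $\Omega_\alpha$. You also correctly flag the genuine difficulty: the local leaf-incompressibility at a non-linearizable reduced singularity (Cremer or resonant non-normalizable) is the delicate point, and Teyssier's contribution in \cite{Loic1} is precisely to handle all reduced (non-saddle-node) local models uniformly so that a single fundamental system $(U_n)$ works. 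Your sketch is therefore a fair high-level summary, but it is not a proof on its own: the substance lies in the local analysis and in the careful construction of the neighborhoods so that the interface pieces are $\pi_1$-injective on both sides, and for that one must go to \cite{MM1}, \cite{MM4}, \cite{Loic1}.
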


\begin{obs}\label{2consequences}
Two direct consequences of this result are the simple connectedness of the leaves  of the foliation $\wt\F_n$ and a structure of (non Hausdorff) Riemann surface of its the leaf space 
$\wt{\mc Q}_{U_n}^\F$,  whose atlas is given by the transversals $\Omega_\alpha$ . 
\end{obs}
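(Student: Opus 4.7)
The two consequences will follow, respectively, from items~(2) and~(3) of Theorem~\ref{incompressibiliteTh}, in each case by a short and essentially formal argument.

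To establish simple connectedness of the leaves of $\wt\F_n$, I would argue via the standard covering-space dictionary. Write $p:\wt{U}_n^\ast\to U_n\setminus S$ for the universal covering used in~(3), pick a leaf $L$ of $\F_{|(U_n\setminus S)}$, and denote by $\iota:L\hookrightarrow U_n\setminus S$ its inclusion. The connected components of $p^{-1}(L)$ are precisely the leaves of $\wt\F_n$ projecting onto $L$, and each such component $\wt L$ is the covering of $L$ associated with the subgroup $\ker(\iota_\ast)\subset\pi_1(L,\cdot)$. Property~(2) asserts that $\iota_\ast$ is injective, hence $\ker(\iota_\ast)=1$ and $\wt L$ is the universal cover of $L$, which gives the claimed simple connectedness.

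For the Riemann surface structure on $\wt{\mc Q}_{U_n}^\F$, the plan is to promote the family $(\Omega_\alpha)$ produced by property~(3) to an atlas. Let $q:\wt{U}_n^\ast\to\wt{\mc Q}_{U_n}^\F$ be the canonical leaf-space projection, equip the target with the quotient topology, and observe: the condition $\mr{card}(L\cap\Omega_\alpha)\le 1$ makes $q|_{\Omega_\alpha}$ injective, while the condition that every leaf meets $\Omega=\bigsqcup_\alpha\Omega_\alpha$ makes the family $\{q(\Omega_\alpha)\}_\alpha$ a covering of $\wt{\mc Q}_{U_n}^\F$. Declaring each inverse $(q|_{\Omega_\alpha})^{-1}$ to be a holomorphic chart then produces a one-dimensional complex atlas, and the transition maps between two overlapping charts are, by their very definition, the holonomy correspondences between transversals $\Omega_\alpha$ and $\Omega_\beta$ along paths inside the leaves of $\wt\F_n$.

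The only step requiring some attention will be to verify that these transition maps are well defined biholomorphisms. Well-definedness is precisely where the first consequence feeds back in: since each leaf $\wt L$ of $\wt\F_n$ is simply connected, the holonomy transport from a point of $\wt L\cap\Omega_\alpha$ to a point of $\wt L\cap\Omega_\beta$ is independent of the path chosen in $\wt L$. Holomorphy then follows from the classical statement that the holonomy of a non-singular holomorphic foliation between two holomorphic transversals is holomorphic. The possible failure of Hausdorffness — two distinct leaves may be non-separated limits of a common converging sequence of leaves — is not excluded by this construction, and is exactly why the observation qualifies the resulting Riemann surface as non Hausdorff.
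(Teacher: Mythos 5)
Your proof is correct and is exactly the argument the paper leaves implicit in this remark: item~(2) of Theorem~\ref{incompressibiliteTh} gives simple connectedness of the lifted leaves via the standard covering-space correspondence (the components of $p^{-1}(L)$ are coverings of $L$ associated to $\ker\iota_\ast=1$), and item~(3) makes the $\Omega_\alpha$ into charts for the leaf space, with holonomy as transition maps. A minor remark: the well-definedness of the transition maps already follows from $\mathrm{card}(L\cap\Omega_\alpha)\le 1$ (any holonomy along any path must agree with the canonical point map), so invoking simple connectedness there is sound but not strictly needed.
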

\textit{Now, in the sequel we   consider  the following situation: $\F$ and $\mc G$ are  two topologically equivalent germs  of foliations at $0\in\C^2$ and $\psi:(\C^2,0)\to(\C^2,0)$ is a germ of homeomorphism that conjugates them, $\psi^{\ast}\mc G=\F$.}\\

Previous Theorem~\ref{incompressibiliteTh} will allow us to extend Theorem~1.6 of \cite{MM3} with weaker assumptions.
\begin{teo}\label{newliftingconjugacies}
If $\F$ is a generalized curve fulfilling  Conditions (TC) and  (TR)  stated in the introduction, then 
there exists a germ of a homeomorphism $\phi:(\C^2,0)\to(\C^2,0)$ such that: 
\begin{enumerate}
\item\label{extensionrelevement} the lifting   $E_{\mc G}^{-1}\circ \phi\circ E_\F$ of $\phi$  through  the reduction maps of $\F $ and $\mc G$ 
extends to the exceptional divisor as a germ of homeomorphism $\Phi:(M_{\F}, \mc E_{\F})\to(M_{\mc G},\mc E_{\mc G})$ along the exceptional divisors;
\item\label{holosing}  $\Phi$ is holomorphic at each non-nodal singular point of $\F^\sharp$;
\item\label{transversalementholomorphe} $\Phi$ is  transversely holomorphic at each point of the exceptional divisor which is regular for $\F^\sharp$ and  not contained in a dicritical component.
\end{enumerate}
\end{teo}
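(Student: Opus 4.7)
My plan is to adapt the proof of \cite[Theorem~1.6]{MM3}, the essential improvement being that the restrictive incompressibility hypothesis used there is now replaced by the optimal statement (Theorem~\ref{incompressibiliteTh}) of Teyssier, which applies to any generalised-curve foliation without further restriction on the dicriticals. I first fix an $\F$-appropriate curve $S$ (Definition~\ref{apropriateSeparatices}) on a Milnor ball $B$; since topological conjugation preserves separatrices, isolated separatrices and the combinatorial type of the reduction, $\psi(S)$ is $\mc G$-appropriate on a Milnor ball $B'$. Applying Theorem~\ref{incompressibiliteTh} to both pairs $(\F,S)$ and $(\mc G,\psi(S))$ yields fundamental systems $(U_n)$ and $(V_n)$ such that $\psi(U_n)\subset V_{n-1}\subset\psi(U_{n-2})$. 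Property~(1) allows a lift of $\psi_{|U_n\setminus S}$ to a homeomorphism $\wt\psi_n:\wt U_n^{\ast}\to\wt V_n^{\ast}$ of universal covers, and properties (2) and (3) together with Remark~\ref{2consequences} imply that $\wt\psi_n$ descends to a homeomorphism $\ov\psi_n$ between the non-Hausdorff Riemann surfaces $\wtq_{U_n}^{\F}$ and $\wtq_{V_n}^{\mc G}$ of leaves.

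The second step is to upgrade $\ov\psi_n$ to a transversely biholomorphic map away from nodal corners and dicritical components, using condition (TR). In each non-exceptional cut-component of $\mc E_\F$, (TR) provides an invariant component $D$ whose projective holonomy is topologically rigid; hence the conjugation between the holonomy of $\F^\sharp$ along $D$ and the one of $\mc G^\sharp$ along the corresponding component, induced by $\psi$, is necessarily conformal on any chosen transversal. Leaf-injectivity (Theorem~\ref{incompressibiliteTh}(2)) then propagates this transverse conformality along the leaves to a full tubular neighbourhood of every invariant component of the cut-component. An exceptional cut-component is handled separately: all its invariant components have singular valency $\le 2$ and cyclic holonomy, so I would isotope $\psi$ along the divisor to make the induced conjugation conformal on a preferred transversal, and propagate as before. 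After gathering these modifications we obtain a new homeomorphism, still denoted $\psi$, whose lift is transversely holomorphic on a neighbourhood of $\mc E_\F\setminus(\mc E_\F^d\cup\mc N_\F\cup\Sigma_\F)$.

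The third step is the extension across the singular locus. Since $\F$ is a generalised curve and the Camacho-Sad data is preserved by $\psi$, at a non-nodal singularity the conjugation is transversely holomorphic on a punctured neighbourhood and continuous at the point; the saturation argument of Remark~\ref{saturation} (or \cite[Theorem~A]{MM4}) upgrades it to holomorphy at the singular point, which gives Assertion~(\ref{holosing}). At a nodal corner, I would choose simultaneously linearising holomorphic coordinates for the nodes of $\F^\sharp$ and $\mc G^\sharp$ and glue continuously via the associated $S^1$-fibrations, as in \cite[\S8.5]{MM3}. Condition (TC) is precisely what ensures that the transversely holomorphic pieces produced on distinct cut-components match along their common edges, so that they assemble globally into $\Phi$; in the configuration excluded by (TC), an invariant component trapped between two dicriticals would escape the reach of the rigidifying holonomies and the gluing would be ambiguous. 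Blowing down $\Phi$ away from $\mc E_\F$ yields $\phi$, satisfying Assertions (\ref{extensionrelevement})--(\ref{transversalementholomorphe}).

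The main obstacle will be to verify that each local modification of $\psi$ (conformal rectification on transversals, holomorphic rectification at non-nodal singularities, continuous gluing at nodal corners) can be realised by an ambient isotopy of $\psi$ supported in a small neighbourhood of $0\in\C^2$, so that the cumulative result remains a germ of homeomorphism globally conjugating $\F$ and $\mc G$. This relies critically on leaf-injectivity (Theorem~\ref{incompressibiliteTh}(2)): propagating a transverse change along leaves never identifies distinct leaves, so injectivity survives each step. A subsidiary technical issue is the compatibility between the good fibrations (Definition~\ref{goodfib}) used to extend holonomy conjugations from transversals to tubular neighbourhoods and the linearising fibrations governing the $S^1$-gluing at nodes; I would address this by isotoping the good fibrations near each node so that the two choices agree there.
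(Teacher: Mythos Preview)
Your overall strategy for non-exceptional cut-components is close to the paper's (invoke (TR) to get transverse conformality on a transversal, propagate via the monodromy/leaf-injectivity machinery of \cite{MM3}), but there are two genuine gaps.

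\textbf{Camacho--Sad equality is not free.} In your third step you write ``the Camacho--Sad data is preserved by $\psi$'' and then use this to upgrade to holomorphy at non-nodal singularities. But this equality is precisely what is hardest here: in \cite{MM3} it was obtained only \emph{after} building a transversely holomorphic conjugation on a full neighbourhood of the divisor, whereas now the exceptional cut-components prevent that. The paper therefore proves it as a separate preliminary (Lemma~\ref{equalitiesCS}), by a case analysis that combines Rosas' nodal result, rigidity on non-exceptional pieces, and---crucially for exceptional pieces---continued-fraction identities coming from the self-intersections. Condition (TC) enters exactly here: it rules out exceptional cut-components with dicriticals at \emph{both} ends, so the case analysis (i)/(ii) in the proof of Lemma~\ref{equalitiesCS} is exhaustive. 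Your description of the role of (TC) (``ensures the pieces match along common edges'') is not where the condition is actually used.

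\textbf{Exceptional cut-components cannot be rigidified by isotopy.} You propose to ``isotope $\psi$ along the divisor to make the induced conjugation conformal on a preferred transversal''. But the holonomy along such a component is cyclic, hence far from topologically rigid: a topological conjugacy between two germs of $\mr{Diff}(\C,0)$ with a common rotation number need not be holomorphic, nor isotopic to a holomorphic one compatible with the surrounding leaf structure. There is no mechanism, from $\psi$ alone, that produces a conformal transverse conjugation there. The paper abandons $\psi$ entirely on these pieces: once Camacho--Sad equality (Lemma~\ref{equalitiesCS}) is established, the singularities on an exceptional chain are seen to be linearisable (case (i)) or to have equal irrational indices (case (ii)), so one builds a \emph{new} holomorphic local conjugation at each singular point from the normal forms and glues them using the equality of self-intersections. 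This $\Phi_2$ is unrelated to $\psi$ on those pieces; only later is it glued to the $\psi$-derived $\Phi_1$ across nodal corners and dicriticals (Step~4).

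In short: first prove Lemma~\ref{equalitiesCS} (this is where (TC) is consumed), and on exceptional cut-components construct the conjugation from local analytic models rather than from $\psi$.
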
 
Beside the possible existence of dicritical components, the new difficulty  of this theorem lies in the fact that $\psi$ may not be transversely holomorphic  on a whole neighborhood of 0.
Indeed, let us denote by $\mc E_\F^{\mr{cut}}$ the disjoint union of the 
cut-components of $\mc E_\F$. Conditions (TC) and (TR) of the introduction do not exclude the existence of  exceptional cut-components of $\mc E_\F$, i.e.  irreducible components containing at most two singular points of $\F^\sharp$. Around such cut-components, in a meaning which will be specified later, the conjugation $\psi$ may not be transversely holomorphic. \\

Because $\F$, and therefore $\mc G$  are generalized curves \cite{CLNS} $\psi$  defines one-to-one correspondences 
\begin{equation}\label{correspond}
D\mapsto D'  \quad\hbox{ \rm and } \quad s\mapsto s'
\end{equation}
between the irreducible components of the exceptional divisors $\mc E_\F$ and $\mc E_{\mc G}$ and  between the points of $\mr{Sing}(\mc E_\F)\cup\mr{Sing}(\F^\sharp)$ and $\mr{Sing}(\mc E_\mc G)\cup\mr{Sing}(\mc G^\sharp)$. Moreover we have the equalities of intersection numbers:
\begin{equation}\label{equalityintersections}
(D'_1,D'_2)=(D_1,D_2) \quad {\rm for} \quad D_1\mapsto D_1',\quad D_2\mapsto D_2'\,.
\end{equation}
Indeed the reduction map of a foliation is also equal to the reduction map of the curve formed by all its isolated separatrices and two dicritical separatrices for each dicritical component of the exceptional divisor. Thus equality (\ref{equalityintersections}) follows from classical topological properties of germs of  curves.\\

Let us point out  that property (\ref{holosing}) in Theorem~\ref{newliftingconjugacies} implies  equality of Camacho-Sad indices  of these foliations. These equalities will be strongly used in the proof of  the above theorem and in fact we need to prove them first.

\begin{lema}\label{equalitiesCS}
Under the  assumptions of Theorem~\ref{newliftingconjugacies}, if
$s\in D\subset \mc E_\F$ and $s'\in D'\subset \mc E_{\G}$ correspond by (\ref{correspond}), then  
\begin{equation}\label{equalityCS}
\mr{CS}( \F^\sharp, D, s)=\mr{CS}(\mc G^\sharp, D',  s')\,.
\end{equation}
\end{lema}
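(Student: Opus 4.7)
The plan is to transfer the conjugation $\psi$ to the holonomy groups along invariant components, seed the equality of Camacho--Sad indices at one singular point of each connected component of $\mc E_\F\setminus\mc E_\F^d$ using conditions~(TR) and~(TC), and then propagate the equality throughout each such component using the two classical identities
\[
\mr{CS}(\F^\sharp,D,s)\cdot\mr{CS}(\F^\sharp,D',s)=1\ \text{for } s=D\cap D',
\qquad
\sum_{s\in D\cap\Sigma_\F}\mr{CS}(\F^\sharp,D,s)=D\cdot D,
\]
whose right-hand sides are topologically preserved thanks to the equality of intersection numbers~(\ref{equalityintersections}).

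First, for each invariant component $D$ of $\mc E_\F$ I would apply the incompressibility Theorem~\ref{incompressibiliteTh} to an $\F$-appropriate curve carrying the local separatrices issued from $D\cap\Sigma_\F$, which allows a suitable representative of $\psi$ to induce a germ of homeomorphism $\tau_D$ between transverse sections at regular base points $o_D\in D$ and $o_{D'}\in D'$. The map $\tau_D$ topologically conjugates the holonomy representations of $\F^\sharp$ along $D$ and of $\mc G^\sharp$ along $D'$ and sends the generator of $\pi_1(D\setminus\Sigma_\F,o_D)$ turning around each $s\in D\cap\Sigma_\F$ to its analogue turning around $s'$.

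Second, I fix a connected component $\mc K$ of $\mc E_\F\setminus\mc E_\F^d$ and seed the desired equality on it. If $\mc K$ meets a non-exceptional cut-component, condition~(TR) provides an invariant component $D_0\subset\mc K$ with topologically rigid holonomy, hence $\tau_{D_0}$ is necessarily conformal by the very definition of topological rigidity; the conformal conjugation of the local holonomies around each $s\in D_0\cap\Sigma_\F$, combined with the analytic rigidity of reduced non-degenerate saddles of a generalized curve (their Camacho--Sad index is an analytic invariant determined by the conformal conjugacy class of the holonomy germ on a marked separatrix), yields $\mr{CS}(\F^\sharp,D_0,s)=\mr{CS}(\mc G^\sharp,D_0',s')$ for every $s\in D_0\cap\Sigma_\F$. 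Otherwise every cut-component of $\mc K$ is exceptional, so every irreducible component of $\mc K$ carries at most two singularities of $\F^\sharp$; condition~(TC) then produces a component $D\subset\mc K$ with $\mr{card}(D\cap\Sigma_\F)\neq 2$: the value $0$ leaves nothing to prove (then $\mc K=\{D\}$ and $D\cap\Sigma_\F=\emptyset$), while the value $1$ forces, via the Camacho--Sad sum formula, $\mr{CS}(\F^\sharp,D,s_0)=D\cdot D$ at the unique singularity $s_0\in D\cap\Sigma_\F$, and this is topologically preserved by~(\ref{equalityintersections}).

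Third, starting from the seed equality produced above, I would propagate it throughout $\mc K$. Across a shared singularity $s=D\cap D'$ (including nodal corners) the reciprocity relation transfers the equality from $D$ to $D'$; on a component where the equality has been established at all but one singularity, the Camacho--Sad sum relation combined with~(\ref{equalityintersections}) determines the remaining one. A connectivity argument on the dual graph of $\mc K$ then exhausts every singularity of $\F^\sharp$ supported on $\mc K$, with renewed appeals to~(TR) to reseed the equality each time the walk crosses into a previously uncovered non-exceptional cut-component. The main technical obstacle is the upgrade from equality modulo $\Z$---which is all that the conformal conjugacy of holonomy germs yields immediately through equal multipliers at the fixed points---to equality in~$\C$ in the Siegel regime where the multiplier lies on the unit circle; this is handled precisely by the analytic rigidity of reduced singularities of generalized curves invoked in Step~2.
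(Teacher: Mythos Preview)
Your propagation via the reciprocity relation and the index-sum formula is correct and matches the induction of \cite[\S7.3]{MM3} that the paper also invokes; your treatment of the purely exceptional case likewise agrees with the paper's case~c)(i). The gap is in the seeding on a non-exceptional cut-component. You assert that the Camacho--Sad index is ``determined by the conformal conjugacy class of the holonomy germ on a marked separatrix,'' but this is false: the holonomy along a given separatrix of a reduced singularity with index~$\lambda$ has linear part $z\mapsto e^{2\pi i\lambda}z$, so it determines $\lambda$ only modulo~$\Z$; the linear foliations $\lambda\,x\,dy-y\,dx$ and $(\lambda+1)\,x\,dy-y\,dx$ have identical holonomy along $\{y=0\}$ yet distinct indices. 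Your last paragraph identifies exactly this ``mod~$\Z$'' obstruction but then resolves it by re-invoking the same claim, so the argument is circular rather than merely informal.

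The paper's route at this step is genuinely different. Condition~(TR) together with the rigidity theorem of \cite{Rebelo} (in its extended form \cite{MM3}) yields that the conjugating homeomorphism $\psi$ \emph{itself} is transversely holomorphic on a whole neighbourhood of the non-exceptional cut-component, which is much stronger than the conformality of a single transverse germ~$\tau_{D_0}$; this transverse holomorphy of $\psi$ along the chosen separatrix is what allows the argument of \cite[Chapter~2]{MM3} to apply at its attaching point and produce the exact Camacho--Sad equality there. Separately, the paper handles the case $\mr{CS}\in\R\setminus\Q$ by citing specific topological results (\cite[Proposition~13]{Rosas} for nodes, \cite[Theorem~1.12]{MM4} in general); this is what disposes of configuration~c)(ii), where an exceptional chain abuts a nodal corner and the index at its separatrix end is forced to be a negative irrational, a situation your scheme reaches only through the flawed non-exceptional seed on the far side of the node.
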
 
\begin{proof} It is enough to  prove these equalities when $s$ is an  intersection point   of the strict transform of a  separatrix $S$ of $\F$ with  an irreducible component $D$ of $\mc E_{\F}$. Indeed according to an extension of \cite{OBRGV}  given in \cite[Lemma~1.9]{MM4} or in \cite[Theorem~8]{CamachoRosas}, there is such a point on any cut-component $\mc C$ of $\mc E_\F$. Thus the induction given in 
\cite[\textsection~7.3]{MM3} will remain valid and equalities (\ref{equalityCS}) will be satisfied at every singular point of the foliation. 

We distinguish three possibilities.
\begin{enumerate}[a)]
\item \textit{$\lambda:=\mr{CS}( \F^\sharp, D, s)$ is an irrational real number.}  If $\lambda$ is positive, $s$ is a nodal singular point, and (\ref{equalityCS}) was obtained by R. Rosas in \cite[Proposition~13]{Rosas}. Another proof is given in 
 \cite[Theorem~1.12]{MM4} that remains valid for $\lambda<0$.
\item \textit{$\mc C$ is not exceptional and $s$ is not a nodal singular point.} Then by using (TR)   and thanks to an extended version \cite{MM3} of the rigidity theorem in \cite{Rebelo},  $\psi$ is transversely holomorphic on the image by $E_\F$ of a neighborhood of $\mc C$ and specifically at the points of the separatrix $S$. In this case the proof of (\ref{equalityCS}) given in \cite[Chapter 2]{MM3} remains valid. 
\item \textit{$\mc C$ is exceptional and $s$ is not a nodal point.} Then $\mc C$ is a ''chain''
of components of $\mc E_\F$, $\mc C=D_1\cup\cdots\cup D_\ell$, $\ell\geq 1$, 
$D_i$ meeting $D_{i+1}$ in one point $s_{i}$ and  $D_i\cap D_j=\emptyset$ if $|i-j|\neq 1$. Perhaps $\mc C$ meets several dicritical components of $\mc E_\F$, but we only have two possibilities fulfilling Assumption~(TC):
\begin{enumerate}[(i)]
\item\label{unseulpoint} $s\in D_1$ and the other singular points of $\F^\sharp$ belonging to  $\mc C$ are $s_1,\ldots,s_{\ell-1}$;
\item\label{avecnoeud} $s\in D_1$, $D_\ell$ contains a nodal singular point  $ s_\ell\neq s_{\ell-1}$, the other singular points of $\F^\sharp$ belonging to  $\mc C$ being $s_1,\ldots,s_{\ell-1}$;
\end{enumerate}
In case (\ref{unseulpoint}) using the classical index formula, we see that   $\mr{CS}(\F^\sharp, D_1, m_1)$ is given by a continuous fraction whose coefficients are the self-intersections $(D_i,D_i)$, $i=1,\ldots,\ell$; thus (\ref{equalityCS}) follows from (\ref{equalityintersections}).
In the same way we obtain in case (\ref{avecnoeud})  that $\mr{CS}(\F^\sharp, D_1, s)$ is an irrational (negative) real number, but this case was already examined above.
\end{enumerate}
\end{proof}
\noindent Because the dicriticity of a irreducible component   $D$ can be characterized by the vanishing  of  the Camacho-Sad indices along all the adjacent components at their intersection points with $D$, we have:
\begin{cor}\label{corespdic}
By Correspondences (\ref{correspond}) the image of a dicritical component, a exceptional cut-component, a non exceptional cut-component of $\F$ is respectively   a dicritical component, an exceptional cut-component, a non exceptional cut-component of $\mc G$. 
\end{cor}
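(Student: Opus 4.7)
My plan is to deduce the corollary in three stages from Lemma~\ref{equalitiesCS} (preservation of Camacho--Sad indices) together with the bijective correspondences~(\ref{correspond}) on components and singular points.

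\textit{Stage 1: dicritical components.} I use the numerical criterion stated in the sentence preceding the corollary, namely that an irreducible component $D\subset\mc E_\F$ is dicritical if and only if for every component $D''$ adjacent to $D$ with $s=D\cap D''$, the Camacho--Sad index $\mr{CS}(\F^\sharp, D'', s)$ vanishes. Applying Lemma~\ref{equalitiesCS} point by point, the vanishing of all these indices is transported by $D\mapsto D'$, so dicritical components of $\F$ go to dicritical components of $\mc G$ (and conversely, since the argument is symmetric in $\F$ and~$\mc G$).

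\textit{Stage 2: nodal corners and cut-components.} By definition, $s\in\mc N_\F$ means that $s$ is the intersection of two invariant components of $\mc E_\F$ and that $\mr{CS}(\F^\sharp,D,s)$ is a strictly positive real number. Invariance of both branches at $s$ is preserved by Stage~1, and the property of being a strictly positive real is preserved by Lemma~\ref{equalitiesCS}, so $s\in\mc N_\F$ iff $s'\in\mc N_\mc G$. Combining with Stage~1, the correspondences~(\ref{correspond}) send $\mc E_\F^{d}\cup\mc N_\F$ bijectively onto $\mc E_\mc G^{d}\cup\mc N_\mc G$. Since $D\mapsto D'$ is a bijection on irreducible components that preserves the incidence pattern (via the equality of intersection numbers~(\ref{equalityintersections})), it induces a bijection between the connected components of $\mc E_\F\setminus(\mc E_\F^{d}\cup\mc N_\F)$ and those of $\mc E_\mc G\setminus(\mc E_\mc G^{d}\cup\mc N_\mc G)$, hence a bijection between cut-components.

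\textit{Stage 3: exceptionality.} A cut-component $\mc C$ is exceptional iff $\mr{card}(D\cap\Sigma_\F)\leq 2$ for every $D\in\mr{Comp}(\mc C)$. Because the bijection $s\mapsto s'$ restricts to a bijection $D\cap\Sigma_\F\iso D'\cap\Sigma_\mc G$, this cardinality is preserved, so exceptional cut-components correspond to exceptional cut-components, and similarly for non-exceptional ones.

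The proof is essentially a bookkeeping exercise once Lemma~\ref{equalitiesCS} and the bijective nature of~(\ref{correspond}) are at hand; the substantial work has already been invested in establishing equality of Camacho--Sad indices in that lemma (in particular the delicate case when $s$ lies on an exceptional cut-component, handled there via the continued-fraction formula from the self-intersections). I do not anticipate an additional obstacle at the level of the corollary itself.
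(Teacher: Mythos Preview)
Your proposal is correct and follows exactly the approach the paper intends: the corollary is derived from Lemma~\ref{equalitiesCS} via the numerical characterization of dicriticity by vanishing of adjacent Camacho--Sad indices. The paper's own proof is in fact just the single sentence preceding the corollary; your three stages merely make explicit the bookkeeping for nodal corners, cut-components, and exceptionality that the paper leaves to the reader.
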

\begin{dem2}{of Theorem~\ref{newliftingconjugacies}} 
This result  extends Theorem~5.0.2 of \cite{MM3} and we will sketch an inductive proof similar to that described in Chapter 8 of \cite{MM3}.
We proceed in four steps: first we extend to our new context the notion of monodromy; then 
we construct a conjugation $\Phi_1$ between $\F^\sharp$ and $\mc G^\sharp$ on a neighborhood of the union of all  non exceptional cut-components of $\mc E_\F$; in a third step  we define  a conjugation $\Phi_2$ along  the exceptional cut-component except at the nodal singularities; finally in the fourth and last step we extend and glue $\Phi_1$ and $\Phi_2$ at the nodal singularities and along the dicritical components.\\
 
\emph{- Step 1.}
 Let us fix a  $\F$-appropriate curve $S$ and  Milnor balls $B$ and~$B'$ for $S$ and~$S':=\psi(S)$. Let   $\wt B^\ast$ and  $\wt B'{}^\ast$ be universal coverings of $B\setminus S$ and $B'\setminus S'$ respectively.  We suppose that $\psi(B)\subset B'$,  we  choose a lifting $\wt\psi: \wt B^\ast\to\wt B'{}^\ast$ of $\psi$ and we denote by $\wt\psi_\ast : \Gamma\iso\Gamma'$  the isomorphisms induced by $\wt\psi$ between the deck transformation groups of these coverings.

As in  Chapter 3 of \cite{MM3} we call \emph{monodromy of $\F$}  the natural group morphism 
\[
\mf M_S^\F : \Gamma\to \mr{Aut}_{\underleftarrow{\mr{An}}}(\wt{\mc Q}_\infty^\F)
\subset \mr{Aut}_{\underleftarrow{\mr{Top}}}(\wt{\mc Q}_\infty^\F)
\] 
where with notations of Remark \ref{2consequences},   $\wt{\mc Q}_\infty^{\F}$ is the inverse system $(\wt{\mc Q}_{U_n}^\F)_{{}_{n\in \N}}$,  ${\underleftarrow{\mr{An}}}$ is the category of pro-objects associated to the category of analytic spaces and 
$\underleftarrow{\mr{Top}}$ is the category of pro-objets associated to the category of topological spaces and continuous functions. 
The monodromy 
\[
\mf M_{S'}^{\mc G} : \Gamma'\to \mr{Aut}_{\underleftarrow{\mr{An}}}(\wt{\mc Q}_\infty^{\mc G})
\subset \mr{Aut}_{\underleftarrow{\mr{Top}}}(\wt{\mc Q}_\infty^{\mc G})
\]
of $\mc G$ is defined in the same way, after the  choice of  $S':=\psi(S)$ as $\mc G$-appropriate curve. 
The conjugation $\wt \psi$ induces an automorphism $h_{\wt\psi}: \wt{\mc Q}_\infty^\F\iso \wt{\mc Q}_\infty^{\mc G}$ in the category $\underleftarrow{\mr{Top}}$; however, because here Condition (G) of \cite[page 406]{MM3} may not be satisfied, 
$h$ may not be $\mc N$-analytic in the sense of \cite[Definition~3.4.2]{MM3}. Thus we extend the notion of conjugation  of \cite[page 416]{MM3} by calling  \emph{topological conjugation between the monodromies} $\mf M_S^\F$ and $\mf M_{S'}^{\mc G}$ any pair $(\mf g,h)$ where $\mf g:\Gamma\iso\Gamma'$ is a group isomorphism and $h:\wt{\mc Q}_\infty^{\F}\iso\wt{\mc Q}_\infty^{\mc G}$ is an isomorphism in the category $\underleftarrow{\rm{Top}}$ such that 
$h_\ast\circ\mf M_S^\F=\mf M_{S'}^{\mc G}\circ \mf g$, with $h_\ast: \mr{Aut}_{\underleftarrow{\mr{Top}}}(\wt{\mc Q}_\infty^{\F})\to \mr{Aut}_{\underleftarrow{\mr{Top}}}(\wt{\mc Q}_\infty^{\mc G})$, $\varphi\mapsto h\circ\varphi\circ h^{-1}$. The notions of \emph{geometric conjugation} and \emph{realization of geometric conjugation} given in \cite[Definitions 3.3.3 and 3.6.1]{MM3} which are already defined in the topological category, remain unchanged for topological conjugations. 
Now we highlight the fact that 
\begin{enumerate}[A.]\it%
\item\label{gc} \textit{the pair $(\wt\psi_\ast, h_{\wt\psi})$ is a geometric topological  conjugation between $\mf M_S^\F$ and $\mf M_{S'}^{\mc G}$ that is realized on germs $(\Delta,S)$ and $(\psi(\Delta), S')$ for any subset $\Delta$ of $B$ meeting $S$;}
\item \textit{Theorem~4.3.1 of \cite{MM3} which gives a relation between  conjugations of monodromies and conjugation of holonomies, remains valid for topological conjugations;}
\item \textit{Key Lemma~8.3.2 of  \cite{MM3} of extension of realizations, is also valid when the conjugation of holonomies $(\mf g,h)$ is topological, the realization $\phi_S: (T,c)\to (T',c')$ remaining biholomorphic.}
\end{enumerate}
Assertion \textit{\ref{gc}} that  extends Assertion (2) of  Invariance Theorem~5.0.1 of \cite{MM3}, is immediate; the other two follow directly from the  proofs in \cite{MM3}.\\

 \emph{- Step 2.} We will perform an induction process as in \cite[Chapter 8]{MM3}.

 \indent a) First we define  \emph{elementary pieces} as in  \cite[\textsection 8.2]{MM3}, however the unions defining the real hypersurfaces $\mc H$ and  $\mc H'$  are now indexed by the set of   all  singular points of the exceptional divisor and of the foliation. In this way we  have
three new types of elementary pieces:
\begin{enumerate}[(i)]
\item $K_D$ with $D$ an invariant component of the exceptional divisor meeting a dicritical component,
\item\label{dicnondic} $K_s$ with $s$  an intersection point of a dicritical and a non dicritical component of the exceptional divisor,
\item\label{compdic} $K_D$, with $D$ a dicritical component.
\end{enumerate}

\indent b) To start  the induction, we proceed as  in \cite[\textsection 8.4]{MM3} but with an \emph{$\F$-suitable collection of  transversals}  $(\Delta_{\mc C})_{\mc C\in\mf C}$ in the meaning that it  is obtained in the following way: $\mf C$ is the set of non exceptional cut-components of $\mc E_\F$;  for each $\mc C\in \mf C$ we choose one  separatrix $S_{\mc C}$ whose strict transform meets $\mc C$ and an embedded  conformal disc $\Delta_{\mc C}$ transversal to $S_\mc C$ at a regular point; these discs are small enough so their pairwise intersections are empty and they are tranversal to the foliation.  Up to suitable foliated isotopies  we suppose that $\psi(\Delta_{\mc C})$ are also conformal embedded discs. Thus  thanks to Corollary \ref{corespdic} the collection given by $\Delta'_{\mc C'}:=\psi(\Delta_{\mc C})$,  $\mc C\in \mf C$, where $\mc C'$ is the cut-component corresponding to $\mc C$ by (\ref{correspond}), forms a $\mc G$-suitable collection of transversals. We begin the induction with the realization $(\psi_{|\Delta}, \wt\psi_{|\wt\Delta}, h_{\wt\psi})$ of the geometric conjugation $(\psi_\ast, h_{\wt\psi})$ on $\Delta:=\cup_{\mc C\in \mf C}\Delta_{\mc C}$ and $\Delta':=\cup_{\mc C'\in \mf C'}\Delta_{\mc C'}$. Because the cut-components are non exceptional and thanks to the extended version of the main theorem of \cite{Rebelo}, Rigidity condition (TR) of the introduction implies the analyticity of the restrictions $\psi_{|\Delta_{\mc C}}$. Then we finish the base case of the induction by the construction of a geometric representation  of the conjugation $(\psi_\ast, h_{\wt\psi})$ satisfying condition (23) of \cite[Extension Lemma~8.3.2]{MM3}, as  in \cite[\textsection 8.4]{MM3}.

\indent c) The process of the induction described in \cite[\S8.1]{MM3} and started in this way,  stops when it  requires  to make an extension to an elementary piece of type (\ref{dicnondic}) or to an elementary piece containing a nodal singularity belonging to $\mr{Sing}(\mc E_\F)$. In this way we obtain the announced conjugation $\Phi_1$.\\ 

\emph{- Step 3.} Let $\mc C$ be an exceptional cut-component of $\mc E_\F$ and let us keep the notations introduced in the proof of Lemma~\ref{equalitiesCS}: $\mc C=D_1\cup\cdots\cup D_\ell$ has two possible configurations (\ref{unseulpoint}) and (\ref{avecnoeud}). 

In the first case (\ref{unseulpoint}), $D_\ell$ contains only one singular point of the foliation, hence there is a holomorphic first integral defined on a neighborhood of $\mc C$ and specifically the foliation is linearizable at each singular point. Thus by equality of Camacho-Sad indices given by Lemma \ref{equalitiesCS} the considered foliations  are locally holomorphically conjugated at the singular points corresponding by~(\ref{correspond}). The equalities (\ref{equalityintersections})  of self-intersections of the components of $\mc C$ and $\mc C'$ allow us to glue these conjugacies and to obtain a homeomorphism  defined on a neighborhood of $\mc C$. We leave to the reader the details of this construction.

The situation in case (\ref{avecnoeud}) is similar: we have again equality of Camacho-Sad indices and therefore local conjugacies, and then equality of self-inter\-sections allowing to glue and to obtain a global $\mc C^0$ conjugation.
\\

\emph{- Step 4.}  On the elementary pieces $K_s$ corresponding to a nodal singular point $s$, we perform the gluing of the homeomorphisms already constructed by the process described  in \cite[\textsection 8.5]{MM3}. It remains to extend the obtained homeomorphisms to the dicritical components. Notice that in all the above constructions the homeomorphisms can be built by respecting  the dicritical components meeting their definition domains. Finally we arrive at the following situation described in \cite[page 147]{MM4}: if we identify tubular neighborhoods of  corresponding dicritical components $D\subset \mc E_\F$ and $D'\subset\mc E_{\mc G}$ with a same\footnote{It is possible because $D$ and $D'$ have same negative self-intersection.}
tubular neighborhood of the zero section of the normal bundle of $D$, the corresponding foliations being identified with the natural normal fibration, we have to extend to the whole $D$ a continuous map $g$ from 
a union $\mc K$ of disjoint closed discs to the group $\mr{Aut}_0(\C,0)$ of germs of homeomorphisms of $(\C,0)$. This can be easily made by extending $g$ to a union of bigger discs $\mc K'$ being a constant automorphism on $\partial \mc K'$. This ends the proof of Theorem~\ref{newliftingconjugacies}.
\end{dem2}

\bibliographystyle{plain}

\end{document}